\theoremstyle{plain}
\newtheorem{theorem}{Theorem}[section]
\newtheorem{lemma}[theorem]{Lemma}
\newtheorem{proposition}[theorem]{Proposition}
\theoremstyle{definition}
\newtheorem{definition}[theorem]{Definition}
\newtheorem{assumption}[theorem]{Assumption}
\theoremstyle{remark}
\newtheorem{remark}[theorem]{Remark}
\numberwithin{equation}{section}
\begin{document}

\title[Stationary Navier--Stokes equations in a curved thin domain]{Approximation of a solution to the stationary Navier--Stokes equations in a curved thin domain by a solution to thin-film limit equations}

\author[T.-H. Miura]{Tatsu-Hiko Miura}
\address{Graduate School of Science and Technology, Hirosaki University, 3, Bunkyo-cho, Hirosaki-shi, Aomori, 036-8561, Japan}
\email{thmiura623@hirosaki-u.ac.jp}

\subjclass[2020]{35Q30, 76D05, 76A20}

\keywords{stationary Navier--Stokes equations, thin-film limit, surface fluid}

\begin{abstract}
  We consider the stationary Navier--Stokes equations in a three-dimensional curved thin domain around a given closed surface under the slip boundary conditions.
  Our aim is to show that a solution to the bulk equations is approximated by a solution to limit equations on the surface appearing in the thin-film limit of the bulk equations.
  To this end, we take the average of the bulk solution in the thin direction and estimate the difference of the averaged bulk solution and the surface solution.
  Then we combine an obtained difference estimate on the surface with an estimate for the difference of the bulk solution and its average to get a difference estimate for the bulk and surface solutions in the thin domain, which shows that the bulk solution is approximated by the surface one when the thickness of the thin domain is sufficiently small.
\end{abstract}

\maketitle

\section{Introduction} \label{S:Intro}

\subsection{Problem settings and the outline of main results} \label{SS:In_Pro}
Let $\Gamma$ be a closed surface in $\mathbb{R}^3$ with unit outward normal vector field $n$.
Also, let $g_0$ and $g_1$ be functions on $\Gamma$ such that $g=g_1-g_0\geq c$ on $\Gamma$ with some constant $c>0$.
For a sufficiently small $\varepsilon>0$, we define a curved thin domain $\Omega_\varepsilon$ in $\mathbb{R}^3$ and its inner and outer boundaries $\Gamma_\varepsilon^0$ and $\Gamma_\varepsilon^1$ by
\begin{align} \label{E:Def_CTD}
  \begin{aligned}
    \Omega_\varepsilon &= \{y+rn(y) \mid y\in\Gamma, \, \varepsilon g_0(y)<r<\varepsilon g_1(y)\}, \\
    \Gamma_\varepsilon^i &= \{y+\varepsilon g_i(y)n(y) \mid y\in\Gamma\}, \quad i=0,1
  \end{aligned}
\end{align}
and set the whole boundary $\Gamma_\varepsilon=\Gamma_\varepsilon^0\cup\Gamma_\varepsilon^1$ of $\Omega_\varepsilon$.
We consider the stationary Navier--Stokes equations under Navier's slip boundary conditions
\begin{align} \label{E:SNS_CTD}
  \left\{
  \begin{alignedat}{3}
    -\nu\Delta u^\varepsilon+(u^\varepsilon\cdot\nabla)u^\varepsilon+\nabla p^\varepsilon = f^\varepsilon, \quad \mathrm{div}\,u^\varepsilon &= 0 &\quad &\text{in} &\quad &\Omega_\varepsilon, \\
    u^\varepsilon\cdot n_\varepsilon = 0, \quad 2\nu P_\varepsilon D(u^\varepsilon)n_\varepsilon+\gamma_\varepsilon u^\varepsilon &= 0 &\quad &\text{on} &\quad &\Gamma_\varepsilon.
  \end{alignedat}
  \right.
\end{align}
Here $\nu>0$ is the viscosity coefficient independent of $\varepsilon$ and $f^\varepsilon$ is a given external force depending on $\varepsilon$.
Also, $n_\varepsilon$ is the unit outward normal vector field of $\Gamma_\varepsilon$ and
\begin{align*}
  D(u^\varepsilon) = \frac{\nabla u^\varepsilon+(\nabla u^\varepsilon)^T}{2}, \quad P_\varepsilon = I_3-n_\varepsilon\otimes n_\varepsilon
\end{align*}
are the strain rate tensor and the orthogonal projection onto the tangent plane of $\Gamma_\varepsilon$, where $I_3$ and $n_\varepsilon\otimes n_\varepsilon$ are the $3\times 3$ identity matrix and the tensor product of $n_\varepsilon$ with itself.
The symbol $\gamma_\varepsilon$ stands for the friction coefficient and is given by
\begin{align} \label{E:Fric}
  \gamma_\varepsilon|_{\Gamma_\varepsilon^i} = \gamma_\varepsilon^i \quad\text{on}\quad \Gamma_\varepsilon^i, \quad i=0,1,
\end{align}
where $\gamma_\varepsilon^0$ and $\gamma_\varepsilon^1$ are given nonnegative constants depending on $\varepsilon$.

Since $\Omega_\varepsilon$ shrinks to $\Gamma$ as $\varepsilon\to0$, one naturally expects that a solution to \eqref{E:SNS_CTD} is close to some function on $\Gamma$ when $\varepsilon$ is small.
The purpose of this paper is to show that a solution to \eqref{E:SNS_CTD} is approximated by a solution to
\begin{align} \label{E:Lim_Eq}
  \left\{
  \begin{aligned}
    &-2\nu\left\{P\mathrm{div}_\Gamma[gD_\Gamma(v)]-\frac{1}{g}(v\cdot\nabla_\Gamma g)\nabla_\Gamma g\right\} \\
    &\qquad +(\gamma^0+\gamma^1)v+g\overline{\nabla}_vv+g\nabla_\Gamma q = gf \quad\text{on}\quad \Gamma, \\
    &\mathrm{div}_\Gamma(gv) = 0 \quad\text{on}\quad \Gamma,
  \end{aligned}
  \right.
\end{align}
which we call the limit equations of \eqref{E:SNS_CTD}.
Here unknown functions are the tangential velocity field $v$ and the pressure $q$.
Also, $f$ is a given external force.
We write $P$, $\nabla_\Gamma$, $\mathrm{div}_\Gamma$, $D_\Gamma(v)$, and $\overline{\nabla}_vv$ for the orthogonal projection onto the tangent plane of $\Gamma$, the tangential gradient, the surface divergence, the surface strain rate tensor, and the covariant derivative of $v$ along itself, respectively.
Also, $\gamma^0$ and $\gamma^1$ are nonnegative constants which stands for the friction coefficients.
For details of notations, see Section \ref{S:Main}.
Note that, when $g\equiv 1$ on $\Gamma$ and $\gamma^0=\gamma^1=0$, the limit equations \eqref{E:Lim_Eq} reduce to the surface Navier--Stokes equations with Boussinesq--Scriven surface stress tensor (see \cite{Bou14,Scr60,Ari89})
\begin{align} \label{E:SNS_Surf}
  \left\{
  \begin{alignedat}{3}
    -2\nu P\mathrm{div}_\Gamma[D_\Gamma(v)]+\overline{\nabla}_vv+\nabla_\Gamma q &= f &\quad &\text{on} &\quad \Gamma, \\
    \mathrm{div}_\Gamma v &= 0 &\quad &\text{on} &\quad \Gamma.
  \end{alignedat}
  \right.
\end{align}
Moreover, the equations \eqref{E:SNS_Surf} are equivalent to the Navier--Stokes equations on an abstract Riemannian manifold (see \cite{EbiMar70,Tay92,ChCzDi17})
\begin{align} \label{E:SNS_Mani}
  \left\{
  \begin{alignedat}{3}
    -\nu\{\Delta_Bv+\mathrm{Ric}(v)\}+\overline{\nabla}_vv+\nabla_\Gamma q &= f &\quad &\text{on} &\quad \Gamma, \\
    \mathrm{div}_\Gamma v &= 0 &\quad &\text{on} &\quad \Gamma,
  \end{alignedat}
  \right.
\end{align}
where $\Delta_B$ is the Bochner Laplacian on $\Gamma$ and $\mathrm{Ric}$ is the Ricci curvature of $\Gamma$ (see e.g. \cite[Lemma C.11]{Miu20_03} for the equivalence of the above equations).

In the nonstationary setting, we rigorously derived the limit equations \eqref{E:Lim_Eq} from the bulk equations \eqref{E:SNS_CTD} by the thin-film limit in our previous work \cite{Miu20_03}.
There we proved under suitable assumptions that, for an $L^2$-strong solution $u^\varepsilon$ to the nonstationary Navier--Stokes equations in $\Omega_\varepsilon$, its average
\begin{align*}
  Mu^\varepsilon(y) = \frac{1}{\varepsilon g(y)}\int_{\varepsilon g_0(y)}^{\varepsilon g_1(y)}u^\varepsilon(y+rn(y))\,dr, \quad y\in\Gamma
\end{align*}
converges weakly to a tangential vector field $v$ on $\Gamma$ in an appropriate function space as $\varepsilon\to0$, and derived the nonstationary limit equations on $\Gamma$ by characterizing $v$ as a unique $L^2$-weak solution to the limit equations.
We also obtained some estimates for the difference of $u^\varepsilon$ and $v$ which show that $v$ approximates $u^\varepsilon$ in the $L^2$ sense when $\varepsilon$ is small.

As in the nonstationary case \cite{Miu20_03}, we can derive \eqref{E:Lim_Eq} from \eqref{E:SNS_CTD} by means of convergence of a solution and characterization of the limit, but the procedure is the same so we omit it here.
In this paper, we focus on difference estimates for the solutions $u^\varepsilon$ to \eqref{E:SNS_CTD} and $v$ to \eqref{E:Lim_Eq}.
Let us fix some notations and formally state our main results (see Section \ref{S:Main} for details).
Let $\mathbb{P}_\varepsilon$ be the orthogonal projection from $L^2(\Omega_\varepsilon)^3$ onto a function space $\mathcal{H}_\varepsilon$ given in \eqref{E:Def_Heps}, which is the standard $L^2$ solenoidal space on $\Omega_\varepsilon$ or its subspace.
For a vector field $u$ on $\Omega_\varepsilon$, let $M_\tau u$ be the tangential component of the average $Mu$ on $\Gamma$.
Let
\begin{align*}
  H^1(\Gamma,T\Gamma) = \{v\in H^1(\Gamma)^3 \mid \text{$v\cdot n=0$ on $\Gamma$}\}
\end{align*}
and $H^{-1}(\Gamma,T\Gamma)$ be the dual space of $H^1(\Gamma,T\Gamma)$.
Formally speaking, our main results are as follows (see Theorems \ref{T:Error} and \ref{T:Er_CE} for the precise statements).

\begin{theorem} \label{T:Er_Intro}
  Let $f^\varepsilon\in L^2(\Omega_\varepsilon)^3$, $f\in H^{-1}(\Gamma,T\Gamma)$, and $u^\varepsilon$ and $v$ be weak solutions to \eqref{E:SNS_CTD} and \eqref{E:Lim_Eq}, respectively.
  Under suitable assumptions, suppose that there exist $c_1,c_2>0$ and $\alpha\in(0,1]$ independent of $\varepsilon$ such that
  \begin{align} \label{E:F_Intro}
    \|\mathbb{P}_\varepsilon f^\varepsilon\|_{L^2(\Omega_\varepsilon)}^2 \leq c_1\varepsilon^{-1+\alpha}, \quad \|M_\tau\mathbb{P}_\varepsilon f^\varepsilon\|_{H^{-1}(\Gamma,T\Gamma)}^2 \leq c_2
  \end{align}
  for all $\varepsilon>0$ sufficiently small.
  Then there exist $c,\rho>0$ independent of $\varepsilon$ such that
  \begin{align} \label{E:Er_Intro}
    \|M_\tau u^\varepsilon-v\|_{H^1(\Gamma)} \leq c\Bigl(\delta(\varepsilon)+\|M_\tau\mathbb{P}_\varepsilon f^\varepsilon-f\|_{H^{-1}(\Gamma,T\Gamma)}\Bigr)
  \end{align}
  for all $\varepsilon>0$ sufficiently small provided that $\|v\|_{H^1(\Gamma)}\leq\rho$, where
  \begin{align*}
    \delta(\varepsilon) = \varepsilon^{\alpha/4}+\sum_{i=0,1}\left|\frac{\gamma_\varepsilon^i}{\varepsilon}-\gamma^i\right|.
  \end{align*}
  Moreover, we have the following difference estimate in $\Omega_\varepsilon$:
  \begin{align} \label{E:ECE_Intro}
    \varepsilon^{-1/2}\|u^\varepsilon-\bar{v}\|_{L^2(\Omega_\varepsilon)} \leq c\Bigl(\delta(\varepsilon)+\|M_\tau\mathbb{P}_\varepsilon f^\varepsilon-f\|_{H^{-1}(\Gamma,T\Gamma)}\Bigr).
  \end{align}
  Here $\bar{v}$ is the constant extension of $v$ in the normal direction of $\Gamma$.
\end{theorem}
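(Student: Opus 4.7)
The plan is to compare \eqref{E:SNS_CTD} and \eqref{E:Lim_Eq} directly on $\Gamma$ via the vertical average: I would derive an approximate weak formulation on $\Gamma$ that $M_\tau u^\varepsilon$ satisfies up to $O(\delta(\varepsilon))$ residues, subtract the weak form of \eqref{E:Lim_Eq}, and close an energy estimate for the difference $w=M_\tau u^\varepsilon-v$. Coercivity of the diffusion--friction bilinear form will come from a surface Korn-type inequality, while the nonlinear convective difference will be absorbed using the smallness hypothesis $\|v\|_{H^1(\Gamma)}\leq\rho$. The bulk estimate \eqref{E:ECE_Intro} is then recovered from the surface estimate \eqref{E:Er_Intro} by a thin-domain Poincar\'e argument.

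For \eqref{E:Er_Intro}, the main steps are as follows. Given a tangential $\phi\in H^1(\Gamma,T\Gamma)$ with $\mathrm{div}_\Gamma(g\phi)=0$, I would construct an $\Omega_\varepsilon$-extension $\Phi_\varepsilon$ that is divergence-free in the bulk (essentially $\bar\phi$ plus a small Bogovskii-type correction) and satisfies $\Phi_\varepsilon\cdot n_\varepsilon=0$ on $\Gamma_\varepsilon$, then test the weak form of \eqref{E:SNS_CTD} against $\Phi_\varepsilon$. Rewriting each integrand in the tubular coordinates $x=y+rn(y)$ and using $n_\varepsilon=n+O(\varepsilon)$, $P_\varepsilon=P+O(\varepsilon)$, each term equals $\varepsilon$ times the corresponding surface integral evaluated at $M_\tau u^\varepsilon$ and $\phi$, plus a residue bounded by $C\varepsilon\delta(\varepsilon)\|\phi\|_{H^1(\Gamma)}$; the friction-coefficient mismatch $|\gamma_\varepsilon^i/\varepsilon-\gamma^i|$ enters precisely through the $\Gamma_\varepsilon$-boundary term. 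Subtracting the weak form of \eqref{E:Lim_Eq} tested with $\phi$, dividing by $\varepsilon$, and choosing $\phi=w$ (after a small surface pressure correction if $\mathrm{div}_\Gamma(gw)\neq 0$), I expect the identity
\[
A(w,w)+\bigl[B(w,M_\tau u^\varepsilon,w)+B(v,w,w)\bigr] = (M_\tau\mathbb{P}_\varepsilon f^\varepsilon-f,w)_\Gamma + R_\varepsilon(w),
\]
where $A$ is the dissipation--friction bilinear form and $B$ the trilinear convective form on $\Gamma$ associated with \eqref{E:Lim_Eq}. Coercivity $A(w,w)\geq c\|w\|_{H^1(\Gamma)}^2$ via surface Korn, the incompressibility identity $B(v,w,w)=0$ on weighted-divergence-free fields, and the bound $|B(w,M_\tau u^\varepsilon,w)|\leq C(\|v\|_{H^1(\Gamma)}+\|w\|_{H^1(\Gamma)})\|w\|_{H^1(\Gamma)}^2$ together with $|R_\varepsilon(w)|\leq C\delta(\varepsilon)\|w\|_{H^1(\Gamma)}$ then yield \eqref{E:Er_Intro} by absorbing the nonlinear terms, assuming both $\rho$ and $\delta(\varepsilon)$ are sufficiently small.

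For the bulk estimate \eqref{E:ECE_Intro}, I would split
\[
u^\varepsilon-\bar v=(u^\varepsilon-\overline{M_\tau u^\varepsilon})+\overline{M_\tau u^\varepsilon-v}.
\]
The second piece has $L^2(\Omega_\varepsilon)$-norm of order $\varepsilon^{1/2}\|M_\tau u^\varepsilon-v\|_{L^2(\Gamma)}$, hence is controlled by $\varepsilon^{1/2}$ times the right-hand side of \eqref{E:Er_Intro}. For the first piece, the boundary condition $u^\varepsilon\cdot n_\varepsilon=0$ together with a Poincar\'e-type inequality in the thin direction gives $\|u^\varepsilon-\overline{M_\tau u^\varepsilon}\|_{L^2(\Omega_\varepsilon)}\leq C\varepsilon\|\nabla u^\varepsilon\|_{L^2(\Omega_\varepsilon)}$; the a priori bound $\|\nabla u^\varepsilon\|_{L^2(\Omega_\varepsilon)}^2\leq C\varepsilon^\alpha$ coming from \eqref{E:F_Intro} produces the required $\varepsilon^{1/2}\delta(\varepsilon)$ after dividing by $\varepsilon^{1/2}$.

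The delicate step is identifying, in the reduction to the surface, the exact diffusion and friction structure of \eqref{E:Lim_Eq} --- in particular the non-obvious term $\frac{2\nu}{g}(v\cdot\nabla_\Gamma g)\nabla_\Gamma g$ --- from the bulk $D(u^\varepsilon):D(\Phi_\varepsilon)$ integrand and from $\gamma_\varepsilon u^\varepsilon\cdot\Phi_\varepsilon$ on $\Gamma_\varepsilon$. This requires careful geometric computation in tubular coordinates together with cancellations exploiting $u^\varepsilon\cdot n_\varepsilon=0$, analogous to those carried out in \cite{Miu20_03} for the nonstationary case. A secondary difficulty is constructing $\Phi_\varepsilon$ so that the Bogovskii correction is genuinely $O(\varepsilon)$-small in the norms used above; this is what forces the weighted constraint $\mathrm{div}_\Gamma(gv)=0$ appearing in \eqref{E:Lim_Eq}.
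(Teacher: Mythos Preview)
Your overall strategy---averaging the bulk weak form, subtracting the surface weak form, testing the difference, and recovering \eqref{E:ECE_Intro} from \eqref{E:Er_Intro} via the thin-direction Poincar\'e inequality---is exactly the paper's. However, two steps in your energy argument are genuine gaps, and they are precisely the difficulties the paper singles out as new in the stationary case.

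First, the claim ``coercivity $A(w,w)\geq c\|w\|_{H^1(\Gamma)}^2$ via surface Korn'' fails in general: the surface Korn inequality has the Killing fields $\mathcal{K}_g(\Gamma)$ as kernel, so coercivity of $a_g$ holds only on their (weighted) $L^2$-orthogonal complement, or when some $\gamma^i>0$ supplies an $L^2$ lower-order term. Your pressure-corrected test function lies in $H^1_{g\sigma}(\Gamma,T\Gamma)$ but is not automatically orthogonal to $\mathcal{K}_g(\Gamma)$. The paper performs a further finite-rank projection onto $\mathcal{K}_g(\Gamma)^{\perp g}$ and shows this projection is $O(\varepsilon)$-close to $M_\tau u^\varepsilon$ by exploiting the structural assumption $u^\varepsilon\perp\mathcal{R}_g$ in $L^2(\Omega_\varepsilon)$ together with an explicit comparison of $(u^\varepsilon,w)_{L^2(\Omega_\varepsilon)}$ and $(gM_\tau u^\varepsilon,w)_{L^2(\Gamma)}$ for $w\in\mathcal{R}_g$. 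Second, your trilinear splitting leaves $M_\tau u^\varepsilon$ in the middle slot, giving $|B(w,M_\tau u^\varepsilon,w)|\leq C(\|v\|_{H^1}+\|w\|_{H^1})\|w\|_{H^1}^2$. Absorbing this requires $\|w\|_{H^1}$ itself to be small, which is circular; the resulting quadratic inequality in $\|w\|_{H^1}$ has a large branch you cannot exclude. The paper instead works with the modified solenoidal field $v^\varepsilon\in\mathbb{V}_g$ and uses the splitting $b_g(v-v^\varepsilon,v,v-v^\varepsilon)+b_g(v^\varepsilon,v-v^\varepsilon,v-v^\varepsilon)$: the second term vanishes since $v^\varepsilon$ is weighted-solenoidal, and the first is bounded by $c_b\|v\|_{H^1}\|v-v^\varepsilon\|_{H^1}^2$, so only the smallness of $\|v\|_{H^1}\leq\rho$ is needed.
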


Note that the left-hand side of \eqref{E:ECE_Intro} is divided by $\varepsilon^{1/2}$ since the $L^2(\Omega_\varepsilon)$-norm involves the square root of the thickness of $\Omega_\varepsilon$.
By \eqref{E:ECE_Intro}, we can say that the solution $v$ to the limit equations \eqref{E:Lim_Eq} approximates the solution $u^\varepsilon$ to the bulk equations \eqref{E:SNS_CTD} when $\varepsilon$ is small.
We also have an approximation result for $\nabla u^\varepsilon$ in $\Omega_\varepsilon$, but it involves the Weingarten map (the second fundamental form) of $\Gamma$.
For details, see Theorem \ref{T:Er_CE}.
We further note that the difference estimate \eqref{E:Er_Intro} also holds if we replace the tangential component $M_\tau u^\varepsilon$ by the average $Mu^\varepsilon$ itself (see Remark \ref{R:Er_Full}).

The above results are basically the same as in the nonstationary case \cite[Theorems 7.27 and 7.29]{Miu20_03}, but the additional condition $\|v\|_{H^1(\Gamma)}\leq\rho$ is required due to lack of Gronwall's inequality as in the proof of the uniqueness of a weak solution to the stationary Navier--Stokes equations.
In fact, the condition $\|v\|_{H^1(\Gamma)}\leq\rho$ also implies the uniqueness of a weak solution to the limit equations \eqref{E:Lim_Eq} in an appropriate class (see Remark \ref{R:Pf_TErr}).

\subsection{Idea of the proof} \label{SS:In_Idea}
The difference estimate \eqref{E:ECE_Intro} in $\Omega_\varepsilon$ is obtained just by combination of the difference estimate \eqref{E:Er_Intro} on $\Gamma$ and an estimate for the difference of $u^\varepsilon$ and $M_\tau u^\varepsilon$ (see Lemma \ref{L:Ave_Diff}).
Thus the main effort is devoted to the proof of \eqref{E:Er_Intro}.

To prove \eqref{E:Er_Intro}, we first take the average in the thin direction of a weak form of the bulk equations \eqref{E:SNS_CTD} and show that $M_\tau u^\varepsilon$ satisfies a weak form on $\Gamma$ which is close to a weak form of the limit equation \eqref{E:Lim_Eq} but involves some residual terms.
Then we show that the residual terms in the weak form of $M_\tau u^\varepsilon$ are sufficiently small, take the difference of the weak forms of $M_\tau u^\varepsilon$ and $v$, and test $M_\tau u^\varepsilon-v$ to obtain \eqref{E:Er_Intro}.
The first part was already done in the study of the nonstationary problem \cite{Miu20_03}, so we omit details in this paper.
Also, the idea of the second part is basically the same as in \cite{Miu20_03}, but we encounter some difficulties due to lack of Gronwall's inequality in the stationary problem.

To explain the difficulties, let us consider the simplest but important case:
\begin{itemize}
  \item $\Gamma=S^2$ is the unit sphere in $\mathbb{R}^3$,
  \item $g_0\equiv0$ and $g_1\equiv1$ (and thus $g=g_1-g_0\equiv1$) on $S^2$,
  \item $\gamma_\varepsilon^0=\gamma_\varepsilon^1=0$ for all $\varepsilon>0$ and $\gamma^0=\gamma^1=0$.
\end{itemize}
In this case, $\Omega_\varepsilon=\{x\in\mathbb{R}^3 \mid 1<|x|<1+\varepsilon\}$ is a thin spherical shell and the bulk equations \eqref{E:SNS_CTD} read as the Navier--Stokes equations with perfect slip boundary conditions
\begin{align} \label{E:Perfect}
  \left\{
  \begin{alignedat}{3}
    -\nu\Delta u^\varepsilon+(u^\varepsilon\cdot\nabla)u^\varepsilon+\nabla p^\varepsilon = f^\varepsilon, \quad \mathrm{div}\,u^\varepsilon &= 0 &\quad &\text{in} &\quad &\Omega_\varepsilon, \\
    u^\varepsilon\cdot n_\varepsilon = 0, \quad P_\varepsilon D(u^\varepsilon)n_\varepsilon &= 0 &\quad &\text{on} &\quad &\Gamma_\varepsilon.
  \end{alignedat}
  \right.
\end{align}
Also, the limit equations \eqref{E:Lim_Eq} reduce to the surface Navier--Stokes equations \eqref{E:SNS_Surf} on $S^2$.

As we mentioned above, the weak form on $\Gamma$ satisfied by $M_\tau u^\varepsilon$ is close to the one of the limit equations but involves some residual terms.
They are estimated by the $H^1(\Omega_\varepsilon)$- and $H^2(\Omega_\varepsilon)$-norms of $u^\varepsilon$ (see Lemmas \ref{L:BL_Appr} and \ref{L:TL_Appr}), so we need to estimate these norms explicitly in terms of $\varepsilon$ in order to show that the residual terms are sufficiently small as $\varepsilon\to0$.
To estimate the $H^1(\Omega_\varepsilon)$-norm of $u^\varepsilon$, we take $u^\varepsilon$ as a test function in a weak form of \eqref{E:Perfect}.
Then we have $a_\varepsilon(u^\varepsilon,u^\varepsilon)=(f^\varepsilon,u^\varepsilon)_{L^2(\Omega_\varepsilon)}$, where $a_\varepsilon$ is a bilinear form in the weak form of \eqref{E:Perfect} given by
\begin{align*}
  a_\varepsilon(u_1,u_2) = 2\nu\bigl(D(u_1),D(u_2)\bigr)_{L^2(\Omega_\varepsilon)}, \quad u_1,u_2\in H^1(\Omega_\varepsilon)^3.
\end{align*}
We estimate the inner product of $u^\varepsilon$ and $f^\varepsilon$ by using \eqref{E:F_Intro} and some auxiliary arguments.
Moreover, it was shown in our previous work \cite{Miu22_01} that the uniform coerciveness
\begin{align*}
  \|u^\varepsilon\|_{H^1(\Omega_\varepsilon)}^2 \leq ca_\varepsilon(u^\varepsilon,u^\varepsilon) = 2\nu c\|D(u^\varepsilon)\|_{L^2(\Omega_\varepsilon)}^2
\end{align*}
holds with constant $c>0$ independent of $\varepsilon$ if $u^\varepsilon$ satisfies
\begin{align} \label{E:OrR_Intro}
  (u^\varepsilon,w)_{L^2(\Omega_\varepsilon)} = 0 \quad\text{for all}\quad w\in\mathcal{R}, \quad \mathcal{R} = \{w(x)=a\times x, \, x\in\mathbb{R}^3 \mid a\in\mathbb{R}^3\}.
\end{align}
Hence, by assuming \eqref{E:OrR_Intro}, we can get an estimate for the $H^1(\Omega_\varepsilon)$-norm of $u^\varepsilon$ explicitly in terms of $\varepsilon$ (see Lemma \ref{L:NCW_H1} for details).
Note that, under the situation considered here, the uniform coerciveness of $a_\varepsilon$ is the uniform Korn inequality with constant independent of $\varepsilon$, and the condition \eqref{E:OrR_Intro} is the same as the one for a standard Korn inequality with constant possibly depending on $\varepsilon$.
In general, the validity of the uniform Korn inequality on a curved thin domain is closely related to Killing vector fields on a limit surface (see \cite{LewMul11,Miu22_01}).
The above situation is special since any Killing vector field on $S^2$ is of the form $w(y)=a\times y$ for $y\in S^2$.
In the case of a general curved thin domain and limit surface, we make suitable assumptions to confirm that the uniform coerciveness of the bilinear form $a_\varepsilon$ holds (see Assumptions \ref{Asmp_1} and \ref{Asmp_2}).
It is also necessary to estimate the $H^2(\Omega_\varepsilon)$-norm of $u^\varepsilon$ explicitly in terms of $\varepsilon$.
To this end, we take the $L^2(\Omega_\varepsilon)$-inner product of $A_\varepsilon u^\varepsilon$ with \eqref{E:Perfect}, where $A_\varepsilon$ is the Stokes operator on $\Omega_\varepsilon$, apply an explicit estimate for the inner product of $(u^\varepsilon\cdot\nabla)u^\varepsilon$ and $A_\varepsilon u^\varepsilon$ shown in \cite{Miu21_02}, and use the uniform equivalence of the norms $\|A_\varepsilon u^\varepsilon\|_{L^2(\Omega_\varepsilon)}$ and $\|u\|_{H^2(\Omega_\varepsilon)}$ given in \cite{Miu22_01}.
For details, see Lemma \ref{L:NCW_H2}.

Another and more crucial difficulty is in taking a test function in the difference of the weak forms of $M_\tau u^\varepsilon$ and $v$.
Recall that here we consider the simplest case where the limit equations are the surface Navier--Stokes equations \eqref{E:SNS_Surf} on $S^2$, so it is required that a test function $\eta$ of a weak form of \eqref{E:SNS_Surf} should satisfy $\mathrm{div}_\Gamma\eta=0$ on $S^2$.
However, since $\mathrm{div}_\Gamma(M_\tau u^\varepsilon)$ does not vanish in general, we cannot take $M_\tau u^\varepsilon-v$ as a test function.
To overcome this difficulty, we construct an auxiliary vector field $v_1^\varepsilon$ satisfying $\mathrm{div}_\Gamma v_1^\varepsilon=0$ by setting $v_1^\varepsilon=M_\tau u^\varepsilon-\nabla_\Gamma q^\varepsilon$, where $q^\varepsilon$ is a solution to an appropriate elliptic problem on $S^2$ involving $\mathrm{div}_\Gamma(M_\tau u^\varepsilon)$.
Then, using an estimate for $\mathrm{div}_\Gamma(M_\tau u^\varepsilon)$, we show that $v_1^\varepsilon$ is sufficiently close to $M_\tau u^\varepsilon$ as $\varepsilon\to0$.
Hence one would obtain the difference estimate \eqref{E:Er_Intro} if one could get a difference estimate for $v_1^\varepsilon$ and $v$ by taking $v_1^\varepsilon-v$ as a test function.
This procedure was enough in the nonstationary case \cite{Miu20_03}.
In the stationary case, however, we still encounter a difficulty due to lack of Gronwall's inequality.
By integration by parts, we see that a bilinear form in the weak form of \eqref{E:SNS_Surf} is
\begin{align*}
  a_g(v_1,v_2) = 2\nu\bigl(D_\Gamma(v_1),D_\Gamma(v_2)\bigr)_{L^2(\Gamma)}, \quad v_1,v_2\in H^1(S^2,TS^2).
\end{align*}
Note that $a_g$ involves $g=g_1-g_0$ in general and here we assume $g\equiv1$.
Also, $H^1(S^2,TS^2)$ is the Sobolev space of tangential vector fields on $S^2$ of class $H^1$ and $D_\Gamma(v_1)$ is the surface strain rate tensor (see Section \ref{SS:Surf} for details).
In order to get a difference estimate for $v_1^\varepsilon$ and $v$ via a suitable weak form, we need to apply the coerciveness of $a_g$ of the form
\begin{align*}
  \|v_1\|_{H^1(S^2)}^2 \leq ca_g(v_1,v_1) = 2\nu c\|D_\Gamma(v_1)\|_{L^2(S^2)}^2, \quad v_1\in H^1(S^2,TS^2).
\end{align*}
This is the Korn inequality on $S^2$ and valid if $v_1$ is orthogonal in $L^2(S^2)$ to
\begin{align} \label{E:Kil_Intro}
  \mathcal{K}(S^2) = \{w(y)=a\times y, \, y\in S^2 \mid a\in\mathbb{R}^3\}.
\end{align}
Note that $\mathcal{K}(S^2)$ is the space of Killing vector fields on $S^2$, and for $w\in\mathcal{K}(S^2)$ we have $D_\Gamma(w)=0$ and $\mathrm{div}_\Gamma w=0$ on $S^2$ (see Section \ref{SS:Surf}).
Hence we need $v_1^\varepsilon-v$ to be orthogonal to $\mathcal{K}(S^2)$ in order to apply the Korn inequality to it.
For the weak solution $v$ to the surface problem \eqref{E:SNS_Surf}, we just assume that it is orthogonal to $\mathcal{K}(S^2)$.
However, we cannot impose the same assumption on $v_1^\varepsilon$, since it is constructed from the solution $u^\varepsilon$ to the bulk problem \eqref{E:Perfect}.
To overcome this difficulty, we use the fact that $\mathcal{K}(S^2)$ is a linear space of dimension three.
Then we can take an orthonormal basis $\{w_1,w_2,w_3\}$ of $\mathcal{K}(S^2)$ with respect to the $L^2(S^2)$-inner product, and by setting
\begin{align*}
  v^\varepsilon = v_1^\varepsilon-\sum_{k=1}^3(v_1^\varepsilon,w_k)_{L^2(S^2)}w_k \quad\text{on}\quad S^2,
\end{align*}
we see that $v^\varepsilon$ is orthogonal to $\mathcal{K}(S^2)$.
Hence we can apply the Korn inequality to $v^\varepsilon-v$ and get a difference estimate for $v^\varepsilon$ and $v$.
It remains to show that $v^\varepsilon$ is sufficiently close to $v_1^\varepsilon$ as $\varepsilon\to0$.
To this end, we use the assumption \eqref{E:OrR_Intro} on $u^\varepsilon$.
By the expressions of $\mathcal{R}$ in \eqref{E:OrR_Intro} and $\mathcal{K}(S^2)$ in \eqref{E:Kil_Intro}, we can take $w=w_k$ in \eqref{E:OrR_Intro} for $k=1,2,3$.
Then, taking the average of $(u^\varepsilon,w_k)_{L^2(\Omega_\varepsilon)}=0$ in the thin direction, we find that $(M_\tau u^\varepsilon,w_k)_{L^2(S^2)}$ is sufficiently small as $\varepsilon\to0$ (see Lemma \ref{L:Ave_RPe}).
Moreover, since
\begin{align*}
  (v_1^\varepsilon,w_k)_{L^2(S^2)} = (M_\tau u^\varepsilon-\nabla_\Gamma q^\varepsilon,w_k)_{L^2(S^2)} = (M_\tau u^\varepsilon,w_k)_{L^2(S^2)}
\end{align*}
by the construction of $v_1^\varepsilon$, integration by parts, and $\mathrm{div}_\Gamma w_k=0$, we can conclude that $v^\varepsilon$ is sufficiently close to $v_1^\varepsilon$ and thus to $M_\tau u^\varepsilon$ as $\varepsilon\to0$, which proves \eqref{E:Er_Intro} when combined with the difference estimate for $v^\varepsilon$ and $v$.
In the case of a general curved thin domain and limit surface, we impose suitable assumptions which enable us to proceed as above (see Assumptions \ref{Asmp_1} and \ref{Asmp_2}).
Also, when $g\not\equiv1$, we use the weighted inner product $(g\cdot,\cdot)_{L^2(S^2)}$ at several points in the above arguments.
For details, see Section \ref{SS:Rep}.

Let us also give a remark on the condition $\|v\|_{H^1(\Gamma)}\leq\rho$.
In the proof of the difference estimate for $v^\varepsilon$ and $v$, we test $v^\varepsilon-v$ in a suitable weak form.
Then the resulting equality involves the term
\begin{align*}
  b_g(v^\varepsilon-v,v,v^\varepsilon-v) = -\bigl(g(v^\varepsilon-v)\otimes v,\nabla_\Gamma(v^\varepsilon-v)\bigr)_{L^2(\Gamma)},
\end{align*}
where $b_g$ is a trilinear form corresponding to the term $g\overline{\nabla}_vv$ in \eqref{E:Lim_Eq}, and we estimate this term by the H\"{o}lder and Ladyzhenskaya inequalities to get (see Lemma \ref{L:TLS_BoAs})
\begin{align*}
  |b_g(v^\varepsilon-v,v,v^\varepsilon-v)| \leq c\|v\|_{H^1(\Gamma)}\|v^\varepsilon-v\|_{H^1(\Gamma)}^2.
\end{align*}
Using this estimate and the Korn and Young inequalities, we a priori obtain
\begin{align*}
  \Bigl(c_1-c_2\|v\|_{H^1(\Gamma)}\Bigr)\|v^\varepsilon-v\|_{H^1(\Gamma)}^2 \leq c_3\Bigl(\delta(\varepsilon)^2+\|f-M_\tau\mathbb{P}_\varepsilon f^\varepsilon\|_{H^{-1}(\Gamma,T\Gamma)}^2\Bigr)
\end{align*}
with some $c_1,c_2,c_3>0$.
Hence we impose $\|v\|_{H^1(\Gamma)}\leq\rho$ to ensure that $c_1-c_2\|v\|_{H^1(\Gamma)}$ is bounded below by some positive constant.
For the actual proof, see Section \ref{SS:Pf_TErr}.

\subsection{Literature overview} \label{SS:In_Lit}
Fluid flows in thin domains appear in many problems of natural sciences like lubrication and geophysical fluid dynamics.
The Navier--Stokes equations in three-dimensional (3D) thin domains have been studied for a long time.
In the case of 3D thin domains around 2D limit sets, there are many works on the nonstationary problem where the authors mainly study the global-in-time existence of a strong solution when the thickness of a thin domain is small and the behavior of a solution as the thickness of a thin domain tends to zero.
The nonstationary problem in a flat thin domain $\Omega_\varepsilon=\omega\times(0,\varepsilon)$ around a 2D bounded domain $\omega$ was studied first by Raugel and Sell \cite{RauSel93_03,RauSel93_01,RauSel94_02} and then by many others (see \cite{TemZia96,MoTeZi97,Ift99,Mon99,IftRau01,KukZia06,Hu07,KukZia07} and the references cited therein).
Also, other kinds of thin domains were considered in the study of the nonstationary problem like a flat thin domain with nonflat top and bottom boundaries \cite{IfRaSe07,Hoa10,HoaSel10,Hoa13}, a thin spherical shell between two concentric spheres \cite{TemZia97}, and a curved thin domain around a general closed surface \cite{Miu20_03,Miu21_02,Miu22_01}.
On the other hand, there are some works that study the asymptotic behavior of a solution to the stationary problem in flat thin domains around 2D domains (see e.g. \cite{Naz90,CaLuSu13,CaLuSu20}), but it seems that the stationary problem has not been studied yet in the case of curved thin domains around surfaces.
We also refer to \cite{NazPil90,Naz00,Mar01} for asymptotic analysis of the stationary problem in thin tubes and networks.

As we mentioned in Section \ref{SS:In_Pro}, the limit equations \eqref{E:Lim_Eq} reduce to the surface Navier--Stokes equations \eqref{E:SNS_Surf} when $g\equiv1$ and $\gamma^0=\gamma^1=0$, and the equations \eqref{E:SNS_Surf} are further equivalent to the Navier--Stokes equations \eqref{E:SNS_Mani} on an abstract Riemannian manifold.
The study of the Navier--Stokes equations on surfaces and manifolds have been done by several authors.
We refer to \cite{Prie94,DinMit04,ChaYon13,KorWen18} for the stationary problem and to \cite{Tay92,Nag99,MitTay01,KheMis12,ChaCzu13,ChaCzu16,Pie17,SamTuo20,PrSiWi21} for the nonstationary problem.
Also, the Navier--Stokes equations on a moving surface have been proposed in several works \cite{KoLiGi17,JaOlRe18,Miu18} by different methods.
Those equations are a coupled system of the motion of a surface and the evolution of the tangential fluid velocity.
When the motion of a surface is given, the wellposedness of the tangential Navier--Stokes equations on a moving surface was established in the recent work \cite{OlReZh22}, but the wellposedness of the full system is still open.

\subsection{Organization of this paper} \label{SS:In_Org}
The rest of this paper is organized as follows.
In Section \ref{S:Main}, we fix notations and state the main results.
Section \ref{S:Tool} provides fundamental tools for analysis of functions on $\Gamma$ and $\Omega_\varepsilon$.
We discuss the existence and uniqueness of a weak solution to the limit equations \eqref{E:Lim_Eq} in Section \ref{S:Sol_Lim}.
In Section \ref{S:Bulk}, we derive some explicit estimates in terms of $\varepsilon$ for a solution to the bulk equations \eqref{E:SNS_CTD}.
Some uniqueness results for a solution to \eqref{E:SNS_CTD} are also given in that section.
In Section \ref{S:Diff_Est}, we give the proofs of the main results.
Section \ref{S:Pf_Sob} is devoted to the proof of Lemma \ref{L:Sob_CTD}.

\section{Notations and main results} \label{S:Main}
We fix notations and state the main results.
Throughout this paper, we fix a coordinate system of $\mathbb{R}^3$ and write $x_i$, $i=1,2,3$ for the $i$-th component of a point $x\in\mathbb{R}^3$ under the fixed coordinate system.
Also, $c$ denotes a general positive constant independent of $\varepsilon$.

\subsection{Notations on vectors and matrices} \label{SS:Vect}
We write a vector and a matrix as
\begin{align*}
  a =
  \begin{pmatrix}
    a_1 \\ a_2 \\ a_3
  \end{pmatrix}
  = (a_1,a_2,a_3)^T\in\mathbb{R}^3, \quad A = (A_{ij})_{i,j} =
  \begin{pmatrix}
    A_{11} & A_{12} & A_{13} \\
    A_{21} & A_{22} & A_{23} \\
    A_{31} & A_{32} & A_{33}
  \end{pmatrix}
  \in\mathbb{R}^{3\times3}.
\end{align*}
Also, we write $A^T$ for the transpose of $A$ and $I_3$ for the $3\times 3$ identity matrix.
We define the tensor product of $a,b\in\mathbb{R}^3$ by $a\otimes b=(a_ib_j)_{i,j}$.
Also, for $A,B\in\mathbb{R}^{3\times3}$ we set the inner product $A:B=\mathrm{tr}[A^TB]$ and the Frobenius norm $|A|=\sqrt{A:A}$.
For 3D vector fields $u$ and $\varphi$ on an open subset of $\mathbb{R}^3$, let
\begin{gather*}
  \nabla u =
  \begin{pmatrix}
    \partial_1u_1 & \partial_1u_2 & \partial_1u_3 \\
    \partial_2u_1 & \partial_2u_2 & \partial_2u_3 \\
    \partial_3u_1 & \partial_3u_2 & \partial_3u_3
  \end{pmatrix}, \quad
  |\nabla^2 u|^2 = \sum_{i,j,k=1}^3|\partial_i\partial_ju_k|^2, \\
  (\varphi\cdot\nabla)u=(\varphi\cdot\nabla u_1,\varphi\cdot\nabla u_2,\varphi\cdot\nabla u_3)^T.
\end{gather*}
Here $\partial_i=\partial/\partial x_i$, $u=(u_1,u_2,u_3)^T$, and $a\cdot b$ is the inner product of $a,b\in\mathbb{R}^3$.

\subsection{Closed surface} \label{SS:Surf}
Let $\Gamma$ be a closed (i.e. compact and without boundary), connected, and oriented surface in $\mathbb{R}^3$ of class $C^l$ with $l\geq2$.
We assume that $\Gamma$ is the boundary of a bounded domain $\Omega$ in $\mathbb{R}^3$ and write $n$ for the unit outward normal vector field of $\Gamma$ which points from $\Omega$ into $\mathbb{R}^3\setminus\Omega$.
Let $d$ be the signed distance function from $\Gamma$ increasing in the direction of $n$.
Also, let $\kappa_1$ and $\kappa_2$ be the principal curvatures of $\Gamma$.
By the regularity of $\Gamma$, we see that $\kappa_1$ and $\kappa_2$ are of class $C^{l-2}$ on $\Gamma$ and thus bounded on $\Gamma$.
Hence there exists a tubular neighborhood $N=\{x\in\mathbb{R}^3 \mid -\delta<d(x)<\delta\}$ of radius $\delta>0$ such that for each $x\in N$ there exists a unique $\pi(x)\in\Gamma$ satisfying
\begin{align} \label{E:Fermi}
  x = \pi(x)+d(x)n(\pi(x)), \quad \nabla d(x) = n(\pi(x))
\end{align}
and that $d$ and $\pi$ are of class $C^l$ and $C^{l-1}$ on $\overline{N}$ (see \cite[Section 14.6]{GilTru01}).
Moreover, taking $\delta>0$ sufficiently small, we may assume that
\begin{align} \label{E:PrCu_Bd}
  c^{-1} \leq 1-r\kappa_i(y) \leq c, \quad y\in\Gamma, \, r\in(-\delta,\delta), \, i=1,2
\end{align}
with some constant $c>0$.

Let $P=I_3-n\otimes n$ be the orthogonal projection onto the tangent plane of $\Gamma$.
We define the tangential gradient of $\eta\in C^1(\Gamma)$ by $\nabla_\Gamma\eta=P\nabla\tilde{\eta}$ on $\Gamma$, where $\tilde{\eta}$ is an extension of $\eta$ to $N$, and denote $\underline{D}_i\eta$ by the $i$-th component of $\nabla_\Gamma\eta$ for $i=1,2,3$.
Note that the value of $\nabla_\Gamma\eta$ is independent of the choice of the extension $\tilde{\eta}$.
In particular, if we set $\bar{\eta}=\eta\circ\pi$, which is the constant extension of $\eta$ in the normal direction of $\Gamma$, then
\begin{align} \label{E:CEGr_Su}
  \nabla\bar{\eta}(y) = \nabla_\Gamma\eta(y), \quad \partial_i\bar{\eta}(y) = \underline{D}_i\eta(y), \quad y\in\Gamma, \, i=1,2,3,
\end{align}
since $\nabla\pi(y)=P(y)$ for $y\in\Gamma$ by \eqref{E:Fermi} and $d(y)=0$.
In what follows, a function with an overline always stands for the constant extension of a function on $\Gamma$.

For a (not necessarily tangential) vector field $v=(v_1,v_2,v_3)^T\in C^1(\Gamma)^3$, we define the tangential gradient matrix and the surface divergence of $v$ by
\begin{align*}
  \nabla_\Gamma v =
  \begin{pmatrix}
    \underline{D}_1v_1 & \underline{D}_1v_2 & \underline{D}_1v_3 \\
    \underline{D}_2v_1 & \underline{D}_2v_2 & \underline{D}_2v_3 \\
    \underline{D}_3v_1 & \underline{D}_3v_2 & \underline{D}_3v_3
  \end{pmatrix}, \quad
  \mathrm{div}_\Gamma v = \mathrm{tr}[\nabla_\Gamma v] = \sum_{i=1}^3\underline{D}_iv_i.
\end{align*}
Note that the order of the indices of $\nabla_\Gamma v$ is reversed in some literature.
We also set
\begin{align} \label{E:Surf_SRT}
  D_\Gamma(v) = P(\nabla_\Gamma v)_SP \quad\text{on}\quad \Gamma, \quad (\nabla_\Gamma v)_S = \frac{\nabla_\Gamma v+(\nabla_\Gamma v)^T}{2}
\end{align}
and call $D_\Gamma(v)$ the surface strain rate tensor.
For $v\in C^1(\Gamma)^3$ and $\eta\in C(\Gamma)^3$, let
\begin{align*}
  (\eta\cdot\nabla_\Gamma)v = (\eta\cdot\nabla_\Gamma v_1,\eta\cdot\nabla_\Gamma v_2,\eta\cdot\nabla_\Gamma v_3)^T \quad\text{on}\quad \Gamma.
\end{align*}
When $v$ and $\eta$ are tangential, we write $\overline{\nabla}_\eta v=P(\eta\cdot\nabla_\Gamma)v$ for the covariant derivative of $v$ along $\eta$.
For a $3\times 3$ matrix-valued function $A=(A_{ij})_{i,j}$ on $\Gamma$, we define $\mathrm{div}_\Gamma A$ as a vector field on $\Gamma$ with $j$-th component $[\mathrm{div}_\Gamma A]_j=\sum_{i=1}^3\underline{D}_iA_{ij}$ for $j=1,2,3$.
Let $W=-\nabla_\Gamma n$ be the Weingarten map of $\Gamma$.
By \eqref{E:Fermi}, \eqref{E:CEGr_Su}, and $|n|=1$, we have
\begin{align} \label{E:Wein}
  W = -\nabla^2d = W^T, \quad Wn = -\frac{1}{2}\nabla_\Gamma(|n|^2) = 0 \quad\text{on}\quad \Gamma.
\end{align}
We also define the mean curvature of $\Gamma$ by $H=-\mathrm{div}_\Gamma n$.

Let $\eta,\xi\in C^1(\Gamma)$ and $i=1,2,3$.
Then the integration by parts formula
\begin{align} \label{E:TD_IbP}
  \int_\Gamma(\eta\underline{D}_i\xi+\xi\underline{D}_i\eta)\,d\mathcal{H}^2 = -\int_\Gamma\eta\xi Hn_i\,d\mathcal{H}^2
\end{align}
holds (see e.g. \cite[Lemma 16.1]{GilTru01} and \cite[Theorem 2.10]{DziEll13}), where $\mathcal{H}^2$ is the 2D Hausdorff measure.
Based on this formula, we say that a function $\eta\in L^2(\Gamma)$ has the $i$-th weak tangential derivative if there exists $\eta_i\in L^2(\Gamma)$ such that
\begin{align*}
  \int_\Gamma\eta_i\xi\,d\mathcal{H}^2 = -\int_\Gamma\eta(\underline{D}_i\xi+\xi Hn_i)\,d\mathcal{H}^2
\end{align*}
for all $\xi\in C^1(\Gamma)$.
In this case, we write $\underline{D}_i\eta=\eta_i$ and define the Sobolev spaces
\begin{align*}
  H^1(\Gamma) &= \{\eta\in L^2(\Gamma) \mid \underline{D}_i\eta\in L^2(\Gamma), \, i=1,2,3\}, \\
  H^2(\Gamma) &= \{\eta\in H^1(\Gamma) \mid \underline{D}_i\underline{D}_j\eta\in L^2(\Gamma), \, i,j=1,2,3\}.
\end{align*}
We also define $\|\eta\|_{H^1(\Gamma)}$ and $\|\eta\|_{H^2(\Gamma)}$ as in the flat domain case.

For a function space $\mathcal{X}(\Gamma)$ on $\Gamma$, we denote by
\begin{align*}
  \mathcal{X}(\Gamma,T\Gamma)=\{v\in \mathcal{X}(\Gamma)^3 \mid \text{$v\cdot n=0$ on $\Gamma$}\}
\end{align*}
the space of tangential vector fields on $\Gamma$ of class $\mathcal{X}$.
If $\mathcal{X}=L^2,H^1$, then $\mathcal{X}(\Gamma,T\Gamma)$ is a closed subspace of $\mathcal{X}(\Gamma)^3$.
Let $H^{-1}(\Gamma,T\Gamma)$ be the dual space of $H^1(\Gamma,T\Gamma)$ with duality product $[\cdot,\cdot]_{T\Gamma}$.
We consider $L^2(\Gamma,T\Gamma)$ as a subspace of $H^{-1}(\Gamma,T\Gamma)$ by setting
\begin{align} \label{E:HinS_L2}
  [v,w]_{T\Gamma} = (v,w)_{L^2(\Gamma)}, \quad v\in L^2(\Gamma,T\Gamma), \, w\in H^1(\Gamma,T\Gamma).
\end{align}
For $\eta\in C^1(\Gamma)$ and $f\in H^{-1}(\Gamma,T\Gamma)$, we define $\eta f\in H^{-1}(\Gamma,T\Gamma)$ by
\begin{align} \label{E:HinS_Mul}
  [\eta f,v]_{T\Gamma} = [f,\eta v]_{T\Gamma}, \quad v\in H^1(\Gamma,T\Gamma).
\end{align}
Let $g\in C^1(\Gamma)$.
We define the spaces of weighted solenoidal vector fields on $\Gamma$ by
\begin{align*}
  \mathcal{X}_{g\sigma}(\Gamma,T\Gamma) = \{v\in\mathcal{X}(\Gamma,T\Gamma) \mid \text{$\mathrm{div}_\Gamma(gv)=0$ on $\Gamma$}\}, \quad \mathcal{X} = L^2,H^1.
\end{align*}
Here we consider $\mathrm{div}_\Gamma(gv)$ in the dual space of $H^1(\Gamma)$ when $\mathcal{X}=L^2$.
Also, $\mathcal{X}_{g\sigma}(\Gamma,T\Gamma)$ is a closed subspace of $\mathcal{X}(\Gamma,T\Gamma)$.
For details, see \cite[Sections 3.1 and 6.3]{Miu20_03}.
We also set
\begin{align} \label{E:Def_Kilg}
  \begin{aligned}
    \mathcal{K}(\Gamma) &= \{v\in H^1(\Gamma,T\Gamma) \mid \text{$D_\Gamma(v)=0$ on $\Gamma$}\}, \\
    \mathcal{K}_g(\Gamma) &= \{v\in\mathcal{K}(\Gamma) \mid \text{$v\cdot\nabla_\Gamma g=0$ on $\Gamma$}\}.
  \end{aligned}
\end{align}
It is known that a vector field in $\mathcal{K}(\Gamma)$ preserves the Riemannian metric of $\Gamma$, and such a vector field is called a Killing vector field (see e.g. \cite{Pet16} for details).

\begin{lemma} \label{L:Kil_Sol}
  If $v\in\mathcal{K}(\Gamma)$, then $\mathrm{div}_\Gamma v=0$ on $\Gamma$.
\end{lemma}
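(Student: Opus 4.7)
The plan is to verify the pointwise (weakly) algebraic identity $\mathrm{tr}[D_\Gamma(v)] = \mathrm{div}_\Gamma v$ for every tangential $v\in H^1(\Gamma,T\Gamma)$. Once this is established, the hypothesis $D_\Gamma(v)=0$ immediately forces $\mathrm{div}_\Gamma v=0$, proving the lemma. The whole argument is linear-algebraic, so I would prove it for smooth $v$ first and then invoke density (or simply note that the same manipulations go through for weak tangential derivatives since $n$ and $W$ are of class $C^{l-2}$ with $l\geq 2$).

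First I would take the trace of $D_\Gamma(v)=P(\nabla_\Gamma v)_SP$. Using cyclic invariance of trace and $P^2=P$, I obtain
\[
  \mathrm{tr}[D_\Gamma(v)] = \mathrm{tr}\bigl[P(\nabla_\Gamma v)_S\bigr].
\]
Expanding $P=I_3-n\otimes n$ and using $\mathrm{tr}[(\nabla_\Gamma v)_S]=\mathrm{tr}[\nabla_\Gamma v]=\mathrm{div}_\Gamma v$ together with $\mathrm{tr}[(n\otimes n)M]=n\cdot Mn$, this yields
\[
  \mathrm{tr}[D_\Gamma(v)] = \mathrm{div}_\Gamma v - n\cdot(\nabla_\Gamma v)_S n.
\]
So it suffices to prove that the scalar $n\cdot(\nabla_\Gamma v)_S n$ vanishes.

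To handle that term I would compute $(\nabla_\Gamma v)^T n$ and $(\nabla_\Gamma v)n$ separately. The first is zero because, with $\underline{D}_jv_i=\sum_k P_{jk}\partial_k\tilde v_i$, one has $\sum_j n_j\underline{D}_jv_i=\sum_k(n^TP)_k\partial_k\tilde v_i=0$ since $n^TP=0$. For the second, I would use the tangentiality $v\cdot n=0$: applying the Leibniz rule $\underline{D}_i(v_jn_j)=n_j\underline{D}_iv_j+v_j\underline{D}_in_j$, summing in $j$, and invoking $W=-\nabla_\Gamma n$ gives
\[
  [(\nabla_\Gamma v)n]_i = -\sum_j v_j\underline{D}_in_j = [Wv]_i.
\]
Averaging, $(\nabla_\Gamma v)_Sn=\tfrac{1}{2}Wv$, hence by $W=W^T$ and $Wn=0$ from \eqref{E:Wein},
\[
  n\cdot(\nabla_\Gamma v)_Sn = \tfrac{1}{2}(Wn)\cdot v = 0.
\]
Combining with the earlier trace identity gives $\mathrm{div}_\Gamma v=\mathrm{tr}[D_\Gamma(v)]=0$.

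There is no real obstacle here; the only care required is the low regularity of $v$, which is handled either by a density argument from $C^1(\Gamma,T\Gamma)$ or by noting that Leibniz and the definition $\nabla_\Gamma v=P\nabla\tilde v$ still make sense weakly against $C^1$ curvature data, which is available since $\Gamma$ is of class $C^l$ with $l\geq 2$.
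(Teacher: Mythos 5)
Your proof is correct and follows essentially the same route as the paper's: both arguments reduce the claim to the identities $(\nabla_\Gamma v)n=Wv$ (Leibniz applied to $v\cdot n=0$ with $W=-\nabla_\Gamma n$), $W=W^T$, and $Wn=0$ from \eqref{E:Wein}, combined with the trace identity $\mathrm{tr}[D_\Gamma(v)]=\mathrm{tr}[P(\nabla_\Gamma v)P]$. The only cosmetic difference is that you establish $\mathrm{tr}[D_\Gamma(v)]=\mathrm{div}_\Gamma v$ directly, while the paper decomposes $\nabla_\Gamma v=(\nabla_\Gamma v)P+(Wv)\otimes n$ and traces each piece; your explicit computation $(\nabla_\Gamma v)^Tn=0$ is exactly what the paper uses implicitly via $P\nabla_\Gamma v=\nabla_\Gamma v$.
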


\begin{proof}
  Since $v\cdot n=0$ and $(\nabla_\Gamma v)(n\otimes n)=\{(\nabla_\Gamma v)n\}\otimes n$ on $\Gamma$, we have
  \begin{align*}
    (\nabla_\Gamma v)n = \nabla_\Gamma(v\cdot n)-(\nabla_\Gamma n)v = Wv, \quad (\nabla_\Gamma v)(n\otimes n) = (Wv)\otimes n \quad\text{on}\quad \Gamma.
  \end{align*}
  Thus $\nabla_\Gamma v=(\nabla_\Gamma v)P+(Wv)\otimes n$ on $\Gamma$ by $P=I_3-n\otimes n$.
  Moreover,
  \begin{align*}
    \mathrm{tr}[(\nabla_\Gamma v)P] = \mathrm{tr}[P(\nabla_\Gamma v)P] = \mathrm{tr}[D_\Gamma(v)] = 0, \quad \mathrm{tr}[(Wv)\otimes n] = (Wv)\cdot n = 0
  \end{align*}
  on $\Gamma$ by $P\nabla_\Gamma v=\nabla_\Gamma v$, $v\in\mathcal{K}(\Gamma)$, and \eqref{E:Wein}.
  Hence $\mathrm{div}_\Gamma v=\mathrm{tr}[\nabla_\Gamma v]=0$ on $\Gamma$.
\end{proof}

\begin{lemma} \label{L:Kilg_Sub}
  The space $\mathcal{K}_g(\Gamma)$ is a finite dimensional subspace of $H_{g\sigma}^1(\Gamma,T\Gamma)$.
\end{lemma}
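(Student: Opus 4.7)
The statement has two parts: the inclusion $\mathcal{K}_g(\Gamma)\subset H_{g\sigma}^1(\Gamma,T\Gamma)$ and the finite dimensionality.

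For the inclusion, my plan is to apply the product rule for the surface divergence to obtain the identity
\begin{align*}
  \mathrm{div}_\Gamma(gv) = g\,\mathrm{div}_\Gamma v + v\cdot\nabla_\Gamma g
\end{align*}
for $v\in\mathcal{K}_g(\Gamma)$. Since $v\in\mathcal{K}(\Gamma)$, Lemma \ref{L:Kil_Sol} gives $\mathrm{div}_\Gamma v=0$, and the defining condition of $\mathcal{K}_g(\Gamma)$ forces $v\cdot\nabla_\Gamma g=0$, so both terms vanish. Since $v$ is only $H^1$, I would justify the identity either by writing it in the weak (distributional) sense against a test function $\xi\in C^1(\Gamma)$ via \eqref{E:TD_IbP}, or by density of $C^1(\Gamma,T\Gamma)$ in $H^1(\Gamma,T\Gamma)$, noting that both sides are continuous with respect to the $H^1$-topology. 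This step is entirely routine.

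For finite dimensionality, since $\mathcal{K}_g(\Gamma)$ is a linear subspace of $\mathcal{K}(\Gamma)$ (both defining conditions are linear in $v$), it suffices to prove that $\mathcal{K}(\Gamma)$ itself is finite dimensional. This is the classical fact that the space of Killing vector fields on a compact Riemannian manifold is finite dimensional: a Killing vector field $v$ on an $n$-dimensional manifold is uniquely determined by the pair $(v(p),\nabla v(p))$ at a single point $p$, because the Killing equation forces $\nabla v(p)$ to be skew-symmetric (an element of $\mathfrak{so}(T_p\Gamma)$) and the identity $\nabla_Y\nabla_Z v = R(v,Y)Z$ turns the system into a linear second-order ODE along geodesics. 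This yields $\dim\mathcal{K}(\Gamma)\leq n+n(n-1)/2=n(n+1)/2$, and for the closed surface $\Gamma$ we obtain $\dim\mathcal{K}(\Gamma)\leq 3$. The cleanest execution is to cite this standard result (for example \cite{Pet16}, already referenced just above the lemma) rather than reproduce the ODE argument in the paper.

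The main (minor) obstacle is the first step's low regularity: elements of $\mathcal{K}(\Gamma)$ are only a priori $H^1$, so $\mathrm{div}_\Gamma(gv)$ must be interpreted weakly, and the identity $\mathrm{div}_\Gamma(gv)=g\,\mathrm{div}_\Gamma v+v\cdot\nabla_\Gamma g$ must be verified in that sense. Once this is handled (which needs only $g\in C^1(\Gamma)$, assumed throughout), the argument is very short. The finite-dimensionality part contains no real difficulty beyond invoking a standard theorem of Riemannian geometry.
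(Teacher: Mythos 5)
Your proposal is correct and follows essentially the same route as the paper: the inclusion via the product rule $\mathrm{div}_\Gamma(gv)=g\,\mathrm{div}_\Gamma v+v\cdot\nabla_\Gamma g$ combined with Lemma \ref{L:Kil_Sol} and the condition $v\cdot\nabla_\Gamma g=0$, and finite dimensionality by citing the classical fact (\cite[Theorem 8.1.6]{Pet16}) that $\mathcal{K}(\Gamma)$ has dimension at most three. Your additional care about interpreting the identity weakly for $H^1$ fields is a reasonable refinement the paper leaves implicit, but it does not change the argument.
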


\begin{proof}
  Let $v\in\mathcal{K}_g(\Gamma)$.
  Then, by Lemma \ref{L:Kil_Sol} and $\nabla_\Gamma g\cdot v=0$ on $\Gamma$, we have
  \begin{align*}
    \mathrm{div}_\Gamma(gv) = g\,\mathrm{div}_\Gamma v+\nabla_\Gamma g\cdot v = 0 \quad\text{on}\quad \Gamma, \quad\text{i.e.}\quad v\in H_{g\sigma}^1(\Gamma,T\Gamma).
  \end{align*}
  Thus $\mathcal{K}_g(\Gamma)\subset H_{g\sigma}^1(\Gamma,T\Gamma)$.
  It is known (see e.g. \cite[Theorem 8.1.6]{Pet16}) that $\mathcal{K}(\Gamma)$ is a Lie algebra of dimension at most three.
  Hence $\mathcal{K}_g(\Gamma)$ is also of finite dimension.
\end{proof}

Let $\gamma^0,\gamma^1\geq0$ be the friction coefficients appearing in \eqref{E:Lim_Eq}.
We set
\begin{align*}
  \mathcal{X}^{\perp g} = \{v\in L^2(\Gamma,T\Gamma) \mid \text{$(gv,w)_{L^2(\Gamma)}=0$ for all $w\in\mathcal{X}$}\}
\end{align*}
for a subset $\mathcal{X}$ of $L^2(\Gamma,T\Gamma)$ and define
\begin{align*}
  \mathbb{H}_g &=
  \begin{cases}
    L_{g\sigma}^2(\Gamma,T\Gamma) &\text{if $\gamma^0>0$ or $\gamma^1>0$ or $\mathcal{K}_g(\Gamma)=\{0\}$}, \\
    L_{g\sigma}^2(\Gamma,T\Gamma)\cap \mathcal{K}_g(\Gamma)^{\perp g} &\text{if $\gamma^0=\gamma^1=0$ and $\mathcal{K}_g(\Gamma)\neq\{0\}$},
  \end{cases} \\
  \mathbb{V}_g &= \mathbb{H}_g\cap H^1(\Gamma,T\Gamma),
\end{align*}
which are closed subspaces of $L_{g\sigma}^2(\Gamma,T\Gamma)$ and $H_{g\sigma}^1(\Gamma,T\Gamma)$, respectively.
It turns out (see Lemma \ref{L:BLS_BoCo}) that a biliear form appearing in a weak form of \eqref{E:Lim_Eq} is coercive on $\mathbb{V}_g$, which is crucial for the proof of Theorem \ref{T:Error} given below.

\subsection{Curved thin domain} \label{SS:CTD}
Now let $\Gamma$ be of class $C^5$ and $g_0,g_1\in C^4(\Gamma)$.
In what follows, we set $g=g_1-g_0$ and assume that there exists a constant $c>0$ such that
\begin{align} \label{E:G_Inf}
  g \geq c \quad\text{on}\quad \Gamma.
\end{align}
For a sufficiently small $\varepsilon>0$, we define a curved thin domain $\Omega_\varepsilon$ in $\mathbb{R}^3$ and its inner and outer boundaries $\Gamma_\varepsilon^0$ and $\Gamma_\varepsilon^1$ by \eqref{E:Def_CTD} and write $\Gamma_\varepsilon=\Gamma_\varepsilon^0\cup\Gamma_\varepsilon^1$ for the whole boundary of $\Omega_\varepsilon$.
Note that $\Gamma_\varepsilon$ is of class $C^4$ by the regularity of $\Gamma$, $g_0$, and $g_1$.
This fact is required for the proof of the uniform norm equivalence of the Stokes operator on $\Omega_\varepsilon$ given in \cite{Miu22_01} (see Lemma \ref{L:Ae_Equi} for the statement).
In this paper, we employ the result of \cite{Miu22_01}, but do not use the regularity of $\Gamma_\varepsilon$ explicitly.

Since $g_0$ and $g_1$ are bounded on $\Gamma$, we can take a sufficiently small $\tilde{\varepsilon}\in(0,1)$ so that $\varepsilon|g_i|<\delta$ on $\Gamma$ for all $\varepsilon\in(0,\tilde{\varepsilon})$ and $i=0,1$, where $\delta$ is the radius of the tubular neighborhood $N$ of $\Gamma$ given in Section \ref{SS:Surf}.
Hence $\overline{\Omega_\varepsilon}\subset N$ and we can use the results of Section \ref{SS:Surf} in $\overline{\Omega_\varepsilon}$ for all $\varepsilon\in(0,\tilde{\varepsilon})$.
From now on, we always assume $\varepsilon\in(0,\tilde{\varepsilon})$.

We define the solenoidal spaces on $\Omega_\varepsilon$ by
\begin{align} \label{E:Sol_CTD}
  \begin{aligned}
    L_\sigma^2(\Omega_\varepsilon) &= \{u\in L^2(\Omega_\varepsilon)^3\mid \text{$\mathrm{div}\,u=0$ in $\Omega_\varepsilon$, $u\cdot n_\varepsilon=0$ on $\Gamma_\varepsilon$}\}, \\
    H_{n,\sigma}^1(\Omega_\varepsilon) &= L_\sigma^2(\Omega_\varepsilon)\cap H^1(\Omega_\varepsilon)^3.
  \end{aligned}
\end{align}
Note that $H_{n,\sigma}^1(\Omega_\varepsilon)$ is different from $H_{0,\sigma}^1(\Omega_\varepsilon)$ used in the literature, which is the space of divergence-free vector fields in $H^1(\Omega_\varepsilon)^3$ totally vanishing on the boundary.

\subsection{Main results} \label{SS:Main}
To state the main results, we further require some assumptions and function spaces.
We define spaces of infinitesimal rigid displacements by
\begin{align} \label{E:Def_Rota}
  \begin{aligned}
    \mathcal{R} &= \{w(x)=a\times x+b, \, x\in\mathbb{R}^3 \mid a,b\in\mathbb{R}^3, \, \text{$w|_\Gamma\cdot n=0$ on $\Gamma$}\}, \\
    \mathcal{R}_i &= \{w\in\mathcal{R} \mid \text{$w|_\Gamma\cdot\nabla_\Gamma g_i=0$ on $\Gamma$}\}, \quad i=0,1, \\
    \mathcal{R}_g &= \{w\in\mathcal{R} \mid \text{$w|_\Gamma\cdot\nabla_\Gamma g=0$ on $\Gamma$}\} \quad (g=g_1-g_0).
  \end{aligned}
\end{align}
Note that $\mathcal{R}$ and thus $\mathcal{R}_0$, $\mathcal{R}_1$, and $\mathcal{R}_g$ are of finite dimension.
We write
\begin{align*}
  \mathcal{R}|_\Gamma = \{w|_\Gamma \mid w\in\mathcal{R}\}, \quad \mathcal{R}_g|_\Gamma = \{w|_\Gamma \mid w\in\mathcal{R}_g\}.
\end{align*}
Then we easily observe that $\mathcal{R}|_\Gamma\subset\mathcal{K}(\Gamma)$, where $\mathcal{K}(\Gamma)$ is given in \eqref{E:Def_Kilg}.
The spaces $\mathcal{K}(\Gamma)$ and $\mathcal{R}$ represent the intrinsic and extrinsic infinitesimal symmetry of $\Gamma$, respectively.
It is known that $\mathcal{R}|_\Gamma=\mathcal{K}(\Gamma)$ when
\begin{itemize}
  \item $\Gamma$ is axially symmetric (see e.g. \cite[Lemma E.3]{Miu22_01}),
  \item $\Gamma$ is a closed and convex (by the Cohn-Vossen rigidity theorem, see \cite{Spi79_05}),
  \item the genus of $\Gamma$ is greater than one, in which case $\mathcal{K}(\Gamma)=\{0\}$ (see e.g. \cite{Shi18}).
\end{itemize}
Let $\gamma_\varepsilon^0,\gamma_\varepsilon^1\geq0$ be the friction coefficients appearing in \eqref{E:Fric}.
We make assumptions on $\gamma_\varepsilon^0$, $\gamma_\varepsilon^1$, and the function spaces given in \eqref{E:Def_Kilg} and \eqref{E:Def_Rota} as follows.

\begin{assumption} \label{Asmp_1}
  There exists a constant $c>0$ such that
  \begin{align} \label{E:Fr_Up}
    \gamma_\varepsilon^0 \leq c\varepsilon, \quad \gamma_\varepsilon^1 \leq c\varepsilon
  \end{align}
  for all $\varepsilon\in(0,1)$.
\end{assumption}

\begin{assumption} \label{Asmp_2}
  Either of the following conditions is satisfied:
  \begin{itemize}
    \item[(A1)] there exists a constant $c>0$ such that
    \begin{align*}
      \gamma_\varepsilon^0 \geq c\varepsilon \quad\text{for all}\quad \varepsilon\in(0,1) \quad\text{or}\quad \gamma_\varepsilon^1 \geq c\varepsilon \quad\text{for all}\quad \varepsilon\in(0,1),
    \end{align*}
    \item[(A2)] $\mathcal{K}_g(\Gamma)=\{0\}$,
    \item[(A3)] $\mathcal{R}_g=\mathcal{R}_0\cap\mathcal{R}_1$, $\mathcal{R}_g|_\Gamma=\mathcal{K}_g(\Gamma)$, and $\gamma_\varepsilon^0=\gamma_\varepsilon^1=0$ for all $\varepsilon\in(0,1)$.
  \end{itemize}
\end{assumption}

We impose these assumptions in this section and Sections \ref{S:Bulk} and \ref{S:Diff_Est}.
They are required to make a bilinear form appearing in a weak form of \eqref{E:SNS_CTD} bounded and coercive uniformly in $\varepsilon$ on the function space $\mathcal{V}_\varepsilon$ given below.
See Lemma \ref{L:BL_Unif} for details.

Under Assumptions \ref{Asmp_1} and \ref{Asmp_2}, we define
\begin{align} \label{E:Def_Heps}
  \begin{aligned}
    \mathcal{H}_\varepsilon &=
    \begin{cases}
      L_\sigma^2(\Omega_\varepsilon) &\text{if the condition (A1) or (A2) is imposed}, \\
      L_\sigma^2(\Omega_\varepsilon)\cap\mathcal{R}_g^\perp &\text{if the condition (A3) is imposed},
  \end{cases} \\
  \mathcal{V}_\varepsilon &= \mathcal{H}_\varepsilon\cap H^1(\Omega_\varepsilon)^3.
    \end{aligned}
\end{align}
Here $\mathcal{R}_g^\perp$ is the orthogonal complement of $\mathcal{R}_g$ in $L^2(\Omega_\varepsilon)^3$.
Also, $\mathcal{H}_\varepsilon$ and $\mathcal{V}_\varepsilon$ are closed subspaces of $L^2(\Omega_\varepsilon)^3$ and $H^1(\Omega_\varepsilon)^3$, respectively.
We denote $\mathbb{P}_\varepsilon$ by the orthogonal projection from $L^2(\Omega_\varepsilon)^3$ onto $\mathcal{H}_\varepsilon$.

Now we are ready to state the main results.

\begin{theorem} \label{T:Error}
  Under Assumptions \ref{Asmp_1} and \ref{Asmp_2}, let $f^\varepsilon\in L^2(\Omega_\varepsilon)^3$ satisfy
  \begin{align} \label{E:Fth_Comp}
    f^\varepsilon \in \mathcal{R}_g^\perp \quad \text{if the condition (A3) is imposed}
  \end{align}
  and let $u^\varepsilon$ be a weak solution to \eqref{E:SNS_CTD}.
  Also, let $f\in H^{-1}(\Gamma,T\Gamma)$ satisfy
  \begin{align} \label{E:Fsu_Comp}
    [gf,w]_{T\Gamma} = 0 \quad\text{for all}\quad w\in\mathcal{K}_g(\Gamma) \quad\text{if $\gamma^0=\gamma^1=0$ and $\mathcal{K}_g(\Gamma)\neq\{0\}$}
  \end{align}
  and let $v$ be a weak solution to \eqref{E:Lim_Eq}.
  Suppose that
  \begin{itemize}
    \item[(a)] there exist constants $c_1,c_2>0$ and $\alpha\in(0,1]$ such that
    \begin{align*}
      \|\mathbb{P}_\varepsilon f^\varepsilon\|_{L^2(\Omega_\varepsilon)}^2 \leq c_1\varepsilon^{-1+\alpha}, \quad \|M_\tau\mathbb{P}_\varepsilon f^\varepsilon\|_{H^{-1}(\Gamma,T\Gamma)}^2 \leq c_2
    \end{align*}
    for all $\varepsilon\in(0,1)$ sufficiently small,
    \item[(b)] $u^\varepsilon\in\mathcal{V}_\varepsilon$ for all $\varepsilon\in(0,1)$ sufficiently small and $v\in\mathbb{V}_g$, and
    \item[(c)] $\gamma^0>0$ or $\gamma^1>0$ if the condition (A1) of Assumption \ref{Asmp_2} is imposed.
  \end{itemize}
  Then there exist constants $c,\rho>0$ independent of $\varepsilon$, $f^\varepsilon$, $u^\varepsilon$, $f$, and $v$ such that
  \begin{align} \label{E:Error}
    \|M_\tau u^\varepsilon-v\|_{H^1(\Gamma)} \leq c\Bigl(\delta(\varepsilon)+\|M_\tau\mathbb{P}_\varepsilon f^\varepsilon-f\|_{H^{-1}(\Gamma,T\Gamma)}\Bigr)
  \end{align}
  for all $\varepsilon\in(0,1)$ sufficiently small provided that $\|v\|_{H^1(\Gamma)}\leq\rho$, where
  \begin{align*}
    \delta(\varepsilon) = \varepsilon^{\alpha/4}+\sum_{i=0,1}\left|\frac{\gamma_\varepsilon^i}{\varepsilon}-\gamma^i\right|.
  \end{align*}
  In particular, if in addition $\gamma_\varepsilon^i/\varepsilon\to\gamma^i$ for $i=0,1$ and $M_\tau\mathbb{P}_\varepsilon f^\varepsilon\to f$ strongly in $H^{-1}(\Gamma,T\Gamma)$ as $\varepsilon\to0$, then $M_\tau u^\varepsilon\to v$ strongly in $H^1(\Gamma,T\Gamma)$ as $\varepsilon\to0$.
\end{theorem}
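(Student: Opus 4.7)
Following the strategy outlined in Section \ref{SS:In_Idea}, the main task is to produce an $H^1(\Gamma)$-estimate for $M_\tau u^\varepsilon-v$ by a Galerkin-type testing procedure in a suitable surface weak form. First, I would average a weak form of the bulk equations \eqref{E:SNS_CTD} in the thin direction to derive a weak form on $\Gamma$ satisfied by $M_\tau u^\varepsilon$; this weak form is close to the one for \eqref{E:Lim_Eq} but contains residual terms coming from the non-commutativity of averaging with the differential operators and from the Weingarten map. These residuals I would control via explicit bounds on $\|u^\varepsilon\|_{H^1(\Omega_\varepsilon)}$ and $\|u^\varepsilon\|_{H^2(\Omega_\varepsilon)}$ in powers of $\varepsilon$, obtained by testing the weak form of \eqref{E:SNS_CTD} with $u^\varepsilon$ and with $A_\varepsilon u^\varepsilon$ respectively, using the uniform coerciveness of $a_\varepsilon$ on $\mathcal{V}_\varepsilon$ furnished by Assumptions \ref{Asmp_1} and \ref{Asmp_2}, the uniform norm equivalence of the Stokes operator from \cite{Miu22_01}, and the force bounds in hypothesis (a).

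Once the surface weak form for $M_\tau u^\varepsilon$ is in hand, the next difficulty is that $M_\tau u^\varepsilon$ is not a legal test function for the weak form of \eqref{E:Lim_Eq}: it is neither $g$-solenoidal nor orthogonal to $\mathcal{K}_g(\Gamma)$ when that space is nontrivial. I would therefore correct it in two stages. First, I would solve an elliptic problem of the form $\mathrm{div}_\Gamma(g\nabla_\Gamma q^\varepsilon)=\mathrm{div}_\Gamma(gM_\tau u^\varepsilon)$ on $\Gamma$ and set $v_1^\varepsilon=M_\tau u^\varepsilon-\nabla_\Gamma q^\varepsilon\in H_{g\sigma}^1(\Gamma,T\Gamma)$; an estimate of $\mathrm{div}_\Gamma(gM_\tau u^\varepsilon)$ via the averaging identities yields $\|v_1^\varepsilon-M_\tau u^\varepsilon\|_{H^1(\Gamma)}\leq c\delta(\varepsilon)$. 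Second, if $\mathcal{K}_g(\Gamma)\neq\{0\}$ and $\gamma^0=\gamma^1=0$, I would subtract from $v_1^\varepsilon$ its projection onto the finite-dimensional subspace $\mathcal{K}_g(\Gamma)$ (with respect to the weighted inner product $(g\,\cdot,\cdot)_{L^2(\Gamma)}$) and call the result $v^\varepsilon\in\mathbb{V}_g$. That this second correction is also of order $\delta(\varepsilon)$ rests on condition (A3), which identifies $\mathcal{K}_g(\Gamma)$ with $\mathcal{R}_g|_\Gamma$: averaging the identity $(u^\varepsilon,w)_{L^2(\Omega_\varepsilon)}=0$ in the normal direction for $w\in\mathcal{R}_g$ gives smallness of $(M_\tau u^\varepsilon,w)_{L^2(\Gamma)}$, while the gradient $\nabla_\Gamma q^\varepsilon$ is invisible to tangential solenoidal test fields by integration by parts.

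With $v^\varepsilon,v\in\mathbb{V}_g$ both admissible, I would then subtract the two surface weak forms and test against $v^\varepsilon-v$. The diffusion term produces $2\nu(gD_\Gamma(v^\varepsilon-v),D_\Gamma(v^\varepsilon-v))_{L^2(\Gamma)}$, which by the weighted surface Korn inequality on $\mathbb{V}_g$ (Lemma \ref{L:BLS_BoCo}) bounds $\|v^\varepsilon-v\|_{H^1(\Gamma)}^2$ from below; the right-hand side collects the residuals of order $\delta(\varepsilon)$, the force defect $\|M_\tau\mathbb{P}_\varepsilon f^\varepsilon-f\|_{H^{-1}(\Gamma,T\Gamma)}$, and the critical trilinear term $b_g(v^\varepsilon-v,v,v^\varepsilon-v)$. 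The main obstacle is this last term: as in the uniqueness theory for stationary Navier--Stokes, no Gronwall step is available, and the H\"older--Ladyzhenskaya bound only yields $|b_g(v^\varepsilon-v,v,v^\varepsilon-v)|\leq c\|v\|_{H^1(\Gamma)}\|v^\varepsilon-v\|_{H^1(\Gamma)}^2$, which must be absorbed into the left-hand side. This is exactly where the smallness hypothesis $\|v\|_{H^1(\Gamma)}\leq\rho$ is used: picking $\rho$ small enough so that the coefficient in front of $\|v^\varepsilon-v\|_{H^1(\Gamma)}^2$ remains strictly positive, and combining with the triangle inequality $\|M_\tau u^\varepsilon-v\|_{H^1(\Gamma)}\leq\|M_\tau u^\varepsilon-v_1^\varepsilon\|_{H^1(\Gamma)}+\|v_1^\varepsilon-v^\varepsilon\|_{H^1(\Gamma)}+\|v^\varepsilon-v\|_{H^1(\Gamma)}$, delivers \eqref{E:Error}; the strong convergence assertion as $\varepsilon\to0$ follows immediately.
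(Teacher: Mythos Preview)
Your proposal is correct and follows essentially the same route as the paper: averaging the bulk weak form to obtain a surface weak form for $M_\tau u^\varepsilon$ with residuals controlled by the explicit $H^1$- and $H^2$-bounds on $u^\varepsilon$ (Lemmas \ref{L:NCW_H1}, \ref{L:NCW_H2}, \ref{L:Ave_Weak}), correcting $M_\tau u^\varepsilon$ first by $\nabla_\Gamma q^\varepsilon$ and then by the $\mathcal{K}_g(\Gamma)$-projection to land in $\mathbb{V}_g$ (Lemmas \ref{L:DgG_Surf}, \ref{L:AveT_Vg}, \ref{L:Weak_Rep}), and finally testing the difference of surface weak forms by $v^\varepsilon-v$ and absorbing the trilinear term under the smallness condition $\|v\|_{H^1(\Gamma)}\leq\rho$. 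The only minor sharpening in the paper is that the two correction steps are in fact of order $\varepsilon$ rather than $\delta(\varepsilon)$ (see \eqref{E:WR_Diff}), but since $\varepsilon\leq\varepsilon^{\alpha/4}\leq\delta(\varepsilon)$ this makes no difference to the final estimate.
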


The proof of Theorem \ref{T:Error} is given in Section \ref{SS:Pf_TErr}.
Note that \eqref{E:Fth_Comp} and \eqref{E:Fsu_Comp} are the compatibility conditions for the existence of weak solutions to \eqref{E:SNS_CTD} and \eqref{E:Lim_Eq}, respectively (see Propositions \ref{P:LW_Comp} and \ref{P:F_Comp}).
Also, the constant $\rho$ is explicitly given by other constants in the proof of Theorem \ref{T:Error}.
It turns out that the condition $\|v\|_{H^1(\Gamma)}\leq\rho$ implies the uniqueness of a weak solution to \eqref{E:Lim_Eq} in the class $\mathbb{V}_g$ (see Remark \ref{R:Pf_TErr}).

We also have a difference estimate in $\Omega_\varepsilon$.
Recall that $\bar{\eta}$ denotes the constant extension of a function $\eta$ on $\Gamma$ in the normal direction of $\Gamma$.
Also, for a function $\varphi$ on $\Omega_\varepsilon$, we write $\partial_n\varphi=(\bar{n}\cdot\nabla)\varphi$ for the derivative of $\varphi$ in the normal direction of $\Gamma$.

\begin{theorem} \label{T:Er_CE}
  Under the assumptions of Theorem \ref{T:Error}, we have
  \begin{multline} \label{E:Er_CE}
    \varepsilon^{-1/2}\left(\|u^\varepsilon-\bar{v}\|_{L^2(\Omega_\varepsilon)}+\Bigl\|\overline{P}\nabla u^\varepsilon-\overline{\nabla_\Gamma v}\Bigr\|_{L^2(\Omega_\varepsilon)}+\Bigl\|\partial_nu^\varepsilon-\overline{V}\Bigr\|_{L^2(\Omega_\varepsilon)}\right) \\
    \leq c\Bigl(\delta(\varepsilon)+\|M_\tau\mathbb{P}_\varepsilon f^\varepsilon-f\|_{H^{-1}(\Gamma,T\Gamma)}\Bigr)
  \end{multline}
  for all $\varepsilon\in(0,1)$ sufficiently small, where
  \begin{align*}
    V = -Wv+\frac{1}{g}(v\cdot\nabla_\Gamma g)n \quad\text{on}\quad \Gamma
  \end{align*}
  and $c>0$ is a constant independent of $\varepsilon$, $f^\varepsilon$, $u^\varepsilon$, $f$, and $v$.
\end{theorem}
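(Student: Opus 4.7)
The plan is to deduce the bulk estimate \eqref{E:Er_CE} from the surface estimate \eqref{E:Error} of Theorem \ref{T:Error} by inserting constant normal extensions of averaged surface quantities as intermediate terms, and controlling the bulk-to-average gap by the explicit $\varepsilon$-dependent $H^1$- and $H^2$-bounds on $u^\varepsilon$ established in Section \ref{S:Bulk}. For each of the three terms on the left of \eqref{E:Er_CE} I would insert the corresponding surface field and apply the triangle inequality,
\begin{align*}
u^\varepsilon-\bar v &= \bigl(u^\varepsilon-\overline{M_\tau u^\varepsilon}\bigr)+\overline{M_\tau u^\varepsilon-v},\\
\overline P\nabla u^\varepsilon-\overline{\nabla_\Gamma v} &= \bigl(\overline P\nabla u^\varepsilon-\overline{\nabla_\Gamma M_\tau u^\varepsilon}\bigr)+\overline{\nabla_\Gamma(M_\tau u^\varepsilon-v)},\\
\partial_n u^\varepsilon-\overline V &= \bigl(\partial_n u^\varepsilon-\overline{V^\varepsilon}\bigr)+\overline{V^\varepsilon-V},
\end{align*}
where $V^\varepsilon:=-W(M_\tau u^\varepsilon)+\tfrac{1}{g}(M_\tau u^\varepsilon\cdot\nabla_\Gamma g)n$ is the natural surface counterpart of $V$ associated to the averaged velocity.

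The second summand of each line is a surface-to-bulk transfer: the change of variables \eqref{E:Fermi} together with the Jacobian bound \eqref{E:PrCu_Bd} yields $\|\bar\eta\|_{L^2(\Omega_\varepsilon)}\leq c\varepsilon^{1/2}\|\eta\|_{L^2(\Gamma)}$, and analogous inequalities hold for the gradient quantities in terms of $H^1(\Gamma)$-norms. Because $V$ depends linearly on $v$ through the bounded maps $W$, multiplication by $\nabla_\Gamma g$, and the bounded factor $g^{-1}$ guaranteed by \eqref{E:G_Inf}, one obtains $\|V^\varepsilon-V\|_{L^2(\Gamma)}\leq c\|M_\tau u^\varepsilon-v\|_{H^1(\Gamma)}$. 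Multiplying by $\varepsilon^{-1/2}$ and invoking Theorem \ref{T:Error} bounds these surface-side differences by the right-hand side of \eqref{E:Er_CE}.

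For the first summand in each decomposition I would use a Poincaré-in-the-thin-direction argument of the type in Lemma \ref{L:Ave_Diff}, namely $\|u^\varepsilon-\overline{M_\tau u^\varepsilon}\|_{L^2(\Omega_\varepsilon)}\leq c\varepsilon\|\nabla u^\varepsilon\|_{L^2(\Omega_\varepsilon)}$, together with its gradient refinement involving $\|\nabla^2 u^\varepsilon\|_{L^2(\Omega_\varepsilon)}$. Combined with the explicit bounds of Lemmas \ref{L:NCW_H1}--\ref{L:NCW_H2} on $\|u^\varepsilon\|_{H^1(\Omega_\varepsilon)}$ and $\|u^\varepsilon\|_{H^2(\Omega_\varepsilon)}$ and with the external-force scaling $\|\mathbb P_\varepsilon f^\varepsilon\|_{L^2(\Omega_\varepsilon)}^2\leq c\varepsilon^{-1+\alpha}$, these bulk residuals contribute, after division by $\varepsilon^{1/2}$, an $O(\varepsilon^{\alpha/2})$ correction that is absorbed into $c\,\delta(\varepsilon)$.

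The delicate point, and the expected main obstacle, is the normal-derivative identification $\partial_n u^\varepsilon\approx\overline{V^\varepsilon}$ throughout $\Omega_\varepsilon$. Unlike the tangential estimates, this is not a pure Poincaré argument: the precise form of $V^\varepsilon$ must be extracted from the structure of \eqref{E:SNS_CTD}. The tangential Weingarten part $-W(M_\tau u^\varepsilon)$ arises by differentiating the near-identity $u^\varepsilon\cdot\bar n\approx0$ in the normal direction and using $\nabla_\Gamma\bar n|_\Gamma=-W$, while the normal-component correction $\tfrac1g(M_\tau u^\varepsilon\cdot\nabla_\Gamma g)n$ follows from $\mathrm{div}\,u^\varepsilon=0$ combined with the averaged divergence identity $\mathrm{div}_\Gamma(gM_\tau u^\varepsilon)\approx0$ already obtained in Section \ref{S:Bulk}. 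Both contributions come with residual errors of order $\varepsilon^{1/2}\|u^\varepsilon\|_{H^2(\Omega_\varepsilon)}\lesssim\varepsilon^{\alpha/2}$, which are again absorbed into $c\,\delta(\varepsilon)$ to finish the proof.
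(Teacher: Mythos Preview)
Your proposal is correct and follows essentially the same route as the paper: split each term into a bulk-to-average piece controlled by Lemma~\ref{L:Ave_Diff} (and its gradient refinement \eqref{E:AvDi_H1}) together with the bounds \eqref{E:AW_H1H2}, and a surface piece controlled by Theorem~\ref{T:Error} via \eqref{E:CE_L2CTD}; for the normal derivative the paper invokes Lemmas~\ref{L:Pdn_CTD} and~\ref{L:Ave_Div} (estimates \eqref{E:Pdn_CTD} and \eqref{E:ndn_CTD}), which give precisely your identification $\partial_n u^\varepsilon\approx\overline{V^\varepsilon}$ with residual $c\varepsilon\|u^\varepsilon\|_{H^2(\Omega_\varepsilon)}\leq c\varepsilon^{1/2+\alpha/2}$. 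One small correction to your heuristic: the tangential Weingarten contribution $\overline{P}\partial_n u^\varepsilon\approx-\overline{W}u^\varepsilon$ does not come from differentiating $u^\varepsilon\cdot\bar n\approx0$ in the normal direction (that yields only the normal component $\partial_n u^\varepsilon\cdot\bar n$, since $\partial_n\bar n=0$), but from the slip boundary condition $2\nu P_\varepsilon D(u^\varepsilon)n_\varepsilon+\gamma_\varepsilon u^\varepsilon=0$, which is exactly the hypothesis of Lemma~\ref{L:Pdn_CTD}.
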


The proof of Theorem \ref{T:Er_CE} is given in Section \ref{SS:Pf_ErCE}.
Note that the left-hand side of \eqref{E:Er_CE} is divided by $\varepsilon^{1/2}$ since the $L^2(\Omega_\varepsilon)$-norm involves the square root of the thickness of $\Omega_\varepsilon$.
Moreover, since
\begin{align*}
  \nabla u^\varepsilon = \overline{P}\nabla u^\varepsilon+(\bar{n}\otimes\bar{n})\nabla u^\varepsilon = \overline{P}\nabla u^\varepsilon+\bar{n}\otimes\partial_n u^\varepsilon \quad\text{in}\quad \Omega_\varepsilon,
\end{align*}
the estimate \eqref{E:Er_CE} gives the approximation of the whole $\nabla u^\varepsilon$ by functions on $\Gamma$.
We also note that the surface vector field $V$ in \eqref{E:Er_CE} does not vanish in general.
Hence $\partial_nu^\varepsilon$, the derivative of $u^\varepsilon$ in the normal direction of $\Gamma$, is not small as $\varepsilon\to0$ in general, even though $u^\varepsilon$ is close to the constant extension of $v$ in the normal direction of $\Gamma$.

\section{Fundamental tools} \label{S:Tool}
We give fundamental tools for analysis of functions on $\Gamma$ and $\Omega_\varepsilon$.
Recall that we denote by $c$ a general positive constant independent of $\varepsilon$ and by $\bar{\eta}$ the constant extension of a function $\eta$ on $\Gamma$ in the normal direction of $\Gamma$.

\subsection{Inequalities on the surface} \label{SS:In_Surf}
We give the Ladyzhenskaya and Korn inequalities on $\Gamma$.
Let $\mathcal{K}_g(\Gamma)$ be the function space given by \eqref{E:Def_Kilg}.

\begin{lemma} \label{L:Lad_Ineq}
  There exists a constant $c>0$ such that
  \begin{align} \label{E:Lad_Ineq}
    \|\eta\|_{L^4(\Gamma)} \leq c\|\eta\|_{L^2(\Gamma)}^{1/2}\|\eta\|_{H^1(\Gamma)}^{1/2}
  \end{align}
  for all $\eta\in H^1(\Gamma)$.
\end{lemma}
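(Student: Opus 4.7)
The plan is to reduce the claim to the classical two-dimensional Euclidean Ladyzhenskaya inequality via a partition of unity on the compact surface $\Gamma$. Since $\Gamma$ is a closed $C^5$ surface in $\mathbb{R}^3$, it is a compact $2$-dimensional Riemannian manifold, so the inequality is a standard Gagliardo--Nirenberg interpolation; I will sketch a direct proof to keep the exposition self-contained.

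First I would fix a finite atlas: cover $\Gamma$ by finitely many open sets $\{U_k\}_{k=1}^N$ such that each $U_k$ is the image of a $C^2$ parametrization $\psi_k\colon V_k\to U_k$ from an open set $V_k\subset\mathbb{R}^2$, with the first fundamental form bounded uniformly above and below. Then I would take a smooth partition of unity $\{\chi_k\}_{k=1}^N$ subordinate to $\{U_k\}$. For $\eta\in H^1(\Gamma)$ and each $k$, the pull-back $\eta_k:=(\chi_k\eta)\circ\psi_k$ has compact support in $V_k$ and belongs to $H^1(\mathbb{R}^2)$ after trivial extension by zero. The $C^{l-2}$-boundedness of the Weingarten map and of $\psi_k$ (with non-degenerate Jacobian) yields uniform equivalences
\begin{equation*}
  c^{-1}\|\eta_k\|_{L^p(\mathbb{R}^2)}\leq\|\chi_k\eta\|_{L^p(\Gamma)}\leq c\|\eta_k\|_{L^p(\mathbb{R}^2)},\quad p\in\{2,4\},
\end{equation*}
and similarly $\|\nabla\eta_k\|_{L^2(\mathbb{R}^2)}\leq c\|\chi_k\eta\|_{H^1(\Gamma)}$ (using that $\nabla_\Gamma\chi_k$ is bounded on $\Gamma$).

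Next I would apply the classical 2D Ladyzhenskaya inequality to each $\eta_k\in H^1(\mathbb{R}^2)$ of compact support:
\begin{equation*}
  \|\eta_k\|_{L^4(\mathbb{R}^2)}\leq c\|\eta_k\|_{L^2(\mathbb{R}^2)}^{1/2}\|\nabla\eta_k\|_{L^2(\mathbb{R}^2)}^{1/2}.
\end{equation*}
Transferring back via the norm equivalences and raising to the fourth power,
\begin{equation*}
  \|\chi_k\eta\|_{L^4(\Gamma)}^4\leq c\|\chi_k\eta\|_{L^2(\Gamma)}^2\|\chi_k\eta\|_{H^1(\Gamma)}^2\leq c\|\eta\|_{L^2(U_k)}^2\|\eta\|_{H^1(\Gamma)}^2,
\end{equation*}
where the last step uses $|\chi_k|\leq 1$ and $|\nabla_\Gamma\chi_k|\leq c$.

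Finally I would assemble the global bound. Since $\sum_k\chi_k\equiv1$ and the partition is finite, convexity of $t\mapsto t^4$ gives $|\eta|^4\leq N^3\sum_k|\chi_k\eta|^4$ pointwise, hence
\begin{equation*}
  \|\eta\|_{L^4(\Gamma)}^4\leq c\sum_{k=1}^N\|\chi_k\eta\|_{L^4(\Gamma)}^4\leq c\|\eta\|_{H^1(\Gamma)}^2\sum_{k=1}^N\|\eta\|_{L^2(U_k)}^2\leq c\|\eta\|_{L^2(\Gamma)}^2\|\eta\|_{H^1(\Gamma)}^2,
\end{equation*}
and taking the fourth root yields \eqref{E:Lad_Ineq}. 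There is no real obstacle here: the only subtlety is the summation step, and it is handled by finiteness of the cover plus the elementary pointwise bound $|\sum a_k|^4\lesssim\sum|a_k|^4$. If preferred, one can shortcut the argument by quoting the general Gagliardo--Nirenberg interpolation on compact Riemannian manifolds, but the partition-of-unity proof above is more elementary and fits the style of the paper.
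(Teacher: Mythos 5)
Your proof is correct. For this lemma the paper offers no argument of its own---its ``proof'' is simply a citation to \cite[Lemma 4.1]{Miu21_02}---so your partition-of-unity reduction to the planar Ladyzhenskaya inequality supplies a self-contained argument where the paper outsources one; the two are not in conflict, yours just fills in the standard localization that the cited lemma encapsulates. The steps all check out: the subordinate supports $\mathrm{supp}\,\chi_k$ are compact and contained in the open chart images, hence their preimages are compactly contained in the $V_k$, which is what gives the uniform two-sided Jacobian bounds behind your $L^p$ and $H^1$ norm equivalences; the commutator term $\eta\nabla_\Gamma\chi_k$ from differentiating the cutoff is absorbed into $\|\eta\|_{H^1(\Gamma)}$ exactly as you say; and the pointwise power-mean bound $\bigl|\sum_{k=1}^N a_k\bigr|^4\leq N^3\sum_{k=1}^N|a_k|^4$ together with $\sum_{k=1}^N\|\eta\|_{L^2(U_k)}^2\leq N\|\eta\|_{L^2(\Gamma)}^2$ closes the summation. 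Two small points deserve a sentence in a careful write-up, though neither is a gap. First, the paper defines $H^1(\Gamma)$ through weak tangential derivatives via \eqref{E:TD_IbP}, so you should note (or cite) the standard equivalence with the chart-based Sobolev space, which is what licenses the Leibniz rule $\underline{D}_i(\chi_k\eta)=\chi_k\underline{D}_i\eta+\eta\,\underline{D}_i\chi_k$ and the pull-back computation for merely $H^1$ functions (alternatively, prove the inequality for $\eta\in C^1(\Gamma)$ and conclude by density). Second, your estimate correctly carries the full inhomogeneous norm $\|\eta\|_{H^1(\Gamma)}$ rather than $\|\nabla_\Gamma\eta\|_{L^2(\Gamma)}$ alone---the homogeneous version is false on a closed surface, as $\eta\equiv1$ shows---and your route respects this automatically because the cutoff derivatives force the lower-order term; it is worth being aware that this is not a removable loss.
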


\begin{proof}
  We refer to \cite[Lemma 4.1]{Miu21_02} for the proof.
\end{proof}

\begin{lemma} \label{L:Korn_Surf}
  There exists a constant $c>0$ such that
  \begin{align} \label{E:Korn_Surf}
    \|v\|_{H^1(\Gamma)} \leq c\Bigl(\|D_\Gamma(v)\|_{L^2(\Gamma)}+\|v\|_{L^2(\Gamma)}\Bigr)
  \end{align}
  for all $v\in H^1(\Gamma,T\Gamma)$.
\end{lemma}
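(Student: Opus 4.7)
The plan is to reduce the proof to the classical Korn inequality in flat 2D domains via a partition-of-unity argument, after first isolating algebraically the only piece of $\nabla_\Gamma v$ that cannot be read off from $D_\Gamma(v)$ and $v$ directly.

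First I would establish the pointwise decomposition of $\nabla_\Gamma v$ for tangential $v$. From the computations in the proof of Lemma \ref{L:Kil_Sol} one has $(\nabla_\Gamma v)n=Wv$, and since $\underline{D}_i$ is a tangential derivative one also checks that $(\nabla_\Gamma v)^T n=0$. Combined with $P=I_3-n\otimes n$ this gives
\begin{align*}
  \nabla_\Gamma v = P(\nabla_\Gamma v)P+(Wv)\otimes n,
\end{align*}
and a further split of $P(\nabla_\Gamma v)P$ into symmetric and skew parts yields the orthogonal decomposition
\begin{align*}
  |\nabla_\Gamma v|^2 = |D_\Gamma(v)|^2+|A(v)|^2+|Wv|^2,\qquad A(v):=P(\nabla_\Gamma v)_A P.
\end{align*}
Because $W\in L^\infty(\Gamma)^{3\times3}$ by the $C^5$-regularity of $\Gamma$, the term $\|Wv\|_{L^2(\Gamma)}$ is absorbed into $\|v\|_{L^2(\Gamma)}$, so the remaining task is exactly
\begin{align*}
  \|A(v)\|_{L^2(\Gamma)} \leq c\bigl(\|D_\Gamma(v)\|_{L^2(\Gamma)}+\|v\|_{L^2(\Gamma)}\bigr).
\end{align*}

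Next I would cover $\Gamma$ by finitely many $C^5$ coordinate charts $\psi_k\colon V_k\subset\mathbb{R}^2\to U_k\subset\Gamma$ with a subordinate partition of unity $\{\chi_k\}$. In each chart, $v$ pulls back to a two-component field $\tilde v^{(k)}$ on $V_k$; the intrinsic tangential gradient $\nabla_\Gamma v$, expressed in the local frame, equals the Euclidean gradient of $\tilde v^{(k)}$ modulo zeroth-order terms involving $\partial\psi_k$, $\partial^2\psi_k$, and the Weingarten map (all bounded on $\overline V_k$). Hence the skew and symmetric parts of these two gradients differ only by pointwise multiples of $|v|$. Applying the classical flat Korn second inequality on $V_k$ to $\chi_k \tilde v^{(k)}$ bounds the skew part of its Euclidean gradient by the symmetric part plus its $L^2$ norm, and summing over $k$ gives a global bound for $\|A(v)\|_{L^2(\Gamma)}$ in terms of $\|D_\Gamma(v)\|_{L^2(\Gamma)}$ and $\|v\|_{L^2(\Gamma)}$. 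Together with the decomposition above, this proves \eqref{E:Korn_Surf}.

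The main obstacle is the bookkeeping in the chart-transfer step: $D_\Gamma(v)$ is defined extrinsically by $P(\nabla_\Gamma v)_S P$, while the flat Korn inequality controls the symmetric part of the Euclidean coordinate gradient, and the discrepancy between these two symmetric tensors involves Christoffel symbols of $\psi_k$ and components of the second fundamental form. I would need to check that every such discrepancy is a zeroth-order multiplier on $v$ with an $L^\infty$ bound independent of the chart, so that all error contributions can be absorbed into $c\|v\|_{L^2(\Gamma)}$ and do not re-enter the derivative side of the estimate.
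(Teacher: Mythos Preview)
Your approach is correct and aligns with the local-coordinate proof the paper cites from \cite{JaOlRe18}; the paper itself does not give a self-contained argument but refers to that source (and to a coordinate-free alternative in \cite{Miu20_03} requiring $C^3$ regularity). Your preliminary algebraic decomposition $|\nabla_\Gamma v|^2=|D_\Gamma(v)|^2+|A(v)|^2+|Wv|^2$ is a clean way to isolate the one piece that genuinely needs the flat Korn inequality, and your identification of the chart-transfer errors as zeroth-order Christoffel and second-fundamental-form terms is exactly the content of the bookkeeping in the cited reference.
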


\begin{proof}
  The inequality \eqref{E:Korn_Surf} is shown in the proof of \cite[Lemma 4.1]{JaOlRe18} (see (4.8) in \cite{JaOlRe18}) under the $C^2$-regularity of $\Gamma$, which uses local coordinates of $\Gamma$.
  Note that the order of the indices of the (tangential) gradient matrices is reversed in \cite{JaOlRe18} and that $\nabla_\Gamma v$, $(\nabla_\Gamma v)_S$, and $D_\Gamma(v)$ in this paper are denoted by $\nabla_P\mathbf{u}$, $\mathbf{e}_s(\mathbf{u})$, and $E_s(\mathbf{u})$ in \cite{JaOlRe18}.
  We also refer to \cite[Lemma 4.2]{Miu20_03} for the proof of \eqref{E:Korn_Surf} without using local coordinates of $\Gamma$, but it requires the $C^3$-regularity of $\Gamma$ since it uses a density argument for which the $C^2$-regularity of $P$ is necessary.
\end{proof}

\begin{lemma} \label{L:Korn_G}
  There exists a constant $c>0$ such that
  \begin{align} \label{E:Korn_G}
    \|v\|_{H^1(\Gamma)} \leq c\Bigl(\|D_\Gamma(v)\|_{L^2(\Gamma)}+\|v\cdot\nabla_\Gamma g\|_{L^2(\Gamma)}\Bigr)
  \end{align}
  for all $v\in H^1(\Gamma,T\Gamma)$ satisfying
  \begin{align} \label{E:KoG_Per}
    (gv,w)_{L^2(\Gamma)} = 0 \quad\text{for all}\quad w\in\mathcal{K}_g(\Gamma).
  \end{align}
\end{lemma}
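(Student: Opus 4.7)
The plan is to prove \eqref{E:Korn_G} by a standard contradiction and compactness argument, using the unweighted Korn inequality of Lemma \ref{L:Korn_Surf} as a starting point and the finite-dimensionality of $\mathcal{K}_g(\Gamma)$ (Lemma \ref{L:Kilg_Sub}) to conclude.

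Suppose \eqref{E:Korn_G} were false. Then there would exist a sequence $\{v_n\} \subset H^1(\Gamma,T\Gamma)$ satisfying \eqref{E:KoG_Per} with $\|v_n\|_{H^1(\Gamma)} = 1$ but
\[
\|D_\Gamma(v_n)\|_{L^2(\Gamma)} + \|v_n\cdot\nabla_\Gamma g\|_{L^2(\Gamma)} \longrightarrow 0.
\]
By the Rellich--Kondrachov theorem applied componentwise on $\Gamma$, a subsequence (still denoted $\{v_n\}$) converges weakly in $H^1(\Gamma)^3$ and strongly in $L^2(\Gamma)^3$ to some $v_\infty$. The condition $v_n\cdot n=0$ passes to the strong $L^2$-limit, so $v_\infty\in H^1(\Gamma,T\Gamma)$.

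Next I would identify $v_\infty$ as an element of $\mathcal{K}_g(\Gamma)$. Since $D_\Gamma:H^1(\Gamma)^3\to L^2(\Gamma)^{3\times 3}$ is bounded and linear, weak $H^1$-convergence yields weak $L^2$-convergence of $D_\Gamma(v_n)$ to $D_\Gamma(v_\infty)$; weak lower semicontinuity of the norm combined with $\|D_\Gamma(v_n)\|_{L^2(\Gamma)}\to 0$ forces $D_\Gamma(v_\infty)=0$ on $\Gamma$, so $v_\infty\in\mathcal{K}(\Gamma)$. Since $\nabla_\Gamma g\in C^3(\Gamma)^3$ is bounded, strong $L^2$-convergence of $v_n$ gives $v_n\cdot\nabla_\Gamma g\to v_\infty\cdot\nabla_\Gamma g$ in $L^2(\Gamma)$, and the limit must be zero, so $v_\infty\cdot\nabla_\Gamma g=0$ on $\Gamma$. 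Hence $v_\infty\in\mathcal{K}_g(\Gamma)$. Passing to the limit in \eqref{E:KoG_Per} using strong $L^2$-convergence gives $(gv_\infty,w)_{L^2(\Gamma)}=0$ for all $w\in\mathcal{K}_g(\Gamma)$; choosing $w=v_\infty$ and using $g\geq c>0$ from \eqref{E:G_Inf} yields $v_\infty=0$.

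Finally, I would produce the contradiction via Lemma \ref{L:Korn_Surf}:
\[
1 = \|v_n\|_{H^1(\Gamma)} \leq c\Bigl(\|D_\Gamma(v_n)\|_{L^2(\Gamma)}+\|v_n\|_{L^2(\Gamma)}\Bigr),
\]
where the first term on the right tends to zero by assumption and the second tends to zero because $v_n\to v_\infty=0$ strongly in $L^2(\Gamma)^3$. This forces $1\leq 0$, contradiction.

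The only nontrivial point is ensuring all three properties of the limit (tangentiality, $D_\Gamma(v_\infty)=0$, $v_\infty\cdot\nabla_\Gamma g=0$) are captured simultaneously so that $v_\infty\in\mathcal{K}_g(\Gamma)$ and is admissible as a test function in \eqref{E:KoG_Per}. This is straightforward since each of these is continuous under weak $H^1$ or strong $L^2$ convergence; no further compactness input beyond Rellich is needed, and the argument relies crucially on the finite dimensionality of $\mathcal{K}_g(\Gamma)$ being already established in Lemma \ref{L:Kilg_Sub} to guarantee that $v_\infty$ indeed lies in the class over which orthogonality is imposed.
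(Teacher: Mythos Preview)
Your proof is correct and follows essentially the same contradiction--compactness argument as the paper: both use Lemma~\ref{L:Korn_Surf}, Rellich compactness, identification of the limit as an element of $\mathcal{K}_g(\Gamma)$, and the orthogonality condition \eqref{E:KoG_Per} together with $g\geq c>0$ to force the limit to vanish. The only cosmetic difference is that the paper normalizes $\|v_k\|_{L^2(\Gamma)}=1$ and invokes \eqref{E:Korn_Surf} to obtain $H^1$-boundedness, whereas you normalize $\|v_n\|_{H^1(\Gamma)}=1$ and invoke \eqref{E:Korn_Surf} at the end to reach the contradiction; these are equivalent rearrangements. One minor remark: the finite dimensionality of $\mathcal{K}_g(\Gamma)$ from Lemma~\ref{L:Kilg_Sub} is not actually needed here---you only use that $v_\infty\in\mathcal{K}_g(\Gamma)$ so it can serve as a test function in \eqref{E:KoG_Per}.
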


\begin{proof}
  By \eqref{E:Korn_Surf}, it is sufficient for \eqref{E:Korn_G} to show that
  \begin{align} \label{Pf_KG:Goal}
    \|v\|_{L^2(\Gamma)} \leq c\Bigl(\|D_\Gamma(v)\|_{L^2(\Gamma)}+\|v\cdot\nabla_\Gamma g\|_{L^2(\Gamma)}\Bigr)
  \end{align}
  for all $v\in H^1(\Gamma,T\Gamma)$ satisfying \eqref{E:KoG_Per}.
  Assume to the contrary that there exist vector fields $v_k\in H^1(\Gamma,T\Gamma)$, $k\in\mathbb{N}$ satisfying \eqref{E:KoG_Per} and
  \begin{align} \label{Pf_KG:Cont}
    \|v_k\|_{L^2(\Gamma)} > k\Bigl(\|D_\Gamma(v_k)\|_{L^2(\Gamma)}+\|v_k\cdot\nabla_\Gamma g\|_{L^2(\Gamma)}\Bigr).
  \end{align}
  Since $\|v_k\|_{L^2(\Gamma)}\neq0$, we may assume $\|v_k\|_{L^2(\Gamma)}=1$ by replacing $v_k$ by $v_k/\|v_k\|_{L^2(\Gamma)}$.
  Then it follows from \eqref{Pf_KG:Cont} that
  \begin{align} \label{Pf_KG:Co_Con}
    \lim_{k\to\infty}\|D_\Gamma(v_k)\|_{L^2(\Gamma)} = \lim_{k\to\infty}\|v_k\cdot\nabla_\Gamma g\|_{L^2(\Gamma)} = 0.
  \end{align}
  Moreover, by \eqref{E:Korn_Surf}, \eqref{Pf_KG:Cont}, and $\|v_k\|_{L^2(\Gamma)}=1$, we see that $\{v_k\}_{k=1}^\infty$ is bounded in $H^1(\Gamma,T\Gamma)$.
  By this fact and the compact embedding from $H^1$ into $L^2$, the sequence $\{v_k\}_{k=1}^\infty$ converges (up to a subsequence) to some $v$ weakly in $H^1(\Gamma,T\Gamma)$ and strongly in $L^2(\Gamma,T\Gamma)$.
  Thus
  \begin{align*}
    \|v\|_{L^2(\Gamma)} = \lim_{k\to\infty}\|v_k\|_{L^2(\Gamma)} = 1.
  \end{align*}
  However, since $v\in\mathcal{K}_g(\Gamma)$ by \eqref{Pf_KG:Co_Con} and the weak convergence of $\{v_k\}_{k=1}^\infty$ to $v$ in $H^1(\Gamma,T\Gamma)$, and since $v_k$ satisfies \eqref{E:KoG_Per} for all $k\in\mathbb{N}$, we have
  \begin{align*}
    \|g^{1/2}v\|_{L^2(\Gamma)}^2 = (gv,v)_{L^2(\Gamma)} = \lim_{k\to\infty}(gv_k,v)_{L^2(\Gamma)} = 0.
  \end{align*}
  Thus $v=0$ on $\Gamma$ by \eqref{E:G_Inf}, which contradicts $\|v\|_{L^2(\Gamma)}=1$.
  Hence \eqref{Pf_KG:Goal} is valid.
\end{proof}

\begin{remark} \label{R:Korn_G}
  The condition \eqref{E:KoG_Per} can be seen as an orthogonality condition with respect to the weighted inner product $(g\,\cdot,\cdot)_{L^2(\Gamma)}$.
  We can also show that \eqref{E:Korn_G} holds provided that $(v,w)_{L^2(\Gamma)}=0$ for all $w\in\mathcal{K}_g(\Gamma)$ instead of \eqref{E:KoG_Per}, but the condition \eqref{E:KoG_Per} is appropriate for our purpose (see Section \ref{S:Diff_Est}).
  When $g$ is constant and thus $\nabla_\Gamma g=0$ on $\Gamma$, Lemma \ref{L:Korn_G} gives a standard Korn inequality on the surface $\Gamma$ (see \cite[Lemma 4.1]{JaOlRe18}).
\end{remark}

\subsection{Inequalities on the thin domain} \label{SS:In_CTD}
For $y\in\Gamma$ and $r\in(-\delta,\delta)$, let
\begin{align} \label{E:Def_Ja}
  J(y,r) = \det[I_3-rW(y)] = \{1-r\kappa_1(y)\}\{1-r\kappa_2(y)\}.
\end{align}
This $J$ is the Jacobian in the change of variables formula (see e.g. \cite[Section 14.6]{GilTru01})
\begin{align} \label{E:CoV_CTD}
  \int_{\Omega_\varepsilon}\varphi(x)\,dx = \int_{\Gamma}\int_{\varepsilon g_0(y)}^{\varepsilon g_1(y)}\varphi(y+rn(y))J(y,r)\,dr\,d\mathcal{H}^2(y)
\end{align}
for a function $\varphi$ on $\Omega_\varepsilon$.
Moreover, we have
\begin{align} \label{E:Jacob}
  c^{-1} \leq J(y,r) \leq c, \quad |J(y,r)-1| \leq c|r| \leq c\varepsilon
\end{align}
for all $y\in\Gamma$ and $r\in[\varepsilon g_0(y),\varepsilon g_1(y)]$, since \eqref{E:PrCu_Bd} holds and $g_0$, $g_1$, $\kappa_1$, and $\kappa_2$ are bounded on $\Gamma$.
Using these facts and \eqref{E:G_Inf}, we easily get the following inequalities.

\begin{lemma} \label{L:CoV_Thin}
  For all $\varphi\in L^2(\Omega_\varepsilon)$ we have
  \begin{align} \label{E:CoV_L2}
    c^{-1}\|\varphi\|_{L^2(\Omega_\varepsilon)}^2 \leq \int_{\Gamma}\int_{\varepsilon g_0(y)}^{\varepsilon g_1(y)}|\varphi(y+rn(y))|^2\,dr\,d\mathcal{H}^2(y) \leq c\|\varphi\|_{L^2(\Omega_\varepsilon)}^2.
  \end{align}
  Also, for all $\eta\in L^2(\Gamma)$ and its constant extension $\bar{\eta}$ in the normal direction of $\Gamma$,
  \begin{align} \label{E:CE_L2CTD}
    c^{-1}\varepsilon^{1/2}\|\eta\|_{L^2(\Gamma)} \leq \|\bar{\eta}\|_{L^2(\Omega_\varepsilon)} \leq c\varepsilon^{1/2}\|\eta\|_{L^2(\Gamma)}.
  \end{align}
\end{lemma}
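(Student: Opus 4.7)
The plan is to derive both inequalities directly from the change of variables formula \eqref{E:CoV_CTD} together with the uniform Jacobian bounds \eqref{E:Jacob}. No fine analytic input is needed; the only subtlety is making sure constants are kept independent of $\varepsilon$.

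For \eqref{E:CoV_L2}, I would apply \eqref{E:CoV_CTD} with $\varphi$ replaced by $|\varphi|^2$, obtaining
\begin{align*}
  \|\varphi\|_{L^2(\Omega_\varepsilon)}^2 = \int_\Gamma\int_{\varepsilon g_0(y)}^{\varepsilon g_1(y)}|\varphi(y+rn(y))|^2 J(y,r)\,dr\,d\mathcal{H}^2(y).
\end{align*}
Since the range of $r$ lies in $[\varepsilon g_0(y),\varepsilon g_1(y)]$, the bound $c^{-1}\leq J(y,r)\leq c$ from \eqref{E:Jacob} applies pointwise and sandwiches the integrand, which immediately gives both inequalities of \eqref{E:CoV_L2} with $\varepsilon$-independent constants.

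For \eqref{E:CE_L2CTD}, I would use the defining property of the constant normal extension, $\bar{\eta}(y+rn(y))=\eta(y)$ for $y\in\Gamma$ and $r\in(-\delta,\delta)$, and apply the already-established \eqref{E:CoV_L2} to $\varphi=\bar{\eta}$. The inner integral collapses to
\begin{align*}
  \int_{\varepsilon g_0(y)}^{\varepsilon g_1(y)}|\bar{\eta}(y+rn(y))|^2\,dr = |\eta(y)|^2\,\varepsilon g(y),
\end{align*}
where $g=g_1-g_0$. By the lower bound \eqref{E:G_Inf} and the boundedness of $g_0,g_1$ on the compact surface $\Gamma$, there is a constant $c>0$ with $c^{-1}\leq g(y)\leq c$ on $\Gamma$. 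Integrating over $\Gamma$ and combining with \eqref{E:CoV_L2} then yields \eqref{E:CE_L2CTD}.

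There is no real obstacle here; the lemma is essentially bookkeeping for the tubular-neighborhood change of variables. The only thing to watch is that all constants (from the Jacobian bounds and from the bounds on $g$) are controlled uniformly in $\varepsilon\in(0,\tilde{\varepsilon})$, which is exactly what \eqref{E:PrCu_Bd}, \eqref{E:Jacob}, and \eqref{E:G_Inf} provide.
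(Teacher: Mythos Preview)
Your proposal is correct and matches the paper's approach exactly: the paper simply states that the inequalities follow from the change of variables formula \eqref{E:CoV_CTD}, the Jacobian bounds \eqref{E:Jacob}, and \eqref{E:G_Inf}, which is precisely what you spell out.
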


Next we give the Sobolev inequalities on $\Omega_\varepsilon$ with constants explicitly depending on $\varepsilon$.
For a function $\varphi$ on $\Omega_\varepsilon$, we write $\partial_n\varphi=(\bar{n}\cdot\nabla)\varphi$.
Note that $\partial_n\bar{\eta}=0$ in $\Omega_\varepsilon$ for the constant extension $\bar{\eta}$ of a function $\eta$ on $\Gamma$.

\begin{lemma} \label{L:Sob_CTD}
  Let $\varphi\in H^1(\Omega_\varepsilon)$.
  Then
  \begin{align} \label{E:Sob_L6}
    \|\varphi\|_{L^6(\Omega_\varepsilon)} \leq c\varepsilon^{-1/3}\|\varphi\|_{H^1(\Omega_\varepsilon)}^{2/3}\Bigl(\|\varphi\|_{L^2(\Omega_\varepsilon)}+\varepsilon\|\partial_n\varphi\|_{L^2(\Omega_\varepsilon)}\Bigr)^{1/3}.
  \end{align}
  Moreover, for all $p\in[2,6]$ we have
  \begin{align} \label{E:Sob_Lp}
    \|\varphi\|_{L^p(\Omega_\varepsilon)} \leq c_p\varepsilon^{-(p-2)/2p}\|\varphi\|_{H^1(\Omega_\varepsilon)},
  \end{align}
  where $c_p>0$ is a constant depending on $p$ but independent of $\varepsilon$.
\end{lemma}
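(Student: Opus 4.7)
The plan is to establish \eqref{E:Sob_L6} by splitting $\varphi$ into its normal average and a zero-fiber-mean residual, handling each piece with the appropriate 2D or 3D Sobolev ingredient; the second inequality \eqref{E:Sob_Lp} will then follow from \eqref{E:Sob_L6} and a log-convexity interpolation in $p$.

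First I would introduce the normal average $M\varphi(y) = (\varepsilon g(y))^{-1}\int_{\varepsilon g_0(y)}^{\varepsilon g_1(y)}\varphi(y+rn(y))\,dr$ and decompose $\varphi = \overline{M\varphi} + \psi$, so that $\psi$ has zero average on every normal fiber and $\partial_n\psi = \partial_n\varphi$. The change-of-variables formula \eqref{E:CoV_CTD} together with the Jacobian bounds \eqref{E:Jacob} yield the bookkeeping equivalences $\|\bar{\eta}\|_{L^p(\Omega_\varepsilon)}^p \asymp \varepsilon\|\eta\|_{L^p(\Gamma)}^p$ and, after commuting tangential derivatives through the averaging operator (picking up controlled curvature terms via \eqref{E:PrCu_Bd}), the bounds $\|M\varphi\|_{L^2(\Gamma)}^2 \leq c\varepsilon^{-1}\|\varphi\|_{L^2(\Omega_\varepsilon)}^2$ and $\|M\varphi\|_{H^1(\Gamma)}^2 \leq c\varepsilon^{-1}\|\varphi\|_{H^1(\Omega_\varepsilon)}^2$. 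A 1D Poincar\'e inequality on each normal segment also gives $\|\psi\|_{L^2(\Omega_\varepsilon)} \leq c\varepsilon\|\partial_n\varphi\|_{L^2(\Omega_\varepsilon)}$.

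For $\overline{M\varphi}$ I would invoke the 2D Gagliardo--Nirenberg inequality on $\Gamma$, $\|\eta\|_{L^6(\Gamma)}^3 \leq c\|\eta\|_{L^2(\Gamma)}\|\eta\|_{H^1(\Gamma)}^2$ (provable by a partition of unity reducing to the flat 2D case), applied to $\eta = M\varphi$; converting the $\Gamma$-norms back to $\Omega_\varepsilon$-norms via the equivalences above produces $\|\overline{M\varphi}\|_{L^6(\Omega_\varepsilon)} \leq c\varepsilon^{-1/3}\|\varphi\|_{L^2(\Omega_\varepsilon)}^{1/3}\|\varphi\|_{H^1(\Omega_\varepsilon)}^{2/3}$. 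For the residual $\psi$ I would rescale to the fixed slab $\tilde{\Omega} = \{(y,s) : g_0(y)<s<g_1(y)\}$ via $\tilde{\psi}(y,s) = \psi(y+\varepsilon s n(y))$, under which $\|\partial_s\tilde{\psi}\|_{L^2(\tilde{\Omega})}^2 \asymp \varepsilon\|\partial_n\varphi\|_{L^2(\Omega_\varepsilon)}^2$ and $\|\nabla_y\tilde{\psi}\|_{L^2(\tilde{\Omega})}^2 \leq c\varepsilon^{-1}\|\nabla\varphi\|_{L^2(\Omega_\varepsilon)}^2$, and apply an anisotropic Sobolev inequality of the form $\|\tilde{\psi}\|_{L^6(\tilde{\Omega})}^3 \leq c\|\nabla_y\tilde{\psi}\|_{L^2}^2\|\partial_s\tilde{\psi}\|_{L^2}$ for $\tilde{\psi}$ with zero fiber mean. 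This last estimate follows from an extension and localization argument reducing to the standard anisotropic Nirenberg inequality $\|u\|_{L^6(\mathbb{R}^3)}^3 \leq c\|\partial_1u\|_{L^2}\|\partial_2u\|_{L^2}\|\partial_3u\|_{L^2}$, with $\partial_1,\partial_2$ interpreted as tangential coordinate derivatives and $\partial_3$ as $\partial_s$ in each local chart. Undoing the rescaling produces $\|\psi\|_{L^6(\Omega_\varepsilon)} \leq c\varepsilon^{-1/3}\|\varphi\|_{H^1(\Omega_\varepsilon)}^{2/3}(\varepsilon\|\partial_n\varphi\|_{L^2(\Omega_\varepsilon)})^{1/3}$, and combining with the bound on $\overline{M\varphi}$ via $a^{1/3}+b^{1/3} \leq c(a+b)^{1/3}$ yields \eqref{E:Sob_L6}.

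The second inequality \eqref{E:Sob_Lp} follows immediately: the log-convexity of $L^p$-norms gives $\|\varphi\|_{L^p(\Omega_\varepsilon)} \leq \|\varphi\|_{L^2(\Omega_\varepsilon)}^{(6-p)/(2p)}\|\varphi\|_{L^6(\Omega_\varepsilon)}^{3(p-2)/(2p)}$ for $p\in[2,6]$, and plugging in the weaker consequence $\|\varphi\|_{L^6} \leq c\varepsilon^{-1/3}\|\varphi\|_{H^1}$ of \eqref{E:Sob_L6} together with $\|\varphi\|_{L^2}\leq\|\varphi\|_{H^1}$ reproduces the claimed exponent $-(p-2)/(2p)$. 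The main technical obstacle will be rigorously justifying the anisotropic Sobolev estimate on $\psi$ with only $H^1$-regularity available; this requires carefully exploiting the zero-fiber-mean condition so that the 1D Poincar\'e constant on each normal segment stays independent of $\varepsilon$ after rescaling, together with a density argument to transfer the $\mathbb{R}^3$ anisotropic Nirenberg inequality to the curved slab. The remaining steps are standard bookkeeping of $\varepsilon$-powers and curvature factors through the change-of-variables formulas.
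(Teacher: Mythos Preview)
Your approach is correct but genuinely different from the paper's. The paper does not split $\varphi$ into its normal average and a zero-mean residual; instead it proceeds directly by a partition of unity on $\Gamma$, pulls each localized piece $\varphi_k=\bar{\eta}_k\varphi$ back to the unit cube $Q=(0,1)^3$ via a chart $\zeta$ that stretches the normal direction by a factor $1/\varepsilon$, and applies the anisotropic Sobolev inequality $\|\Phi\|_{L^6(Q)}\leq c\prod_{i=1}^3(\|\Phi\|_{L^2(Q)}+\|\partial_i\Phi\|_{L^2(Q)})^{1/3}$ to $\Phi=\varphi_k\circ\zeta$ in one shot. The key bookkeeping is that $\partial_{s_3}\Phi=\varepsilon g\,\partial_n\varphi$ contributes the factor $\varepsilon^{1/2}\|\partial_n\varphi\|_{L^2}$ (plus the $\|\varphi\|_{L^2}$ term from the product form), while the tangential factors and the Jacobian $\det\nabla_s\zeta\asymp\varepsilon$ produce the remaining powers. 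Your decomposition into $\overline{M\varphi}+\psi$ is more conceptual---it isolates the genuinely 2D part and the oscillatory part---but requires assembling two separate ingredients (the 2D Gagliardo--Nirenberg inequality on $\Gamma$ and an anisotropic estimate tailored to zero-fiber-mean functions), whereas the paper's single application of the cube inequality handles both regimes simultaneously. One small imprecision: the inequality $\|\tilde\psi\|_{L^6}^3\leq c\|\nabla_y\tilde\psi\|_{L^2}^2\|\partial_s\tilde\psi\|_{L^2}$ as stated is slightly too strong; what actually follows from the anisotropic Nirenberg inequality plus the fiberwise Poincar\'e is the version with $\|\nabla_y\tilde\psi\|_{L^2}$ replaced by $\|\nabla_y\tilde\psi\|_{L^2}+\|\partial_s\tilde\psi\|_{L^2}$, but since $\|\partial_s\tilde\psi\|_{L^2}\lesssim\varepsilon^{1/2}\|\varphi\|_{H^1}\leq\varepsilon^{-1/2}\|\varphi\|_{H^1}$ this is harmless for the final bound. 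The interpolation argument for \eqref{E:Sob_Lp} is identical in both approaches.
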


The proof of Lemma \ref{L:Sob_CTD} is given in Section \ref{S:Pf_Sob}.

We also use the next result on the duality product of bulk and surface vector fields.

\begin{lemma}
  For all $v\in L^2(\Gamma,T\Gamma)$ and $u\in H^1(\Omega_\varepsilon)^3$ we have
  \begin{align} \label{E:Dual_CTD}
    \Bigl|(\bar{v},u)_{L^2(\Omega_\varepsilon)}\Bigr| \leq c\varepsilon^{1/2}\|v\|_{H^{-1}(\Gamma,T\Gamma)}\|u\|_{H^1(\Omega_\varepsilon)}.
  \end{align}
\end{lemma}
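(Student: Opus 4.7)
The plan is to reinterpret $(\bar v,u)_{L^2(\Omega_\varepsilon)}$ as a duality pairing on $\Gamma$ and then measure the resulting tangential field in $H^1(\Gamma,T\Gamma)$. By density it suffices to treat $u\in C^1(\overline{\Omega_\varepsilon})^3$ and $v\in C^1(\Gamma,T\Gamma)$. Applying the change of variables \eqref{E:CoV_CTD}, and using that $\bar v(y+rn(y))=v(y)$ is independent of $r$, I would rewrite
\begin{align*}
(\bar v,u)_{L^2(\Omega_\varepsilon)}=\int_\Gamma v(y)\cdot\Phi(y)\,d\mathcal{H}^2(y),\quad \Phi(y)=\int_{\varepsilon g_0(y)}^{\varepsilon g_1(y)}u(y+rn(y))J(y,r)\,dr.
\end{align*}
Since $v\cdot n=0$ on $\Gamma$, this equals $(v,P\Phi)_{L^2(\Gamma)}=[v,P\Phi]_{T\Gamma}$ by \eqref{E:HinS_L2}, and by $H^{-1}/H^1$-duality the proof reduces to
\begin{align*}
\|P\Phi\|_{H^1(\Gamma,T\Gamma)}\leq c\,\varepsilon^{1/2}\|u\|_{H^1(\Omega_\varepsilon)}.
\end{align*}

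The $L^2(\Gamma)$-bound on $\Phi$ is immediate from Cauchy--Schwarz in $r$ combined with $g\leq c$, $J\leq c$, and the equivalence \eqref{E:CoV_L2}, giving $\|\Phi\|_{L^2(\Gamma)}\leq c\varepsilon^{1/2}\|u\|_{L^2(\Omega_\varepsilon)}$. For the tangential derivatives, I would extend $\Phi$ to the tubular neighborhood of $\Gamma$ via the projection $\pi$ and differentiate under the integral by the Leibniz rule. This produces (i) two endpoint contributions $\varepsilon\,\underline{D}_jg_i(y)\,u(y+\varepsilon g_i(y)n(y))\,J(y,\varepsilon g_i(y))$, $i=0,1$; (ii) an interior integral $\int_{\varepsilon g_0(y)}^{\varepsilon g_1(y)}\underline{D}_j[u(y+rn(y))J(y,r)]\,dr$; and (iii) a term $(\underline{D}_jP)\Phi$ from differentiating the projection. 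Term (iii) is harmless because $\nabla_\Gamma P$ is bounded on $\Gamma$, and term (ii) is handled by Cauchy--Schwarz in $r$ and the chain rule (using the boundedness of $\nabla\pi$ and the derivatives of $J$), producing a bound $c\varepsilon^{1/2}\|u\|_{H^1(\Omega_\varepsilon)}$.

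The main obstacle is the endpoint contribution (i). The explicit Leibniz factor is of order $\varepsilon$, while the trace of $u$ along the map $y\mapsto y+\varepsilon g_i(y)n(y)$ blows up like $\varepsilon^{-1/2}$ in $L^2(\Gamma)$. The required thin-domain trace estimate
\begin{align*}
\|u(\cdot+\varepsilon g_in(\cdot))\|_{L^2(\Gamma)}^2\leq c\Bigl(\varepsilon^{-1}\|u\|_{L^2(\Omega_\varepsilon)}^2+\varepsilon\|\partial_nu\|_{L^2(\Omega_\varepsilon)}^2\Bigr)
\end{align*}
follows by applying the fundamental theorem of calculus along the normal lines $r\mapsto u(y+rn(y))$, squaring, and averaging over $r\in(\varepsilon g_0(y),\varepsilon g_1(y))$. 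Multiplying this bound by the squared Leibniz coefficient of order $\varepsilon^2$ yields $c\varepsilon\|u\|_{H^1(\Omega_\varepsilon)}^2$ for the square of the endpoint contribution; taking square roots delivers precisely the required factor $\varepsilon^{1/2}\|u\|_{H^1(\Omega_\varepsilon)}$ and completes the $H^1$-estimate on $P\Phi$. The delicate point is that no factor of $\varepsilon$ can be lost in this cancellation between the Leibniz smallness and the trace blow-up.
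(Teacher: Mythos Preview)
Your argument is correct. The paper itself does not prove this lemma but cites \cite[Lemma 8.3]{Miu21_02}; your proposal supplies precisely the kind of direct proof that such a citation hides, and each step---the change of variables giving $(\bar v,u)_{L^2(\Omega_\varepsilon)}=[v,P\Phi]_{T\Gamma}$, the $L^2$ bound by Cauchy--Schwarz in $r$, the Leibniz differentiation of $\Phi$, and the thin-domain trace inequality absorbing the endpoint term---is sound.

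It is worth noting that your $P\Phi$ is, up to the Jacobian weight $J$, nothing but $\varepsilon g\,M_\tau u$. Thus your $H^1$ bound on $P\Phi$ is essentially a Jacobian-weighted version of the estimate $\|M_\tau u\|_{H^1(\Gamma)}\leq c\varepsilon^{-1/2}\|u\|_{H^1(\Omega_\varepsilon)}$ recorded in the paper as Lemma~\ref{L:Ave_Hk}, whose proof (also deferred to \cite{Miu21_02}) uses exactly the same Leibniz/trace mechanism you describe. So your approach and the cited one coincide in spirit; you have simply carried the Jacobian through rather than splitting it off as an $O(\varepsilon)$ remainder, which is cleaner here because a naive splitting would leave an error term controlled only by $\|v\|_{L^2(\Gamma)}$ rather than $\|v\|_{H^{-1}(\Gamma,T\Gamma)}$.
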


\begin{proof}
  We refer to \cite[Lemma 8.3]{Miu21_02} for the proof.
\end{proof}

The following lemmas is crucial for the proof of Theorem \ref{T:Er_CE}.

\begin{lemma} \label{L:Pdn_CTD}
  Suppose that the inequalities \eqref{E:Fr_Up} hold.
  Then
  \begin{align} \label{E:Pdn_CTD}
    \Bigl\|\overline{P}\partial_nu+\overline{W}u\Bigr\|_{L^2(\Omega_\varepsilon)} \leq c\varepsilon\|u\|_{H^2(\Omega_\varepsilon)}
  \end{align}
  for all $u\in H^2(\Omega_\varepsilon)^3$ satisfying the slip boundary conditions
  \begin{align} \label{E:Slip_Bo}
    u\cdot n_\varepsilon = 0, \quad 2\nu P_\varepsilon D(u)n_\varepsilon+\gamma_\varepsilon u = 0
  \end{align}
  on $\Gamma_\varepsilon^0$ or on $\Gamma_\varepsilon^1$.
\end{lemma}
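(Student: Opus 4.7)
The plan is to derive an exact-up-to-$O(\varepsilon)$ algebraic identity that exhibits $\Phi := \overline{P}\partial_n u + \overline{W}u$ as a quantity that is $O(\varepsilon)$ on $\Gamma_\varepsilon^i$ because of the slip conditions, and then to propagate this boundary smallness into the interior using the thinness of $\Omega_\varepsilon$. First I would use $2D(u)\bar n = \partial_n u + (\nabla u)\bar n$ together with the product rule $(\nabla u)\bar n = \nabla(u\cdot\bar n) - (\nabla\bar n)u$. A direct Fermi-coordinate computation using $\pi(x) = x - d(x)\bar n(x)$ gives $\nabla\bar n = -(I_3 - d\overline{W})^{-1}\overline{W}$ in $N$, hence $\nabla\bar n = -\overline{W} + O(\varepsilon)$ in $\Omega_\varepsilon$. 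Combining these and using $\overline{P}\,\overline{W} = \overline{W}$ (because $Wn=0$), I get
\begin{equation*}
  \Phi = 2\overline{P}D(u)\bar n - \overline{P}\nabla(u\cdot\bar n) + R, \qquad |R|\le c\varepsilon|u| \text{ in }\Omega_\varepsilon,
\end{equation*}
so it suffices to bound $\tilde\Phi := 2\overline{P}D(u)\bar n - \overline{P}\nabla(u\cdot\bar n)$ by $c\varepsilon\|u\|_{H^2(\Omega_\varepsilon)}$.

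Assume without loss of generality that \eqref{E:Slip_Bo} holds on $\Gamma_\varepsilon^1$. The key geometric input is that on $\Gamma_\varepsilon^1$ one has $n_\varepsilon - \bar n = -\varepsilon\,\overline{\nabla_\Gamma g_1} + O(\varepsilon^2)$ and $P_\varepsilon - \overline{P} = O(\varepsilon)$, and moreover $\nabla_{\Gamma_\varepsilon^1}(n_\varepsilon - \bar n) = O(\varepsilon)$ (since the leading correction itself is $O(\varepsilon)$). I would then show the pointwise estimate $|\tilde\Phi|\le c\varepsilon(|u|+|\nabla u|)$ on $\Gamma_\varepsilon^1$ in two pieces. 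For the first piece, I decompose
\begin{equation*}
  2\overline{P}D(u)\bar n = 2(\overline{P}-P_\varepsilon)D(u)\bar n + 2P_\varepsilon D(u)(\bar n - n_\varepsilon) + 2P_\varepsilon D(u)n_\varepsilon,
\end{equation*}
bounding the first two terms by $c\varepsilon|\nabla u|$ and using $2\nu P_\varepsilon D(u)n_\varepsilon = -\gamma_\varepsilon^1 u$ together with \eqref{E:Fr_Up} to bound the last by $c\varepsilon|u|$. For the second piece, since $u\cdot n_\varepsilon=0$ on $\Gamma_\varepsilon^1$, the restriction of $u\cdot\bar n$ to $\Gamma_\varepsilon^1$ equals $u\cdot(\bar n - n_\varepsilon)$; its tangential gradient along $\Gamma_\varepsilon^1$ is then $O(\varepsilon)(|u|+|\nabla u|)$ thanks to the $O(\varepsilon)$ bounds on $\bar n - n_\varepsilon$ and on $\nabla_{\Gamma_\varepsilon^1}(n_\varepsilon - \bar n)$, and replacing $P_\varepsilon\nabla$ by $\overline{P}\nabla$ costs only another $O(\varepsilon)$.

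To propagate to the interior, I use the fundamental theorem of calculus along normal lines: for $x = y + rn(y)$ with $r\in(\varepsilon g_0(y),\varepsilon g_1(y))$ and $x_1 = y + \varepsilon g_1(y)n(y)\in\Gamma_\varepsilon^1$,
\begin{equation*}
  |\tilde\Phi(x)|^2 \le 2|\tilde\Phi(x_1)|^2 + c\varepsilon\int_{\varepsilon g_0(y)}^{\varepsilon g_1(y)}|\partial_n\tilde\Phi(y+sn(y))|^2\,ds.
\end{equation*}
Integrating this against $J(y,r)\,dr\,d\mathcal{H}^2(y)$ (using \eqref{E:Jacob} and \eqref{E:CoV_CTD}) gives
\begin{equation*}
  \|\tilde\Phi\|_{L^2(\Omega_\varepsilon)}^2 \le c\varepsilon\|\tilde\Phi\|_{L^2(\Gamma_\varepsilon^1)}^2 + c\varepsilon^2\|\partial_n\tilde\Phi\|_{L^2(\Omega_\varepsilon)}^2.
\end{equation*}
The standard one-dimensional thin-domain trace inequality $\|\psi\|_{L^2(\Gamma_\varepsilon^1)}^2 \le c(\varepsilon^{-1}\|\psi\|_{L^2(\Omega_\varepsilon)}^2 + \varepsilon\|\partial_n\psi\|_{L^2(\Omega_\varepsilon)}^2)$ applied to $u$ and to $\nabla u$ yields $\|u\|_{L^2(\Gamma_\varepsilon^1)}^2 + \|\nabla u\|_{L^2(\Gamma_\varepsilon^1)}^2 \le c\varepsilon^{-1}\|u\|_{H^2(\Omega_\varepsilon)}^2$, and combined with the pointwise boundary bound above this gives $\varepsilon\|\tilde\Phi\|_{L^2(\Gamma_\varepsilon^1)}^2 \le c\varepsilon^2\|u\|_{H^2}^2$. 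Since $\pi$ is constant along normal lines, $\partial_n\bar n = 0$, $\partial_n\overline{P} = 0$, and $\partial_n\overline{W} = 0$, so $\partial_n\tilde\Phi$ involves only second derivatives of $u$ with bounded coefficients and $\|\partial_n\tilde\Phi\|_{L^2(\Omega_\varepsilon)}\le c\|u\|_{H^2(\Omega_\varepsilon)}$. Putting everything together and adding $\|R\|_{L^2(\Omega_\varepsilon)}\le c\varepsilon\|u\|_{L^2(\Omega_\varepsilon)}$ yields \eqref{E:Pdn_CTD}. The main obstacle is the boundary estimate for $\overline{P}\nabla(u\cdot\bar n)$: without the finer observation that $\nabla_{\Gamma_\varepsilon^1}(n_\varepsilon - \bar n) = O(\varepsilon)$ (not merely $O(1)$), this term would only be controlled by $O(1)|u|$ on the boundary, losing the decisive factor of $\varepsilon$.
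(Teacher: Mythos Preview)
The paper does not actually give a proof of this lemma; it simply cites \cite[Lemma 4.7]{Miu21_02}. Your argument is therefore a genuine reconstruction, and it is correct in all essential points. The strategy---rewrite $\overline{P}\partial_n u+\overline{W}u$ as $2\overline{P}D(u)\bar n-\overline{P}\nabla(u\cdot\bar n)$ modulo an $O(\varepsilon)|u|$ term, use the slip conditions together with $|n_\varepsilon-\bar n|+|P_\varepsilon-\overline P|=O(\varepsilon)$ and $|\nabla_{\Gamma_\varepsilon^i}(n_\varepsilon-\bar n)|=O(\varepsilon)$ to get an $O(\varepsilon)(|u|+|\nabla u|)$ bound on the boundary, and then push this into the interior by the one-dimensional Poincar\'e/FTC argument along normal fibers combined with the thin-domain trace inequality---is exactly the natural approach and is the one used in the cited reference.

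Two small remarks. First, in the last step you write that ``$\partial_n\tilde\Phi$ involves only second derivatives of $u$ with bounded coefficients''; note that $\partial_n(\nabla\bar n)$ is not zero (only $\partial_n\bar n$, $\partial_n\overline P$, $\partial_n\overline W$ are), so lower-order terms in $u$ with bounded smooth coefficients do appear, but the conclusion $\|\partial_n\tilde\Phi\|_{L^2(\Omega_\varepsilon)}\le c\|u\|_{H^2(\Omega_\varepsilon)}$ is of course unaffected. Second, your identification of the crucial ingredient---that the \emph{tangential derivative} of $n_\varepsilon-\bar n$ along $\Gamma_\varepsilon^i$ is itself $O(\varepsilon)$, not merely $O(1)$---is exactly right; this follows from the explicit formula \eqref{E:UON_CTD} and the $C^4$ regularity of $g_i$, and without it the $\overline P\nabla(u\cdot\bar n)$ term would indeed lose the factor of $\varepsilon$.
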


\begin{proof}
  We refer to \cite[Lemma 4.7]{Miu21_02} for the proof.
\end{proof}

\subsection{Average operators} \label{SS:Average}
To analyze thin domain problems, it is useful to take the average of a function in the thin direction.
We define the average operator $M$ by
\begin{align*}
  M\varphi(y) = \frac{1}{\varepsilon g(y)}\int_{\varepsilon g_0(y)}^{\varepsilon g_1(y)}\varphi(y+rn(y))\,dr, \quad y\in\Gamma
\end{align*}
for a function $\varphi$ on $\Omega_\varepsilon$.
Also, we write $M_\tau u=PMu$ for the averaged tangential component of a vector field $u$ on $\Omega_\varepsilon$.
The following results play an important role in Section \ref{S:Diff_Est}.

\begin{lemma} \label{L:Ave_Hk}
  Let $u\in H^k(\Omega_\varepsilon)^3$ with $k=0,1$ and $H^0=L^2$.
  Then
  \begin{align} \label{E:Ave_Hk}
    \|M_\tau u\|_{H^k(\Gamma)} \leq c\varepsilon^{-1/2}\|u\|_{H^k(\Omega_\varepsilon)}
  \end{align}
\end{lemma}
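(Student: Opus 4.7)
The plan is to prove the two cases $k=0$ and $k=1$ separately, reducing the $H^1(\Gamma)$-bound on $M_\tau u = PMu$ to an $H^1(\Gamma)$-bound on $Mu$ via the boundedness of $P$ and $\nabla_\Gamma P$ on $\Gamma$.

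For $k=0$, I would start from the pointwise Cauchy--Schwarz bound
\[
|Mu(y)|^2 \leq \frac{1}{\varepsilon g(y)}\int_{\varepsilon g_0(y)}^{\varepsilon g_1(y)} |u(y+rn(y))|^2\,dr,
\]
integrate over $\Gamma$, and apply the change-of-variables inequality \eqref{E:CoV_L2} together with the uniform lower bound \eqref{E:G_Inf} for $g$. This immediately yields $\|Mu\|_{L^2(\Gamma)}\leq c\varepsilon^{-1/2}\|u\|_{L^2(\Omega_\varepsilon)}$, and the same bound for $M_\tau u=PMu$ since $|P|\leq 1$.

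For $k=1$, by density it suffices to treat smooth $u$. The key step is to differentiate $Mu$ tangentially using the chain rule and the Leibniz rule under the integral sign. Three kinds of terms appear: (i) an interior term of the form $\frac{1}{\varepsilon g}\int (\text{tangential derivative of } u(y+rn(y)))\,dr$, which is handled exactly as in the $k=0$ case applied to $\nabla u$, producing a factor $c\varepsilon^{-1/2}\|u\|_{H^1(\Omega_\varepsilon)}$; (ii) a factor coming from $\underline{D}_i(1/(\varepsilon g))$ multiplying the unnormalized integral, bounded by $c\varepsilon^{-1/2}\|u\|_{L^2(\Omega_\varepsilon)}$ by the $k=0$ argument since $\nabla_\Gamma g$ is bounded on $\Gamma$; and (iii) boundary traces coming from differentiating the limits $\varepsilon g_0$ and $\varepsilon g_1$, of the form $\frac{1}{g}\underline{D}_i g_k\cdot u(y+\varepsilon g_k(y)n(y))$ for $k=0,1$. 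These boundary terms are the main point to handle carefully.

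The boundary terms in (iii) are controlled by a uniform trace inequality on $\Gamma_\varepsilon^i$: writing $u(y+\varepsilon g_i n(y))$ in terms of $u$ at an interior normal slice plus the fundamental theorem of calculus in $r$, one obtains
\[
\int_\Gamma |u(y+\varepsilon g_i(y)n(y))|^2\,d\mathcal{H}^2(y) \leq c\bigl(\varepsilon^{-1}\|u\|_{L^2(\Omega_\varepsilon)}^2 + \varepsilon\|\partial_n u\|_{L^2(\Omega_\varepsilon)}^2\bigr) \leq c\varepsilon^{-1}\|u\|_{H^1(\Omega_\varepsilon)}^2,
\]
which is precisely the desired $\varepsilon^{-1/2}$ scaling. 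Combining (i)--(iii) gives $\|\nabla_\Gamma Mu\|_{L^2(\Gamma)}\leq c\varepsilon^{-1/2}\|u\|_{H^1(\Omega_\varepsilon)}$. Finally $\nabla_\Gamma(PMu)=P\nabla_\Gamma Mu + (\nabla_\Gamma P)(Mu)$, and since $P$ and $\nabla_\Gamma P$ are bounded on $\Gamma$ (recall $\Gamma\in C^5$), the $k=1$ estimate follows by combining with the $k=0$ bound on $Mu$. The main obstacle is the honest bookkeeping of (iii) together with the uniform-in-$\varepsilon$ trace inequality on the lateral boundaries $\Gamma_\varepsilon^i$; the rest is routine once those estimates are in place.
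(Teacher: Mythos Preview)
The paper does not give its own proof of this lemma; it simply refers to \cite[Lemmas~6.3 and~6.11]{Miu21_02}. Your sketch is correct and self-contained, and is exactly the kind of argument one expects in that reference: Cauchy--Schwarz plus the change-of-variables estimate \eqref{E:CoV_L2} for $k=0$, and for $k=1$ a Leibniz differentiation of the average together with a uniform-in-$\varepsilon$ trace bound on $\Gamma_\varepsilon^i$ to handle the boundary terms from the variable limits. One small point worth making precise when you write it out: in the interior term (i), differentiating $u(y+rn(y))$ tangentially in $y$ produces a factor $(I_3-rW(y))$ acting on $\nabla u$, not just ``the tangential derivative of $u$''; this is harmless since $|I_3-rW|\leq c$ uniformly by \eqref{E:PrCu_Bd}, but it should be recorded.
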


\begin{proof}
  We refer to \cite[Lemmas 6.3 and 6.11]{Miu21_02} for the proof.
\end{proof}

\begin{lemma} \label{L:Ave_No}
  Let $u\in H^1(\Omega_\varepsilon)^3$ satisfy $u\cdot n_\varepsilon=0$ on $\Gamma_\varepsilon^0$ or on $\Gamma_\varepsilon^1$.
  Then
  \begin{align} \label{E:Ave_No}
    \|Mu\cdot n\|_{L^2(\Gamma)} &\leq c\varepsilon^{1/2}\|u\|_{H^1(\Omega_\varepsilon)}.
  \end{align}
  If in addition $u\in H^2(\Omega_\varepsilon)^3$, then
  \begin{align} \label{E:AvNo_H1}
    \|Mu\cdot n\|_{H^1(\Gamma)} \leq c\varepsilon^{1/2}\|u\|_{H^2(\Omega_\varepsilon)}.
  \end{align}
\end{lemma}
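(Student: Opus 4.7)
The plan is to reduce $Mu\cdot n = M(u\cdot\bar n)$ (valid since $\bar n$ is constant along each normal ray) to a single boundary value at the face where the slip condition is available — say $\Gamma_\varepsilon^0$, the case $\Gamma_\varepsilon^1$ being symmetric — plus an interior integral. Since $\partial_n\bar n=0$, the fundamental theorem of calculus in $r$ together with Fubini gives the representation
\begin{align*}
  M(u\cdot\bar n)(y) = (u\cdot\bar n)\bigl(y+\varepsilon g_0(y)n(y)\bigr)+\frac{1}{\varepsilon g(y)}\int_{\varepsilon g_0(y)}^{\varepsilon g_1(y)}\bigl(\varepsilon g_1(y)-s\bigr)(\partial_n u\cdot n)(y+sn(y))\,ds.
\end{align*}
At the endpoint $x=y+\varepsilon g_0(y)n(y)\in\Gamma_\varepsilon^0$, a direct computation in the parametrization $y\mapsto y+\varepsilon g_0(y)n(y)$ of $\Gamma_\varepsilon^0$ shows $|n(y)+n_\varepsilon(x)|\leq c\varepsilon$ (the leading correction being $\varepsilon\nabla_\Gamma g_0$), so the slip condition $u(x)\cdot n_\varepsilon(x)=0$ forces $(u\cdot\bar n)(x)=u(x)\cdot n(y)=u(x)\cdot(n(y)+n_\varepsilon(x))$, whose absolute value is $\leq c\varepsilon|u(x)|$. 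The crucial feature of the representation is that no boundary term from $\Gamma_\varepsilon^1$ arises, because the weight $\varepsilon g_1(y)-s$ vanishes at $s=\varepsilon g_1(y)$.

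For \eqref{E:Ave_No}, the above pointwise bound together with Cauchy--Schwarz on the interior integral yields
\begin{align*}
  |M(u\cdot\bar n)(y)| \leq c\varepsilon\,|u(y+\varepsilon g_0(y)n(y))|+c\varepsilon^{1/2}\left(\int_{\varepsilon g_0(y)}^{\varepsilon g_1(y)}|\partial_n u(y+sn(y))|^2\,ds\right)^{1/2}.
\end{align*}
Squaring and integrating over $\Gamma$, the second term is controlled by $c\varepsilon\|\partial_n u\|_{L^2(\Omega_\varepsilon)}^2\leq c\varepsilon\|u\|_{H^1(\Omega_\varepsilon)}^2$ via \eqref{E:CoV_L2}. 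For the first term I would use the trace-type estimate $\int_\Gamma|u(y+\varepsilon g_0(y)n(y))|^2\,d\mathcal{H}^2(y)\leq c(\varepsilon^{-1}\|u\|_{L^2(\Omega_\varepsilon)}^2+\varepsilon\|\partial_n u\|_{L^2(\Omega_\varepsilon)}^2)$, itself obtained by writing the boundary value of $u$ as the value on a parallel slice minus the normal integral of $\partial_n u$, squaring, and averaging in $r$; multiplying by $\varepsilon^2$ gives again $c\varepsilon\|u\|_{H^1(\Omega_\varepsilon)}^2$. Combining yields \eqref{E:Ave_No}.

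For \eqref{E:AvNo_H1} I would tangentially differentiate the representation. The boundary term becomes $\nabla_\Gamma[u\cdot(n+n_\varepsilon)\circ\tilde x]$ with $\tilde x(y)=y+\varepsilon g_0(y)n(y)$; since both $(n+n_\varepsilon)\circ\tilde x$ and its tangential derivatives are $O(\varepsilon)$ in $L^\infty(\Gamma)$ (the defect depends smoothly on $\varepsilon$ through $C^4$ data, with the $\varepsilon$-factor explicit), this contribution is bounded by $c\varepsilon(\|u\|_{L^2(\Gamma_\varepsilon^0)}+\|\nabla u\|_{L^2(\Gamma_\varepsilon^0)})$, which, by the same trace-type estimate applied to $u$ and $\nabla u$, is $\leq c\varepsilon\cdot c\varepsilon^{-1/2}\|u\|_{H^2(\Omega_\varepsilon)}=c\varepsilon^{1/2}\|u\|_{H^2(\Omega_\varepsilon)}$. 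Differentiating the interior integral produces (i) a boundary contribution at $s=\varepsilon g_0$ weighted by $\underline D_j(\varepsilon g_0)=\varepsilon\underline D_jg_0$ acting on the trace of $\partial_n u\cdot n$ on $\Gamma_\varepsilon^0$ (again $O(\varepsilon)$ with a trace estimate on $\nabla u$), and (ii) interior terms involving $\nabla u$ and $\nabla^2 u$ that are handled exactly as in the $L^2$ case since $|\varepsilon g_1-s|\leq\varepsilon g$ compensates the prefactor $1/(\varepsilon g)$. The main obstacle is the careful verification in the boundary contributions that the defect $n+n_\varepsilon\circ\tilde x$ retains its explicit $\varepsilon$-factor under tangential differentiation; this follows from the smooth $\varepsilon$-dependence of the parametrization $\tilde x$ and the $C^{l-1}$-regularity of $n_\varepsilon$ on $\Gamma_\varepsilon^0$ inherited from the $C^4$-regularity of $\Gamma$ and $g_0$.
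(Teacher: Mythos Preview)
Your argument is correct. The paper itself does not supply a proof here but refers to \cite[Lemmas 6.5 and 6.13]{Miu21_02}, so there is no in-paper proof to compare against; your approach---writing $Mu\cdot n=M(u\cdot\bar n)$, expanding via the fundamental theorem of calculus in the normal variable so that only the boundary value on the face where $u\cdot n_\varepsilon=0$ holds survives, and then using the explicit formula \eqref{E:UON_CTD} to get $|n+n_\varepsilon|\leq c\varepsilon$ on $\Gamma_\varepsilon^0$ together with a thin-domain trace inequality---is the natural self-contained route. One minor point worth making explicit: for \eqref{E:AvNo_H1} the pointwise tangential differentiation of the representation formula is most cleanly justified first for smooth $u$ and then extended by density, since the final bound depends only on $\|u\|_{H^2(\Omega_\varepsilon)}$; this is routine and does not affect the constants.
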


\begin{proof}
  We refer to \cite[Lemmas 6.5 and 6.13]{Miu21_02} for the proof.
\end{proof}

\begin{lemma} \label{L:Ave_Diff}
  Let $u\in H^1(\Omega_\varepsilon)^3$ satisfy $u\cdot n_\varepsilon=0$ on $\Gamma_\varepsilon^0$ or on $\Gamma_\varepsilon^1$.
  Then
  \begin{align} \label{E:Ave_Diff}
    \Bigl\|u-\overline{M_\tau u}\Bigr\|_{L^2(\Omega_\varepsilon)} \leq c\varepsilon\|u\|_{H^1(\Omega_\varepsilon)}.
  \end{align}
  If in addition $u\in H^2(\Omega_\varepsilon)^3$, then
  \begin{align} \label{E:AvDi_H1}
    \Bigl\|\overline{P}\nabla u-\overline{\nabla_\Gamma M_\tau u}\Bigr\|_{L^2(\Omega_\varepsilon)} \leq c\varepsilon\|u\|_{H^2(\Omega_\varepsilon)}.
  \end{align}
\end{lemma}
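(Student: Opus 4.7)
The plan is to reduce both estimates to a Poincar\'e--Wirtinger inequality in the thin (normal) direction combined with the normal-component control from Lemma \ref{L:Ave_No}.

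For \eqref{E:Ave_Diff}, I would first decompose
\begin{align*}
u - \overline{M_\tau u} = \bigl(u - \overline{Mu}\bigr) + \overline{(Mu\cdot n)\,n},
\end{align*}
using $M_\tau u = PMu$ and $P = I_3 - n\otimes n$. The first term is handled fibrewise: for each fixed $y\in\Gamma$, the average $Mu(y)$ is the mean value of $r\mapsto u(y+rn(y))$ on the interval of length $\varepsilon g(y)$, so the one-dimensional Poincar\'e--Wirtinger inequality gives
\begin{align*}
\int_{\varepsilon g_0(y)}^{\varepsilon g_1(y)} |u(y+rn(y)) - Mu(y)|^2 \, dr \leq c\varepsilon^2 \int_{\varepsilon g_0(y)}^{\varepsilon g_1(y)} |(\partial_n u)(y+rn(y))|^2\, dr,
\end{align*}
since $\partial_r[u(y+rn(y))] = (\partial_n u)(y+rn(y))$. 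Integrating over $y\in\Gamma$ and using \eqref{E:CoV_L2} yields $\|u - \overline{Mu}\|_{L^2(\Omega_\varepsilon)} \leq c\varepsilon \|\partial_n u\|_{L^2(\Omega_\varepsilon)} \leq c\varepsilon\|u\|_{H^1(\Omega_\varepsilon)}$. For the second term, Lemma \ref{L:Ave_No} gives $\|Mu\cdot n\|_{L^2(\Gamma)} \leq c\varepsilon^{1/2}\|u\|_{H^1(\Omega_\varepsilon)}$, which combined with the constant-extension bound \eqref{E:CE_L2CTD} produces an extra factor of $\varepsilon^{1/2}$. Adding the two contributions proves \eqref{E:Ave_Diff}.

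For \eqref{E:AvDi_H1}, I would apply the same strategy to tangential derivatives of $u$. The key intermediate step is a commutation identity of the schematic form
\begin{align*}
\overline{\nabla_\Gamma M_\tau u} = \overline{PM(\overline{P}\nabla u)} + R(u),
\end{align*}
where $R(u)$ collects (a) boundary traces of $u$ on $\Gamma_\varepsilon^0$ and $\Gamma_\varepsilon^1$ produced by differentiating the $y$-dependent limits $\varepsilon g_i(y)$ in the definition of $M$, and (b) zeroth-order terms in $Mu$ multiplied by the Weingarten map $W$ arising from $\nabla_\Gamma P = -\nabla_\Gamma(n\otimes n)$. Since $u\in H^2(\Omega_\varepsilon)^3$, each entry of $\overline{P}\nabla u$ lies in $H^1(\Omega_\varepsilon)$, so the fibrewise Poincar\'e--Wirtinger argument used above applies to it, and the residual normal component $PM(\overline{P}\nabla u)\cdot n$ is handled by the $H^1$-refinement \eqref{E:AvNo_H1}. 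The correction $R(u)$ is controlled by $c\varepsilon\|u\|_{H^2(\Omega_\varepsilon)}$ after invoking a trace inequality on the $\Gamma_\varepsilon^i$-pieces and \eqref{E:CE_L2CTD} on the curvature pieces.

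The main obstacle is the clean bookkeeping of $R(u)$: the tangential derivative $\underline{D}_i$ does not commute with $M$ because both the integration limits and the fibre direction $n(y)$ depend on $y$, and it does not commute with $P$ because $\nabla_\Gamma P$ carries $W$; the slip hypothesis $u\cdot n_\varepsilon=0$ on $\Gamma_\varepsilon^0$ or on $\Gamma_\varepsilon^1$ enters precisely when controlling the boundary traces inside $R(u)$, via Lemma \ref{L:Ave_No}. Once each piece of $R(u)$ is individually estimated by $c\varepsilon\|u\|_{H^2(\Omega_\varepsilon)}$, the proof closes. Since the analogous statement is already established in \cite{Miu21_02} (from which Lemmas \ref{L:Ave_Hk}--\ref{L:Pdn_CTD} are also cited), a concise presentation may simply invoke that reference rather than repeating the calculation.
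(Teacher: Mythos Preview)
Your argument for \eqref{E:Ave_Diff} is correct and matches what the paper does (it simply cites \cite[Lemma~6.6]{Miu21_02}, which is exactly the fibrewise Poincar\'e--Wirtinger plus Lemma~\ref{L:Ave_No} reasoning you describe).

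For \eqref{E:AvDi_H1} your strategy would work, but the paper's route is shorter and avoids the bookkeeping of $R(u)$ altogether. Instead of building a commutation identity for $\nabla_\Gamma M_\tau u$ directly, the paper first invokes \cite[Lemma~6.12]{Miu21_02} for the \emph{full} average, obtaining
\[
\Bigl\|\overline{P}\nabla u-\overline{\nabla_\Gamma Mu}\Bigr\|_{L^2(\Omega_\varepsilon)} \leq c\varepsilon\|u\|_{H^2(\Omega_\varepsilon)},
\]
and then corrects from $Mu$ to $M_\tau u$ in one stroke: since $Mu=M_\tau u+(Mu\cdot n)n$, one has $\|\nabla_\Gamma Mu-\nabla_\Gamma M_\tau u\|_{L^2(\Gamma)}\leq c\|Mu\cdot n\|_{H^1(\Gamma)}\leq c\varepsilon^{1/2}\|u\|_{H^2(\Omega_\varepsilon)}$ by \eqref{E:AvNo_H1}, and \eqref{E:CE_L2CTD} supplies the missing $\varepsilon^{1/2}$. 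This sidesteps the boundary-trace and Weingarten correction terms you flag as the main obstacle; all of that is already absorbed into the cited $Mu$-version. Your closing remark that one could ``simply invoke that reference'' is exactly what the paper does, just with the extra $Mu\to M_\tau u$ correction spelled out.
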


\begin{proof}
  We refer to \cite[Lemma 6.6]{Miu21_02} for the proof of \eqref{E:Ave_Diff}.
  Suppose that $u$ is in $H^2(\Omega_\varepsilon)^3$ and satisfies $u\cdot n_\varepsilon=0$ on $\Gamma_\varepsilon^0$ or on $\Gamma_\varepsilon^1$.
  It is shown in \cite[Lemma 6.12]{Miu21_02} that
  \begin{align*}
    \Bigl\|\overline{P}\nabla u-\overline{\nabla_\Gamma Mu}\Bigr\|_{L^2(\Omega_\varepsilon)} \leq c\varepsilon\|u\|_{H^2(\Omega_\varepsilon)}.
  \end{align*}
  Moreover, since $Mu=M_\tau u+(Mu\cdot n)n$ on $\Gamma$, we have
  \begin{align*}
    \|\nabla_\Gamma Mu-\nabla_\Gamma M_\tau u\|_{L^2(\Gamma)} \leq c\|Mu\cdot n\|_{H^1(\Gamma)} \leq c\varepsilon^{1/2}\|u\|_{H^2(\Omega_\varepsilon)}
  \end{align*}
  by \eqref{E:AvNo_H1}.
  By this inequality and \eqref{E:CE_L2CTD}, we find that
  \begin{align*}
    \Bigl\|\overline{\nabla_\Gamma Mu}-\overline{\nabla_\Gamma M_\tau u}\Bigr\|_{L^2(\Omega_\varepsilon)} \leq c\varepsilon^{1/2}\|\nabla_\Gamma Mu-\nabla_\Gamma M_\tau u\|_{L^2(\Gamma)} \leq c\varepsilon\|u\|_{H^2(\Omega_\varepsilon)}.
  \end{align*}
  Combining the above inequalities, we obtain \eqref{E:AvDi_H1}.
\end{proof}

\begin{lemma} \label{L:Ave_Asym}
  For all $u_1,u_2\in L^2(\Omega_\varepsilon)^3$, we have
  \begin{align} \label{E:Ave_Asym}
    \left|\Bigl(\overline{M_\tau u_1},u_2\Bigr)_{L^2(\Omega_\varepsilon)}-\Bigl(u_1,\overline{M_\tau u_2}\Bigr)_{L^2(\Omega_\varepsilon)}\right| \leq c\varepsilon\|u_1\|_{L^2(\Omega_\varepsilon)}\|u_2\|_{L^2(\Omega_\varepsilon)}.
  \end{align}
\end{lemma}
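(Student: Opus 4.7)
The plan is to expand both inner products via the change of variables formula \eqref{E:CoV_CTD}, isolate a common symmetric leading term, and show that the remainders are $O(\varepsilon)$.

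First I would write, using that $\overline{M_\tau u_1}(y+rn(y))=M_\tau u_1(y)$,
\begin{align*}
  \Bigl(\overline{M_\tau u_1},u_2\Bigr)_{L^2(\Omega_\varepsilon)} = \int_\Gamma M_\tau u_1(y)\cdot\int_{\varepsilon g_0(y)}^{\varepsilon g_1(y)}u_2(y+rn(y))J(y,r)\,dr\,d\mathcal{H}^2(y),
\end{align*}
and split $J=1+(J-1)$. The contribution from the ``$1$'' part is
\begin{align*}
  \int_\Gamma \varepsilon g(y)\,M_\tau u_1(y)\cdot Mu_2(y)\,d\mathcal{H}^2(y) = \int_\Gamma\varepsilon g\,M_\tau u_1\cdot M_\tau u_2\,d\mathcal{H}^2,
\end{align*}
where the last equality uses that $M_\tau u_1$ is tangential to $\Gamma$, so the normal part of $Mu_2$ drops out. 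An identical computation for $(u_1,\overline{M_\tau u_2})_{L^2(\Omega_\varepsilon)}$ produces the same leading term, so the two leading terms cancel in the difference.

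Next I would estimate the two remainders. By \eqref{E:Jacob} we have $|J-1|\leq c\varepsilon$ on $\Omega_\varepsilon$, so the Cauchy--Schwarz inequality combined with \eqref{E:CoV_L2} gives
\begin{align*}
  \left|\int_\Gamma\int_{\varepsilon g_0(y)}^{\varepsilon g_1(y)}M_\tau u_1(y)\cdot u_2(y+rn(y))\bigl(J(y,r)-1\bigr)dr\,d\mathcal{H}^2(y)\right| \leq c\varepsilon^{3/2}\|M_\tau u_1\|_{L^2(\Gamma)}\|u_2\|_{L^2(\Omega_\varepsilon)},
\end{align*}
and then the averaging bound \eqref{E:Ave_Hk} with $k=0$ yields $\|M_\tau u_1\|_{L^2(\Gamma)}\leq c\varepsilon^{-1/2}\|u_1\|_{L^2(\Omega_\varepsilon)}$, so this remainder is bounded by $c\varepsilon\|u_1\|_{L^2(\Omega_\varepsilon)}\|u_2\|_{L^2(\Omega_\varepsilon)}$. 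The remainder from $(u_1,\overline{M_\tau u_2})_{L^2(\Omega_\varepsilon)}$ is handled symmetrically.

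There is no real obstacle here; the proof is essentially bookkeeping. The one point that needs a small observation rather than a calculation is the symmetrization step: after the change of variables, the overline on $M_\tau u_i$ is replaced by evaluation at the base point $y\in\Gamma$, and then the dot product $M_\tau u_1(y)\cdot Mu_2(y)$ (respectively $Mu_1(y)\cdot M_\tau u_2(y)$) collapses to $M_\tau u_1(y)\cdot M_\tau u_2(y)$ because one factor is tangential; this is what makes the leading terms identical and forces the difference to be of order $\varepsilon$.
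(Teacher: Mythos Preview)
Your argument is correct. The paper does not give its own proof of this lemma but simply refers to \cite[Lemma 6.7]{Miu21_02}, so there is no in-paper proof to compare against; your change-of-variables-and-split-$J$ approach is exactly the natural one and matches in spirit the argument used for the closely related Lemma~\ref{L:AT_L2In}.
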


\begin{proof}
  We refer to \cite[Lemma 6.7]{Miu21_02} for the proof.
\end{proof}

\begin{lemma} \label{L:Ave_Div}
  Let $u\in H^1(\Omega_\varepsilon)^3$ satisfy $\mathrm{div}\,u=0$ in $\Omega_\varepsilon$ and $u\cdot n_\varepsilon=0$ on $\Gamma_\varepsilon$.
  Then
  \begin{align} \label{E:Ave_Div}
    \|\mathrm{div}_\Gamma(gM_\tau u)\|_{L^2(\Gamma)} \leq c\varepsilon^{1/2}\|u\|_{H^1(\Omega_\varepsilon)}.
  \end{align}
  If in addition $u\in H^2(\Omega_\varepsilon)^3$, then
  \begin{align} \label{E:ndn_CTD}
    \left\|\partial_nu\cdot\bar{n}-\frac{1}{\bar{g}}\overline{M_\tau u}\cdot\overline{\nabla_\Gamma g}\right\|_{L^2(\Omega_\varepsilon)} \leq c\varepsilon\|u\|_{H^2(\Omega_\varepsilon)}.
  \end{align}
\end{lemma}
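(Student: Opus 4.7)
My plan for the first estimate is to translate the bulk incompressibility into a near-solenoidal condition on the averaged field on $\Gamma$. For any $\xi \in C^1(\Gamma)$, the constant extension $\bar{\xi}$ satisfies $\nabla \bar{\xi} = \overline{\nabla_\Gamma \xi}$, so $\mathrm{div}\,u=0$ and $u \cdot n_\varepsilon = 0$ on $\Gamma_\varepsilon$ give $\int_{\Omega_\varepsilon} u \cdot \overline{\nabla_\Gamma \xi}\,dx = 0$ after one integration by parts in the bulk. Applying the change of variables \eqref{E:CoV_CTD} and setting
\[
V(y) = \int_{\varepsilon g_0(y)}^{\varepsilon g_1(y)} u(y+rn(y))\,J(y,r)\,dr,
\]
one finds $\int_\Gamma \nabla_\Gamma \xi \cdot V\,d\mathcal{H}^2 = 0$ for every such $\xi$, and surface integration by parts via \eqref{E:TD_IbP} yields the a.e.\ identity $\mathrm{div}_\Gamma V + H\,(V\cdot n) = 0$ on $\Gamma$ (justified first for smooth $u$ and then by density).

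To convert this into a bound on $\mathrm{div}_\Gamma(gM_\tau u)$, I split $V = \varepsilon g\,Mu + R$ with $R(y) = \int_{\varepsilon g_0}^{\varepsilon g_1} u(y+rn(y))[J(y,r)-1]\,dr$, decompose $Mu = M_\tau u + (Mu \cdot n)n$, and use the identity $\mathrm{div}_\Gamma(\varphi n) = -H\varphi$ (which follows from $\nabla_\Gamma\varphi \perp n$ and $H = -\mathrm{div}_\Gamma n$). The normal contributions from $\varepsilon g Mu$ and from $H(V\cdot n)$ cancel exactly, leaving
\[
\varepsilon\,\mathrm{div}_\Gamma(g\,M_\tau u) = -\mathrm{div}_\Gamma R - H\,(R\cdot n).
\]
Since $|J-1|\leq c\varepsilon$ with $|\nabla_\Gamma J|=O(\varepsilon)$ by \eqref{E:Def_Ja}--\eqref{E:Jacob}, Cauchy--Schwarz together with \eqref{E:CoV_L2} gives $\|R\|_{L^2(\Gamma)} \leq c\varepsilon^{3/2}\|u\|_{L^2(\Omega_\varepsilon)}$; differentiating $R$ in surface coordinates by Leibniz produces an interior integrand of size $O(\varepsilon)(|u|+|\nabla u|)$ plus boundary pieces of size $O(\varepsilon^2)\,u|_{\Gamma_\varepsilon^i}$, and the thin-domain trace bound $\|u|_{\Gamma_\varepsilon^i}\|_{L^2}\leq c\varepsilon^{-1/2}\|u\|_{H^1(\Omega_\varepsilon)}$ converts both contributions into $O(\varepsilon^{3/2}\|u\|_{H^1(\Omega_\varepsilon)})$. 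Dividing by $\varepsilon$ yields \eqref{E:Ave_Div}.

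For the second estimate, writing $\nabla u = \overline{P}\nabla u + \bar n\otimes\partial_n u$ and taking the trace turns $\mathrm{div}\,u = 0$ into the pointwise identity $\partial_n u \cdot \bar n = -\overline{P}:\nabla u$ in $\Omega_\varepsilon$. Taking the trace inside \eqref{E:AvDi_H1} then gives $\|\overline{P}:\nabla u - \overline{\mathrm{div}_\Gamma M_\tau u}\|_{L^2(\Omega_\varepsilon)} \leq c\varepsilon\|u\|_{H^2(\Omega_\varepsilon)}$, while the Leibniz rule $\mathrm{div}_\Gamma(gM_\tau u) = g\,\mathrm{div}_\Gamma M_\tau u + M_\tau u\cdot\nabla_\Gamma g$ lets me rewrite $\overline{\mathrm{div}_\Gamma M_\tau u} = \bar{g}^{-1}\bigl[\overline{\mathrm{div}_\Gamma(gM_\tau u)} - \overline{M_\tau u}\cdot\overline{\nabla_\Gamma g}\bigr]$. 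Combining these and using \eqref{E:CE_L2CTD} together with part 1 to bound the extended $\overline{\mathrm{div}_\Gamma(gM_\tau u)}$ by $c\varepsilon\|u\|_{H^1(\Omega_\varepsilon)}$ in $L^2(\Omega_\varepsilon)$ yields \eqref{E:ndn_CTD}. The hardest step is the $L^2(\Gamma)$-control of $\mathrm{div}_\Gamma R$: the Leibniz boundary terms carry only an $O(\varepsilon^2)$ prefactor, and it is precisely the thin-domain trace inequality, which costs $\varepsilon^{-1/2}$, that saves the target $O(\varepsilon^{1/2})$ bound; the distributional identity for merely $H^1$ data also requires an initial smoothing step, since $V$ need not \emph{a priori} lie in $H^1(\Gamma)^3$.
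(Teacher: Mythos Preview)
The paper does not give its own proof here but defers to \cite[Lemmas~6.15 and 6.16]{Miu21_02}, so there is no in-paper argument to compare against. Your route---translating $\mathrm{div}\,u=0$ and $u\cdot n_\varepsilon=0$ into a weighted surface identity via a test function $\bar\xi$, isolating $\varepsilon g M_\tau u$ plus a Jacobian remainder, and then for \eqref{E:ndn_CTD} reading off the trace of \eqref{E:AvDi_H1} and closing with part one through \eqref{E:CE_L2CTD}---is sound and yields the stated bounds.

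One point needs correcting. The identity $\nabla\bar\xi=\overline{\nabla_\Gamma\xi}$ holds only on $\Gamma$ (this is \eqref{E:CEGr_Su}); at $x=y+rn(y)$ one has instead $\nabla\bar\xi(x)=(I_3-rW(y))^{-1}\nabla_\Gamma\xi(y)$. Hence the vector field for which the surface identity $\int_\Gamma\nabla_\Gamma\xi\cdot(\,\cdot\,)\,d\mathcal{H}^2=0$ holds \emph{exactly} is
\[
\tilde V(y)=\int_{\varepsilon g_0(y)}^{\varepsilon g_1(y)}(I_3-rW(y))^{-1}u(y+rn(y))\,J(y,r)\,dr,
\]
not your $V$. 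This matters because you want the \emph{pointwise} identity $\mathrm{div}_\Gamma\tilde V+H(\tilde V\cdot n)=0$ in order to extract an $L^2(\Gamma)$ bound rather than merely an $H^{-1}(\Gamma)$ one. The fix is harmless: since $(I_3-rW)^{-1}-I_3=rW(I_3-rW)^{-1}=O(\varepsilon)$ together with its tangential derivatives (here $W\in C^3(\Gamma)$ by the $C^5$ regularity of $\Gamma$), the difference $\tilde V-V$ is absorbed into your residual $R$ with the same $O(\varepsilon^{3/2}\|u\|_{H^1(\Omega_\varepsilon)})$ control in $H^1(\Gamma)$, and the cancellation of normal parts, the Leibniz differentiation of $R$, and the thin-domain trace step all go through unchanged. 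Your argument for \eqref{E:ndn_CTD} is correct as written.
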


\begin{proof}
  We refer to \cite[Lemmas 6.15 and 6.16]{Miu21_02} for the proof.
\end{proof}

\begin{lemma} \label{L:AT_L2In}
  For all $u\in L^2(\Omega_\varepsilon)^3$ and $\eta\in L^2(\Gamma,T\Gamma)$, we have
  \begin{align} \label{E:AT_L2In}
    \Bigl|(u,\bar{\eta})_{L^2(\Omega_\varepsilon)}-\varepsilon(gM_\tau u,\eta)_{L^2(\Gamma)}\Bigr| \leq c\varepsilon^{3/2}\|u\|_{L^2(\Omega_\varepsilon)}\|\eta\|_{L^2(\Gamma)}.
  \end{align}
\end{lemma}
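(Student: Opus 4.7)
The plan is to reduce the bulk integral on $\Omega_\varepsilon$ to one on $\Gamma$ via the change of variables \eqref{E:CoV_CTD}, then isolate the leading-order piece in $\varepsilon$ by exploiting the fact that the Jacobian $J$ differs from $1$ by $O(\varepsilon)$.

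First, since $\bar{\eta}(y+rn(y))=\eta(y)$ by the definition of the constant extension, applying \eqref{E:CoV_CTD} to $\varphi(x)=u(x)\cdot\bar{\eta}(x)$ gives
\begin{align*}
  (u,\bar{\eta})_{L^2(\Omega_\varepsilon)} = \int_\Gamma \eta(y)\cdot\int_{\varepsilon g_0(y)}^{\varepsilon g_1(y)} u(y+rn(y))\,J(y,r)\,dr\,d\mathcal{H}^2(y).
\end{align*}
Split $J(y,r)=1+(J(y,r)-1)$. The contribution of the constant $1$ is, by definition of the averaging operator,
\begin{align*}
  \int_\Gamma \eta(y)\cdot\int_{\varepsilon g_0(y)}^{\varepsilon g_1(y)} u(y+rn(y))\,dr\,d\mathcal{H}^2(y) = \varepsilon\int_\Gamma g(y)\,\eta(y)\cdot Mu(y)\,d\mathcal{H}^2(y),
\end{align*}
and since $\eta\cdot n=0$ on $\Gamma$ we have $\eta\cdot Mu=\eta\cdot PMu=\eta\cdot M_\tau u$, so this equals $\varepsilon(gM_\tau u,\eta)_{L^2(\Gamma)}$, which is exactly the desired leading-order term.

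It remains to estimate the remainder
\begin{align*}
  R = \int_\Gamma \eta(y)\cdot\int_{\varepsilon g_0(y)}^{\varepsilon g_1(y)} u(y+rn(y))\,\bigl(J(y,r)-1\bigr)\,dr\,d\mathcal{H}^2(y).
\end{align*}
By \eqref{E:Jacob} we have $|J(y,r)-1|\leq c\varepsilon$ for $r\in[\varepsilon g_0(y),\varepsilon g_1(y)]$, so Cauchy--Schwarz in $r$ (noting the interval has length $\varepsilon g(y)\leq c\varepsilon$) gives
\begin{align*}
  \int_{\varepsilon g_0(y)}^{\varepsilon g_1(y)} |u(y+rn(y))|\,dr \leq c\varepsilon^{1/2}\left(\int_{\varepsilon g_0(y)}^{\varepsilon g_1(y)} |u(y+rn(y))|^2\,dr\right)^{1/2}.
\end{align*}
Plugging this in and then applying Cauchy--Schwarz in $y$ together with \eqref{E:CoV_L2} yields
\begin{align*}
  |R| \leq c\varepsilon^{3/2}\|\eta\|_{L^2(\Gamma)}\left(\int_\Gamma\int_{\varepsilon g_0(y)}^{\varepsilon g_1(y)} |u(y+rn(y))|^2\,dr\,d\mathcal{H}^2(y)\right)^{1/2} \leq c\varepsilon^{3/2}\|u\|_{L^2(\Omega_\varepsilon)}\|\eta\|_{L^2(\Gamma)},
\end{align*}
which is \eqref{E:AT_L2In}. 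There is no serious obstacle here; the only thing to keep track of is that the tangentiality of $\eta$ converts $Mu$ into $M_\tau u$ for free, and that the Jacobian error $J-1$ supplies the extra factor of $\varepsilon$ beyond the $\varepsilon^{1/2}$ coming from integration in the thin direction.
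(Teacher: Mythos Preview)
Your proof is correct and follows essentially the same approach as the paper's: both use the change of variables \eqref{E:CoV_CTD}, the Jacobian error bound $|J-1|\leq c\varepsilon$ from \eqref{E:Jacob}, H\"{o}lder/Cauchy--Schwarz together with \eqref{E:CoV_L2}, and the observation that $\eta\cdot Mu=\eta\cdot M_\tau u$ for tangential $\eta$. The paper defers the details to \cite[Lemma~5.23]{Miu20_03}, whereas you spell them out explicitly.
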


\begin{proof}
  Using \eqref{E:CoV_CTD}--\eqref{E:CE_L2CTD} and H\"{o}lder's inequality, we can easily get
  \begin{align*}
    \Bigl|(u,\bar{\eta})_{L^2(\Omega_\varepsilon)}-\varepsilon(gMu,\eta)_{L^2(\Gamma)}\Bigr| \leq c\varepsilon^{3/2}\|u\|_{L^2(\Omega_\varepsilon)}\|\eta\|_{L^2(\Gamma)},
  \end{align*}
  see \cite[Lemma 5.23]{Miu20_03} for details.
  Moreover, $Mu\cdot\eta=M_\tau u\cdot\eta$ on $\Gamma$ since $\eta$ is tangential on $\Gamma$.
  Hence we obtain \eqref{E:AT_L2In} by these results.
\end{proof}

\begin{lemma} \label{L:Ave_RPe}
  Let $u\in H^1(\Omega_\varepsilon)^3$.
  Also, let $\mathcal{R}$ be given by \eqref{E:Def_Rota} and $w\in\mathcal{R}$ be of the form $w(x)=a\times x+b$, $x\in\mathbb{R}^3$ with $a,b\in\mathbb{R}^3$.
  If $(u,w)_{L^2(\Omega_\varepsilon)}=0$, then
  \begin{align} \label{E:Ave_RPe}
    \Bigl|(gM_\tau u,w)_{L^2(\Gamma)}\Bigr| \leq c\varepsilon^{1/2}(|a|+|b|)\|u\|_{L^2(\Omega_\varepsilon)},
  \end{align}
  where $c>0$ is a constant independent of $\varepsilon$, $u$, $a$, and $b$.
\end{lemma}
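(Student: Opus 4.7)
The plan is to reduce the surface integral $(gM_\tau u, w|_\Gamma)_{L^2(\Gamma)}$ to a bulk integral of $u$ against the constant normal extension of $w|_\Gamma$ via Lemma~\ref{L:AT_L2In}, and then exploit the orthogonality $(u,w)_{L^2(\Omega_\varepsilon)}=0$ after showing that $w$ itself is close to this constant extension on the thin strip $\Omega_\varepsilon$.

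First I would note that, since $w\in\mathcal{R}$, the restriction $w|_\Gamma$ is tangential by the defining condition $w|_\Gamma\cdot n=0$ in \eqref{E:Def_Rota}. Consequently $(gM_\tau u,w|_\Gamma)_{L^2(\Gamma)}=(gMu,w|_\Gamma)_{L^2(\Gamma)}$, and $\|w|_\Gamma\|_{L^2(\Gamma)}\leq c(|a|+|b|)$ since $\Gamma$ is bounded. Next, the key quantitative input is that the affine field $w(x)=a\times x+b$ differs from its constant normal extension $\overline{w|_\Gamma}(x)=a\times\pi(x)+b$ only by a term of order $\varepsilon|a|$ on $\Omega_\varepsilon$. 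Indeed, by \eqref{E:Fermi},
\begin{equation*}
  w(x)-\overline{w|_\Gamma}(x) = a\times\bigl(x-\pi(x)\bigr) = d(x)\,a\times n(\pi(x)), \quad x\in\Omega_\varepsilon,
\end{equation*}
so $|w-\overline{w|_\Gamma}|\leq c\varepsilon|a|$ pointwise, and combining with $|\Omega_\varepsilon|\leq c\varepsilon$ yields
\begin{equation*}
  \Bigl\|w-\overline{w|_\Gamma}\Bigr\|_{L^2(\Omega_\varepsilon)} \leq c\varepsilon^{3/2}|a|.
\end{equation*}

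Then, using $(u,w)_{L^2(\Omega_\varepsilon)}=0$ and the Cauchy--Schwarz inequality,
\begin{equation*}
  \left|\Bigl(u,\overline{w|_\Gamma}\Bigr)_{L^2(\Omega_\varepsilon)}\right| = \left|\Bigl(u,\overline{w|_\Gamma}-w\Bigr)_{L^2(\Omega_\varepsilon)}\right| \leq c\varepsilon^{3/2}|a|\,\|u\|_{L^2(\Omega_\varepsilon)}.
\end{equation*}
Finally, applying Lemma~\ref{L:AT_L2In} with $\eta=w|_\Gamma\in L^2(\Gamma,T\Gamma)$ gives
\begin{equation*}
  \varepsilon\Bigl|(gM_\tau u,w|_\Gamma)_{L^2(\Gamma)}\Bigr| \leq \left|\Bigl(u,\overline{w|_\Gamma}\Bigr)_{L^2(\Omega_\varepsilon)}\right|+c\varepsilon^{3/2}\|u\|_{L^2(\Omega_\varepsilon)}\Bigl\|w|_\Gamma\Bigr\|_{L^2(\Gamma)} \leq c\varepsilon^{3/2}(|a|+|b|)\|u\|_{L^2(\Omega_\varepsilon)},
\end{equation*}
and dividing by $\varepsilon$ yields \eqref{E:Ave_RPe}. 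There is no serious obstacle here; the argument is essentially a two-line combination of the closeness of $w$ to its constant normal extension in the thin strip (a direct consequence of the $O(\varepsilon)$ size of $d(x)$ on $\Omega_\varepsilon$) with the already-established bulk--surface duality estimate of Lemma~\ref{L:AT_L2In}. The mild subtlety to keep track of is the extra factor of $\varepsilon^{1/2}$ gained from $|\Omega_\varepsilon|\leq c\varepsilon$ when passing from pointwise to $L^2$ bounds, which is exactly what converts the naive $c\varepsilon$ error into the $c\varepsilon^{1/2}$ appearing on the right-hand side of \eqref{E:Ave_RPe}.
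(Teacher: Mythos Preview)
Your proof is correct and follows essentially the same strategy as the paper: both decompose the error into the contribution from $w$ differing from its constant normal extension $\overline{w|_\Gamma}$ and the contribution from the Jacobian in the change of variables. The only difference is packaging---you invoke Lemma~\ref{L:AT_L2In} to absorb the Jacobian error, whereas the paper carries out the change-of-variables computation \eqref{E:CoV_CTD} directly and isolates that same error as the term $J_2$.
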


\begin{proof}
  We apply \eqref{E:CoV_CTD} to $(u,w)_{L^2(\Omega_\varepsilon)}=0$ to get
  \begin{align*}
    0 &= \int_\Gamma\int_{\varepsilon g_0(y)}^{\varepsilon g_1(y)}\{u(y+rn(y))\cdot w(y+rn(y))\}J(y,r)\,dr\,d\mathcal{H}^2(y) \\
    &= J_1+J_2+\int_\Gamma\int_{\varepsilon g_0(y)}^{\varepsilon g_1(y)}u(y+rn(y))\cdot w(y)\,dr\,d\mathcal{H}^2(y),
  \end{align*}
  where
  \begin{align*}
    J_1 &= \int_\Gamma\int_{\varepsilon g_0(y)}^{\varepsilon g_1(y)}[u(y+rn(y))\cdot\{w(y+rn(y))-w(y)\}]J(y,r)\,dr\,d\mathcal{H}^2(y), \\
    J_2 &= \int_\Gamma\int_{\varepsilon g_0(y)}^{\varepsilon g_1(y)}\{u(y+rn(y))\cdot w(y)\}\{J(y,r)-1\}\,dr\,d\mathcal{H}^2(y).
  \end{align*}
  Since $w(y)$ is independent of $r$, it follows that
  \begin{align*}
    \int_\Gamma\int_{\varepsilon g_0(y)}^{\varepsilon g_1(y)}u(y+rn(y))\cdot w(y)\,dr\,d\mathcal{H}^2(y) = \varepsilon\int_\Gamma g(y)Mu(y)\cdot w(y)\,d\mathcal{H}^2(y).
  \end{align*}
  Moreover, since $w\cdot n=0$ on $\Gamma$ by $w\in\mathcal{R}$, we have $Mu\cdot w=M_\tau u\cdot w$ on $\Gamma$ and thus
  \begin{align*}
    \int_\Gamma\int_{\varepsilon g_0(y)}^{\varepsilon g_1(y)}u(y+rn(y))\cdot w(y)\,dr\,d\mathcal{H}^2(y) = \varepsilon(gM_\tau u,w)_{L^2(\Gamma)}.
  \end{align*}
  To estimate $J_1$ and $J_2$, we see that $w$ is of the form $w(x)=a\times x+b$ and thus
  \begin{align*}
    |w(y+rn(y))-w(y)| = |r||a\times n(y)| \leq c\varepsilon|a|, \quad |w(y)| \leq c(|a|+|b|)
  \end{align*}
  for all $y\in\Gamma$ and $r\in[\varepsilon g_0(y),\varepsilon g_1(y)]$, since $\Gamma$ is compact and $g_0$ and $g_1$ are bounded on $\Gamma$.
  By these inequalities, \eqref{E:Jacob}, H\"{o}lder's inequality, and \eqref{E:CoV_L2}, we find that
  \begin{align*}
    |J_k| \leq c\varepsilon(|a|+|b|)\int_\Gamma\int_{\varepsilon g_0(y)}^{\varepsilon g_1(y)}|u(y+rn(y))|\,dr\,d\mathcal{H}^2(y) \leq c\varepsilon^{3/2}(|a|+|b|)\|u\|_{L^2(\Omega_\varepsilon)}
  \end{align*}
  for $k=1,2$.
  Combining the above results, we obtain
  \begin{align*}
    \Bigl|(gM_\tau u,w)_{L^2(\Gamma)}\Bigr| = \varepsilon^{-1}|J_1+J_2| \leq c\varepsilon^{1/2}(|a|+|b|)\|u\|_{L^2(\Omega_\varepsilon)}.
  \end{align*}
  Therefore, \eqref{E:Ave_RPe} is valid.
\end{proof}

\subsection{Extension of surface vector fields} \label{SS:Ex_SuVe}
In the study of the thin-film limit for \eqref{E:SNS_CTD}, we extend a vector field on $\Gamma$ to a suitable test function for a weak form of \eqref{E:SNS_CTD}.

Let $L_\sigma^2(\Omega_\varepsilon)$ and $H_{n,\sigma}^1(\Omega_\varepsilon)$ be the solenoidal spaces on $\Omega_\varepsilon$ given by \eqref{E:Sol_CTD}.
Also, let $\mathbb{L}_\varepsilon$ be the Helmholtz--Leray projection from $L^2(\Omega_\varepsilon)^3$ onto $L_\sigma^2(\Omega_\varepsilon)$.
It is known (see e.g. \cite{Tem01,Gal11,BoyFab13}) that the orthogonal complement of $L_\sigma^2(\Omega_\varepsilon)$ in $L^2(\Omega_\varepsilon)^3$ is of the form
\begin{align*}
  L_\sigma^2(\Omega_\varepsilon)^\perp = \{\nabla p\in L^2(\Omega_\varepsilon)^3 \mid p\in L^2(\Omega_\varepsilon)\}
\end{align*}
and that $\mathbb{L}_\varepsilon$ is given by $\mathbb{L}_\varepsilon u=u-\nabla\varphi$ for $u\in L^2(\Omega_\varepsilon)^3$, where $\varphi$ is a weak solution to
\begin{align*}
  \Delta\varphi = \mathrm{div}\,u \quad\text{in}\quad \Omega_\varepsilon, \quad \frac{\partial\varphi}{\partial n_\varepsilon} = u\cdot n_\varepsilon \quad\text{on}\quad \Gamma_\varepsilon.
\end{align*}
Moreover, if $u\in H^1(\Omega_\varepsilon)^3$, then $\mathbb{L}_\varepsilon u\in H_{n,\sigma}^1(\Omega_\varepsilon)$ by the elliptic regularity theory.

Let $\nabla_\Gamma$ and $W$ be the tangential gradient on $\Gamma$ and the Weingarten map of $\Gamma$ given in Section \ref{SS:Surf}.
We define
\begin{align*}
  \tau_\varepsilon^i(y) = \{I_3-\varepsilon g_i(y)W(y)\}^{-1}\nabla_\Gamma g_i(y), \quad y\in\Gamma, \, i=0,1.
\end{align*}
Then the unit outward normal vector field $n_\varepsilon$ of $\Gamma_\varepsilon$ is expressed by
\begin{align} \label{E:UON_CTD}
  n_\varepsilon(x) = \frac{(-1)^{i+1}}{\sqrt{1+\varepsilon^2|\bar{\tau}_\varepsilon^i(x)|^2}}\{\bar{n}(x)-\varepsilon\bar{\tau}_\varepsilon^i(x)\}, \quad x\in\Gamma_\varepsilon^i, \, i=0,1,
\end{align}
see \cite[Lemma 3.9]{Miu22_01}.
For a tangential vector field $v$ on $\Gamma$, we set
\begin{align*}
  E_\varepsilon v = \bar{v}+(\bar{v}\cdot\Psi_\varepsilon)\bar{n}, \quad \Psi_\varepsilon = \frac{1}{\bar{g}}\{(d-\varepsilon\bar{g}_0)\bar{\tau}_\varepsilon^1+(\varepsilon\bar{g}_1-d)\bar{\tau}_\varepsilon^0\} \quad\text{in}\quad N,
\end{align*}
where $d$ and $N$ are the signed distance from $\Gamma$ and the tubular neighborhood of $\Gamma$ given in Section \ref{SS:Surf}.
Then we easily observe by \eqref{E:UON_CTD} that $E_\varepsilon v\cdot n_\varepsilon=0$ on $\Gamma_\varepsilon$.
Moreover, $\mathrm{div}(E_\varepsilon v)$ is of order $\varepsilon$ in $\Omega_\varepsilon$ if $\mathrm{div}_\Gamma(gv)=0$ on $\Gamma$ (see \cite[Lemma 4.11]{Miu21_02}).

Combining $\mathbb{L}_\varepsilon$ and $E_\varepsilon$, we have a suitable extension of a vector field on $\Gamma$ as follows.

\begin{lemma} \label{L:ESV_L2}
  Let $\eta\in H^1(\Gamma,T\Gamma)$ satisfy $\mathrm{div}_\Gamma(g\eta)=0$ on $\Gamma$.
  Then
  \begin{align*}
    \eta_\varepsilon = \mathbb{L}_\varepsilon E_\varepsilon\eta \in H_{n,\sigma}^1(\Omega_\varepsilon)
  \end{align*}
  and there exists a constant $c>0$ independent of $\varepsilon$ and $\eta$ such that
  \begin{align} \label{E:ESV_L2}
    \|\eta_\varepsilon-\bar{\eta}\|_{L^2(\Omega_\varepsilon)} \leq c\varepsilon^{3/2}\|\eta\|_{H^1(\Gamma)}.
  \end{align}
\end{lemma}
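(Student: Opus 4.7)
The plan is to decompose
\begin{equation*}
  \eta_\varepsilon-\bar\eta = \bigl(\mathbb{L}_\varepsilon E_\varepsilon\eta-E_\varepsilon\eta\bigr) + \bigl(E_\varepsilon\eta-\bar\eta\bigr)
\end{equation*}
and estimate each piece separately, invoking the already-quoted bound on $\mathrm{div}(E_\varepsilon\eta)$ as the main analytic input. Before estimating, note that membership $\eta_\varepsilon\in H_{n,\sigma}^1(\Omega_\varepsilon)$ is automatic: since $\eta\in H^1(\Gamma,T\Gamma)$ and the coefficient $\Psi_\varepsilon$ is $C^1$ on $\overline{\Omega_\varepsilon}$, the field $E_\varepsilon\eta$ lies in $H^1(\Omega_\varepsilon)^3$, and the elliptic regularity of the Neumann problem defining $\mathbb{L}_\varepsilon$ (recalled just before the lemma) places $\mathbb{L}_\varepsilon E_\varepsilon\eta$ in $H_{n,\sigma}^1(\Omega_\varepsilon)$.

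For the second summand, the definition of $E_\varepsilon$ gives $E_\varepsilon\eta-\bar\eta=(\bar\eta\cdot\Psi_\varepsilon)\bar n$. Because the factors $d-\varepsilon\bar g_0$ and $\varepsilon\bar g_1-d$ appearing in $\Psi_\varepsilon$ are bounded by $c\varepsilon$ on $\Omega_\varepsilon$ (since $\varepsilon g_0<d<\varepsilon g_1$ there) and $1/\bar g$, $\bar\tau_\varepsilon^0$, $\bar\tau_\varepsilon^1$ are uniformly bounded, we have $|\Psi_\varepsilon|\le c\varepsilon$ pointwise. Combined with \eqref{E:CE_L2CTD} this yields
\begin{equation*}
  \|E_\varepsilon\eta-\bar\eta\|_{L^2(\Omega_\varepsilon)} \le c\varepsilon\|\bar\eta\|_{L^2(\Omega_\varepsilon)} \le c\varepsilon^{3/2}\|\eta\|_{L^2(\Gamma)},
\end{equation*}
which is of the required order.

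For the first summand, write $\mathbb{L}_\varepsilon E_\varepsilon\eta-E_\varepsilon\eta=-\nabla\varphi$, where (by the description of $\mathbb{L}_\varepsilon$ recalled above) $\varphi\in H^2(\Omega_\varepsilon)$ solves the Neumann problem
\begin{equation*}
  \Delta\varphi=\mathrm{div}(E_\varepsilon\eta)\ \text{in}\ \Omega_\varepsilon,\qquad \partial_{n_\varepsilon}\varphi=E_\varepsilon\eta\cdot n_\varepsilon=0\ \text{on}\ \Gamma_\varepsilon,
\end{equation*}
the boundary condition holding because \eqref{E:UON_CTD} and the definition of $\Psi_\varepsilon$ were designed precisely to make $E_\varepsilon\eta\cdot n_\varepsilon=0$. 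Normalizing $\varphi$ to have zero mean, I test the identity $E_\varepsilon\eta=\mathbb{L}_\varepsilon E_\varepsilon\eta+\nabla\varphi$ against $\nabla\varphi$ and use orthogonality together with integration by parts to get
\begin{equation*}
  \|\nabla\varphi\|_{L^2(\Omega_\varepsilon)}^2 = -\bigl(\mathrm{div}(E_\varepsilon\eta),\varphi\bigr)_{L^2(\Omega_\varepsilon)} \le \|\mathrm{div}(E_\varepsilon\eta)\|_{L^2(\Omega_\varepsilon)}\|\varphi\|_{L^2(\Omega_\varepsilon)}.
\end{equation*}
Applying a Poincaré inequality $\|\varphi\|_{L^2(\Omega_\varepsilon)}\le c\|\nabla\varphi\|_{L^2(\Omega_\varepsilon)}$ with constant independent of $\varepsilon$ (available on these curved thin domains) I obtain $\|\nabla\varphi\|_{L^2(\Omega_\varepsilon)}\le c\|\mathrm{div}(E_\varepsilon\eta)\|_{L^2(\Omega_\varepsilon)}$. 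The hypothesis $\mathrm{div}_\Gamma(g\eta)=0$ on $\Gamma$ together with \cite[Lemma 4.11]{Miu21_02} (cited in the paragraph preceding the lemma) yields $\|\mathrm{div}(E_\varepsilon\eta)\|_{L^2(\Omega_\varepsilon)}\le c\varepsilon^{3/2}\|\eta\|_{H^1(\Gamma)}$. Adding the two estimates gives \eqref{E:ESV_L2}.

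The only delicate point is the uniform Poincaré bound on $\Omega_\varepsilon$; everything else is bookkeeping with the already-developed toolkit. I expect the Poincaré constant to be $\varepsilon$-independent by a standard thin-domain argument (change variables $r=\varepsilon s$ in \eqref{E:CoV_CTD} and use Poincaré on the rescaled domain, exploiting that $\varphi$ has zero mean), but if a sharper or qualitatively different statement is needed one could alternatively estimate $\|\nabla\varphi\|_{L^2(\Omega_\varepsilon)}$ directly in a negative-order norm of $\mathrm{div}(E_\varepsilon\eta)$ using that this divergence has zero mean (which follows from $E_\varepsilon\eta\cdot n_\varepsilon=0$ on $\Gamma_\varepsilon$).
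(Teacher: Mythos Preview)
Your proof is correct and follows the natural decomposition; the paper itself only cites \cite[Lemma 7.3]{Miu20_03} rather than giving an argument, so there is nothing to compare against directly, but your reconstruction is the expected one. Two remarks on the points you flag.

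First, the divergence bound: the paper's phrase ``$\mathrm{div}(E_\varepsilon v)$ is of order $\varepsilon$'' refers to a pointwise estimate of the type $|\mathrm{div}(E_\varepsilon\eta)(x)|\le c\varepsilon\,|\overline{F(\eta)}(x)|$ with $F$ built from $\eta$ and its first tangential derivatives. Combining this with \eqref{E:CE_L2CTD} gives exactly the $L^2$-bound $c\varepsilon^{3/2}\|\eta\|_{H^1(\Gamma)}$ you use, so that step is fine.

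Second, the uniform Poincar\'e inequality $\|\varphi\|_{L^2(\Omega_\varepsilon)}\le c\|\nabla\varphi\|_{L^2(\Omega_\varepsilon)}$ for zero-mean $\varphi$ with $c$ independent of $\varepsilon$ is true, and your rescaling heuristic is essentially the proof: after the normal change of variables $r=\varepsilon s$ the domain becomes a fixed Lipschitz domain (up to bounded distortion), the Jacobian satisfies \eqref{E:Jacob}, and the gradient in the $s$-variables is controlled by the gradient in $x$ (the dangerous factor $\varepsilon^{-1}$ sits on $\partial_s$, not on the tangential derivatives, so rescaling only helps). One can also bypass Poincar\'e entirely by observing that $\mathrm{div}(E_\varepsilon\eta)$ has zero mean on $\Omega_\varepsilon$ (divergence theorem plus $E_\varepsilon\eta\cdot n_\varepsilon=0$), so the right-hand side $-(\mathrm{div}(E_\varepsilon\eta),\varphi)$ is unchanged if one subtracts a constant from $\varphi$; this shows the estimate is insensitive to the normalization and gives a clean route via the $H^{-1}$-type pairing you allude to at the end.
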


\begin{proof}
  We refer to \cite[Lemma 7.3]{Miu20_03} for the proof.
\end{proof}

\section{Weak solution to the limit equations} \label{S:Sol_Lim}
The purpose of this section is to discuss the existence and uniqueness of a weak solution to the limit equations \eqref{E:Lim_Eq}.
We use the function spaces on $\Gamma$ given in Section \ref{SS:Surf}.

\subsection{Bilinear and trilinear forms} \label{SS:BL_Surf}
We give some properties of bilinear and trilinear forms appearing in a weak form of \eqref{E:Lim_Eq}.
Let $\nu>0$ and $\gamma^0,\gamma^1\geq0$ be the viscous and friction coefficients appearing in \eqref{E:Lim_Eq}, respectively.
We define
\begin{align} \label{E:BL_Surf}
  \begin{aligned}
    a_g(v_1,v_2) &= 2\nu\Bigl\{\bigl(gD_\Gamma(v_1),D_\Gamma(v_2)\bigr)_{L^2(\Gamma)}+(g^{-1}(v_1\cdot\nabla_\Gamma g),v_2\cdot\nabla_\Gamma g)_{L^2(\Gamma)}\Bigl\} \\
    &\qquad +(\gamma^0+\gamma^1)(v_1,v_2)_{L^2(\Gamma)}
  \end{aligned}
\end{align}
for $v_1,v_2\in H^1(\Gamma,T\Gamma)$ with $D_\Gamma(v_1)$ given by \eqref{E:Surf_SRT} and
\begin{align} \label{E:TL_Surf}
  b_g(v_1,v_2,v_3) = -(g(v_1\otimes v_2),\nabla_\Gamma v_3)_{L^2(\Gamma)}
\end{align}
for $v_1,v_2,v_3\in H^1(\Gamma,T\Gamma)$.
Note that $v_1\otimes v_2\in L^2(\Gamma)^{3\times 3}$ by \eqref{L:Lad_Ineq}.

\begin{lemma} \label{L:BLS_BoCo}
  For all $v_1,v_2\in H^1(\Gamma,T\Gamma)$, we have
  \begin{align} \label{E:BLS_Bou}
    |a_g(v_1,v_2)| \leq c\|v_1\|_{H^1(\Gamma)}\|v_2\|_{H^1(\Gamma)}.
  \end{align}
  Moreover, there exists a constant $c_a>0$ such that
  \begin{align} \label{E:BLS_Cor}
    \|v\|_{H^1(\Gamma)}^2 \leq c_aa_g(v,v)
  \end{align}
  for all $v\in H^1(\Gamma,T\Gamma)$ provided that $\gamma^0>0$ or $\gamma^1>0$ or $v$ satisfies \eqref{E:KoG_Per}.
\end{lemma}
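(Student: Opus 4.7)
The overall strategy is to combine the pointwise bounds $\inf_\Gamma g>0$ (from \eqref{E:G_Inf}) and the fact that $g$, $g^{-1}$, and $|\nabla_\Gamma g|$ are bounded on $\Gamma$ (since $g\in C^4(\Gamma)$) with the two Korn-type inequalities already established in Lemmas \ref{L:Korn_Surf} and \ref{L:Korn_G}.

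For the boundedness \eqref{E:BLS_Bou}, I would apply the Cauchy--Schwarz inequality to each of the three summands in the definition \eqref{E:BL_Surf} of $a_g$, pull the uniformly bounded weights $g$, $g^{-1}$, and $|\nabla_\Gamma g|^2$ out in sup-norm, and use the pointwise inequality $|D_\Gamma(v)|\leq|\nabla_\Gamma v|$. The latter follows from $D_\Gamma(v)=P(\nabla_\Gamma v)_SP$, symmetrization, and the fact that $|PAP|\leq|A|$ for any $3\times 3$ matrix $A$ and any orthogonal projection $P$. This step is routine.

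The substantive part is \eqref{E:BLS_Cor}. My plan is first to use $\inf_\Gamma g>0$ and $\sup_\Gamma g<\infty$ to establish, for every $v\in H^1(\Gamma,T\Gamma)$, the preliminary lower bound
\begin{equation*}
a_g(v,v) \geq c_1\Bigl(\|D_\Gamma(v)\|_{L^2(\Gamma)}^2+\|v\cdot\nabla_\Gamma g\|_{L^2(\Gamma)}^2\Bigr)+(\gamma^0+\gamma^1)\|v\|_{L^2(\Gamma)}^2
\end{equation*}
with some $c_1>0$ depending only on $\nu$ and on the bounds for $g$. Then I would split according to the hypothesis. If $\gamma^0>0$ or $\gamma^1>0$, I discard the middle term on the right, so that $a_g(v,v)$ dominates a multiple of $\|D_\Gamma(v)\|_{L^2(\Gamma)}^2+\|v\|_{L^2(\Gamma)}^2$, and \eqref{E:BLS_Cor} follows by squaring \eqref{E:Korn_Surf}. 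If instead $v$ satisfies the orthogonality \eqref{E:KoG_Per}, I discard the last term and apply the weighted Korn inequality \eqref{E:Korn_G}, whose right-hand side involves precisely $\|D_\Gamma(v)\|_{L^2(\Gamma)}$ and $\|v\cdot\nabla_\Gamma g\|_{L^2(\Gamma)}$.

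I do not foresee a genuine obstacle: the two summands in $a_g$ beyond the leading term $(gD_\Gamma(v),D_\Gamma(v))_{L^2(\Gamma)}$ are tailored so that, under each admissible hypothesis, exactly one of Lemmas \ref{L:Korn_Surf} or \ref{L:Korn_G} applies. The resulting constant $c_a$ depends on $\nu$, on $\min_\Gamma g$ and $\max_\Gamma g$, on the Korn constants, and, in the first case, on $\gamma^0+\gamma^1$, but the statement places no independence requirement on $c_a$.
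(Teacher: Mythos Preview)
Your proposal is correct and follows essentially the same approach as the paper: boundedness via the pointwise estimate $|D_\Gamma(v_i)|\leq c|\nabla_\Gamma v_i|$ together with the $L^\infty$ bounds on $g$, $g^{-1}$, and $\nabla_\Gamma g$, and coerciveness by bounding $a_g(v,v)$ from below using \eqref{E:G_Inf} and then invoking Lemma~\ref{L:Korn_Surf} when $\gamma^0+\gamma^1>0$ or Lemma~\ref{L:Korn_G} when \eqref{E:KoG_Per} holds. The paper's proof is simply a two-line pointer to these same ingredients.
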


\begin{proof}
  The inequality \eqref{E:BLS_Bou} follows from $|D_\Gamma(v_i)|\leq c|\nabla_\Gamma v_i|$ on $\Gamma$ for $i=1,2$ and $g\in C^1(\Gamma)$ and \eqref{E:G_Inf}.
  Also, \eqref{E:BLS_Cor} follows from \eqref{E:G_Inf} and Lemmas \ref{L:Korn_Surf} and \ref{L:Korn_G}.
\end{proof}

\begin{lemma} \label{L:TLS_BoAs}
  There exists a constant $c_b>0$ such that
  \begin{align} \label{E:TLS_Bou}
    |b_g(v_1,v_2,v_3)| \leq c_b\|v_1\|_{L^2(\Gamma)}^{1/2}\|v_1\|_{H^1(\Gamma)}^{1/2}\|v_2\|_{L^2(\Gamma)}^{1/2}\|v_2\|_{H^1(\Gamma)}^{1/2}\|v_3\|_{H^1(\Gamma)}
  \end{align}
  for all $v_1,v_2,v_3\in H^1(\Gamma,T\Gamma)$.
  Moreover,
  \begin{align} \label{E:TLS_Asym}
    b_g(v_1,v_2,v_3) = -b_g(v_1,v_3,v_2), \quad b_g(v_1,v_2,v_2) = 0
  \end{align}
  for all $v_1\in H_{g\sigma}^1(\Gamma,T\Gamma)$ and $v_2,v_3\in H^1(\Gamma,T\Gamma)$.
\end{lemma}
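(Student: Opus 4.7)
The plan is to prove the two assertions separately, with the boundedness being a direct consequence of Hölder and Ladyzhenskaya, and the antisymmetry following from the surface integration-by-parts formula \eqref{E:TD_IbP}.

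First I would rewrite the integrand of $b_g$ in index form. Since $(v_1\otimes v_2)_{ij}=(v_1)_i(v_2)_j$ and $(\nabla_\Gamma v_3)_{ij}=\underline{D}_i(v_3)_j$, we have
\begin{equation*}
  b_g(v_1,v_2,v_3) = -\int_\Gamma g\,(v_1)_i(v_2)_j\,\underline{D}_i(v_3)_j\,d\mathcal{H}^2,
\end{equation*}
with summation over $i,j$. For \eqref{E:TLS_Bou}, apply Hölder's inequality in the form $L^4\cdot L^4\cdot L^2$ together with the boundedness of $g$ on $\Gamma$ to obtain
\begin{equation*}
  |b_g(v_1,v_2,v_3)| \leq c\|v_1\|_{L^4(\Gamma)}\|v_2\|_{L^4(\Gamma)}\|\nabla_\Gamma v_3\|_{L^2(\Gamma)},
\end{equation*}
then invoke the Ladyzhenskaya inequality \eqref{E:Lad_Ineq} on each of the $L^4$ factors. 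This is routine.

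For the antisymmetry \eqref{E:TLS_Asym}, the key computation is to expand
\begin{equation*}
  b_g(v_1,v_2,v_3)+b_g(v_1,v_3,v_2) = -\int_\Gamma g\sum_i (v_1)_i\,\underline{D}_i(v_2\cdot v_3)\,d\mathcal{H}^2,
\end{equation*}
since the Leibniz rule gives $(v_1\cdot\nabla_\Gamma)v_2\cdot v_3+(v_1\cdot\nabla_\Gamma)v_3\cdot v_2=(v_1\cdot\nabla_\Gamma)(v_2\cdot v_3)$. Now I would apply the surface integration-by-parts formula \eqref{E:TD_IbP} with $\eta=g(v_1)_i$ and $\xi=v_2\cdot v_3$, then sum over $i$, which yields
\begin{equation*}
  \int_\Gamma g(v_1\cdot\nabla_\Gamma)(v_2\cdot v_3)\,d\mathcal{H}^2 = -\int_\Gamma (v_2\cdot v_3)\,\mathrm{div}_\Gamma(gv_1)\,d\mathcal{H}^2 - \int_\Gamma g\,(v_1\cdot n)(v_2\cdot v_3)H\,d\mathcal{H}^2.
\end{equation*}
Here I should note that the argument proceeds first for $v_1,v_2,v_3\in C^1(\Gamma,T\Gamma)$ and then extends by density to $H^1(\Gamma,T\Gamma)$. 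The hypothesis $v_1\in H_{g\sigma}^1(\Gamma,T\Gamma)$ makes the first term vanish (this is exactly the definition of $\mathrm{div}_\Gamma(gv_1)=0$), while the tangentiality $v_1\cdot n=0$ makes the second term vanish. Hence $b_g(v_1,v_2,v_3)+b_g(v_1,v_3,v_2)=0$, and setting $v_3=v_2$ gives $b_g(v_1,v_2,v_2)=0$.

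The only mild obstacle is making sure the weak divergence-free condition $\mathrm{div}_\Gamma(gv_1)=0$ in the sense of $H^1(\Gamma)'$ is compatible with the integration-by-parts identity at the level of $L^2$-test functions; this is handled by the standard density of $C^1(\Gamma,T\Gamma)$ in $H^1(\Gamma,T\Gamma)$ together with \eqref{E:Lad_Ineq} to control the cross-term $v_2\cdot v_3$ in $H^1(\Gamma)$ (or by the definition of $\mathrm{div}_\Gamma(gv_1)$ as a functional on $H^1$). Everything else is bookkeeping.
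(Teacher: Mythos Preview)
Your proposal is correct and follows the same approach as the paper: H\"older plus Ladyzhenskaya for \eqref{E:TLS_Bou}, and the surface integration-by-parts formula \eqref{E:TD_IbP} applied to $g(v_1\cdot\nabla_\Gamma)(v_2\cdot v_3)$ for \eqref{E:TLS_Asym}. The only cosmetic difference is that the paper approximates only $v_2,v_3$ by $C^1$ vector fields (keeping $v_1\in H^1_{g\sigma}(\Gamma,T\Gamma)$ fixed so that $\mathrm{div}_\Gamma(gv_1)=0$ holds throughout), whereas you approximate all three and pass to the limit before invoking the divergence-free condition; both routes are equivalent.
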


\begin{proof}
  The inequality \eqref{E:TLS_Bou} follows from H\"{o}lder's inequality and \eqref{E:Lad_Ineq}.
  Let us show \eqref{E:TLS_Asym}.
  We may assume that $v_2$ and $v_3$ are of class $C^1$ since $C^1(\Gamma,T\Gamma)$ is dense in $H^1(\Gamma,T\Gamma)$ (see \cite[Lemma 3.7]{Miu22_01}).
  Then we see by integration by parts \eqref{E:TD_IbP} that
  \begin{align*}
    b_g(v_1,v_2,v_3) = -b_g(v_1,v_3,v_2)+\int_\Gamma\{\mathrm{div}_\Gamma(gv_1)+gH(v_1\cdot n)\}(v_2\cdot v_3)\,d\mathcal{H}^2
  \end{align*}
  and the last integral vanishes by $v_1\in H_{g\sigma}^1(\Gamma,T\Gamma)$.
  Thus the first equality of \eqref{E:TLS_Asym} is valid.
  Also, we have the second equality of \eqref{E:TLS_Asym} by setting $v_2=v_3$ in the first one.
\end{proof}

Let us give some results related to the function space $\mathcal{K}_g(\Gamma)$ given by \eqref{E:Def_Kilg}.

\begin{lemma} \label{L:BL_Kg}
  If $\gamma^0=\gamma^1=0$, $v\in H^1(\Gamma,T\Gamma)$, and $w\in\mathcal{K}_g(\Gamma)$, then $a_g(v,w)=0$.
\end{lemma}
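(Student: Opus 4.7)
The plan is to simply unpack the definitions. By \eqref{E:BL_Surf}, with $\gamma^0=\gamma^1=0$, the bilinear form reduces to
\begin{align*}
  a_g(v,w) = 2\nu\Bigl\{\bigl(gD_\Gamma(v),D_\Gamma(w)\bigr)_{L^2(\Gamma)}+\bigl(g^{-1}(v\cdot\nabla_\Gamma g),w\cdot\nabla_\Gamma g\bigr)_{L^2(\Gamma)}\Bigr\},
\end{align*}
so it suffices to show that each factor involving $w$ vanishes.

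First, I would invoke the definition \eqref{E:Def_Kilg} of $\mathcal{K}_g(\Gamma)$: a vector field $w\in\mathcal{K}_g(\Gamma)$ is, by construction, a tangential $H^1$-vector field satisfying both $D_\Gamma(w)=0$ on $\Gamma$ (from $w\in\mathcal{K}(\Gamma)$) and $w\cdot\nabla_\Gamma g=0$ on $\Gamma$. Substituting these two pointwise identities into the two $L^2$-inner products above immediately makes each integrand vanish, giving $a_g(v,w)=0$.

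There is no real obstacle here; the lemma is essentially a definition-chasing observation and requires no integration by parts, no density argument, and no use of \eqref{E:G_Inf} beyond ensuring that $g^{-1}$ is well defined (which is not even needed since the second factor of that inner product already vanishes). I would write the proof as one or two lines.
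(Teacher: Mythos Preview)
Your proposal is correct and matches the paper's own proof exactly: both simply observe that $D_\Gamma(w)=0$ and $w\cdot\nabla_\Gamma g=0$ by the definition of $\mathcal{K}_g(\Gamma)$, while the friction term drops out since $\gamma^0=\gamma^1=0$. There is nothing to add.
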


\begin{proof}
  The statement follows from $D_\Gamma(w)=0$ and $\nabla_\Gamma g\cdot w=0$ on $\Gamma$ and $\gamma^0=\gamma^1=0$.
\end{proof}

\begin{lemma} \label{L:TL_Kg}
  Let $v\in H^1(\Gamma,T\Gamma)$ and $w\in\mathcal{K}_g(\Gamma)$.
  Then
  \begin{align} \label{E:TKg_wvv}
    b_g(w,v,v) = 0.
  \end{align}
  For all $v_1,v_2\in H^1(\Gamma,T\Gamma)$ and $w\in\mathcal{K}_g(\Gamma)$, we have
  \begin{align} \label{E:TKg_v1v2}
    b_g(v_1,v_2,w) = -b_g(v_2,v_1,w), \quad b_g(v_1,v_1,w) = 0.
  \end{align}
\end{lemma}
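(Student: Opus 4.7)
The plan splits naturally into the two displays to be proved, and the first one is essentially free given what has already been established.

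For \eqref{E:TKg_wvv}, the strategy is to reduce to the second identity in \eqref{E:TLS_Asym} of Lemma \ref{L:TLS_BoAs}. Lemma \ref{L:Kilg_Sub} already records the inclusion $\mathcal{K}_g(\Gamma)\subset H_{g\sigma}^1(\Gamma,T\Gamma)$ (a one-line consequence of $\mathrm{div}_\Gamma w=0$ from Lemma \ref{L:Kil_Sol} together with the defining condition $w\cdot\nabla_\Gamma g=0$). Thus $w$ sits in the first slot of $b_g$ as an element of $H_{g\sigma}^1(\Gamma,T\Gamma)$, and Lemma \ref{L:TLS_BoAs} immediately gives $b_g(w,v,v)=0$.

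For \eqref{E:TKg_v1v2}, the plan is a direct computation exploiting the symmetry built into the definition of $D_\Gamma(w)$. Writing out
\begin{align*}
  b_g(v_1,v_2,w)+b_g(v_2,v_1,w)
  = -\int_\Gamma g\sum_{i,j}\bigl(v_{1,i}v_{2,j}+v_{2,i}v_{1,j}\bigr)\underline{D}_iw_j\,d\mathcal{H}^2,
\end{align*}
and recognizing $\tfrac12(\underline{D}_iw_j+\underline{D}_jw_i)$ as the $(i,j)$-entry of $(\nabla_\Gamma w)_S$, I would rewrite the right-hand side as $-2(g(v_1\otimes v_2),(\nabla_\Gamma w)_S)_{L^2(\Gamma)}$. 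Since $v_1,v_2$ are tangential, $Pv_1=v_1$ and $Pv_2=v_2$, so that $(v_1\otimes v_2):(\nabla_\Gamma w)_S = v_1^T P(\nabla_\Gamma w)_S P v_2 = v_1^T D_\Gamma(w) v_2$, which vanishes because $D_\Gamma(w)=0$ for $w\in\mathcal{K}(\Gamma)\supset\mathcal{K}_g(\Gamma)$. This yields the first identity of \eqref{E:TKg_v1v2}, and setting $v_1=v_2$ instantly gives the second.

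There is no genuine obstacle here; the only technical point that deserves care is the insertion of the two $P$ factors when passing from $(\nabla_\Gamma w)_S$ to $D_\Gamma(w)=P(\nabla_\Gamma w)_SP$, which is legitimate precisely because both $v_1$ and $v_2$ are tangential, so the condition $v_j\in H^1(\Gamma,T\Gamma)$ in the hypothesis cannot be dropped. No integration by parts is needed for \eqref{E:TKg_v1v2}, so no additional regularity of $w$ beyond the $H^1$ class supplied by $\mathcal{K}_g(\Gamma)\subset H^1(\Gamma,T\Gamma)$ is required.
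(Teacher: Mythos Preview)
Your proof is correct and follows essentially the same approach as the paper. For \eqref{E:TKg_wvv} both you and the paper invoke Lemma \ref{L:Kilg_Sub} and then \eqref{E:TLS_Asym}; for \eqref{E:TKg_v1v2} the paper inserts the $P$ factors first (writing $v_1\otimes v_2:\nabla_\Gamma w = v_1\otimes v_2:P(\nabla_\Gamma w)P$ and similarly for the transpose) and then adds, whereas you symmetrize first and then insert $P$, but the underlying identity $(v_1\otimes v_2):(\nabla_\Gamma w)_S = v_1^T D_\Gamma(w)v_2$ via tangentiality is the same.
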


\begin{proof}
  Let $w\in\mathcal{K}_g(\Gamma)$.
  Then $w\in H_{g\sigma}^1(\Gamma,T\Gamma)$ by Lemma \ref{L:Kilg_Sub} and thus \eqref{E:TKg_wvv} holds by \eqref{E:TLS_Asym}.
  Let $v_1,v_2\in H^1(\Gamma,T\Gamma)$.
  Then since $Pv_i=v_i$ for $i=1,2$ and $P^T=P$, we have
  \begin{align*}
    v_1\otimes v_2:\nabla_\Gamma w &= \{P(v_1\otimes v_2)P\}:\nabla_\Gamma w = v_1\otimes v_2:\{P(\nabla_\Gamma w)P\}, \\
    v_2\otimes v_1:\nabla_\Gamma w &= v_1\otimes v_2:(\nabla_\Gamma w)^T = v_1\otimes v_2:\{P(\nabla_\Gamma w)^TP\}
  \end{align*}
  on $\Gamma$.
  Hence $v_1\otimes v_2:\nabla_\Gamma w=-v_2\otimes v_1:\nabla_\Gamma w$ on $\Gamma$ by $D_\Gamma(w)=0$ and we get the first equality of \eqref{E:TKg_v1v2}.
  Also, the second equality follows from the first one with $v_2=v_1$.
\end{proof}

\subsection{Weak solution to the limit equations} \label{SS:Lim_Weak}
To give the definition of a weak solution to \eqref{E:Lim_Eq}, we recall the following result on the weighted Helmholtz--Leray decomposition on $\Gamma$ proved in \cite[Lemma 6.11]{Miu20_03}.
For a subset $\mathcal{X}$ of $L^2(\Gamma,T\Gamma)$, we write $\mathcal{X}^\perp$ for the orthogonal complement of $\mathcal{X}$ in $L^2(\Gamma,T\Gamma)$ with respect to the canonical inner product $(\cdot,\cdot)_{L^2(\Gamma)}$.

\begin{lemma} \label{L:Lgs_Per}
  We have $L_{g\sigma}^2(\Gamma,T\Gamma)^\perp = \{g\nabla_\Gamma q \in L^2(\Gamma,T\Gamma) \mid q\in H^1(\Gamma)\}$.
\end{lemma}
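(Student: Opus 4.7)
The plan is to prove the two inclusions $\{g\nabla_\Gamma q\} \subset L_{g\sigma}^2(\Gamma,T\Gamma)^\perp$ and $L_{g\sigma}^2(\Gamma,T\Gamma)^\perp \subset \{g\nabla_\Gamma q\}$ separately. The forward inclusion is essentially built into the definitions: for $q\in H^1(\Gamma)$ and $v\in L_{g\sigma}^2(\Gamma,T\Gamma)$, one writes
\begin{align*}
  (g\nabla_\Gamma q,v)_{L^2(\Gamma)} = (gv,\nabla_\Gamma q)_{L^2(\Gamma)} = -[\mathrm{div}_\Gamma(gv),q]_{H^{-1},H^1} = 0,
\end{align*}
where the middle equality is just the definition of the weak surface divergence on a closed surface (cf.\ the integration-by-parts identity \eqref{E:TD_IbP}, applied to the tangential field $gv$ so that the mean-curvature term drops). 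The fact that $g\nabla_\Gamma q$ belongs to $L^2(\Gamma,T\Gamma)$ uses only $g\in C^1(\Gamma)$ and the tangentiality of $\nabla_\Gamma q$.

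For the reverse inclusion, take $u\in L_{g\sigma}^2(\Gamma,T\Gamma)^\perp$ and solve a weighted Neumann-type problem on $\Gamma$: find $q$ in the quotient space $\dot H^1(\Gamma)=H^1(\Gamma)/\mathbb{R}$ satisfying
\begin{align*}
  \int_\Gamma g\nabla_\Gamma q\cdot\nabla_\Gamma\phi\,d\mathcal{H}^2 = \int_\Gamma u\cdot\nabla_\Gamma\phi\,d\mathcal{H}^2 \quad\text{for all}\quad \phi\in H^1(\Gamma).
\end{align*}
The bilinear form on the left is continuous and, thanks to the lower bound $g\geq c$ in \eqref{E:G_Inf} together with the Poincar\'{e} inequality on the compact surface $\Gamma$, coercive on $\dot H^1(\Gamma)$. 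The right-hand side defines a continuous linear functional on $\dot H^1(\Gamma)$ (since constants are annihilated by $\nabla_\Gamma$), so Lax--Milgram produces a unique solution $q\in \dot H^1(\Gamma)$.

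Next I would set $w=u-g\nabla_\Gamma q\in L^2(\Gamma,T\Gamma)$. By construction, the identity above reads $\int_\Gamma w\cdot\nabla_\Gamma\phi=0$ for every $\phi\in H^1(\Gamma)$, which means $\mathrm{div}_\Gamma w=0$ in the weak sense, i.e.\ in $H^1(\Gamma)^*$. Since $g\geq c>0$, the vector field $v:=w/g$ lies in $L^2(\Gamma,T\Gamma)$ and satisfies $\mathrm{div}_\Gamma(gv)=\mathrm{div}_\Gamma w=0$, so $v\in L_{g\sigma}^2(\Gamma,T\Gamma)$. Testing the orthogonality $u\perp L_{g\sigma}^2(\Gamma,T\Gamma)$ against this particular $v$ gives
\begin{align*}
  0 = (u,v)_{L^2(\Gamma)} = \int_\Gamma \frac{|w|^2}{g}\,d\mathcal{H}^2 + \int_\Gamma w\cdot\nabla_\Gamma q\,d\mathcal{H}^2 = \int_\Gamma \frac{|w|^2}{g}\,d\mathcal{H}^2,
\end{align*}
where the last step uses $q\in H^1(\Gamma)$ and the weak divergence-freeness of $w$. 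Hence $w=0$ and $u=g\nabla_\Gamma q$.

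The only step that requires genuine thought is the reverse inclusion, and within it the main subtlety is manufacturing a valid test element of $L_{g\sigma}^2(\Gamma,T\Gamma)$ from the residual $w$: this is exactly where the uniform lower bound $g\geq c$ enters, allowing us to divide by $g$ without losing $L^2$-regularity. Beyond that the argument is standard Helmholtz--Leray on a closed Riemannian manifold, and no compatibility condition appears because constants are automatically in the kernel of $\nabla_\Gamma$.
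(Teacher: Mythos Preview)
Your proof is correct. The paper itself does not prove this lemma but simply cites \cite[Lemma 6.11]{Miu20_03}, so there is no in-paper argument to compare against; your self-contained Helmholtz--Leray argument via the weighted Lax--Milgram problem on $\dot H^1(\Gamma)$ is the standard route and matches what one expects the cited proof to do.
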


Based on this result and the integration by parts formula \eqref{E:TD_IbP}, we define a weak solution to \eqref{E:Lim_Eq} as follows.
Recall that we define $gf$ by \eqref{E:HinS_Mul} for $f\in H^{-1}(\Gamma,T\Gamma)$.

\begin{definition} \label{D:Lim_Weak}
  For a given $f\in H^{-1}(\Gamma,T\Gamma)$, we say that a tangential vector field $v$ on $\Gamma$ is a weak solution to \eqref{E:Lim_Eq} if $v\in H_{g\sigma}^1(\Gamma,T\Gamma)$ and
  \begin{align} \label{E:Lim_Weak}
    a_g(v,\eta)+b_g(v,v,\eta) = [gf,\eta]_{T\Gamma}
  \end{align}
  for all $\eta\in H_{g\sigma}^1(\Gamma,T\Gamma)$.
\end{definition}

For the existence of a weak solution, $f$ should satisfy the compatibility condition \eqref{E:Fsu_Comp}.

\begin{proposition} \label{P:LW_Comp}
  Let $f\in H^{-1}(\Gamma,T\Gamma)$.
  If $\gamma^0=\gamma^1=0$ and $\mathcal{K}_g(\Gamma)\neq\{0\}$, and if there exists a weak solution to \eqref{E:Lim_Eq}, then $[gf,w]_{T\Gamma}=0$ for all $w\in\mathcal{K}_g(\Gamma)$.
\end{proposition}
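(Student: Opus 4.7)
The plan is straightforward: given the existing lemmas on how $a_g$ and $b_g$ vanish when tested against Killing fields, I would simply plug a generic $w \in \mathcal{K}_g(\Gamma)$ into the weak formulation \eqref{E:Lim_Weak} and read off the conclusion.

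First I would verify that $w$ is a legal test function. By Lemma~\ref{L:Kilg_Sub}, $\mathcal{K}_g(\Gamma) \subset H_{g\sigma}^1(\Gamma,T\Gamma)$, so any $w \in \mathcal{K}_g(\Gamma)$ qualifies as a test function in Definition~\ref{D:Lim_Weak}. Substituting $\eta = w$ into \eqref{E:Lim_Weak} gives
\begin{equation*}
  a_g(v,w) + b_g(v,v,w) = [gf,w]_{T\Gamma}.
\end{equation*}

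Next I would eliminate the two terms on the left. Since $\gamma^0 = \gamma^1 = 0$ by assumption and $w \in \mathcal{K}_g(\Gamma)$, Lemma~\ref{L:BL_Kg} yields $a_g(v,w) = 0$. For the trilinear term, Lemma~\ref{L:TL_Kg} — specifically the second identity of \eqref{E:TKg_v1v2} applied with $v_1 = v$ — gives $b_g(v,v,w) = 0$. (Alternatively one could argue via \eqref{E:TKg_wvv} after using the antisymmetry of $b_g$ in its last two arguments from \eqref{E:TLS_Asym}, but the direct appeal to \eqref{E:TKg_v1v2} is cleaner.) Combining these, $[gf,w]_{T\Gamma} = 0$, which is the claim.

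There is no real obstacle here; the proposition is essentially a bookkeeping consequence of the two lemmas from Section~\ref{SS:BL_Surf} that were explicitly set up for this purpose. The only thing to be mildly careful about is to point out why $w$ is admissible as a test function (i.e., to invoke Lemma~\ref{L:Kilg_Sub}), since $\mathcal{K}_g(\Gamma)$ was defined via $D_\Gamma(w) = 0$ and $w \cdot \nabla_\Gamma g = 0$, and one needs the derived condition $\mathrm{div}_\Gamma(gw) = 0$ to fit into the test space of the weak formulation.
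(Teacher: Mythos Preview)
Your proposal is correct and follows essentially the same route as the paper: invoke Lemma~\ref{L:Kilg_Sub} to justify $w$ as a test function, substitute $\eta=w$ into \eqref{E:Lim_Weak}, and then apply Lemmas~\ref{L:BL_Kg} and~\ref{L:TL_Kg} to kill both terms on the left. The paper's proof is slightly terser but identical in substance.
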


\begin{proof}
  Let $v$ be a weak solution to \eqref{E:Lim_Eq} and $w\in\mathcal{K}_g(\Gamma)$.
  Since $w\in H_{g\sigma}^1(\Gamma,T\Gamma)$ by Lemma \ref{L:Kilg_Sub}, and we can set $\eta=w$ in \eqref{E:Lim_Weak} and get $[gf,w]_{T\Gamma}=0$ by Lemmas \ref{L:BL_Kg} and \ref{L:TL_Kg}.
\end{proof}

As in the flat domain case, we can get the existence of a weak solution to \eqref{E:Lim_Eq}.

\begin{proposition} \label{P:LW_Exist}
  Let $f\in H^{-1}(\Gamma,T\Gamma)$ satisfy \eqref{E:Fsu_Comp}.
  Then there exists at least one weak solution $v$ to \eqref{E:Lim_Eq} such that $v\in\mathbb{V}_g$.
\end{proposition}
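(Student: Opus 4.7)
The plan is to apply a standard Galerkin method in the separable Hilbert space $\mathbb{V}_g$, following the classical Leray--Lions argument for stationary Navier--Stokes. I would choose a countable orthonormal basis $\{w_k\}_{k=1}^\infty$ of $\mathbb{V}_g$ (say with respect to the $H^1(\Gamma)$ inner product) and, writing $\mathbb{V}_g^N=\mathrm{span}\{w_1,\dots,w_N\}$, look for a Galerkin approximation $v_N\in\mathbb{V}_g^N$ satisfying
\[
a_g(v_N,w_j)+b_g(v_N,v_N,w_j)=[gf,w_j]_{T\Gamma},\qquad j=1,\dots,N.
\]
Existence of $v_N$ follows from Brouwer's fixed point theorem applied to the continuous map $\Phi_N\colon\mathbb{V}_g^N\to\mathbb{V}_g^N$ defined by $(\Phi_N(u),w_j)_{H^1(\Gamma)}=a_g(u,w_j)+b_g(u,u,w_j)-[gf,w_j]_{T\Gamma}$. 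The key estimate is
\[
(\Phi_N(u),u)_{H^1(\Gamma)}=a_g(u,u)+b_g(u,u,u)-[gf,u]_{T\Gamma}\geq c_a^{-1}\|u\|_{H^1(\Gamma)}^2-c\|f\|_{H^{-1}(\Gamma,T\Gamma)}\|u\|_{H^1(\Gamma)},
\]
where $b_g(u,u,u)=0$ by Lemma \ref{L:TLS_BoAs} (since $u\in H^1_{g\sigma}(\Gamma,T\Gamma)$) and the coercive bound on $a_g$ comes from Lemma \ref{L:BLS_BoCo}; crucially, in the case $\gamma^0=\gamma^1=0$ with $\mathcal{K}_g(\Gamma)\neq\{0\}$, every $u\in\mathbb{V}_g$ satisfies the orthogonality \eqref{E:KoG_Per} by construction, so that the lemma applies in all configurations allowed by the definition of $\mathbb{H}_g$.

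The same computation supplies the uniform a priori bound $\|v_N\|_{H^1(\Gamma)}\leq c\|f\|_{H^{-1}(\Gamma,T\Gamma)}$. Extracting a subsequence, I would obtain $v_N\rightharpoonup v$ weakly in $H^1(\Gamma)$ with $v\in\mathbb{V}_g$ (by weak closedness) and, via the compact Sobolev embedding together with Ladyzhenskaya's inequality \eqref{E:Lad_Ineq}, strongly in $L^4(\Gamma)$. Passing to the limit in the Galerkin equation for fixed $w_j$ is routine: $a_g(\cdot,w_j)$ is linear and bounded by \eqref{E:BLS_Bou}, hence weakly continuous, while the trilinear part $b_g(v_N,v_N,w_j)=-(gv_N\otimes v_N,\nabla_\Gamma w_j)_{L^2(\Gamma)}$ converges to $-(gv\otimes v,\nabla_\Gamma w_j)_{L^2(\Gamma)}$ thanks to the strong $L^4$ convergence of $v_N$. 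By density of $\bigcup_N\mathbb{V}_g^N$ in $\mathbb{V}_g$ together with the continuity estimates \eqref{E:BLS_Bou} and \eqref{E:TLS_Bou}, the identity $a_g(v,\eta)+b_g(v,v,\eta)=[gf,\eta]_{T\Gamma}$ then holds for all $\eta\in\mathbb{V}_g$.

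The final step is to upgrade the test space from $\mathbb{V}_g$ to $H^1_{g\sigma}(\Gamma,T\Gamma)$ as required by Definition \ref{D:Lim_Weak}. When $\mathbb{V}_g=H^1_{g\sigma}(\Gamma,T\Gamma)$ there is nothing to prove. Otherwise $\gamma^0=\gamma^1=0$ and $\mathcal{K}_g(\Gamma)\neq\{0\}$, and by Lemma \ref{L:Kilg_Sub} the finite-dimensional subspace $\mathcal{K}_g(\Gamma)$ of $H^1_{g\sigma}(\Gamma,T\Gamma)$ allows the weighted $L^2$-orthogonal decomposition $\eta=\eta_1+\eta_2$ with $\eta_1\in\mathbb{V}_g$ and $\eta_2\in\mathcal{K}_g(\Gamma)$ for every $\eta\in H^1_{g\sigma}(\Gamma,T\Gamma)$ (the $H^1$-regularity of $\eta_1=\eta-\eta_2$ being inherited from both $\eta$ and $\eta_2$). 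On the Killing part, Lemma \ref{L:BL_Kg} gives $a_g(v,\eta_2)=0$, the second equality of \eqref{E:TKg_v1v2} in Lemma \ref{L:TL_Kg} (with $v_1=v_2=v$) gives $b_g(v,v,\eta_2)=0$, and the compatibility assumption \eqref{E:Fsu_Comp} gives $[gf,\eta_2]_{T\Gamma}=0$. Thus the weak form extends to all of $H^1_{g\sigma}(\Gamma,T\Gamma)$. I expect the only genuinely delicate point to be the uniform coercivity of $a_g$ on $\mathbb{V}_g$ in the Killing-vector case; this is exactly what motivates the definition of $\mathbb{H}_g$, and without it the a priori bound on $v_N$ would fail.
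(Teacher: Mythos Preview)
Your proposal is correct and follows essentially the same route as the paper: a Galerkin scheme in $\mathbb{V}_g$ using the coercivity of $a_g$ from Lemma~\ref{L:BLS_BoCo} and the cancellation $b_g(u,u,u)=0$, followed by the decomposition $\eta=\eta_1+\eta_2$ with $\eta_2\in\mathcal{K}_g(\Gamma)$ and Lemmas~\ref{L:BL_Kg}, \ref{L:TL_Kg}, \eqref{E:Fsu_Comp} to extend the test space. The only cosmetic difference is that the paper picks the basis $\{v_k\}$ to be orthonormal for the inner product $a_g(\cdot,\cdot)$ (via the compact embedding $\mathbb{V}_g\hookrightarrow\mathbb{H}_g$), whereas you use an $H^1(\Gamma)$-orthonormal basis; either choice works for the argument you give.
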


\begin{proof}
  The proof is the same as in the flat domain case under the slip boundary conditions \cite[Theorem 7.3]{AmrRej14}, so we just give the outline of the proof.

  Since $a_g$ is symmetric, bounded, and coercive on $\mathbb{V}_g$ by Lemma \ref{L:BLS_BoCo}, and since the embedding $\mathbb{V}_g\hookrightarrow\mathbb{H}_g$ is compact, we can take an orthonormal basis $\{v_k\}_{k=1}^\infty$ of $\mathbb{V}_g$ equipped with inner product $a_g(\cdot,\cdot)$ as in the flat domain case (see e.g. \cite[Theorem IV.5.5]{BoyFab13}).
  Using this basis, we can construct a vector field $v\in\mathbb{V}_g$ satisfying \eqref{E:Lim_Weak} for all $\eta\in\mathbb{V}_g$ by the Galerkin method with the aid of Lemmas \ref{L:BLS_BoCo} and \ref{L:TLS_BoAs} (see e.g. \cite[Theorems V.3.1]{BoyFab13} and \cite[Chapter II, Theorem 1.2]{Tem01} for the flat domain case).

  When $\gamma^0>0$ or $\gamma^1>0$ or $\mathcal{K}_g(\Gamma)=\{0\}$, this $v$ is a weak solution to \eqref{E:Lim_Eq} since $\mathbb{V}_g$ coincides with $H_{g\sigma}^1(\Gamma,T\Gamma)$.
  On the other hand, if $\gamma^0=\gamma^1=0$ and $\mathcal{K}_g(\Gamma)\neq\{0\}$, then $\mathbb{V}_g$ is strictly smaller than $H_{g\sigma}^1(\Gamma,T\Gamma)$.
  In this case, however, each test function $\eta\in H_{g\sigma}^1(\Gamma,T\Gamma)$ can be decomposed as $\eta=\zeta+w$ with $\zeta\in\mathbb{V}_g$ and $w\in\mathcal{K}_g(\Gamma)$ by Lemma \ref{L:Kilg_Sub}.
  Then $v$ satisfies \eqref{E:Lim_Weak} for the test function $\zeta\in\mathbb{V}_g$.
  Also, \eqref{E:Lim_Weak} holds for the test function $w\in\mathcal{K}_g(\Gamma)$ by \eqref{E:Fsu_Comp} and Lemmas \ref{L:BL_Kg} and \ref{L:TL_Kg}.
  Hence $v$ satisfies \eqref{E:Lim_Weak} for $\eta=\zeta+w$ and it is a weak solution to \eqref{E:Lim_Eq}.
\end{proof}

We also have the uniqueness of a weak solution to \eqref{E:Lim_Eq} in the class $\mathbb{V}_g$.

\begin{proposition} \label{P:LW_Uniq}
  For the constants $c_a$ and $c_b$ given in Lemmas \ref{L:BLS_BoCo} and \ref{L:TLS_BoAs}, let $\rho_u=(c_ac_b)^{-1}>0$.
  Also, let $f\in H^{-1}(\Gamma,T\Gamma)$ satisfy \eqref{E:Fsu_Comp}.
  If $v_1$ and $v_2$ are weak solutions to \eqref{E:Lim_Eq} such that $v_1,v_2\in\mathbb{V}_g$, and if $\|v_k\|_{H^1(\Gamma)}<\rho_u$ for $k=1$ or $k=2$, then $v_1=v_2$.
\end{proposition}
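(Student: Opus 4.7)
The plan is to run the classical monotonicity argument for uniqueness of stationary Navier--Stokes weak solutions, adapted to the surface weak formulation \eqref{E:Lim_Weak}. Since $v_1,v_2\in\mathbb{V}_g$ and $\mathbb{V}_g$ is a linear subspace of $H_{g\sigma}^1(\Gamma,T\Gamma)$, the difference $w=v_1-v_2$ lies in $\mathbb{V}_g$ and is therefore admissible as a test function in \eqref{E:Lim_Weak}.

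Subtracting the weak forms for $v_1$ and $v_2$ and testing against $w$ gives
\begin{align*}
  a_g(w,w)+b_g(v_1,v_1,w)-b_g(v_2,v_2,w) = 0.
\end{align*}
Depending on which of $v_1,v_2$ is small, I would split the trilinear difference in one of two symmetric ways; say $\|v_2\|_{H^1(\Gamma)}<\rho_u$. Then I use the bilinear decomposition
\begin{align*}
  b_g(v_1,v_1,w)-b_g(v_2,v_2,w) = b_g(v_1,w,w)+b_g(w,v_2,w),
\end{align*}
and invoke \eqref{E:TLS_Asym} with $v_1\in H_{g\sigma}^1(\Gamma,T\Gamma)$ to kill the first term, obtaining $a_g(w,w)=-b_g(w,v_2,w)$. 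If instead $\|v_1\|_{H^1(\Gamma)}<\rho_u$, I split as $b_g(w,v_1,w)+b_g(v_2,w,w)$ and kill the second term using $v_2\in H_{g\sigma}^1(\Gamma,T\Gamma)$.

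It then remains to estimate the surviving trilinear term and absorb it using coerciveness. By \eqref{E:TLS_Bou},
\begin{align*}
  |b_g(w,v_k,w)| \leq c_b\|w\|_{H^1(\Gamma)}^2\|v_k\|_{H^1(\Gamma)}
\end{align*}
for the appropriate $k$. To apply \eqref{E:BLS_Cor} to $w$, note that either $\gamma^0>0$ or $\gamma^1>0$, or else $\mathcal{K}_g(\Gamma)=\{0\}$ (so \eqref{E:KoG_Per} is vacuous), or $\mathbb{V}_g=L_{g\sigma}^2(\Gamma,T\Gamma)\cap\mathcal{K}_g(\Gamma)^{\perp g}\cap H^1(\Gamma,T\Gamma)$ which is exactly \eqref{E:KoG_Per}; so in all situations permitted by the definition of $\mathbb{V}_g$, the coerciveness \eqref{E:BLS_Cor} applies to $w\in\mathbb{V}_g$. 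Therefore
\begin{align*}
  \|w\|_{H^1(\Gamma)}^2 \leq c_a a_g(w,w) \leq c_a c_b\|v_k\|_{H^1(\Gamma)}\|w\|_{H^1(\Gamma)}^2,
\end{align*}
i.e.\ $(1-c_ac_b\|v_k\|_{H^1(\Gamma)})\|w\|_{H^1(\Gamma)}^2\leq 0$. Since $\|v_k\|_{H^1(\Gamma)}<\rho_u=(c_ac_b)^{-1}$, the parenthesis is strictly positive, forcing $w=0$.

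No step here is genuinely difficult; the only point that requires care (and which I would verify cleanly) is the case analysis needed to justify applying the Korn-type coerciveness \eqref{E:BLS_Cor} to $w\in\mathbb{V}_g$ across all three scenarios in the definition of $\mathbb{H}_g$, and the bookkeeping of which of $v_1,v_2$ needs to be small so that the bilinearity split produces a trilinear term whose middle argument is that small factor.
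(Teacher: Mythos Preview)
Your proof is correct and follows essentially the same route as the paper: subtract the weak forms, test with $w=v_1-v_2\in\mathbb{V}_g$, use \eqref{E:TLS_Asym} to kill one of the two trilinear terms in the decomposition, apply the coerciveness \eqref{E:BLS_Cor} (valid for $w\in\mathbb{V}_g$ by exactly the case analysis you give), and absorb the remaining trilinear term via \eqref{E:TLS_Bou} and the smallness assumption. The paper simply assumes without loss of generality that $\|v_1\|_{H^1(\Gamma)}<\rho_u$ and uses the split $b_g(w,v_1,w)+b_g(v_2,w,w)$, whereas you handle both cases explicitly; otherwise the arguments coincide.
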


\begin{proof}
  We may assume $\|v_1\|_{H^1(\Gamma)}<\rho_u$ without loss of generality.
  Let $w=v_1-v_2$.
  Then since $v_1,v_2\in\mathbb{V}_g$ and they satisfy \eqref{E:Lim_Weak}, we see that $w\in\mathbb{V}_g$ and
  \begin{align*}
    a_g(w,\eta)+b_g(w,v_1,\eta)+b_g(v_2,w,\eta) = 0
  \end{align*}
  for all $\eta\in H_{g\sigma}^1(\Gamma,T\Gamma)$.
  Let $\eta=w$.
  Then we see by $v_2\in\mathbb{V}_g$ and \eqref{E:TLS_Asym} that
  \begin{align*}
    a_g(w,w)+b_g(w,v_1,w) = 0.
  \end{align*}
  Moreover, since $\gamma^0>0$ or $\gamma^1>0$ or $w\in\mathbb{V}_g$ satisfies \eqref{E:KoG_Per}, we can use \eqref{E:BLS_Cor} to get
  \begin{align*}
    \|w\|_{H^1(\Gamma)}^2 \leq c_aa_g(w,w) = -c_ab_g(w,v_1,w) \leq c_a|b_g(w,v_1,w)|.
  \end{align*}
  Thus, it follows from \eqref{E:TLS_Bou} and $\|v_1\|_{H^1(\Gamma)}<\rho_u=(c_ac_b)^{-1}$ that
  \begin{align*}
    \|w\|_{H^1(\Gamma)}^2 \leq c_ac_b\|v_1\|_{H^1(\Gamma)}\|w\|_{H^1(\Gamma)}^2, \quad 1-c_ac_b\|v_1\|_{H^1(\Gamma)} > 0,
  \end{align*}
  which implies $\|w\|_{H^1(\Gamma)}^2=0$ and thus $w=0$.
  Hence $v_1=v_2$.
\end{proof}

\begin{remark} \label{R:LW_Uniq}
  When $\gamma^0=\gamma^1=0$ and $\mathcal{K}_g(\Gamma)\neq\{0\}$, we can show that for each $w\in\mathcal{K}_g(\Gamma)$ there exists a vector field $v\in\mathbb{V}_g$ such that $v+w$ is a weak solution to \eqref{E:Lim_Eq} as in the proof of Proposition \ref{P:LW_Exist}.
  Thus, noting that $\mathcal{K}_g(\Gamma)$ is an infinite set and orthogonal to $\mathbb{H}_g$, we see that \eqref{E:Lim_Eq} admits infinitely many weak solutions in the class $H_{g\sigma}^1(\Gamma,T\Gamma)$.
  See also \cite[Remark 7.4]{AmrRej14} for the flat domain case.
  We also note that $\mathcal{K}_g(\Gamma)\neq\{0\}$ when $\Gamma$ is axially symmetric such as the unit sphere $S^2$ and when $g$ is constant on $\Gamma$.
\end{remark}

\section{Estimates for a solution to the bulk equations} \label{S:Bulk}
In this section we show explicit estimate in terms of $\varepsilon$ for a solution to the bulk Navier--Stokes equations \eqref{E:SNS_CTD}.
We also give some uniqueness results for a solution to \eqref{E:SNS_CTD}.

Throughout this section, we impose Assumptions \ref{Asmp_1} and \ref{Asmp_2} and use the function spaces given in Section \ref{S:Main}.
Also, let $\mathbb{P}_\varepsilon$ be the orthogonal projection from $L^2(\Omega_\varepsilon)^3$ onto $\mathcal{H}_\varepsilon$.

\subsection{Bilinear and trilinear forms} \label{SS:BTL_CTD}
We present some results on bilinear and trilinear forms appearing in a weak form of \eqref{E:SNS_CTD}.
Let $\nu>0$ be the viscosity coefficient independent of $\varepsilon$.
Also, let $\gamma_\varepsilon^0$ and $\gamma_\varepsilon^1$ be the friction coefficients appearing in \eqref{E:Fric}.
We define
\begin{align} \label{E:BL_CTD}
  a_\varepsilon(u_1,u_2) = 2\nu\bigl(D(u_1),D(u_2)\bigr)_{L^2(\Omega_\varepsilon)}+\gamma_\varepsilon^0(u_1,u_2)_{L^2(\Gamma_\varepsilon^0)}+\gamma_\varepsilon^1(u_1,u_2)_{L^2(\Gamma_\varepsilon^1)}
\end{align}
for $u_1,u_2\in H^1(\Omega_\varepsilon)^3$.
By integration by parts (see \cite[Lemma 7.1]{Miu22_01}), we easily observe that $a_\varepsilon$ is a bilinear form corresponding to the Stokes equations
\begin{align} \label{E:Sto_CTD}
  \left\{
  \begin{alignedat}{3}
    -\nu\Delta u^\varepsilon+\nabla p^\varepsilon = f^\varepsilon, \quad \mathrm{div}\,u^\varepsilon &= 0 &\quad &\text{in} &\quad &\Omega_\varepsilon, \\
    u^\varepsilon\cdot n_\varepsilon = 0, \quad 2\nu P_\varepsilon D(u^\varepsilon)n_\varepsilon+\gamma_\varepsilon u^\varepsilon &= 0, &\quad &\text{on} &\quad &\Gamma_\varepsilon.
  \end{alignedat}
  \right.
\end{align}
Clearly, $a_\varepsilon$ is symmetric.
Moreover, it is uniformly bounded and coercive on $\mathcal{V}_\varepsilon$.

\begin{lemma} \label{L:BL_Unif}
  Under Assumptions \ref{Asmp_1} and \ref{Asmp_2}, there exist constants $\varepsilon_0\in(0,1)$ and $c>0$ such that
  \begin{align} \label{E:BL_Unif}
    c^{-1}\|u\|_{H^1(\Omega_\varepsilon)}^2 \leq a_\varepsilon(u,u) \leq c\|u\|_{H^1(\Omega_\varepsilon)}^2
  \end{align}
  for all $\varepsilon\in(0,\varepsilon_0)$ and $u\in\mathcal{V}_\varepsilon$.
\end{lemma}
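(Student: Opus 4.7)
The plan is to prove the upper and lower bounds in \eqref{E:BL_Unif} separately, and to reduce the harder lower bound to the uniform Korn inequality on curved thin domains already established in \cite{Miu22_01}.

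For the upper bound, I would observe that $|D(u)|\leq|\nabla u|$ pointwise, so the first term in $a_\varepsilon(u,u)$ is directly bounded by $2\nu\|\nabla u\|_{L^2(\Omega_\varepsilon)}^2$. For the boundary terms I would combine Assumption \ref{Asmp_1}, which gives $\gamma_\varepsilon^i\leq c\varepsilon$, with an $\varepsilon$-explicit trace inequality of the form
\begin{align*}
  \|u\|_{L^2(\Gamma_\varepsilon^i)}^2 \leq c\bigl(\varepsilon^{-1}\|u\|_{L^2(\Omega_\varepsilon)}^2+\varepsilon\|\nabla u\|_{L^2(\Omega_\varepsilon)}^2\bigr),
\end{align*}
which follows from a standard one-dimensional fundamental theorem of calculus argument in the normal direction, the change of variables \eqref{E:CoV_CTD}, and the bounds \eqref{E:Jacob}. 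Multiplying this by $\gamma_\varepsilon^i\leq c\varepsilon$ yields $\gamma_\varepsilon^i\|u\|_{L^2(\Gamma_\varepsilon^i)}^2 \leq c\|u\|_{H^1(\Omega_\varepsilon)}^2$ uniformly in $\varepsilon$, and adding the contributions from $i=0,1$ gives the right-hand inequality of \eqref{E:BL_Unif}.

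The lower bound is the substantive content of the lemma and is essentially a uniform Korn inequality on $\Omega_\varepsilon$ with constant independent of $\varepsilon$. Here Assumption \ref{Asmp_2} is crucial because it excludes the three possible ways the Korn constant could degenerate as $\varepsilon\to 0$: in case (A1), the one-sided lower bound $\gamma_\varepsilon^i\geq c\varepsilon$ on a friction coefficient allows a boundary-term contribution to recover full $L^2$ control of $u$; in case (A2) the assumption $\mathcal{K}_g(\Gamma)=\{0\}$ removes the intrinsic surface-level obstruction; and in case (A3) the extrinsic orthogonality to $\mathcal{R}_g$ built into $\mathcal{V}_\varepsilon$, together with the identification $\mathcal{R}_g|_\Gamma=\mathcal{K}_g(\Gamma)$ and $\mathcal{R}_g=\mathcal{R}_0\cap\mathcal{R}_1$, eliminates the remaining approximate kernel of $D$. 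In each of these three cases the author's previous work \cite{Miu22_01} provides $\varepsilon_0\in(0,1)$ and $c>0$ such that
\begin{align*}
  \|u\|_{H^1(\Omega_\varepsilon)}^2 \leq c\bigl(\|D(u)\|_{L^2(\Omega_\varepsilon)}^2+\gamma_\varepsilon^0\|u\|_{L^2(\Gamma_\varepsilon^0)}^2+\gamma_\varepsilon^1\|u\|_{L^2(\Gamma_\varepsilon^1)}^2\bigr)
\end{align*}
for all $\varepsilon\in(0,\varepsilon_0)$ and $u\in\mathcal{V}_\varepsilon$, from which the left-hand inequality of \eqref{E:BL_Unif} follows after dividing by a constant and absorbing $2\nu$.

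The main obstacle is really the lower bound, and specifically case (A3) where both friction coefficients vanish and one must rule out degeneration of the Korn constant using only the extrinsic orthogonality to $\mathcal{R}_g$. That delicate step is exactly what is handled in \cite{Miu22_01} via a contradiction-compactness argument in which a putative minimizing sequence is shown to converge to an element of $\mathcal{R}_g$, contradicting the imposed orthogonality. Since the present lemma is a direct packaging of those results into the bilinear-form formulation, the proof here reduces to quoting the cited estimate and noting that Assumption \ref{Asmp_1} is exactly what is needed to absorb the boundary terms back into $a_\varepsilon(u,u)$ when passing between the two formulations.
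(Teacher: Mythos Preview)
Your proposal is correct and takes essentially the same approach as the paper: the paper's own proof is simply a one-line citation to \cite[Theorem 2.4]{Miu22_01}, and you likewise reduce the substantive lower bound to the uniform Korn inequality established there, while supplying the routine details for the upper bound via the trace inequality and Assumption \ref{Asmp_1}.
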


\begin{proof}
  We refer to \cite[Thoerem 2.4]{Miu22_01} for the proof.
\end{proof}

In what follows, we fix the constant $\varepsilon_0$ given in Lemma \ref{L:BL_Unif} and assume $\varepsilon\in(0,\varepsilon_0)$.
By the Lax--Milgram theory, $a_\varepsilon$ induces a bounded linear operator $A_\varepsilon$ from $\mathcal{V}_\varepsilon$ into its dual space.
We consider $A_\varepsilon$ an unbounded operator on $\mathcal{H}_\varepsilon$ with domain $D(A_\varepsilon)=\{u\in\mathcal{V}_\varepsilon \mid A_\varepsilon u\in\mathcal{H}_\varepsilon\}$ and call $A_\varepsilon$ the Stokes operator on $\mathcal{H}_\varepsilon$.
Then $A_\varepsilon$ is positive and self-adjoint on $\mathcal{H}_\varepsilon$ and thus $A_\varepsilon^{1/2}$ is well-defined on $\mathcal{H}_\varepsilon$ with domain $D(A_{\varepsilon}^{1/2})=\mathcal{V}_\varepsilon$.
Moreover, it follows from a regularity result for a solution to \eqref{E:Sto_CTD} (see \cite{SolSca73,Bei04,AmrRej14}) that
\begin{align*}
  D(A_\varepsilon) = \{u\in\mathcal{V}_\varepsilon\cap H^2(\Omega_\varepsilon)^3 \mid \text{$2\nu P_\varepsilon D(u_\varepsilon)n_\varepsilon+\gamma_\varepsilon u=0$ on $\Gamma_\varepsilon$}\}
\end{align*}
and $A_\varepsilon u=-\nu\mathbb{P}_\varepsilon\Delta u$ for $u\in D(A_\varepsilon)$.
Also, basic inequalities for $A_\varepsilon$ hold uniformly in $\varepsilon$.

\begin{lemma} \label{L:Ae_Equi}
  For all $\varepsilon\in(0,\varepsilon_0)$ and $u\in D(A_\varepsilon^{k/2})$ with $k=1,2$, we have
  \begin{align} \label{E:Ae_Equi}
    c^{-1}\|u\|_{H^k(\Omega_\varepsilon)} \leq \|A_\varepsilon^{k/2}u\|_{L^2(\Omega_\varepsilon)} \leq c\|u\|_{H^k(\Omega_\varepsilon)}.
  \end{align}
\end{lemma}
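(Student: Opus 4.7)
The plan is to handle the two values of $k$ separately, deriving the $k=1$ case directly from Lemma~\ref{L:BL_Unif} and deferring the $k=2$ case to the regularity result in \cite{Miu22_01} referenced in Section~\ref{SS:CTD}.

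For $k=1$, I would exploit the identification of $A_\varepsilon^{1/2}$ via the bilinear form. Since $A_\varepsilon$ is positive and self-adjoint on $\mathcal{H}_\varepsilon$ with form domain $\mathcal{V}_\varepsilon$, the standard spectral/form identity gives
\begin{align*}
  \|A_\varepsilon^{1/2}u\|_{L^2(\Omega_\varepsilon)}^2 = a_\varepsilon(u,u), \quad u\in D(A_\varepsilon^{1/2})=\mathcal{V}_\varepsilon.
\end{align*}
Combining this with the uniform two-sided estimate \eqref{E:BL_Unif} immediately yields the desired norm equivalence for $k=1$, with constants independent of $\varepsilon$.

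For $k=2$, the upper bound is the easy direction. Since $u\in D(A_\varepsilon)\subset H^2(\Omega_\varepsilon)^3$ and $A_\varepsilon u=-\nu\mathbb{P}_\varepsilon\Delta u$, and $\mathbb{P}_\varepsilon$ is an orthogonal projection on $L^2(\Omega_\varepsilon)^3$, we have
\begin{align*}
  \|A_\varepsilon u\|_{L^2(\Omega_\varepsilon)} \leq \nu\|\Delta u\|_{L^2(\Omega_\varepsilon)} \leq c\|u\|_{H^2(\Omega_\varepsilon)}
\end{align*}
with $c$ independent of $\varepsilon$. The main obstacle is the lower bound $\|u\|_{H^2(\Omega_\varepsilon)}\leq c\|A_\varepsilon u\|_{L^2(\Omega_\varepsilon)}$, because it amounts to a uniform-in-$\varepsilon$ $H^2$-regularity estimate for the Stokes system \eqref{E:Sto_CTD} with slip boundary data on the curved, thin domain $\Omega_\varepsilon$: given $f^\varepsilon=A_\varepsilon u\in\mathcal{H}_\varepsilon$, the pair $(u,p^\varepsilon)$ solves \eqref{E:Sto_CTD} and one needs $\|u\|_{H^2(\Omega_\varepsilon)}\leq c\|f^\varepsilon\|_{L^2(\Omega_\varepsilon)}$ with $c$ independent of $\varepsilon$. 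This is precisely the content of the main result of \cite{Miu22_01}, whose proof relies on the $C^4$-regularity of $\Gamma_\varepsilon$, a careful treatment of the slip boundary conditions under \eqref{E:Fr_Up}, and the uniform coerciveness machinery that underpins Lemma~\ref{L:BL_Unif}. I would simply invoke that result to close the $k=2$ case.

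In summary, the proof amounts to: (i) invoke the spectral identity $\|A_\varepsilon^{1/2}u\|_{L^2}^2=a_\varepsilon(u,u)$ together with Lemma~\ref{L:BL_Unif} for $k=1$; (ii) use the explicit formula $A_\varepsilon u=-\nu\mathbb{P}_\varepsilon\Delta u$ for the trivial direction and cite the uniform Stokes $H^2$-regularity estimate from \cite{Miu22_01} for the nontrivial direction when $k=2$. The only nonroutine ingredient is the latter, which is external to this paper.
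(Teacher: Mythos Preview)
Your proposal is correct and matches the paper's approach: the paper simply refers to \cite[Lemma 2.5 and Theorem 2.7]{Miu22_01} for the proof, and your outline is precisely the content of those results (the $k=1$ case from the form identity plus Lemma~\ref{L:BL_Unif}, the $k=2$ case from the uniform Stokes $H^2$-regularity).
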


\begin{proof}
  We refer to \cite[Lemma 2.5 and Theorem 2.7]{Miu22_01} for the proof.
\end{proof}

For a trilinear term involving $A_\varepsilon$, we have an estimate similar to a 2D one.

\begin{lemma} \label{L:Tri_Est}
  There exist constants $d_1,d_2>0$ such that
  \begin{align} \label{E:Tri_Est}
    \begin{aligned}
      &\left|\bigl((u\cdot\nabla)u,A_\varepsilon u\bigr)_{L^2(\Omega_\varepsilon)}\right| \\
      &\qquad \leq \left(\frac{1}{4}+d_1\varepsilon^{1/2}\|A_\varepsilon^{1/2}u\|_{L^2(\Omega_\varepsilon)}\right)\|A_\varepsilon u\|_{L^2(\Omega_\varepsilon)}^2 \\
      &\qquad\qquad +d_2\Bigl(\|u\|_{L^2(\Omega_\varepsilon)}^2\|A_\varepsilon^{1/2}u\|_{L^2(\Omega_\varepsilon)}^4+\varepsilon^{-1}\|u\|_{L^2(\Omega_\varepsilon)}^2\|A_\varepsilon^{1/2}u\|_{L^2(\Omega_\varepsilon)}^2\Bigr)
    \end{aligned}
  \end{align}
  for all $\varepsilon\in(0,\varepsilon_0)$ and $u\in D(A_\varepsilon)$.
\end{lemma}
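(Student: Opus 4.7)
The plan is to reduce the three-dimensional trilinear estimate on $\Omega_\varepsilon$ to a two-dimensional Ladyzhenskaya-type bound on $\Gamma$, using the thinness of $\Omega_\varepsilon$ to produce the small factor $\varepsilon^{1/2}$ in front of $\|A_\varepsilon u\|_{L^2}^2$. I would decompose $u=\bar v+w$, with $v=M_\tau u$ and $w=u-\bar v$, so that by Lemma~\ref{L:Ave_Diff} the remainder satisfies $\|w\|_{L^2(\Omega_\varepsilon)}\le c\varepsilon\|u\|_{H^1(\Omega_\varepsilon)}$, while by Lemma~\ref{L:Ave_Hk} the average obeys $\|v\|_{H^k(\Gamma)}\le c\varepsilon^{-1/2}\|u\|_{H^k(\Omega_\varepsilon)}$ for $k=0,1$. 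After H\"older's inequality $|((u\cdot\nabla)u,A_\varepsilon u)_{L^2(\Omega_\varepsilon)}|\le\|u\|_{L^4(\Omega_\varepsilon)}\|\nabla u\|_{L^4(\Omega_\varepsilon)}\|A_\varepsilon u\|_{L^2(\Omega_\varepsilon)}$, the task becomes to split each $L^4(\Omega_\varepsilon)$-factor into an averaged piece and a remainder with the correct $\varepsilon$-scaling, and then to read every $H^k$-norm on $\Omega_\varepsilon$ as $\|A_\varepsilon^{k/2}u\|_{L^2(\Omega_\varepsilon)}$ via Lemma~\ref{L:Ae_Equi}.

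For the averaged pieces $\bar v$ and $\overline{\nabla_\Gamma v}$, the change-of-variables formula \eqref{E:CoV_CTD} converts an $L^4(\Omega_\varepsilon)$-norm into $\varepsilon^{1/4}$ times the corresponding $L^4(\Gamma)$-norm, and the two-dimensional Ladyzhenskaya inequality on $\Gamma$ (Lemma~\ref{L:Lad_Ineq}) combined with Lemma~\ref{L:Ave_Hk} yields clean estimates such as $\|\bar v\|_{L^4(\Omega_\varepsilon)}^2\le c\|u\|_{L^2(\Omega_\varepsilon)}\|u\|_{H^1(\Omega_\varepsilon)}$, with no harmful $\varepsilon$-factors. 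Pairing such estimates with $\|A_\varepsilon u\|_{L^2}$ and invoking Young's inequality produces the genuinely two-dimensional part $(1/4)\|A_\varepsilon u\|_{L^2}^2+d_2\|u\|_{L^2}^2\|A_\varepsilon^{1/2}u\|_{L^2}^4$ of the right-hand side of \eqref{E:Tri_Est}.

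For the remainder pieces I would instead use the thin-domain Sobolev inequality \eqref{E:Sob_L6} together with the slip-boundary control of $\partial_n u$ provided by Lemmas~\ref{L:Pdn_CTD} and~\ref{L:Ave_Div} and the smallness $\|w\|_{L^2(\Omega_\varepsilon)}\le c\varepsilon\|u\|_{H^1(\Omega_\varepsilon)}$. The extra $\varepsilon$-power attached to any occurrence of $w$ is exactly what supplies the factor $d_1\varepsilon^{1/2}\|A_\varepsilon^{1/2}u\|_{L^2}$ in front of $\|A_\varepsilon u\|_{L^2}^2$, while the lower-order curvature corrections (the Jacobian defect $J-1=O(\varepsilon)$ from \eqref{E:Jacob} and the Weingarten terms in Lemma~\ref{L:Pdn_CTD}) contribute the last piece $d_2\varepsilon^{-1}\|u\|_{L^2}^2\|A_\varepsilon^{1/2}u\|_{L^2}^2$, which is of lower differentiability but inherits the $1/\varepsilon$ factor from the thin-direction integration. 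The main obstacle is the bookkeeping: the cross-products among the four pieces $\bar v$, $w$, $\overline{\nabla_\Gamma v}$, and $\nabla u-\overline{\nabla_\Gamma v}$ each require a different interpolation between the surface inequality on $\Gamma$ and the bulk one on $\Omega_\varepsilon$, and every $\varepsilon$-scale must line up exactly to reproduce \eqref{E:Tri_Est}. The cleaner alternative, which is the route actually taken by the author, is to quote the inequality directly from \cite{Miu21_02}.
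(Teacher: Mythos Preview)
You correctly identify that the paper does not prove this lemma but simply refers to \cite[Lemma~7.5]{Miu21_02}; in that sense your final sentence is the accurate summary of what happens here.

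The sketch you offer beforehand, however, contains a concrete $\varepsilon$-scaling error that would prevent it from producing \eqref{E:Tri_Est}. You claim that the change of variables together with Lemmas~\ref{L:Lad_Ineq} and~\ref{L:Ave_Hk} yields $\|\bar v\|_{L^4(\Omega_\varepsilon)}^2\le c\|u\|_{L^2(\Omega_\varepsilon)}\|u\|_{H^1(\Omega_\varepsilon)}$ ``with no harmful $\varepsilon$-factors''. In fact
\[
\|\bar v\|_{L^4(\Omega_\varepsilon)}^2 \le c\varepsilon^{1/2}\|v\|_{L^4(\Gamma)}^2 \le c\varepsilon^{1/2}\|v\|_{L^2(\Gamma)}\|v\|_{H^1(\Gamma)} \le c\varepsilon^{1/2}\cdot\varepsilon^{-1/2}\|u\|_{L^2(\Omega_\varepsilon)}\cdot\varepsilon^{-1/2}\|u\|_{H^1(\Omega_\varepsilon)},
\]
so there is an unavoidable extra $\varepsilon^{-1/2}$. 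Carrying this through your argument, the ``averaged'' contribution becomes $c\varepsilon^{-1/2}\|u\|_{L^2}^{1/2}\|A_\varepsilon^{1/2}u\|_{L^2}\|A_\varepsilon u\|_{L^2}^{3/2}$, and Young's inequality then produces $c\varepsilon^{-2}\|u\|_{L^2}^2\|A_\varepsilon^{1/2}u\|_{L^2}^4$ rather than the $\varepsilon$-free term in \eqref{E:Tri_Est}. The starting H\"older split $\|u\|_{L^4}\|\nabla u\|_{L^4}\|A_\varepsilon u\|_{L^2}$ is too crude: once the three factors are separated in $\Omega_\varepsilon$, each passage to $\Gamma$ costs a power of $\varepsilon^{-1/2}$ and there is no mechanism to recombine them.

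The approach in \cite{Miu21_02} does use the average/residual decomposition you describe, but it does not factor the trilinear form through a single H\"older inequality at the outset. Instead it decomposes $(u\cdot\nabla)u$ itself and treats the inner product $((\,\cdot\,),A_\varepsilon u)_{L^2(\Omega_\varepsilon)}$ term by term, so that the change of variables and the surface Ladyzhenskaya inequality are applied to a \emph{product} rather than to each factor separately; this is what keeps the $\varepsilon$-powers balanced. Your remark that ``every $\varepsilon$-scale must line up exactly'' is precisely the point, and your proposed first step already breaks that alignment.
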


\begin{proof}
  We refer to \cite[Lemma 7.5]{Miu21_02} for the proof.
\end{proof}

Next we define a trilinear form $b_\varepsilon$ by
\begin{align} \label{E:TL_CTD}
  b_\varepsilon(u_1,u_2,u_3) = -(u_1\otimes u_2,\nabla u_3)_{L^2(\Omega_\varepsilon)} = -\int_{\Omega_\varepsilon}u_2\cdot[(u_1\cdot\nabla)u_3]\,dx
\end{align}
for $u_1,u_2,u_3\in H^1(\Omega_\varepsilon)^3$.
Note that $u_1\otimes u_2\in L^2(\Omega_\varepsilon)^{3\times3}$ by \eqref{E:Sob_Lp} with $p=4$.

\begin{lemma} \label{L:TLC_Asym}
  Let $u_1\in H_{n,\sigma}^1(\Omega_\varepsilon)$ and $u_2,u_3\in H^1(\Omega_\varepsilon)^3$.
  Then
  \begin{align} \label{E:TLC_Asym}
    b_\varepsilon(u_1,u_2,u_3) = -b_\varepsilon(u_1,u_3,u_2), \quad b_\varepsilon(u_1,u_2,u_2) = 0.
  \end{align}
\end{lemma}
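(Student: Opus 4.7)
The plan is to reduce everything to a single integration-by-parts computation and then obtain the second identity as an immediate corollary of the first. Setting $u_2=u_3$ in the first identity of \eqref{E:TLC_Asym} yields $b_\varepsilon(u_1,u_2,u_2)=-b_\varepsilon(u_1,u_2,u_2)$, hence $b_\varepsilon(u_1,u_2,u_2)=0$. So the whole task is to prove the antisymmetry $b_\varepsilon(u_1,u_2,u_3)=-b_\varepsilon(u_1,u_3,u_2)$.

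First I would observe the product-rule identity
\begin{align*}
  \mathrm{div}\bigl(u_1(u_2\cdot u_3)\bigr) = (u_2\cdot u_3)\,\mathrm{div}\,u_1 + u_3\cdot(u_1\cdot\nabla)u_2 + u_2\cdot(u_1\cdot\nabla)u_3
\end{align*}
holding pointwise almost everywhere. Since $u_1\in H_{n,\sigma}^1(\Omega_\varepsilon)$, the first term on the right vanishes. Next I would integrate over $\Omega_\varepsilon$ and apply the divergence theorem, giving
\begin{align*}
  \int_{\Gamma_\varepsilon}(u_1\cdot n_\varepsilon)(u_2\cdot u_3)\,d\mathcal{H}^2 = \int_{\Omega_\varepsilon}\Bigl[u_3\cdot(u_1\cdot\nabla)u_2 + u_2\cdot(u_1\cdot\nabla)u_3\Bigr]\,dx.
\end{align*}
The boundary integral vanishes because $u_1\cdot n_\varepsilon=0$ on $\Gamma_\varepsilon$ by the definition of $L_\sigma^2(\Omega_\varepsilon)$ in \eqref{E:Sol_CTD}, and the right-hand side equals $-b_\varepsilon(u_1,u_3,u_2)-b_\varepsilon(u_1,u_2,u_3)$ by \eqref{E:TL_CTD}, which yields the claimed antisymmetry.

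The one subtlety to address is that for $u_1,u_2,u_3\in H^1(\Omega_\varepsilon)^3$ the integrand $u_1(u_2\cdot u_3)$ is not smooth enough for the divergence theorem to be applied verbatim. I would resolve this by a density argument: using the Sobolev embedding $H^1(\Omega_\varepsilon)\hookrightarrow L^6(\Omega_\varepsilon)$ (see \eqref{E:Sob_Lp}) and H\"older's inequality, all the integrals in the product-rule computation depend continuously on $(u_1,u_2,u_3)\in H^1(\Omega_\varepsilon)^3\times H^1(\Omega_\varepsilon)^3\times H^1(\Omega_\varepsilon)^3$, while the boundary integral is continuous via the $H^{1/2}$-trace embedding into $L^4(\Gamma_\varepsilon)$. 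For smooth fields, the identity is classical. Hence one approximates $u_2$ and $u_3$ in $H^1(\Omega_\varepsilon)^3$ by $C^\infty(\overline{\Omega_\varepsilon})^3$ functions and $u_1$ in $H_{n,\sigma}^1(\Omega_\varepsilon)$ by smooth divergence-free vector fields tangent to $\Gamma_\varepsilon$ (such a density result is standard and used implicitly elsewhere in the paper), passes to the limit in both sides, and obtains the identity for general arguments.

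The main obstacle here is really just the density/approximation step for $u_1$, since smooth approximations must preserve both $\mathrm{div}\,u_1=0$ in $\Omega_\varepsilon$ and $u_1\cdot n_\varepsilon=0$ on $\Gamma_\varepsilon$. Once this is granted, the computation above is mechanical. In practice it suffices to cite a standard density result for $H_{n,\sigma}^1(\Omega_\varepsilon)$ on a $C^4$ domain, such as the one underlying the Helmholtz--Leray decomposition recalled just after \eqref{E:Sol_CTD}.
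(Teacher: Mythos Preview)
Your proof is correct and follows exactly the same route as the paper: integration by parts using $\mathrm{div}\,u_1=0$ in $\Omega_\varepsilon$ and $u_1\cdot n_\varepsilon=0$ on $\Gamma_\varepsilon$ to get the first identity, then setting $u_3=u_2$ for the second. The paper's proof is a one-line sketch omitting the density/regularity discussion you include, but the argument is the same.
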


\begin{proof}
  We have the first equality of \eqref{E:TLC_Asym} by integration by parts and by $\mathrm{div}\,u_1=0$ in $\Omega_\varepsilon$ and $u_1\cdot n_\varepsilon=0$ on $\Gamma_\varepsilon$.
  Also, the second equality follows from the first one with $u_3=u_2$.
\end{proof}

We also give some results related to the function space $\mathcal{R}_g$ given by \eqref{E:Def_Rota}.

\begin{lemma} \label{L:BTRg}
  Under the condition (A3) of Assumption \ref{Asmp_2}, we have
  \begin{align} \label{E:BTRg_H1}
    a_\varepsilon(u,w) = 0, \quad b_\varepsilon(w,u,u) = 0
  \end{align}
  for all $u\in H^1(\Omega_\varepsilon)^3$ and $w\in\mathcal{R}_g$.
  Moreover,
  \begin{align} \label{E:BTRg_Asym}
    b_\varepsilon(u_1,u_2,w) = -b_\varepsilon(u_2,u_1,w), \quad b_\varepsilon(u_1,u_1,w) = 0
  \end{align}
  for all $u_1,u_2\in H^1(\Omega_\varepsilon)^3$ and $w\in\mathcal{R}_g$.
\end{lemma}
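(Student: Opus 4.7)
The plan is to handle the three identities in \eqref{E:BTRg_H1} and \eqref{E:BTRg_Asym} separately. For the bilinear identity $a_\varepsilon(u,w)=0$, I would use that condition~(A3) of Assumption~\ref{Asmp_2} forces $\gamma_\varepsilon^0=\gamma_\varepsilon^1=0$, so by~\eqref{E:BL_CTD} only the strain-rate term survives, $a_\varepsilon(u,w)=2\nu(D(u),D(w))_{L^2(\Omega_\varepsilon)}$. For any $w\in\mathcal{R}_g$, written $w(x)=a\times x+b$, the gradient $\nabla w$ is the constant skew-symmetric matrix representing $a\times\cdot$ (with no contribution from the translation $b$), so $D(w)=\tfrac12(\nabla w+(\nabla w)^T)=0$ and the identity is immediate.

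For $b_\varepsilon(w,u,u)=0$, the strategy is to verify $w\in H_{n,\sigma}^1(\Omega_\varepsilon)$ and then invoke the second half of~\eqref{E:TLC_Asym}. The divergence condition $\mathrm{div}\,w=0$ is immediate from the form of $w$. The main obstacle is the impermeability $w\cdot n_\varepsilon=0$ on $\Gamma_\varepsilon$, which I plan to prove by a flow-invariance argument. The one-parameter group $\Phi_t$ generated by $w$ consists of rigid motions of $\mathbb{R}^3$, hence isometries. Since $w|_\Gamma\cdot n=0$ by the definition of $\mathcal{R}$ in~\eqref{E:Def_Rota}, $\Gamma$ is preserved by $\Phi_t$; consequently $\Phi_t$ commutes with the nearest-point projection $\pi$ on $N$ and preserves the signed distance $d$. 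Condition~(A3) asserts $\mathcal{R}_g=\mathcal{R}_0\cap\mathcal{R}_1$, so $w|_\Gamma\cdot\nabla_\Gamma g_i=0$ for $i=0,1$, meaning each $g_i$ is constant along the orbits of $w$ in $\Gamma$; hence $\bar{g}_i\circ\Phi_t=\bar{g}_i$ throughout $N$. Combining these, the defining functions $d-\varepsilon\bar{g}_i$ of $\Gamma_\varepsilon^i$ are invariant under $\Phi_t$, so $\Phi_t(\Gamma_\varepsilon^i)=\Gamma_\varepsilon^i$ and $w$ is tangent to $\Gamma_\varepsilon^0\cup\Gamma_\varepsilon^1=\Gamma_\varepsilon$, giving $w\cdot n_\varepsilon=0$ there. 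With $w\in H_{n,\sigma}^1(\Omega_\varepsilon)$ in hand, Lemma~\ref{L:TLC_Asym} applied to $b_\varepsilon(w,u,u)$ completes this step.

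The antisymmetry~\eqref{E:BTRg_Asym} I plan to handle by direct computation. From $w(x)=a\times x+b$ one has $(u_1\cdot\nabla)w=a\times u_1$ (the constant $b$ differentiates away), and then by the cyclic/antisymmetric property of the scalar triple product,
\[
b_\varepsilon(u_1,u_2,w)=-\int_{\Omega_\varepsilon}u_2\cdot(a\times u_1)\,dx=\int_{\Omega_\varepsilon}u_1\cdot(a\times u_2)\,dx=-b_\varepsilon(u_2,u_1,w).
\]
Setting $u_2=u_1$ yields the second equality of~\eqref{E:BTRg_Asym}.

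The only non-routine step in this plan is the boundary identity $w\cdot n_\varepsilon=0$ on $\Gamma_\varepsilon$ needed to place $w$ in $H_{n,\sigma}^1(\Omega_\varepsilon)$. The flow-invariance route above sidesteps unpacking the inverse $(I_3-\varepsilon g_iW)^{-1}$ that appears in $\tau_\varepsilon^i$ in the explicit normal formula~\eqref{E:UON_CTD}; a direct algebraic verification along that path is possible but substantially messier, since one would need to track the interaction of $a\times n(y)$ with all powers of $\varepsilon g_iW$ arising from the Neumann expansion of $(I_3-\varepsilon g_iW)^{-1}$.
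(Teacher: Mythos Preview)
Your proof is correct and follows essentially the same strategy as the paper: $D(w)=0$ together with $\gamma_\varepsilon^0=\gamma_\varepsilon^1=0$ kills $a_\varepsilon(u,w)$; the inclusion $w\in H_{n,\sigma}^1(\Omega_\varepsilon)$ combined with \eqref{E:TLC_Asym} yields $b_\varepsilon(w,u,u)=0$; and the skew-symmetry of $\nabla w$ (which you phrase via the scalar triple product) gives \eqref{E:BTRg_Asym}. The one point of divergence is that the paper obtains the boundary condition $w\cdot n_\varepsilon=0$ on $\Gamma_\varepsilon$ by invoking \cite[Lemma~E.8]{Miu22_01} for the inclusion $\mathcal{R}_0\cap\mathcal{R}_1\subset H_{n,\sigma}^1(\Omega_\varepsilon)$, whereas your flow-invariance argument supplies a self-contained proof of the same fact; your route is longer but avoids an external citation.
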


\begin{proof}
  First we note that under the condition (A3) we have
  \begin{align} \label{Pf_BRg:Incl}
    \mathcal{R}_g = \mathcal{R}_0\cap\mathcal{R}_1 \subset H_{n,\sigma}^1(\Omega_\varepsilon)
  \end{align}
  by \cite[Lemma E.8]{Miu22_01}.
  Let $u\in H^1(\Omega_\varepsilon)^3$ and $w\in\mathcal{R}_g$.
  Since $w$ is of the form $w(x)=a\times x+b$ with $a,b\in\mathbb{R}^3$, we have $D(w)=0$ in $\Omega_\varepsilon$.
  By this fact and $\gamma_0^\varepsilon=\gamma_1^\varepsilon=0$ in the condition (A3), we have $a_\varepsilon(u,w)=0$.
  Also, since \eqref{Pf_BRg:Incl} holds, we have $b_\varepsilon(w,u,u)=0$ by \eqref{E:TLC_Asym}.

  Let $u_1,u_2\in H^1(\Omega_\varepsilon)^3$ and $w\in\mathcal{R}_g$.
  Since $D(w)=0$ in $\Omega_\varepsilon$, we have
  \begin{align*}
    u_1\otimes u_2:\nabla w = -u_1\otimes u_2:(\nabla w)^T = -u_2\otimes u_1:\nabla w \quad\text{in}\quad \Omega_\varepsilon.
  \end{align*}
  Hence we get the first equality of \eqref{E:BTRg_Asym} and the second one by setting $u_2=u_1$.
\end{proof}

\subsection{Solution to the bulk equations} \label{SS:Est_CTD}
Now we consider the bulk Navier--Stokes equations \eqref{E:SNS_CTD}.
First we give the definition of a weak solution to \eqref{E:SNS_CTD}.

\begin{definition} \label{D:NC_Weak}
  For a given $f^\varepsilon\in L^2(\Omega_\varepsilon)^3$, we say that a vector field $u^\varepsilon$ on $\Omega_\varepsilon$ is a weak solution to \eqref{E:SNS_CTD} if $u^\varepsilon\in H_{n,\sigma}^1(\Omega_\varepsilon)$ and
  \begin{align} \label{E:NC_Weak}
    a_\varepsilon(u^\varepsilon,\varphi)+b_\varepsilon(u^\varepsilon,u^\varepsilon,\varphi) = (f^\varepsilon,\varphi)_{L^2(\Omega_\varepsilon)}
  \end{align}
  for all $\varphi\in H_{n,\sigma}^1(\Omega_\varepsilon)$.
\end{definition}

As in Section \ref{SS:Lim_Weak}, we can show the following results by Lemmas \ref{L:BL_Unif}, \ref{L:TLC_Asym}, and \ref{L:BTRg}.

\begin{proposition} \label{P:F_Comp}
  Under the condition (A3) of Assumption \ref{Asmp_2}, let $f^\varepsilon\in L^2(\Omega_\varepsilon)^3$.
  If there exists a weak solution to \eqref{E:SNS_CTD}, then $f^\varepsilon$ is orthogonal to $\mathcal{R}_g$ in $L^2(\Omega_\varepsilon)^3$.
\end{proposition}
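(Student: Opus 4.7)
The plan is to substitute an arbitrary element $w \in \mathcal{R}_g$ into the weak form \eqref{E:NC_Weak} and then show that the left-hand side vanishes, forcing $(f^\varepsilon, w)_{L^2(\Omega_\varepsilon)} = 0$.

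First I would verify that such a $w$ is an admissible test function. Under condition (A3), the equality $\mathcal{R}_g = \mathcal{R}_0 \cap \mathcal{R}_1$ implies $\mathcal{R}_g \subset H_{n,\sigma}^1(\Omega_\varepsilon)$ (the same inclusion used in the proof of Lemma \ref{L:BTRg}, coming from \cite[Lemma E.8]{Miu22_01}), so $w$ lies in the test space of Definition \ref{D:NC_Weak}.

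Next, I would kill each term on the left of \eqref{E:NC_Weak} using Lemma \ref{L:BTRg}. Applying \eqref{E:BTRg_H1} with $u = u^\varepsilon \in H^1(\Omega_\varepsilon)^3$ gives $a_\varepsilon(u^\varepsilon, w) = 0$, while the second identity in \eqref{E:BTRg_Asym} with $u_1 = u^\varepsilon$ gives $b_\varepsilon(u^\varepsilon, u^\varepsilon, w) = 0$. Plugging $\varphi = w$ into \eqref{E:NC_Weak} therefore yields
\begin{equation*}
  (f^\varepsilon, w)_{L^2(\Omega_\varepsilon)} = a_\varepsilon(u^\varepsilon, w) + b_\varepsilon(u^\varepsilon, u^\varepsilon, w) = 0.
\end{equation*}
Since $w \in \mathcal{R}_g$ was arbitrary, this is exactly the claim $f^\varepsilon \perp \mathcal{R}_g$ in $L^2(\Omega_\varepsilon)^3$.

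There is no real obstacle here: the proposition is a direct consequence of Lemma \ref{L:BTRg}, which packages the two nontrivial facts (that elements of $\mathcal{R}_g$ are admissible test fields and that both forms annihilate when tested against them). The only place to be careful is to cite the right identities from \eqref{E:BTRg_H1} and \eqref{E:BTRg_Asym}, since $a_\varepsilon$-vanishing uses $D(w) = 0$ together with the vanishing friction coefficients built into (A3), while $b_\varepsilon$-vanishing uses the skew-symmetry forced by $D(w) = 0$ rather than the solenoidality of the first slot.
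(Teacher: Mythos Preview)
Your proof is correct and follows exactly the approach the paper indicates: the paper states that Proposition~\ref{P:F_Comp} (together with Proposition~\ref{P:NCW_Exist}) follows from Lemmas~\ref{L:BL_Unif}, \ref{L:TLC_Asym}, and~\ref{L:BTRg} in analogy with Proposition~\ref{P:LW_Comp}, and your argument spells out precisely that analogy---verify $w\in\mathcal{R}_g$ is an admissible test function via \eqref{Pf_BRg:Incl}, then kill both terms using Lemma~\ref{L:BTRg}.
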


\begin{proposition} \label{P:NCW_Exist}
  Let $f^\varepsilon\in L^2(\Omega_\varepsilon)^3$ satisfy \eqref{E:Fth_Comp}.
  Then there exists at least one weak solution $u^\varepsilon$ to \eqref{E:SNS_CTD} such that $u^\varepsilon\in\mathcal{V}_\varepsilon$.
\end{proposition}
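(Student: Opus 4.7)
The plan is to mimic the proof of Proposition \ref{P:LW_Exist}, replacing the surface spaces and forms by their bulk counterparts. First I would apply the Galerkin method in $\mathcal{V}_\varepsilon$. Thanks to Lemma \ref{L:BL_Unif}, the symmetric bilinear form $a_\varepsilon$ is bounded and coercive on $\mathcal{V}_\varepsilon$ uniformly in $\varepsilon$, and $\mathcal{V}_\varepsilon\hookrightarrow\mathcal{H}_\varepsilon$ is compact (by Rellich's theorem, since $\mathcal{H}_\varepsilon\subset L^2(\Omega_\varepsilon)^3$ and $\mathcal{V}_\varepsilon\subset H^1(\Omega_\varepsilon)^3$). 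Hence $a_\varepsilon(\cdot,\cdot)$ induces an inner product on $\mathcal{V}_\varepsilon$ with respect to which there exists an orthonormal basis $\{\varphi_k\}_{k=1}^\infty$ diagonalizing $A_\varepsilon$.

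Next I would look for approximate solutions $u_N^\varepsilon=\sum_{k=1}^N c_k^N\varphi_k$ by requiring
\begin{align*}
  a_\varepsilon(u_N^\varepsilon,\varphi_k)+b_\varepsilon(u_N^\varepsilon,u_N^\varepsilon,\varphi_k)=(f^\varepsilon,\varphi_k)_{L^2(\Omega_\varepsilon)},\quad k=1,\dots,N.
\end{align*}
This is a finite system of polynomial equations for $(c_1^N,\dots,c_N^N)$. Testing with $u_N^\varepsilon$ itself and using Lemma \ref{L:TLC_Asym} (valid since $\mathcal{V}_\varepsilon\subset H_{n,\sigma}^1(\Omega_\varepsilon)$) gives $b_\varepsilon(u_N^\varepsilon,u_N^\varepsilon,u_N^\varepsilon)=0$, hence
\begin{align*}
  a_\varepsilon(u_N^\varepsilon,u_N^\varepsilon)=(f^\varepsilon,u_N^\varepsilon)_{L^2(\Omega_\varepsilon)}\leq\|f^\varepsilon\|_{L^2(\Omega_\varepsilon)}\|u_N^\varepsilon\|_{L^2(\Omega_\varepsilon)}.
\end{align*}
Combined with \eqref{E:BL_Unif}, this yields an a priori bound on $\|u_N^\varepsilon\|_{H^1(\Omega_\varepsilon)}$, which in turn gives solvability of the Galerkin system via a Brouwer fixed point argument. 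Extracting a subsequence converging weakly in $\mathcal{V}_\varepsilon$ and strongly in $\mathcal{H}_\varepsilon$ (by the compact embedding, combined with \eqref{E:Sob_Lp} to control $u_N^\varepsilon\otimes u_N^\varepsilon$ in $L^2$), I would pass to the limit in each term of the weak formulation, obtaining $u^\varepsilon\in\mathcal{V}_\varepsilon$ satisfying \eqref{E:NC_Weak} for every test function in $\mathcal{V}_\varepsilon$ (and hence, by density of $\mathrm{span}\{\varphi_k\}$, for every element of $\mathcal{V}_\varepsilon$).

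The main obstacle is that the definition of a weak solution requires \eqref{E:NC_Weak} for all $\varphi\in H_{n,\sigma}^1(\Omega_\varepsilon)$, whereas under condition (A3) the space $\mathcal{V}_\varepsilon=\mathcal{H}_\varepsilon\cap H^1(\Omega_\varepsilon)^3$ is strictly smaller than $H_{n,\sigma}^1(\Omega_\varepsilon)$. Under (A1) or (A2) we have $\mathcal{V}_\varepsilon=H_{n,\sigma}^1(\Omega_\varepsilon)$ and there is nothing to do. Under (A3), I would use the orthogonal decomposition $H_{n,\sigma}^1(\Omega_\varepsilon)=\mathcal{V}_\varepsilon\oplus\mathcal{R}_g$, which follows from $\mathcal{R}_g\subset H_{n,\sigma}^1(\Omega_\varepsilon)$ by \eqref{Pf_BRg:Incl} and the definition $\mathcal{V}_\varepsilon=\mathcal{H}_\varepsilon\cap H^1(\Omega_\varepsilon)^3=L_\sigma^2(\Omega_\varepsilon)\cap\mathcal{R}_g^\perp\cap H^1(\Omega_\varepsilon)^3$. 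Any $\varphi\in H_{n,\sigma}^1(\Omega_\varepsilon)$ can then be written $\varphi=\psi+w$ with $\psi\in\mathcal{V}_\varepsilon$ and $w\in\mathcal{R}_g$. The identity \eqref{E:NC_Weak} holds for $\psi$ by construction, and for $w$ it reduces to $0=(f^\varepsilon,w)_{L^2(\Omega_\varepsilon)}$, which is precisely the compatibility condition \eqref{E:Fth_Comp}, by virtue of $a_\varepsilon(u^\varepsilon,w)=0$ and $b_\varepsilon(u^\varepsilon,u^\varepsilon,w)=0$ from Lemma \ref{L:BTRg}. Adding these two cases finishes the proof.
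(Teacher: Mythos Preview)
Your proposal is correct and follows exactly the approach the paper indicates: the paper does not spell out a proof of this proposition but states that it follows ``as in Section~\ref{SS:Lim_Weak}'' by Lemmas~\ref{L:BL_Unif}, \ref{L:TLC_Asym}, and~\ref{L:BTRg}, which is precisely the Galerkin argument in $\mathcal{V}_\varepsilon$ together with the decomposition $H_{n,\sigma}^1(\Omega_\varepsilon)=\mathcal{V}_\varepsilon\oplus\mathcal{R}_g$ under (A3) that you describe.
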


We also have a regularity result of a weak solution to \eqref{E:SNS_CTD}.

\begin{lemma} \label{L:NCW_Reg}
  Let $f^\varepsilon\in L^2(\Omega_\varepsilon)^3$ satisfy \eqref{E:Fth_Comp}.
  Then any weak solution to \eqref{E:SNS_CTD} belongs to $H^2(\Omega_\varepsilon)^3$.
\end{lemma}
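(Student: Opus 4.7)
The plan is to reinterpret $u^\varepsilon$ as a weak solution to the linear Stokes system \eqref{E:Sto_CTD} with modified external force
\begin{align*}
  F^\varepsilon := f^\varepsilon-(u^\varepsilon\cdot\nabla)u^\varepsilon,
\end{align*}
and then to upgrade its regularity by a short bootstrap based on the $L^p$-regularity theory for the Stokes problem under Navier's slip boundary conditions, already cited in the paper from \cite{SolSca73,Bei04,AmrRej14}. The weak form \eqref{E:NC_Weak} immediately reads $a_\varepsilon(u^\varepsilon,\varphi)=(F^\varepsilon,\varphi)_{L^2(\Omega_\varepsilon)}$ for all $\varphi\in H_{n,\sigma}^1(\Omega_\varepsilon)$, with the pairing interpreted by duality when $F^\varepsilon$ is not yet known to lie in $L^2$.

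For the bootstrap, starting from $u^\varepsilon\in H^1(\Omega_\varepsilon)^3\hookrightarrow L^6(\Omega_\varepsilon)^3$ and $\nabla u^\varepsilon\in L^2(\Omega_\varepsilon)^{3\times 3}$, H\"{o}lder's inequality yields $(u^\varepsilon\cdot\nabla)u^\varepsilon\in L^{3/2}(\Omega_\varepsilon)^3$, so $F^\varepsilon\in L^{3/2}(\Omega_\varepsilon)^3$. Applying the Stokes $L^{3/2}$-regularity on the fixed $C^4$-domain $\Omega_\varepsilon$ gives $u^\varepsilon\in W^{2,3/2}(\Omega_\varepsilon)^3$. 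Sobolev embedding then provides $u^\varepsilon\in L^q(\Omega_\varepsilon)^3$ for every $q\in[1,\infty)$ and $\nabla u^\varepsilon\in L^3(\Omega_\varepsilon)^{3\times 3}$; taking $q$ sufficiently large and reapplying H\"{o}lder's inequality, one finds $(u^\varepsilon\cdot\nabla)u^\varepsilon\in L^2(\Omega_\varepsilon)^3$, whence $F^\varepsilon\in L^2(\Omega_\varepsilon)^3$. A second application of the slip Stokes regularity, now at the $H^2$-level, concludes $u^\varepsilon\in H^2(\Omega_\varepsilon)^3$.

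The main point requiring care is that the cited $L^p$-Stokes regularity with Navier slip applies at each stage of the bootstrap. Since the claim is purely qualitative (no $\varepsilon$-uniformity is needed) and $\Gamma_\varepsilon$ is of class $C^4$, the results of \cite{SolSca73,Bei04,AmrRej14} apply directly at each fixed $\varepsilon$. Under the condition (A3) of Assumption \ref{Asmp_2}, the slip Stokes solvability at each stage requires its right-hand side to be orthogonal to $\mathcal{R}_g$: combining \eqref{E:Fth_Comp}, the asymmetry \eqref{E:TLC_Asym}, and Lemma \ref{L:BTRg}, one checks
\begin{align*}
  (F^\varepsilon,w)_{L^2(\Omega_\varepsilon)} = (f^\varepsilon,w)_{L^2(\Omega_\varepsilon)}+b_\varepsilon(u^\varepsilon,u^\varepsilon,w) = 0
\end{align*}
for every $w\in\mathcal{R}_g$, so the bootstrap proceeds unchanged in that setting as well.
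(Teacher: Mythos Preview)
Your proof is correct and follows exactly the approach the paper indicates: an $L^p$-regularity result for the slip Stokes problem combined with a bootstrap, with reference to \cite[Theorem 7.5]{AmrRej14}. You have simply spelled out the bootstrap steps that the paper leaves to the reference; the extra compatibility check under (A3) is harmless (regularity of an already-given weak solution does not strictly require it), and the sign in your final display should be $-b_\varepsilon(u^\varepsilon,u^\varepsilon,w)$, but since this term vanishes by \eqref{E:BTRg_Asym} the conclusion is unaffected.
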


\begin{proof}
  The statement can be shown by an $L^p$-regularity result for a solution to the Stokes equations \eqref{E:Sto_CTD} and a bootstrap argument.
  We refer to \cite[Theorem 7.5]{AmrRej14} for details.
\end{proof}

Let us give several estimates for a weak solution to \eqref{E:SNS_CTD} in the class $\mathcal{V}_\varepsilon$.

\begin{lemma} \label{L:NCW_H1}
  Let $c_1,c_2,\alpha,\beta>0$ be constants.
  Also, let $f^\varepsilon$ satisfy \eqref{E:Fth_Comp} and
  \begin{align} \label{E:NC_F_Est}
    \|\mathbb{P}_\varepsilon f^\varepsilon\|_{L^2(\Omega_\varepsilon)}^2 \leq c_1\varepsilon^{-1+\alpha}, \quad \|M_\tau\mathbb{P}_\varepsilon f^\varepsilon\|_{H^{-1}(\Gamma,T\Gamma)}^2 \leq c_2\varepsilon^{-1+\beta}.
  \end{align}
  Then there exists a constant $c_3>0$ independent of $\varepsilon$, $c_1$, $c_2$, $\alpha$, and $\beta$ such that
  \begin{align} \label{E:NCW_H1}
    \|u^\varepsilon\|_{H^1(\Omega_\varepsilon)}^2 \leq c_3(c_1\varepsilon^{1+\alpha}+c_2\varepsilon^\beta)
  \end{align}
  for any weak solution $u^\varepsilon$ to \eqref{E:SNS_CTD} satisfying $u^\varepsilon\in\mathcal{V}_\varepsilon$.
\end{lemma}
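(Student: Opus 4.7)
The plan is to test the weak formulation \eqref{E:NC_Weak} against $u^\varepsilon$ itself and then exploit the two bounds in \eqref{E:NC_F_Est} by splitting $u^\varepsilon$ into its normal-direction average and the remainder. Since $u^\varepsilon \in \mathcal{V}_\varepsilon \subset H_{n,\sigma}^1(\Omega_\varepsilon)$, the antisymmetry relation \eqref{E:TLC_Asym} kills the trilinear term, giving $a_\varepsilon(u^\varepsilon,u^\varepsilon) = (f^\varepsilon,u^\varepsilon)_{L^2(\Omega_\varepsilon)}$. Moreover, since $u^\varepsilon \in \mathcal{H}_\varepsilon$ and $\mathbb{P}_\varepsilon$ is the orthogonal projection onto $\mathcal{H}_\varepsilon$, the right-hand side equals $(\mathbb{P}_\varepsilon f^\varepsilon, u^\varepsilon)_{L^2(\Omega_\varepsilon)}$, and the left-hand side is bounded below by $c^{-1}\|u^\varepsilon\|_{H^1(\Omega_\varepsilon)}^2$ thanks to the uniform coerciveness \eqref{E:BL_Unif}.

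The main task is therefore to estimate $(\mathbb{P}_\varepsilon f^\varepsilon, u^\varepsilon)_{L^2(\Omega_\varepsilon)}$ by a suitable combination of $\|\mathbb{P}_\varepsilon f^\varepsilon\|_{L^2(\Omega_\varepsilon)}$ and $\|M_\tau \mathbb{P}_\varepsilon f^\varepsilon\|_{H^{-1}(\Gamma,T\Gamma)}$. The idea is to write
\begin{equation*}
  (\mathbb{P}_\varepsilon f^\varepsilon, u^\varepsilon)_{L^2(\Omega_\varepsilon)} = \bigl(\mathbb{P}_\varepsilon f^\varepsilon,\, u^\varepsilon - \overline{M_\tau u^\varepsilon}\bigr)_{L^2(\Omega_\varepsilon)} + \bigl(\mathbb{P}_\varepsilon f^\varepsilon,\, \overline{M_\tau u^\varepsilon}\bigr)_{L^2(\Omega_\varepsilon)}.
\end{equation*}
The first piece is handled by Cauchy--Schwarz combined with \eqref{E:Ave_Diff} (noting that $u^\varepsilon \cdot n_\varepsilon = 0$ on $\Gamma_\varepsilon$ since $u^\varepsilon \in L_\sigma^2(\Omega_\varepsilon)$), producing a factor $\varepsilon \|u^\varepsilon\|_{H^1(\Omega_\varepsilon)}$. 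For the second piece I would use the averaging identity \eqref{E:AT_L2In} to trade the bulk inner product for $\varepsilon(gM_\tau\mathbb{P}_\varepsilon f^\varepsilon, M_\tau u^\varepsilon)_{L^2(\Gamma)}$ up to an error of order $\varepsilon\|\mathbb{P}_\varepsilon f^\varepsilon\|_{L^2(\Omega_\varepsilon)}\|u^\varepsilon\|_{L^2(\Omega_\varepsilon)}$ (via \eqref{E:Ave_Hk} with $k=0$), and then interpret this as the duality pairing $\varepsilon[M_\tau\mathbb{P}_\varepsilon f^\varepsilon,\, gM_\tau u^\varepsilon]_{T\Gamma}$, which is bounded by $\varepsilon\|M_\tau\mathbb{P}_\varepsilon f^\varepsilon\|_{H^{-1}(\Gamma,T\Gamma)}\|gM_\tau u^\varepsilon\|_{H^1(\Gamma)}$; then \eqref{E:Ave_Hk} with $k=1$ gives a factor $\varepsilon^{-1/2}\|u^\varepsilon\|_{H^1(\Omega_\varepsilon)}$, producing the prefactor $\varepsilon^{1/2}$ exactly as required.

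Combining these two estimates yields
\begin{equation*}
  \|u^\varepsilon\|_{H^1(\Omega_\varepsilon)}^2 \leq c\varepsilon\|\mathbb{P}_\varepsilon f^\varepsilon\|_{L^2(\Omega_\varepsilon)}\|u^\varepsilon\|_{H^1(\Omega_\varepsilon)} + c\varepsilon^{1/2}\|M_\tau\mathbb{P}_\varepsilon f^\varepsilon\|_{H^{-1}(\Gamma,T\Gamma)}\|u^\varepsilon\|_{H^1(\Omega_\varepsilon)},
\end{equation*}
and dividing by $\|u^\varepsilon\|_{H^1(\Omega_\varepsilon)}$ (or applying Young's inequality to absorb it) followed by squaring and inserting the hypotheses \eqref{E:NC_F_Est} gives \eqref{E:NCW_H1}. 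The one subtlety to watch out for is making sure each step is uniform in $\varepsilon$: the coercivity constant in \eqref{E:BL_Unif}, the $\varepsilon$-prefactors in \eqref{E:Ave_Diff}, \eqref{E:Ave_Hk}, and \eqref{E:AT_L2In}, and the bound $\|g\eta\|_{H^1(\Gamma)} \leq c\|\eta\|_{H^1(\Gamma)}$ (which uses $g \in C^1(\Gamma)$) are all provided by earlier lemmas, so there is no genuine obstacle — the main care is just in orchestrating the correct powers of $\varepsilon$ so that the final exponents $1+\alpha$ and $\beta$ emerge.
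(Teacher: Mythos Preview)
Your proof is correct and follows essentially the same strategy as the paper: test \eqref{E:NC_Weak} with $u^\varepsilon$, kill the trilinear term by \eqref{E:TLC_Asym}, pass to $\mathbb{P}_\varepsilon f^\varepsilon$, split $u^\varepsilon=\bigl(u^\varepsilon-\overline{M_\tau u^\varepsilon}\bigr)+\overline{M_\tau u^\varepsilon}$, and estimate the two pieces to reach the bound that Young's inequality converts into \eqref{E:NCW_H1}. The only cosmetic difference is in the handling of the second piece: the paper swaps $M_\tau$ onto $\mathbb{P}_\varepsilon f^\varepsilon$ via \eqref{E:Ave_Asym} and then applies the bulk duality bound \eqref{E:Dual_CTD}, whereas you convert directly to the surface inner product via \eqref{E:AT_L2In} and then pair on $\Gamma$ using \eqref{E:Ave_Hk}; both routes yield the same $\varepsilon^{1/2}\|M_\tau\mathbb{P}_\varepsilon f^\varepsilon\|_{H^{-1}(\Gamma,T\Gamma)}\|u^\varepsilon\|_{H^1(\Omega_\varepsilon)}$ contribution.
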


Recall that $Mu$ is the average in the thin direction of a vector field $u$ on $\Omega_\varepsilon$ and $M_\tau u$ is the tangential component of $Mu$ on $\Gamma$ (see Section \ref{SS:Average}).

\begin{proof}
  The proof is the same as in the nonstationary case (see (8.12) in \cite{Miu21_02}), but we give it for the completeness.
  Let $u^\varepsilon\in\mathcal{V}_\varepsilon$ be a weak solution to \eqref{E:SNS_CTD}.
  We take $\varphi=u^\varepsilon$ as a test function in \eqref{E:NC_Weak} and use the second equality of \eqref{E:TLC_Asym} with $u_1=u_2=u^\varepsilon$ to get
  \begin{align} \label{Pf_N1:Weak}
    a_\varepsilon(u^\varepsilon,u^\varepsilon) = (f^\varepsilon,u^\varepsilon)_{L^2(\Omega_\varepsilon)} = (\mathbb{P}_\varepsilon f^\varepsilon,u^\varepsilon)_{L^2(\Omega_\varepsilon)} = J_1+J_2,
  \end{align}
  where the second equality follows from $u^\varepsilon\in\mathcal{H}_\varepsilon$ and we set
  \begin{align*}
    J_1 = \Bigl(\mathbb{P}_\varepsilon f^\varepsilon,u^\varepsilon-\overline{M_\tau u^\varepsilon}\Bigr)_{L^2(\Omega_\varepsilon)}, \quad J_2 = \Bigl(\mathbb{P}_\varepsilon f^\varepsilon,\overline{M_\tau u^\varepsilon}\Bigr)_{L^2(\Omega_\varepsilon)}.
  \end{align*}
  Noting that $u^\varepsilon\in\mathcal{V}_\varepsilon$ satisfies $u^\varepsilon\cdot n_\varepsilon=0$ on $\Gamma_\varepsilon$, we see by \eqref{E:Ave_Diff} that
  \begin{align*}
    |J_1| \leq \|\mathbb{P}_\varepsilon f^\varepsilon\|_{L^2(\Omega_\varepsilon)}\Bigl\|u-\overline{M_\tau u^\varepsilon}\Bigr\|_{L^2(\Omega_\varepsilon)} \leq c\varepsilon\|f^\varepsilon\|_{L^2(\Omega_\varepsilon)}\|u\|_{H^1(\Omega_\varepsilon)}.
  \end{align*}
  Also, it follows from \eqref{E:Dual_CTD} and \eqref{E:Ave_Asym} that
  \begin{align*}
    |J_2| &\leq \left|\Bigl(\overline{M_\tau\mathbb{P}_\varepsilon f^\varepsilon},u^\varepsilon\Bigr)_{L^2(\Omega_\varepsilon)}\right|+\left|\Bigl(\mathbb{P}_\varepsilon f^\varepsilon,\overline{M_\tau u^\varepsilon}\Bigr)_{L^2(\Omega_\varepsilon)}-\Bigl(\overline{M_\tau\mathbb{P}_\varepsilon f^\varepsilon},u^\varepsilon\Bigr)_{L^2(\Omega_\varepsilon)}\right| \\
    &\leq c\left(\varepsilon^{1/2}\|M_\tau\mathbb{P}_\varepsilon f^\varepsilon\|_{H^{-1}(\Gamma,T\Gamma)}\|u^\varepsilon\|_{H^1(\Omega_\varepsilon)}+\varepsilon\|\mathbb{P}_\varepsilon f^\varepsilon\|_{L^2(\Omega_\varepsilon)}\|u^\varepsilon\|_{L^2(\Omega_\varepsilon)}\right).
  \end{align*}
  We apply these inequalities and \eqref{E:BL_Unif} (note that $u^\varepsilon\in\mathcal{V}_\varepsilon$) to \eqref{Pf_N1:Weak} to find that
  \begin{align*}
    \|u^\varepsilon\|_{H^1(\Omega_\varepsilon)}^2 &\leq c\|u^\varepsilon\|_{H^1(\Omega_\varepsilon)}\Bigl(\varepsilon\|f^\varepsilon\|_{L^2(\Omega_\varepsilon)}+\varepsilon^{1/2}\|M_\tau\mathbb{P}_\varepsilon f^\varepsilon\|_{H^{-1}(\Gamma,T\Gamma)}\Bigr) \\
    &\leq \frac{1}{2}\|u^\varepsilon\|_{H^1(\Omega_\varepsilon)}^2+c\Bigl(\varepsilon^2\|f^\varepsilon\|_{L^2(\Omega_\varepsilon)}^2+\varepsilon\|M_\tau\mathbb{P}_\varepsilon f^\varepsilon\|_{H^{-1}(\Gamma,T\Gamma)}^2\Bigr).
  \end{align*}
  Hence we get \eqref{E:NCW_H1} by this inequality and \eqref{E:NC_F_Est}.
\end{proof}

\begin{lemma} \label{L:NCW_H2}
  Let $c_1,c_2,\alpha,\beta>0$.
  There exist constants $\varepsilon_1\in(0,\varepsilon_0)$ and $c>0$ independent of $\varepsilon$ such that the following statement holds: if $\varepsilon\in(0,\varepsilon_1)$ and $f^\varepsilon$ satisfies \eqref{E:Fth_Comp} and \eqref{E:NC_F_Est}, and if $u^\varepsilon$ is a weak solution to \eqref{E:SNS_CTD} such that $u^\varepsilon\in\mathcal{V}_\varepsilon$, then $u^\varepsilon\in D(A_\varepsilon)$ and
  \begin{align} \label{E:NCW_H2}
    \|u^\varepsilon\|_{H^2(\Omega_\varepsilon)}^2 \leq c(\varepsilon^{-1+\alpha}+\varepsilon^{-1+2\beta}).
  \end{align}
\end{lemma}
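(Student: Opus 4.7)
The plan is to test the equation against the Stokes operator applied to $u^\varepsilon$ and use the explicit trilinear estimate from Lemma \ref{L:Tri_Est}, bootstrapping from the $H^1$ bound in Lemma \ref{L:NCW_H1}.

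First, by Lemma \ref{L:NCW_Reg} the weak solution $u^\varepsilon$ lies in $H^2(\Omega_\varepsilon)^3$, and since $u^\varepsilon \in \mathcal{V}_\varepsilon$ satisfies the slip boundary conditions (either from the weak form plus regularity, or directly from the Stokes regularity theory), $u^\varepsilon \in D(A_\varepsilon)$. Projecting the momentum equation onto $\mathcal{H}_\varepsilon$ gives the pointwise identity
\begin{align*}
A_\varepsilon u^\varepsilon + \mathbb{P}_\varepsilon[(u^\varepsilon\cdot\nabla)u^\varepsilon] = \mathbb{P}_\varepsilon f^\varepsilon \quad\text{in}\quad \mathcal{H}_\varepsilon,
\end{align*}
and taking the $L^2(\Omega_\varepsilon)$-inner product with $A_\varepsilon u^\varepsilon \in \mathcal{H}_\varepsilon$ yields
\begin{align*}
\|A_\varepsilon u^\varepsilon\|_{L^2(\Omega_\varepsilon)}^2 = (\mathbb{P}_\varepsilon f^\varepsilon, A_\varepsilon u^\varepsilon)_{L^2(\Omega_\varepsilon)} - \bigl((u^\varepsilon\cdot\nabla)u^\varepsilon, A_\varepsilon u^\varepsilon\bigr)_{L^2(\Omega_\varepsilon)}.
\end{align*}

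Next I would estimate each term on the right. The forcing term is handled by Cauchy--Schwarz and Young's inequality to produce $\tfrac14\|A_\varepsilon u^\varepsilon\|_{L^2}^2 + \|\mathbb{P}_\varepsilon f^\varepsilon\|_{L^2}^2$, the latter bounded by $c_1\varepsilon^{-1+\alpha}$ via \eqref{E:NC_F_Est}. The convective term is controlled by Lemma \ref{L:Tri_Est}, giving a bad factor $(\tfrac14 + d_1\varepsilon^{1/2}\|A_\varepsilon^{1/2}u^\varepsilon\|_{L^2})\|A_\varepsilon u^\varepsilon\|_{L^2}^2$ plus lower-order terms. The two $\tfrac14$-contributions and the $d_1$-term must be absorbed into the left-hand side; the $\tfrac14$'s are immediate, but for the $d_1$-term I would invoke the $H^1$ estimate of Lemma \ref{L:NCW_H1} together with Lemma \ref{L:Ae_Equi} to obtain $\|A_\varepsilon^{1/2}u^\varepsilon\|_{L^2}^2 \leq c(\varepsilon^{1+\alpha}+\varepsilon^\beta)$, whence $\varepsilon^{1/2}\|A_\varepsilon^{1/2}u^\varepsilon\|_{L^2} \to 0$ as $\varepsilon \to 0$. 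Choosing $\varepsilon_1 \in (0,\varepsilon_0)$ small enough so that $d_1\varepsilon^{1/2}\|A_\varepsilon^{1/2}u^\varepsilon\|_{L^2} < \tfrac14$ for all $\varepsilon \in (0,\varepsilon_1)$, the problematic term gets absorbed and I am left with
\begin{align*}
\|A_\varepsilon u^\varepsilon\|_{L^2(\Omega_\varepsilon)}^2 \leq c\Bigl(\|u^\varepsilon\|_{L^2}^2\|A_\varepsilon^{1/2}u^\varepsilon\|_{L^2}^4 + \varepsilon^{-1}\|u^\varepsilon\|_{L^2}^2\|A_\varepsilon^{1/2}u^\varepsilon\|_{L^2}^2 + \|\mathbb{P}_\varepsilon f^\varepsilon\|_{L^2}^2\Bigr).
\end{align*}

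Finally I would plug in the bounds $\|u^\varepsilon\|_{L^2}^2, \|A_\varepsilon^{1/2}u^\varepsilon\|_{L^2}^2 \leq c(\varepsilon^{1+\alpha}+\varepsilon^\beta)$. The first term produces powers at least $\varepsilon^{3+3\alpha}$ and $\varepsilon^{3\beta}$; the second produces $\varepsilon^{1+2\alpha}$ and $\varepsilon^{-1+2\beta}$; the forcing contributes $\varepsilon^{-1+\alpha}$. Since $\alpha,\beta>0$, the dominant contributions for small $\varepsilon$ are $\varepsilon^{-1+\alpha}$ and $\varepsilon^{-1+2\beta}$, and Lemma \ref{L:Ae_Equi} converts the $A_\varepsilon$-bound to the claimed $H^2$-bound \eqref{E:NCW_H2}.

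The main obstacle is the absorption step: the coefficient $d_1\varepsilon^{1/2}\|A_\varepsilon^{1/2}u^\varepsilon\|_{L^2}$ in Lemma \ref{L:Tri_Est} cannot be absorbed unconditionally but only after one already knows $\|A_\varepsilon^{1/2}u^\varepsilon\|_{L^2}$ decays at a suitable rate; this is exactly why the $H^1$ estimate of Lemma \ref{L:NCW_H1} must be fed in first and why the smallness threshold $\varepsilon_1$ depends on the constants in \eqref{E:NC_F_Est}. Everything else is routine bookkeeping of $\varepsilon$-powers.
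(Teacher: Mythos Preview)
Your proposal is correct and follows essentially the same route as the paper: project onto $\mathcal{H}_\varepsilon$, test against $A_\varepsilon u^\varepsilon$, absorb the $\tfrac14$-terms from Young and from Lemma~\ref{L:Tri_Est}, use the $H^1$ bound of Lemma~\ref{L:NCW_H1} together with Lemma~\ref{L:Ae_Equi} to make the $d_1\varepsilon^{1/2}\|A_\varepsilon^{1/2}u^\varepsilon\|_{L^2}$-coefficient small, and finish by bookkeeping of $\varepsilon$-powers and another application of Lemma~\ref{L:Ae_Equi}. The only cosmetic difference is that the paper bundles the cubic and $\varepsilon^{-1}$-quadratic terms as $(\varepsilon^{1+\alpha}+\varepsilon^\beta)^3+\varepsilon^{-1}(\varepsilon^{1+\alpha}+\varepsilon^\beta)^2\leq c(\varepsilon^{1+2\alpha}+\varepsilon^{-1+2\beta})$ in one line, whereas you expand the powers individually; the conclusion is the same.
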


\begin{proof}
  Let $\varepsilon\in(0,\varepsilon_0)$ and $u^\varepsilon\in\mathcal{V}_\varepsilon$ be a weak solution to \eqref{E:SNS_CTD}.
  Then $u^\varepsilon\in H^2(\Omega_\varepsilon)^3$ by Lemma \ref{L:NCW_Reg} and we see that $u^\varepsilon$ satisfies the strong form of \eqref{E:SNS_CTD} by the weak form \eqref{E:NC_Weak} and integration by parts.
  In particular, $u^\varepsilon\in D(A_\varepsilon)$.
  Let $\mathbb{P}_\varepsilon$ be the orthogonal projection from $L^2(\Omega_\varepsilon)^3$ onto $\mathcal{H}_\varepsilon$.
  We apply $\mathbb{P}_\varepsilon$ to \eqref{E:SNS_CTD}.
  Then we get
  \begin{align*}
    A_\varepsilon u^\varepsilon = -\nu\mathbb{P}_\varepsilon\Delta u^\varepsilon = \mathbb{P}_\varepsilon f^\varepsilon-\mathbb{P}_\varepsilon(u^\varepsilon\cdot\nabla)u^\varepsilon \quad\text{in}\quad \mathcal{H}_\varepsilon
  \end{align*}
  by $\mathbb{P}_\varepsilon\nabla p^\varepsilon=0$ (note that $\nabla p^\varepsilon\in L_\sigma^2(\Omega_\varepsilon)^\perp\subset\mathcal{H}_\varepsilon^\perp$).
  Noting that $A_\varepsilon u^\varepsilon\in\mathcal{H}_\varepsilon$, we take the $L^2(\Omega_\varepsilon)$-inner product of the above equality with $A_\varepsilon u^\varepsilon$ to find that
  \begin{align} \label{Pf_NCH2:Ip}
    \begin{aligned}
      \|A_\varepsilon u^\varepsilon\|_{L^2(\Omega_\varepsilon)}^2 &\leq \|\mathbb{P}_\varepsilon f^\varepsilon\|_{L^2(\Omega_\varepsilon)}\|A_\varepsilon u^\varepsilon\|_{L^2(\Omega_\varepsilon)}+\left|\bigl((u\cdot\nabla)u,A_\varepsilon u\bigr)_{L^2(\Omega_\varepsilon)}\right| \\
      &\leq \frac{1}{4}\|A_\varepsilon u^\varepsilon\|_{L^2(\Omega_\varepsilon)}^2+\|\mathbb{P}_\varepsilon f^\varepsilon\|_{L^2(\Omega_\varepsilon)}^2+\left|\bigl((u\cdot\nabla)u,A_\varepsilon u\bigr)_{L^2(\Omega_\varepsilon)}\right|.
    \end{aligned}
  \end{align}
  Moreover, since $u^\varepsilon\in\mathcal{V}_\varepsilon$ and $f^\varepsilon$ satisfies \eqref{E:NC_F_Est}, we see by \eqref{E:Ae_Equi} and \eqref{E:NCW_H1} that
  \begin{align*}
    \|A_\varepsilon^{1/2}u^\varepsilon\|_{L^2(\Omega_\varepsilon)}^2 \leq c\|u^\varepsilon\|_{H^1(\Omega_\varepsilon)}^2 \leq c(\varepsilon^{1+\alpha}+\varepsilon^\beta).
  \end{align*}
  Here and in what follows, $c$ denotes a general positive constant depending on $c_1$ and $c_2$ but independent of $\varepsilon$, $\alpha$, and $\beta$.
  From this inequality and \eqref{E:Tri_Est}, we deduce that
  \begin{align*}
    \left|\bigl((u\cdot\nabla)u,A_\varepsilon u\bigr)_{L^2(\Omega_\varepsilon)}\right| &\leq \left\{\frac{1}{4}+cd_1\varepsilon^{1/2}(\varepsilon^{1+\alpha}+\varepsilon^{\beta})^{1/2}\right\}\|A_\varepsilon u^\varepsilon\|_{L^2(\Omega_\varepsilon)}^2 \\
    &\qquad +cd_2\{(\varepsilon^{1+\alpha}+\varepsilon^{\beta})^3+\varepsilon^{-1}(\varepsilon^{1+\alpha}+\varepsilon^\beta)^2\}.
  \end{align*}
  Thus, taking $\varepsilon_1\in(0,\varepsilon_0)$ such that $cd_1\varepsilon_1^{1/2}(\varepsilon_1^{1+\alpha}+\varepsilon_1^{\beta})^{1/2}\leq1/4$ and noting that
  \begin{align*}
    (\varepsilon^{1+\alpha}+\varepsilon^{\beta})^3+\varepsilon^{-1}(\varepsilon^{1+\alpha}+\varepsilon^\beta)^2 \leq c(\varepsilon^{1+2\alpha}+\varepsilon^{-1+2\beta})
  \end{align*}
  by $0<\varepsilon<1$, $\alpha>0$, and $\beta>0$, we see that
  \begin{align*}
    \left|\bigl((u\cdot\nabla)u,A_\varepsilon u\bigr)_{L^2(\Omega_\varepsilon)}\right| \leq \frac{1}{2}\|A_\varepsilon u^\varepsilon\|_{L^2(\Omega_\varepsilon)}^2+c(\varepsilon^{1+2\alpha}+\varepsilon^{-1+2\beta})
  \end{align*}
  for $\varepsilon\in(0,\varepsilon_1)$.
  We apply this inequality and \eqref{E:NC_F_Est} to \eqref{Pf_NCH2:Ip} to get
  \begin{align*}
    \|A_\varepsilon u^\varepsilon\|_{L^2(\Omega_\varepsilon)}^2 \leq c(\varepsilon^{-1+\alpha}+\varepsilon^{-1+2\beta})
  \end{align*}
  by $0<\varepsilon<1$ and $\alpha>0$.
  By this equality and \eqref{E:Ae_Equi}, we obtain \eqref{E:NCW_H2}.
\end{proof}

By \eqref{E:AvNo_H1} and \eqref{E:NCW_H2}, we have the next result which shows that the normal component on $\Gamma$ of the average of $u^\varepsilon$ is sufficiently small as $\varepsilon\to0$.

\begin{proposition} \label{P:NCW_AN}
  Under the assumptions of Lemma \ref{L:NCW_H2}, we have
  \begin{align*}
    \|Mu^\varepsilon\cdot n\|_{H^1(\Gamma)}^2 \leq c(\varepsilon^\alpha+\varepsilon^{2\beta})
  \end{align*}
  with some constant $c>0$ independent of $\varepsilon$.
\end{proposition}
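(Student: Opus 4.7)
The proposition is a direct corollary of the two cited estimates, so the plan is to chain them together with a brief verification that the hypotheses line up.

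First I would check the regularity and boundary conditions needed to invoke \eqref{E:AvNo_H1} at $u=u^\varepsilon$. The assumption $u^\varepsilon\in\mathcal{V}_\varepsilon$ already encodes the impermeability condition $u^\varepsilon\cdot n_\varepsilon=0$ on $\Gamma_\varepsilon$, and Lemma \ref{L:NCW_Reg} upgrades the weak solution to $H^2(\Omega_\varepsilon)^3$ (in fact, $u^\varepsilon\in D(A_\varepsilon)$ was already recorded in the proof of Lemma \ref{L:NCW_H2}). With these hypotheses in hand, \eqref{E:AvNo_H1} yields
\begin{align*}
  \|Mu^\varepsilon\cdot n\|_{H^1(\Gamma)}^2 \leq c\varepsilon\|u^\varepsilon\|_{H^2(\Omega_\varepsilon)}^2.
\end{align*}

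Then I would plug in the $H^2$-bound \eqref{E:NCW_H2} from Lemma \ref{L:NCW_H2}, which gives
\begin{align*}
  \|Mu^\varepsilon\cdot n\|_{H^1(\Gamma)}^2 \leq c\varepsilon\bigl(\varepsilon^{-1+\alpha}+\varepsilon^{-1+2\beta}\bigr) = c\bigl(\varepsilon^\alpha+\varepsilon^{2\beta}\bigr),
\end{align*}
completing the proof. There is no real obstacle here: the content of the proposition is entirely in the two ingredients supplied by \eqref{E:AvNo_H1} and \eqref{E:NCW_H2}, and the $\varepsilon^{1/2}$ gain in the average-trace estimate is exactly what is needed to absorb the $\varepsilon^{-1/2}$ loss inherent in the $H^2$-estimate for $u^\varepsilon$.
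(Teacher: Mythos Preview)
Your proof is correct and matches the paper's approach: the paper does not even write out a proof but simply states that the proposition follows from \eqref{E:AvNo_H1} and \eqref{E:NCW_H2}, which is precisely the chain of estimates you carry out.
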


Lastly, let us give some uniqueness results for a weak solution to \eqref{E:SNS_CTD} in $\mathcal{V}_\varepsilon$.

\begin{proposition} \label{P:NCW_Uni}
  Let $c_1,\alpha>0$.
  There exist constants $\varepsilon_u\in(0,\varepsilon_0)$ and $c_u>0$ such that the following statement holds: if $\varepsilon\in(0,\varepsilon_u)$ and $f^\varepsilon$ satisfies \eqref{E:Fth_Comp} and
  \begin{align} \label{E:NCU_F}
    \|\mathbb{P}_\varepsilon f^\varepsilon\|_{L^2(\Omega_\varepsilon)}^2 \leq c_1\varepsilon^{-1+\alpha}, \quad \|M_\tau\mathbb{P}_\varepsilon f^\varepsilon\|_{H^{-1}(\Gamma,T\Gamma)}^2 \leq c_u,
  \end{align}
  and if $u_1^\varepsilon$ and $u_2^\varepsilon$ are weak solutions to \eqref{E:SNS_CTD} satisfying $u_1^\varepsilon,u_2^\varepsilon\in\mathcal{V}_\varepsilon$, then $u_1^\varepsilon=u_2^\varepsilon$.
\end{proposition}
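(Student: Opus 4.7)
The plan is to follow the classical argument for uniqueness of weak solutions to the stationary Navier--Stokes equations under a smallness condition on the forcing, but with all constants tracked explicitly in $\varepsilon$ using the $\varepsilon$-weighted inequalities of Section~\ref{S:Tool} together with the a priori bound of Lemma~\ref{L:NCW_H1}. Let $w^\varepsilon=u_1^\varepsilon-u_2^\varepsilon\in\mathcal{V}_\varepsilon$. Subtracting the weak forms \eqref{E:NC_Weak} for $u_1^\varepsilon$ and $u_2^\varepsilon$ and using the bilinearity of $a_\varepsilon$ and trilinearity of $b_\varepsilon$, I obtain
\begin{align*}
  a_\varepsilon(w^\varepsilon,\varphi)+b_\varepsilon(w^\varepsilon,u_1^\varepsilon,\varphi)+b_\varepsilon(u_2^\varepsilon,w^\varepsilon,\varphi) = 0
\end{align*}
for every $\varphi\in H_{n,\sigma}^1(\Omega_\varepsilon)$. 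Testing with $\varphi=w^\varepsilon$ and invoking the second identity of \eqref{E:TLC_Asym} with $u_1=u_2^\varepsilon$ to kill the last term, I am left with $a_\varepsilon(w^\varepsilon,w^\varepsilon)=-b_\varepsilon(w^\varepsilon,u_1^\varepsilon,w^\varepsilon)$.

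Next I rewrite the trilinear term via the antisymmetry \eqref{E:TLC_Asym} as $b_\varepsilon(w^\varepsilon,u_1^\varepsilon,w^\varepsilon)=-b_\varepsilon(w^\varepsilon,w^\varepsilon,u_1^\varepsilon)=(w^\varepsilon\otimes w^\varepsilon,\nabla u_1^\varepsilon)_{L^2(\Omega_\varepsilon)}$ and estimate it by H\"older's inequality together with the Sobolev inequality \eqref{E:Sob_Lp} applied with $p=4$:
\begin{align*}
  |b_\varepsilon(w^\varepsilon,u_1^\varepsilon,w^\varepsilon)|
  \leq \|w^\varepsilon\|_{L^4(\Omega_\varepsilon)}^2\|\nabla u_1^\varepsilon\|_{L^2(\Omega_\varepsilon)}
  \leq c\varepsilon^{-1/2}\|w^\varepsilon\|_{H^1(\Omega_\varepsilon)}^2\|u_1^\varepsilon\|_{H^1(\Omega_\varepsilon)}.
\end{align*}
To absorb the singular factor $\varepsilon^{-1/2}$, I apply Lemma~\ref{L:NCW_H1} with $\beta=1$: since $u_1^\varepsilon\in\mathcal{V}_\varepsilon$ is a weak solution and $f^\varepsilon$ satisfies \eqref{E:NCU_F}, one has $\|u_1^\varepsilon\|_{H^1(\Omega_\varepsilon)}^2\leq c_3(c_1\varepsilon^{1+\alpha}+c_u\varepsilon)$. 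For $\varepsilon$ small enough that $c_1\varepsilon^\alpha\leq c_u$, this simplifies to $\|u_1^\varepsilon\|_{H^1(\Omega_\varepsilon)}\leq c_4\sqrt{c_u}\,\varepsilon^{1/2}$, giving
\begin{align*}
  |b_\varepsilon(w^\varepsilon,u_1^\varepsilon,w^\varepsilon)| \leq c_5\sqrt{c_u}\,\|w^\varepsilon\|_{H^1(\Omega_\varepsilon)}^2
\end{align*}
with $c_5$ independent of $\varepsilon$ and $c_u$.

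Finally I combine this with the uniform coerciveness \eqref{E:BL_Unif} applied to $w^\varepsilon\in\mathcal{V}_\varepsilon$ to get
\begin{align*}
  \|w^\varepsilon\|_{H^1(\Omega_\varepsilon)}^2 \leq c\,a_\varepsilon(w^\varepsilon,w^\varepsilon) \leq c c_5\sqrt{c_u}\,\|w^\varepsilon\|_{H^1(\Omega_\varepsilon)}^2,
\end{align*}
and I fix $c_u$ so small that $cc_5\sqrt{c_u}<1$ and $\varepsilon_u\in(0,\varepsilon_0)$ so small that $c_1\varepsilon_u^\alpha\leq c_u$; the conclusion $w^\varepsilon=0$ is then forced. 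The single delicate point is the interplay between the $\varepsilon^{-1/2}$ loss in the Sobolev embedding for $L^4(\Omega_\varepsilon)$ and the $\varepsilon^{1/2}$ gain in the a priori $H^1$ bound for $u_1^\varepsilon$; these must match exactly so that the remaining constant depends only on $c_u$ (not on $\varepsilon$ or $c_1$), which is precisely why the hypothesis on $\|M_\tau\mathbb{P}_\varepsilon f^\varepsilon\|_{H^{-1}(\Gamma,T\Gamma)}^2$ needs to be bounded by a constant rather than by a negative power of $\varepsilon$.
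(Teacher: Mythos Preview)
Your argument is correct and follows the same approach as the paper: subtract the weak forms, test with $w^\varepsilon=u_1^\varepsilon-u_2^\varepsilon$, kill $b_\varepsilon(u_2^\varepsilon,w^\varepsilon,w^\varepsilon)$ by \eqref{E:TLC_Asym}, bound the remaining trilinear term by $c\varepsilon^{-1/2}\|u_1^\varepsilon\|_{H^1(\Omega_\varepsilon)}\|w^\varepsilon\|_{H^1(\Omega_\varepsilon)}^2$ via \eqref{E:Sob_Lp} with $p=4$, absorb $\varepsilon^{-1/2}$ using the a priori bound \eqref{E:NCW_H1} with $\beta=1$, and conclude by the uniform coerciveness \eqref{E:BL_Unif}. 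The only cosmetic differences are that the paper estimates $b_\varepsilon(w^\varepsilon,u_1^\varepsilon,w^\varepsilon)$ directly (placing $u_1^\varepsilon$ in $L^4$ and $\nabla w^\varepsilon$ in $L^2$) rather than first swapping via antisymmetry, and it chooses $\varepsilon_u,c_u$ simultaneously so that $c_4c_3^{1/2}(c_1\varepsilon_u^\alpha+c_u)^{1/2}=1$ instead of first imposing $c_1\varepsilon^\alpha\leq c_u$; both routes yield the same bound and the same conclusion.
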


\begin{proof}
  Let $c_u>0$ be a constant determined later and $f^\varepsilon$ satisfy \eqref{E:Fth_Comp} and \eqref{E:NCU_F}.
  Also, let $w^\varepsilon=u_1^\varepsilon-u_2^\varepsilon$.
  Since $u_1^\varepsilon,u_2^\varepsilon\in\mathcal{V}_\varepsilon$ and they satisfy \eqref{E:NC_Weak}, we have $w^\varepsilon\in\mathcal{V}_\varepsilon$ and
  \begin{align*}
    a_\varepsilon(w^\varepsilon,\varphi)+b_\varepsilon(w_\varepsilon,u_1^\varepsilon,\varphi)+b_\varepsilon(u_2^\varepsilon,w^\varepsilon,\varphi) = 0
  \end{align*}
  for all $\varphi\in H_{n,\sigma}^1(\Omega_\varepsilon)$.
  Let $\varphi=w^\varepsilon$ in this equality.
  Then
  \begin{align*}
    |b_\varepsilon(u_1^\varepsilon,w^\varepsilon,w^\varepsilon)| &\leq \|u_1^\varepsilon\|_{L^4(\Omega_\varepsilon)}\|w^\varepsilon\|_{L^4(\Omega_\varepsilon)}\|\nabla w^\varepsilon\|_{L^2(\Omega_\varepsilon)} \\
    &\leq c\varepsilon^{-1/2}\|u_1^\varepsilon\|_{H^1(\Omega_\varepsilon)}\|w^\varepsilon\|_{H^1(\Omega_\varepsilon)}^2
  \end{align*}
  by H\"{o}lder's inequality and \eqref{E:Sob_Lp} with $p=4$.
  Moreover, $b_\varepsilon(u_2^\varepsilon,w^\varepsilon,w^\varepsilon)=0$ by \eqref{E:TLC_Asym}.
  By these results and \eqref{E:BL_Unif} for $w^\varepsilon\in\mathcal{V}_\varepsilon$, we find that
  \begin{align*}
    \|w^\varepsilon\|_{H^1(\Omega_\varepsilon)}^2 \leq c_4\varepsilon^{-1/2}\|u_1^\varepsilon\|_{H^1(\Omega_\varepsilon)}\|w^\varepsilon\|_{H^1(\Omega_\varepsilon)}^2
  \end{align*}
  with a constant $c_4>0$ independent of $\varepsilon$, $u_1^\varepsilon$, and $u_2^\varepsilon$.
  Now we recall that $u_1^\varepsilon\in\mathcal{V}_\varepsilon$ and $f^\varepsilon$ satisfies \eqref{E:NCU_F} and thus \eqref{E:NC_F_Est} with $c_2=c_u$ and $\beta=1$.
  Hence
  \begin{align*}
    \|u^\varepsilon\|_{H^1(\Omega_\varepsilon)}^2 \leq c_3\varepsilon(c_1\varepsilon^\alpha+c_u)
  \end{align*}
  by \eqref{E:NCW_H1}, where $c_3>0$ is a constant independent of $\varepsilon$, $\alpha$, $c_1$, and $c_u$.
  Therefore,
  \begin{align*}
    \|w^\varepsilon\|_{H^1(\Omega_\varepsilon)}^2 \leq C(\varepsilon,c_u)\|w^\varepsilon\|_{H^1(\Omega_\varepsilon)}^2, \quad C(\varepsilon,c_u) = c_4c_3^{1/2}(c_1\varepsilon^\alpha+c_u)^{1/2},
  \end{align*}
  and if we take $\varepsilon_u,c_u>0$ such that $C(\varepsilon_u,c_u)=1$, then $C(\varepsilon,c_u)<1$ for $\varepsilon\in(0,\varepsilon_u)$ and it follows from the above inequality that $w^\varepsilon=0$, i.e. $u_1^\varepsilon=u_2^\varepsilon$.
\end{proof}

\begin{remark} \label{R:NCW_Uni}
  If $c_2\leq c_u$ for the constants $c_u$ in \eqref{P:NCW_Uni} and $c_2$ in the condition (a) of Theorem \ref{T:Error}, then Proposition \ref{P:NCW_Uni} implies that the weak solution $u^\varepsilon$ to \eqref{E:SNS_CTD} appearing in Theorem \ref{T:Error} is unique in the class $\mathcal{V}_\varepsilon$ when $\varepsilon$ is sufficiently small.
\end{remark}

\begin{proposition} \label{P:NCW_UBe}
  Let $c_1,c_2,\alpha>0$ and $\beta>1$.
  There exists a constant $\varepsilon_u'\in(0,\varepsilon_0)$ such that the following statement holds: if $\varepsilon\in(0,\varepsilon_u')$ and $f^\varepsilon$ satisfies \eqref{E:Fth_Comp} and \eqref{E:NC_F_Est}, and if $u_1^\varepsilon$ and $u_2^\varepsilon$ are weak solutions to \eqref{E:SNS_CTD} satisfying $u_1^\varepsilon,u_2^\varepsilon\in\mathcal{V}_\varepsilon$, then $u_1^\varepsilon=u_2^\varepsilon$.
\end{proposition}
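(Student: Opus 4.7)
The plan is to adapt the energy argument used in the proof of Proposition \ref{P:NCW_Uni}, exploiting the stronger decay rate $\beta>1$ to remove the smallness restriction on the right-hand side. Setting $w^\varepsilon=u_1^\varepsilon-u_2^\varepsilon\in\mathcal{V}_\varepsilon$, subtracting the weak forms satisfied by $u_1^\varepsilon$ and $u_2^\varepsilon$ gives
\begin{align*}
  a_\varepsilon(w^\varepsilon,\varphi)+b_\varepsilon(w^\varepsilon,u_1^\varepsilon,\varphi)+b_\varepsilon(u_2^\varepsilon,w^\varepsilon,\varphi)=0
\end{align*}
for all $\varphi\in H_{n,\sigma}^1(\Omega_\varepsilon)$. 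Testing with $\varphi=w^\varepsilon$, applying \eqref{E:TLC_Asym} to kill the $b_\varepsilon(u_2^\varepsilon,w^\varepsilon,w^\varepsilon)$ term, and estimating the remaining trilinear term by H\"{o}lder's inequality together with \eqref{E:Sob_Lp} for $p=4$ gives
\begin{align*}
  |b_\varepsilon(w^\varepsilon,u_1^\varepsilon,w^\varepsilon)|\leq c\varepsilon^{-1/2}\|u_1^\varepsilon\|_{H^1(\Omega_\varepsilon)}\|w^\varepsilon\|_{H^1(\Omega_\varepsilon)}^2,
\end{align*}
so the uniform coercivity \eqref{E:BL_Unif} (applicable since $w^\varepsilon\in\mathcal{V}_\varepsilon$) yields
\begin{align*}
  \|w^\varepsilon\|_{H^1(\Omega_\varepsilon)}^2\leq c_4\varepsilon^{-1/2}\|u_1^\varepsilon\|_{H^1(\Omega_\varepsilon)}\|w^\varepsilon\|_{H^1(\Omega_\varepsilon)}^2
\end{align*}
with $c_4>0$ independent of $\varepsilon$, $u_1^\varepsilon$, $u_2^\varepsilon$.

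The key new ingredient is to use $\beta>1$ in Lemma \ref{L:NCW_H1}. Setting $\gamma=\min(\alpha,\beta-1)>0$, the hypothesis \eqref{E:NC_F_Est} and \eqref{E:NCW_H1} imply
\begin{align*}
  \|u_1^\varepsilon\|_{H^1(\Omega_\varepsilon)}^2\leq c_3(c_1\varepsilon^{1+\alpha}+c_2\varepsilon^{\beta})\leq c\varepsilon^{1+\gamma},
\end{align*}
so $\varepsilon^{-1/2}\|u_1^\varepsilon\|_{H^1(\Omega_\varepsilon)}\leq c\varepsilon^{\gamma/2}$. This is precisely the improvement over the situation in Proposition \ref{P:NCW_Uni}: there $\beta=1$ merely gives $\|u_1^\varepsilon\|_{H^1(\Omega_\varepsilon)}\leq c\varepsilon^{1/2}$, so the product $\varepsilon^{-1/2}\|u_1^\varepsilon\|_{H^1(\Omega_\varepsilon)}$ is only bounded by a constant depending on $c_u$ and requires smallness of $c_u$ to conclude uniqueness, whereas here the same product vanishes as $\varepsilon\to0$ with no size restriction on $c_2$.

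To conclude, choose $\varepsilon_u'\in(0,\varepsilon_0)$ small enough that $c_4c^{1/2}(\varepsilon_u')^{\gamma/2}\leq 1/2$, where $c$ is the constant above. Then for every $\varepsilon\in(0,\varepsilon_u')$ the inequality reads
\begin{align*}
  \|w^\varepsilon\|_{H^1(\Omega_\varepsilon)}^2\leq\tfrac{1}{2}\|w^\varepsilon\|_{H^1(\Omega_\varepsilon)}^2,
\end{align*}
forcing $w^\varepsilon=0$ and hence $u_1^\varepsilon=u_2^\varepsilon$. I do not foresee any serious obstacle: all the analytic estimates needed (\eqref{E:BL_Unif}, \eqref{E:Sob_Lp}, \eqref{E:TLC_Asym}, and \eqref{E:NCW_H1}) are already available, and the argument is essentially a book-keeping of the $\varepsilon$-powers to verify that $\beta>1$ produces a contraction factor that vanishes with $\varepsilon$.
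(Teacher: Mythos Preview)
Your argument is correct. The paper, however, takes a shorter route: rather than rerunning the energy estimate, it simply observes that since $\beta>1$, the quantity $c_2\varepsilon^{-1+\beta}$ tends to $0$ as $\varepsilon\to0$, so one can choose $\varepsilon_\beta>0$ with $c_2\varepsilon_\beta^{-1+\beta}=c_u$ (the constant from Proposition~\ref{P:NCW_Uni}); for $\varepsilon<\varepsilon_\beta$ the second condition in \eqref{E:NCU_F} is then satisfied, and Proposition~\ref{P:NCW_Uni} applies directly with $\varepsilon_u'=\min\{\varepsilon_u,\varepsilon_\beta\}$. Your approach unpacks the same mechanism at the level of the $H^1$ bound \eqref{E:NCW_H1}, tracking the explicit $\varepsilon$-power $\gamma=\min(\alpha,\beta-1)$ in the contraction factor. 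Both arguments are equivalent in substance; the paper's version is more economical since it reuses Proposition~\ref{P:NCW_Uni} as a black box, while yours is more self-contained and makes the decay rate of the contraction factor explicit.
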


\begin{proof}
  Let $\varepsilon_u$ and $c_u$ be the constants given in Proposition \ref{P:NCW_Uni}.
  Since $\beta>1$, we can take a constant $\varepsilon_\beta>0$ such that $c_2\varepsilon_\beta^{-1+\beta}=c_u$.
  Hence, if we set $\varepsilon_u'=\min\{\varepsilon_u,\varepsilon_\beta\}$, then the statement follows from Proposition \ref{P:NCW_Uni}.
\end{proof}

\begin{remark} \label{R:NCU_Be}
  If $\beta>1$ in \eqref{E:NC_F_Est}, then it follows from \eqref{E:Ave_Hk} and \eqref{E:NCW_H1} that
  \begin{align*}
    \|M_\tau u^\varepsilon\|_{H^1(\Gamma)}^2 \leq c\varepsilon^{-1}\|u^\varepsilon\|_{H^1(\Omega_\varepsilon)}^2 \leq c(\varepsilon^{\alpha}+\varepsilon^{\beta-1}) \to 0
  \end{align*}
  as $\varepsilon\to0$ for a (unique) weak solution $u^\varepsilon\in\mathcal{V}_\varepsilon$ to \eqref{E:SNS_CTD}.
\end{remark}

\section{Difference estimates for the bulk and surface velocities} \label{S:Diff_Est}
In this section we establish Theorems \ref{T:Error} and \ref{T:Er_CE}.
As in Section \ref{S:Bulk}, we impose Assumptions \ref{Asmp_1} and \ref{Asmp_2} and use the function spaces given in Section \ref{S:Main}.
Let $M$ and $M_\tau$ be the average operators given in Section \ref{SS:Average}.
We fix the constant $\varepsilon_0$ given in Lemma \ref{L:BL_Unif} and assume $\varepsilon\in(0,\varepsilon_0)$.
Let $A_\varepsilon$ be the Stokes operator on $\mathcal{H}_\varepsilon$ given in Section \ref{SS:BTL_CTD}.
Recall that $c$ denotes a general positive constant independent of $\varepsilon$.
Also, for a function $\eta$ on $\Gamma$, we write $\bar{\eta}$ for the constant extension of $\eta$ in the normal direction of $\Gamma$.

\subsection{Average of the bulk weak form} \label{SS:TF_AW}
Let $u^\varepsilon$ be a weak solution to \eqref{E:SNS_CTD}.
We derive a weak form on $\Gamma$ satisfied by $M_\tau u^\varepsilon$ from the weak form \eqref{E:NC_Weak} on $\Omega_\varepsilon$.

Let $a_g$ and $b_g$ be the bilinear and trilinear forms on $\Gamma$ given by \eqref{E:BL_Surf} and \eqref{E:TL_Surf}.
Also, let $a_\varepsilon$ and $b_\varepsilon$ be the bilinear and trilinear forms on $\Omega_\varepsilon$ given by \eqref{E:BL_CTD} and \eqref{E:TL_CTD}.
The following results proved in \cite{Miu20_03} show that $a_g$ and $b_g$ approximate $a_\varepsilon$ and $b_\varepsilon$, respectively.
The proofs are based on a careful analysis of integrals appearing in $a_\varepsilon$ and $b_\varepsilon$ with the aid of the change of variables formula \eqref{E:CoV_CTD} and by a suitable decomposition of a vector field on $\Omega_\varepsilon$ into an almost 2D average part and a sufficiently small residual part given in \cite{Miu21_02}.

Let $\mathbb{L}_\varepsilon$ be the Helmholtz--Leray projection from $L^2(\Omega_\varepsilon)$ onto $L_\sigma^2(\Omega_\varepsilon)$.
Also, let $E_\varepsilon$ be the extension operator of a vector field on $\Gamma$ given in Section \ref{SS:Ex_SuVe}.
Recall that $\eta_\varepsilon=\mathbb{L}_\varepsilon E_\varepsilon\eta$ is in $H_{n,\sigma}^1(\Omega_\varepsilon)$ and close to $\bar{\eta}$ if $\eta\in H_{g\sigma}^1(\Gamma,T\Gamma)$ (see Lemma \ref{L:ESV_L2}).

\begin{lemma} \label{L:BL_Appr}
  Let $u\in H^2(\Omega_\varepsilon)^3$ satisfy \eqref{E:Slip_Bo} on $\Gamma_\varepsilon$ and
  \begin{align*}
    \eta \in H_{g\sigma}^1(\Gamma,T\Gamma), \quad \eta_\varepsilon=\mathbb{L}_\varepsilon E_\varepsilon\eta \in H_{n,\sigma}^1(\Omega_\varepsilon).
  \end{align*}
  Then there exists a constant $c>0$ independent of $\varepsilon$, $u$, and $\eta$ such that
  \begin{align} \label{E:BL_Appr}
    |a_\varepsilon(u,\eta_\varepsilon)-\varepsilon a_g(M_\tau u,\eta)| \leq cR_\varepsilon^a(u)\|\eta\|_{H^1(\Gamma)},
  \end{align}
  where
  \begin{align} \label{E:BLA_Res}
    R_\varepsilon^a(u) = \varepsilon^{3/2}\|u\|_{H^2(\Omega_\varepsilon)}+\varepsilon^{1/2}\|u\|_{L^2(\Omega_\varepsilon)}\sum_{i=0,1}\left|\frac{\gamma_\varepsilon^i}{\varepsilon}-\gamma^i\right|.
  \end{align}
\end{lemma}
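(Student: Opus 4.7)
The plan is to decompose $a_\varepsilon(u,\eta_\varepsilon) - \varepsilon a_g(M_\tau u,\eta)$ into its viscous and friction parts and to bound each contribution separately, treating the $\varepsilon$-factor coming from the change of variables as the bridge between the bulk and surface bilinear forms.

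For the viscous part $2\nu(D(u),D(\eta_\varepsilon))_{L^2(\Omega_\varepsilon)}$, first replace $\eta_\varepsilon$ by its explicit predecessor $E_\varepsilon\eta = \bar\eta + (\bar\eta\cdot\Psi_\varepsilon)\bar n$ using Lemma \ref{L:ESV_L2} (together with the corresponding $H^1$ analogue that follows from the elliptic regularity of $\mathbb{L}_\varepsilon$), then decompose $\nabla u = \overline{P}\nabla u + \bar n\otimes\partial_n u$ and apply \eqref{E:AvDi_H1} to replace $\overline{P}\nabla u$ by $\overline{\nabla_\Gamma M_\tau u}$, and \eqref{E:Pdn_CTD} (applicable thanks to the slip hypothesis \eqref{E:Slip_Bo}) to replace $\overline{P}\partial_n u$ by $-\overline{W}u$, in each case modulo $O(\varepsilon\|u\|_{H^2(\Omega_\varepsilon)})$ errors in $L^2(\Omega_\varepsilon)$. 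With the integrand now expressed in constant extensions of surface quantities, the change of variables \eqref{E:CoV_CTD} turns the $r$-integration into a factor $\varepsilon g(y)$, up to $O(\varepsilon^2)$ Jacobian corrections from \eqref{E:Jacob}. A short algebraic identification using $P = I_3 - n\otimes n$, $Wn = 0$ from \eqref{E:Wein}, and the definition \eqref{E:Surf_SRT} of $D_\Gamma$ then matches the leading order with $2\varepsilon\nu(gD_\Gamma(M_\tau u),D_\Gamma(\eta))_{L^2(\Gamma)}$; the second piece $2\nu(g^{-1}(M_\tau u\cdot\nabla_\Gamma g),\eta\cdot\nabla_\Gamma g)_{L^2(\Gamma)}$ of $a_g$ emerges from the interaction of the normal correction $(\bar\eta\cdot\Psi_\varepsilon)\bar n$ in $E_\varepsilon\eta$ with the strain tensor (here one uses that the coefficients of $\Psi_\varepsilon$ evaluated on $\Gamma$ are the $\tau_\varepsilon^i$, whose $g$-weighted combination yields $g^{-1}\nabla_\Gamma g$ to leading order). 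All remainders combine into $c\varepsilon^{3/2}\|u\|_{H^2(\Omega_\varepsilon)}\|\eta\|_{H^1(\Gamma)}$ once the $\varepsilon$ from the change of variables is absorbed.

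For the friction part I split, for $i=0,1$,
\begin{equation*}
\gamma_\varepsilon^i(u,\eta_\varepsilon)_{L^2(\Gamma_\varepsilon^i)} - \varepsilon\gamma^i(M_\tau u,\eta)_{L^2(\Gamma)} = \gamma_\varepsilon^i\bigl[(u,\eta_\varepsilon)_{L^2(\Gamma_\varepsilon^i)} - (M_\tau u,\eta)_{L^2(\Gamma)}\bigr] + \varepsilon\Bigl(\tfrac{\gamma_\varepsilon^i}{\varepsilon}-\gamma^i\Bigr)(M_\tau u,\eta)_{L^2(\Gamma)}.
\end{equation*}
The last term is bounded via \eqref{E:Ave_Hk} by $c\varepsilon^{1/2}|\gamma_\varepsilon^i/\varepsilon - \gamma^i|\|u\|_{L^2(\Omega_\varepsilon)}\|\eta\|_{H^1(\Gamma)}$, which produces exactly the second summand in $R_\varepsilon^a(u)$. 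In the first term I exploit $\gamma_\varepsilon^i\leq c\varepsilon$ from \eqref{E:Fr_Up}, a surface change of variables relating $\Gamma_\varepsilon^i$ to $\Gamma$, the bound \eqref{E:AvNo_H1} to discard the normal component of $Mu$, and a standard trace estimate for $u\in H^2(\Omega_\varepsilon)$; together these give a bracket of size $\varepsilon^{1/2}\|u\|_{H^2(\Omega_\varepsilon)}\|\eta\|_{H^1(\Gamma)}$, which after multiplication by $\gamma_\varepsilon^i\leq c\varepsilon$ fits into the $\varepsilon^{3/2}\|u\|_{H^2(\Omega_\varepsilon)}\|\eta\|_{H^1(\Gamma)}$ budget.

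The delicate step will be the viscous expansion: one has to track, without accidental cancellations or omissions, every contribution from the Weingarten map produced by \eqref{E:Pdn_CTD} and from the normal correction in $E_\varepsilon\eta$, and verify that they assemble precisely into the two surface bilinear pieces of $a_g$ and nothing more. This is the algebraic identification that makes the general curved-$g$ setting nontrivial; fortunately the computation is parallel to the one performed in the nonstationary analysis of \cite{Miu20_03}, so I would import that identification and check only that the slip hypothesis \eqref{E:Slip_Bo} and the friction coefficients $\gamma_\varepsilon^i$ are incorporated consistently in the stationary setting.
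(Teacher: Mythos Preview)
Your sketch is along the right lines and matches what the paper does: the paper simply cites \cite[Lemma~7.6]{Miu20_03} for this result, and your outline of the viscous expansion via \eqref{E:AvDi_H1}, \eqref{E:Pdn_CTD}, the change of variables \eqref{E:CoV_CTD}--\eqref{E:Jacob}, together with the friction splitting, is precisely the structure of that cited proof. The one point to be careful about is the passage from $\eta_\varepsilon$ to $E_\varepsilon\eta$ in the strain-rate term, which needs an $H^1$ (not just $L^2$) bound uniform in $\varepsilon$; this is indeed available in \cite{Miu20_03}, so your plan to import the identification from there is exactly what the paper does.
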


\begin{proof}
  We refer to \cite[Lemma 7.6]{Miu20_03} for the proof.
\end{proof}

\begin{lemma} \label{L:TL_Appr}
  Let $u_1\in H^2(\Omega_\varepsilon)^3$, $u_2\in H^1(\Omega_\varepsilon)^3$, and
  \begin{align*}
    \eta \in H_{g\sigma}^1(\Gamma,T\Gamma), \quad \eta_\varepsilon = \mathbb{L}_\varepsilon E_\varepsilon\eta \in H_{n,\sigma}^1(\Omega_\varepsilon).
  \end{align*}
  Suppose that $u_1$ satisfies $\mathrm{div}\,u_1=0$ in $\Omega_\varepsilon$ and \eqref{E:Slip_Bo} on $\Gamma_\varepsilon$ and that $u_2\cdot n_\varepsilon=0$ on $\Gamma_\varepsilon^0$ or on $\Gamma_\varepsilon^1$.
  Then there exists a constant $c>0$ independent of $\varepsilon$, $u_1$, $u_2$, and $\eta$ such that
  \begin{align} \label{E:TL_Appr}
    |b_\varepsilon(u_1,u_2,\eta_\varepsilon)-\varepsilon b_g(M_\tau u_1,M_\tau u_2,\eta)| \leq cR_\varepsilon^b(u_1,u_2)\|\eta\|_{H^1(\Gamma)},
  \end{align}
  where
  \begin{align} \label{E:TLA_Res}
    \begin{aligned}
      R_\varepsilon^b(u_1,u_2) &= \varepsilon^{3/2}\|u_1\otimes u_2\|_{L^2(\Omega_\varepsilon)}+\varepsilon\|u_1\|_{H^1(\Omega_\varepsilon)}\|u_2\|_{H^1(\Omega_\varepsilon)} \\
      &\qquad +\Bigl(\varepsilon\|u_1\|_{H^2(\Omega_\varepsilon)}+\varepsilon^{1/2}\|u_1\|_{L^2(\Omega_\varepsilon)}^{1/2}\|u_1\|_{H^2(\Omega_\varepsilon)}^{1/2}\Bigr)\|u_2\|_{L^2(\Omega_\varepsilon)}
    \end{aligned}
  \end{align}
\end{lemma}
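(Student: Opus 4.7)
The plan is to imitate the proof of Lemma \ref{L:BL_Appr}: decompose each of the three arguments of $b_\varepsilon$ into a leading part that is constant in the normal direction of $\Gamma$ and a residual that is small in $L^2(\Omega_\varepsilon)$, then apply the change of variables formula \eqref{E:CoV_CTD} to the principal term and show that every cross term is absorbed into one of the three contributions in $R_\varepsilon^b$. Concretely, set
\[
u_i^\sharp := u_i - \overline{M_\tau u_i} \quad (i=1,2), \qquad \eta_\varepsilon^\sharp := \eta_\varepsilon - \bar{\eta}.
\]
By \eqref{E:Ave_Diff} we have $\|u_i^\sharp\|_{L^2(\Omega_\varepsilon)} \leq c\varepsilon\|u_i\|_{H^1(\Omega_\varepsilon)}$, and by Lemma \ref{L:ESV_L2} we have $\|\eta_\varepsilon^\sharp\|_{L^2(\Omega_\varepsilon)} \leq c\varepsilon^{3/2}\|\eta\|_{H^1(\Gamma)}$. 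Expanding $b_\varepsilon(u_1,u_2,\eta_\varepsilon) = -(u_1 \otimes u_2, \nabla\eta_\varepsilon)_{L^2(\Omega_\varepsilon)}$ by trilinearity then yields a principal term plus seven cross terms.

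For the principal term $-(\overline{M_\tau u_1} \otimes \overline{M_\tau u_2}, \nabla\bar{\eta})_{L^2(\Omega_\varepsilon)}$, use $\nabla\bar{\eta}|_\Gamma = \overline{\nabla_\Gamma\eta}$ (from \eqref{E:CEGr_Su}), apply \eqref{E:CoV_CTD}, and carry out the $r$-integration. All three factors are $r$-independent, so only the Jacobian $J(y,r) = 1 + O(\varepsilon)$ (by \eqref{E:Jacob}) varies; the $r$-integral contributes $\varepsilon g(y) + O(\varepsilon^2)$, giving exactly $\varepsilon b_g(M_\tau u_1, M_\tau u_2, \eta)$ up to a remainder controlled by the $\varepsilon\|u_1\|_{H^1}\|u_2\|_{H^1}\|\eta\|_{H^1(\Gamma)}$ and $\varepsilon^{3/2}\|u_1\otimes u_2\|_{L^2(\Omega_\varepsilon)}\|\eta\|_{H^1(\Gamma)}$ pieces of $R_\varepsilon^b$, after using \eqref{E:Ave_Hk} and \eqref{E:CE_L2CTD} to re-express average norms in terms of bulk norms. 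The ``easier'' cross terms, those involving $\bar{\eta}$ or $\eta_\varepsilon^\sharp$ together with at least one residual $u_i^\sharp$, are estimated directly by H\"older's inequality and \eqref{E:Sob_Lp} (notably with $p=4$), together with the $L^p$-versions of \eqref{E:CE_L2CTD} and \eqref{E:Ave_Hk}; they yield contributions matching the first two terms of $R_\varepsilon^b$.

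The main obstacle is the cross terms in which $\nabla\eta_\varepsilon$ pairs with a residual $u_2^\sharp$: since $\|u_2^\sharp\|_{L^2(\Omega_\varepsilon)} \lesssim \varepsilon\|u_2\|_{H^1}$ while $\|\nabla\eta_\varepsilon\|_{L^2(\Omega_\varepsilon)} \lesssim \varepsilon^{1/2}\|\eta\|_{H^1(\Gamma)}$, a direct H\"older bound loses a factor of $\varepsilon^{1/2}$. The remedy is to integrate by parts in $x_i$, using $\mathrm{div}\,u_1 = 0$ in $\Omega_\varepsilon$, $u_1 \cdot n_\varepsilon = 0$ on $\Gamma_\varepsilon$, and the slip condition \eqref{E:Slip_Bo} satisfied by $u_1$, so as to shift the derivative from $\eta_\varepsilon$ onto $u_1$. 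The resulting expression pairs $\|u_1\|_{H^2(\Omega_\varepsilon)}$ against $\|u_2\|_{L^2(\Omega_\varepsilon)}$ (rather than $\|u_2\|_{H^1}$), producing the $\varepsilon\|u_1\|_{H^2}\|u_2\|_{L^2}$ contribution; for the further cross term requiring an $L^\infty$-type control of $u_1$, the anisotropic Agmon-type estimate $\|u_1\|_{L^\infty(\Omega_\varepsilon)} \leq c\varepsilon^{-1/2}\|u_1\|_{L^2(\Omega_\varepsilon)}^{1/2}\|u_1\|_{H^2(\Omega_\varepsilon)}^{1/2}$ on the 3D thin domain yields the mixed term $\varepsilon^{1/2}\|u_1\|_{L^2}^{1/2}\|u_1\|_{H^2}^{1/2}\|u_2\|_{L^2}$ in $R_\varepsilon^b$. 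The full bookkeeping of the eight contributions follows the same pattern as the bilinear case \cite[Lemma 7.6]{Miu20_03}, with the extra tensor factor contributing the additional multilinear terms; this detailed accounting is the most laborious but not conceptually difficult part of the argument.
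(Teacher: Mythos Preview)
The paper does not supply a proof here; it refers to \cite[Lemma 7.7]{Miu20_03}, remarking only that the argument rests on the change of variables \eqref{E:CoV_CTD} together with a refined decomposition of bulk vector fields into an ``almost 2D'' average part and a small residual (developed in \cite{Miu21_02}). Your overall architecture---split each argument into an averaged constant extension plus a remainder, extract $\varepsilon b_g(M_\tau u_1,M_\tau u_2,\eta)$ from the principal piece via \eqref{E:CoV_CTD} and \eqref{E:Jacob}, and control cross terms by H\"older, \eqref{E:Sob_Lp}, \eqref{E:CE_L2CTD}, \eqref{E:Ave_Hk}, and \eqref{E:ESV_L2}---matches that description and accounts correctly for the first two contributions in $R_\varepsilon^b$.

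The gap is in your explanation of how the last two terms of $R_\varepsilon^b$ arise. You write that one integrates by parts ``so as to shift the derivative from $\eta_\varepsilon$ onto $u_1$''; but the only integration by parts available from $\mathrm{div}\,u_1=0$ and $u_1\cdot n_\varepsilon=0$ is the antisymmetry \eqref{E:TLC_Asym}, which moves the derivative from $\eta_\varepsilon$ onto $u_2$, not $u_1$. That alone cannot produce a bound with $\|u_2\|_{L^2(\Omega_\varepsilon)}$ rather than $\|u_2\|_{H^1(\Omega_\varepsilon)}$. In the cited proof the $\|u_1\|_{H^2}$ dependence enters instead through the finer structure of $u_1-\overline{M_\tau u_1}$ guaranteed by the slip conditions \eqref{E:Slip_Bo}: estimates such as \eqref{E:Pdn_CTD} and \eqref{E:AvDi_H1} give $L^2$-control of order $\varepsilon$ on $\overline{P}\partial_n u_1+\overline{W}u_1$ and on $\overline{P}\nabla u_1-\overline{\nabla_\Gamma M_\tau u_1}$, so that the residual of $u_1$ carries an extra derivative at the cost of $\|u_1\|_{H^2}$. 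The anisotropic Agmon bound you cite for $\|u_1\|_{L^\infty(\Omega_\varepsilon)}$ is indeed the source of the mixed term $\varepsilon^{1/2}\|u_1\|_{L^2}^{1/2}\|u_1\|_{H^2}^{1/2}\|u_2\|_{L^2}$, but it is paired with these refined residual estimates, not with an integration by parts onto $u_1$. If you rewrite that paragraph to invoke \eqref{E:Pdn_CTD}--\eqref{E:AvDi_H1} (and the underlying decomposition from \cite{Miu21_02}) rather than an integration by parts, your sketch becomes a faithful outline of the cited argument.
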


\begin{proof}
  We refer to \cite[Lemma 7.7]{Miu20_03} for the proof.
\end{proof}

Using these results, we derive a weak form of $M_\tau u^\varepsilon$ for a weak solution $u^\varepsilon$ to \eqref{E:SNS_CTD}.

\begin{lemma} \label{L:Ave_Weak}
  Let $c_1,c_2>0$, $\alpha\in(0,1]$, $\beta=1$, and $\varepsilon_1$ be the constant given in Lemma \ref{L:NCW_H2}.
  For $\varepsilon\in(0,\varepsilon_1)$, let $f^\varepsilon$ satisfy \eqref{E:Fth_Comp} and \eqref{E:NC_F_Est}, and let $u^\varepsilon$ be a weak solution to \eqref{E:SNS_CTD} such that $u^\varepsilon\in\mathcal{V}_\varepsilon$.
  Then $u^\varepsilon\in D(A_\varepsilon)$ and
  \begin{align} \label{E:AW_H1H2}
    \|u^\varepsilon\|_{H^1(\Omega_\varepsilon)} \leq c\varepsilon^{1/2}, \quad \|u^\varepsilon\|_{H^2(\Omega_\varepsilon)} \leq c\varepsilon^{-1/2+\alpha/2}
  \end{align}
  with a constant $c>0$ independent of $\varepsilon$, $f^\varepsilon$, and $u^\varepsilon$.
  Moreover,
  \begin{align} \label{E:Ave_Weak}
    a_g(M_\tau u^\varepsilon,\eta)+b_g(M_\tau u^\varepsilon,M_\tau u^\varepsilon,\eta) = (gM_\tau\mathbb{P}_\varepsilon f^\varepsilon,\eta)_{L^2(\Gamma)}+R_\varepsilon^1(\eta)
  \end{align}
  for all $\eta\in H_{g\sigma}^1(\Gamma,T\Gamma)$, where $R_\varepsilon^1(\eta)$ is a residual term bounded by
  \begin{align} \label{E:AW_Res}
    |R_\varepsilon^1(\eta)| \leq c\delta(\varepsilon)\|\eta\|_{H^1(\Gamma)}, \quad \delta(\varepsilon) = \varepsilon^{\alpha/4}+\sum_{i=0,1}\left|\frac{\gamma_\varepsilon^i}{\varepsilon}-\gamma^i\right|
  \end{align}
  with a constant $c>0$ independent of $\varepsilon$, $f^\varepsilon$, $u^\varepsilon$, and $\eta$.
\end{lemma}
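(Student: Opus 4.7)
The plan is to (i) apply the already-established bulk estimates of Section \ref{S:Bulk} with the specific choice $\beta=1$, (ii) plug an extended surface test function into the weak form \eqref{E:NC_Weak}, and (iii) use the averaging/approximation lemmas to translate the bulk weak identity into one on $\Gamma$, collecting residuals.

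First I would note that the hypotheses of Lemmas \ref{L:NCW_H1} and \ref{L:NCW_H2} are satisfied with $\beta=1$. Lemma \ref{L:NCW_Reg} (quoted via Lemma \ref{L:NCW_H2}) gives $u^\varepsilon\in D(A_\varepsilon)$. Then \eqref{E:NCW_H1} yields $\|u^\varepsilon\|_{H^1(\Omega_\varepsilon)}^2\le c(c_1\varepsilon^{1+\alpha}+c_2\varepsilon)\le c\varepsilon$ since $\alpha>0$, and \eqref{E:NCW_H2} yields $\|u^\varepsilon\|_{H^2(\Omega_\varepsilon)}^2\le c(\varepsilon^{-1+\alpha}+\varepsilon)\le c\varepsilon^{-1+\alpha}$ since $\alpha\le 1$. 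This is exactly \eqref{E:AW_H1H2}.

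For the weak form, I would fix $\eta\in H_{g\sigma}^1(\Gamma,T\Gamma)$ and set $\eta_\varepsilon=\mathbb{L}_\varepsilon E_\varepsilon\eta\in H_{n,\sigma}^1(\Omega_\varepsilon)$ (Lemma \ref{L:ESV_L2}), so that \eqref{E:NC_Weak} gives
\[
  a_\varepsilon(u^\varepsilon,\eta_\varepsilon)+b_\varepsilon(u^\varepsilon,u^\varepsilon,\eta_\varepsilon)=(f^\varepsilon,\eta_\varepsilon)_{L^2(\Omega_\varepsilon)}.
\]
To replace $f^\varepsilon$ by $\mathbb{P}_\varepsilon f^\varepsilon$ in the inner product, I would observe that $\eta_\varepsilon\in L_\sigma^2(\Omega_\varepsilon)$, and in case (A3) also that $f^\varepsilon\in\mathcal{R}_g^\perp$ by \eqref{E:Fth_Comp}, so $(f^\varepsilon,\eta_\varepsilon)_{L^2(\Omega_\varepsilon)}=(\mathbb{P}_\varepsilon f^\varepsilon,\eta_\varepsilon)_{L^2(\Omega_\varepsilon)}$ in every case. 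I would then split $(\mathbb{P}_\varepsilon f^\varepsilon,\eta_\varepsilon)=(\mathbb{P}_\varepsilon f^\varepsilon,\bar\eta)+(\mathbb{P}_\varepsilon f^\varepsilon,\eta_\varepsilon-\bar\eta)$, apply Lemma \ref{L:AT_L2In} to the first piece and \eqref{E:ESV_L2} to the second, using $\|\mathbb{P}_\varepsilon f^\varepsilon\|_{L^2(\Omega_\varepsilon)}\le c\varepsilon^{(-1+\alpha)/2}$. Each error is of size $\varepsilon^{1+\alpha/2}\|\eta\|_{H^1(\Gamma)}$. Apply Lemma \ref{L:BL_Appr} to $a_\varepsilon(u^\varepsilon,\eta_\varepsilon)$ and Lemma \ref{L:TL_Appr} to $b_\varepsilon(u^\varepsilon,u^\varepsilon,\eta_\varepsilon)$; Lemma \ref{L:NCW_Reg} together with the slip boundary conditions satisfied by $u^\varepsilon$ make their hypotheses hold. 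Then divide the resulting identity by $\varepsilon$ to obtain \eqref{E:Ave_Weak}, with $R_\varepsilon^1(\eta)$ the sum of all approximation errors divided by $\varepsilon$.

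The main work is the residual bookkeeping and checking that every residual is absorbed by $\delta(\varepsilon)\|\eta\|_{H^1(\Gamma)}$. Using \eqref{E:AW_H1H2} in \eqref{E:BLA_Res} gives $R_\varepsilon^a(u^\varepsilon)\le c\varepsilon^{1+\alpha/2}+c\varepsilon\sum_{i=0,1}|\gamma_\varepsilon^i/\varepsilon-\gamma^i|$, whose division by $\varepsilon$ yields a contribution of the desired form since $\varepsilon^{\alpha/2}\le\varepsilon^{\alpha/4}$ for $\varepsilon\in(0,1)$. For $R_\varepsilon^b(u^\varepsilon,u^\varepsilon)$ in \eqref{E:TLA_Res}, the three groups of terms are controlled by $c\varepsilon^2$, $c\varepsilon^{1+\alpha/2}$, and $c\varepsilon^{1+\alpha/4}$ respectively; the only subtle one is the Agmon-type term $\varepsilon^{1/2}\|u^\varepsilon\|_{L^2(\Omega_\varepsilon)}^{1/2}\|u^\varepsilon\|_{H^2(\Omega_\varepsilon)}^{1/2}\|u^\varepsilon\|_{L^2(\Omega_\varepsilon)}$, estimated by $c\varepsilon^{1/2}\cdot\varepsilon^{3/4}\cdot\varepsilon^{(-1+\alpha)/4}=c\varepsilon^{1+\alpha/4}$. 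This is precisely the term that produces the exponent $\alpha/4$ in $\delta(\varepsilon)$, and is the main (essentially the only) delicate point in the proof: one has to use \eqref{E:Sob_Lp} and the Agmon-type split carefully so that powers of $\varepsilon$ match up to $\varepsilon^{1+\alpha/4}$ rather than something worse. Dividing by $\varepsilon$ gives $c\varepsilon^{\alpha/4}$, which is exactly the first term in $\delta(\varepsilon)$. Combining all contributions proves \eqref{E:AW_Res} and completes the lemma.
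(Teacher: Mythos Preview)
Your proposal is correct and follows essentially the same approach as the paper: you invoke Lemmas \ref{L:NCW_H1} and \ref{L:NCW_H2} with $\beta=1$ for \eqref{E:AW_H1H2}, test \eqref{E:NC_Weak} with $\eta_\varepsilon=\mathbb{L}_\varepsilon E_\varepsilon\eta$, replace $f^\varepsilon$ by $\mathbb{P}_\varepsilon f^\varepsilon$, divide by $\varepsilon$, and bound the three residuals via Lemmas \ref{L:BL_Appr}, \ref{L:TL_Appr}, \ref{L:AT_L2In}, and \ref{L:ESV_L2}, exactly as the paper does. Your identification of the $\varepsilon^{1/2}\|u^\varepsilon\|_{L^2}^{1/2}\|u^\varepsilon\|_{H^2}^{1/2}\|u^\varepsilon\|_{L^2}$ term as the one producing the limiting exponent $\alpha/4$ is also precisely what happens in the paper's proof.
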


\begin{proof}
  Since $f^\varepsilon$ satisfies \eqref{E:Fth_Comp} and \eqref{E:NC_F_Est} with $\beta=1$ and $u^\varepsilon\in\mathcal{V}_\varepsilon$, it follows from Lemmas \ref{L:NCW_H1} and \ref{L:NCW_H2}, $0<\varepsilon<1$, and $\alpha\in(0,1]$ that $u^\varepsilon\in D(A_\varepsilon)$ and \eqref{E:AW_H1H2} holds.
  In particular, $u^\varepsilon$ satisfies $\mathrm{div}\,u^\varepsilon=0$ in $\Omega_\varepsilon$ and \eqref{E:Slip_Bo} on $\Gamma_\varepsilon$.

  For $\eta\in H_{g\sigma}^1(\Gamma,T\Gamma)$, let $\varphi=\eta_\varepsilon=\mathbb{L}_\varepsilon E_\varepsilon\eta\in H_{n,\sigma}^1(\Omega_\varepsilon)$ in \eqref{E:NC_Weak}.
  Then
  \begin{align*}
    a_\varepsilon(u^\varepsilon,\eta_\varepsilon)+b_\varepsilon(u^\varepsilon,u^\varepsilon,\eta_\varepsilon) = (f^\varepsilon,\eta_\varepsilon)_{L^2(\Omega_\varepsilon)} = (\mathbb{P}_\varepsilon f^\varepsilon,\eta_\varepsilon)_{L^2(\Omega_\varepsilon)},
  \end{align*}
  where the second equality holds since $f^\varepsilon$ satisfies \eqref{E:Fth_Comp} and $\mathbb{P}_\varepsilon$ is the orthogonal projection from $L^2(\Omega_\varepsilon)^3$ onto the space $\mathcal{H}_\varepsilon$ given by \eqref{E:Def_Heps}.
  We divide both sides of the above equality by $\varepsilon$ and replace each term of the resulting equality by the corresponding term of \eqref{E:Ave_Weak}.
  Then we have \eqref{E:Ave_Weak} with
  \begin{align} \label{Pf_AW:Res}
    R_\varepsilon^1(\eta) = \varepsilon^{-1}(-J_1-J_2+J_3),
  \end{align}
  where
  \begin{align*}
    J_1 &= a_\varepsilon(u^\varepsilon,\eta_\varepsilon)-\varepsilon a_g(M_\tau u^\varepsilon,\eta), \\
    J_2 &= b_\varepsilon(u^\varepsilon,u^\varepsilon,\eta_\varepsilon)-\varepsilon b_g(M_\tau u^\varepsilon,M_\tau u^\varepsilon,\eta), \\
    J_3 &= (\mathbb{P}_\varepsilon f^\varepsilon,\eta_\varepsilon)_{L^2(\Omega_\varepsilon)}-(gM_\tau\mathbb{P}_\varepsilon f^\varepsilon,\eta)_{L^2(\Gamma)}.
  \end{align*}
  Let us estimate $J_1$, $J_2$, and $J_3$.
  We see by \eqref{E:BL_Appr}, \eqref{E:BLA_Res}, and \eqref{E:AW_H1H2} that
  \begin{align*}
    |J_1| \leq cR_\varepsilon^a(u^\varepsilon)\|\eta\|_{H^1(\Gamma)} \leq c\varepsilon\left(\varepsilon^{\alpha/2}+\sum_{i=0,1}\left|\frac{\gamma_\varepsilon^i}{\varepsilon}-\gamma^i\right|\right)\|\eta\|_{H^1(\Gamma)}.
  \end{align*}
  Next it follows from H\"{o}lder's inequality, \eqref{E:Sob_Lp} with $p=4$, and \eqref{E:AW_H1H2} that
  \begin{align*}
    \|u^\varepsilon\otimes u^\varepsilon\|_{L^2(\Omega_\varepsilon)} \leq \|u^\varepsilon\|_{L^4(\Omega_\varepsilon)}^2 \leq c\varepsilon^{-1/2}\|u^\varepsilon\|_{H^1(\Omega_\varepsilon)}^2 \leq c\varepsilon^{1/2}.
  \end{align*}
  Hence we apply \eqref{E:TL_Appr} to $J_2$ and use the above estimate and \eqref{E:AW_H1H2} to get
  \begin{align*}
    |J_2| \leq cR_\varepsilon^b(u^\varepsilon,u^\varepsilon)\|\eta\|_{H^1(\Gamma)} \leq c\varepsilon(\varepsilon+\varepsilon^{\alpha/2}+\varepsilon^{\alpha/4})\|\eta\|_{H^1(\Gamma)}.
  \end{align*}
  We also observe by \eqref{E:AT_L2In}, \eqref{E:ESV_L2}, and \eqref{E:NC_F_Est} that
  \begin{align*}
    |J_3| &\leq \Bigl|(\mathbb{P}_\varepsilon f^\varepsilon,\eta_\varepsilon)_{L^2(\Omega_\varepsilon)}-(\mathbb{P}_\varepsilon f^\varepsilon,\bar{\eta})_{L^2(\Omega_\varepsilon)}\Bigr|+\Bigl|(\mathbb{P}_\varepsilon f^\varepsilon,\bar{\eta})_{L^2(\Omega_\varepsilon)}-(gM_\tau\mathbb{P}_\varepsilon f^\varepsilon,\eta)_{L^2(\Gamma)}\Bigr| \\
    &\leq \|\mathbb{P}_\varepsilon f^\varepsilon\|_{L^2(\Omega_\varepsilon)}\|\eta_\varepsilon-\bar{\eta}\|_{L^2(\Omega_\varepsilon)}+c\varepsilon^{3/2}\|\mathbb{P}_\varepsilon f^\varepsilon\|_{L^2(\Omega_\varepsilon)}\|\eta\|_{L^2(\Gamma)} \\
    &\leq c\varepsilon^{1+\alpha/2}\|\eta\|_{H^1(\Gamma)}.
  \end{align*}
  Applying these estimates to \eqref{Pf_AW:Res} and noting that $\varepsilon^\sigma\leq\varepsilon^{\alpha/4}$ for $\sigma=1,\alpha/2$ by $0<\varepsilon<1$ and $\alpha\in(0,1]$, we obtain \eqref{E:AW_Res}.
\end{proof}

\subsection{Modification of the averaged bulk velocity} \label{SS:Rep}
Let $u^\varepsilon\in\mathcal{V}_\varepsilon$ and $v\in\mathbb{V}_g$ be weak solutions to \eqref{E:SNS_CTD} and \eqref{E:Lim_Eq}, respectively.
To prove \eqref{E:Error}, we intend to take the difference of the weak forms \eqref{E:Lim_Weak} and \eqref{E:Ave_Weak}, test $M_\tau u^\varepsilon-v$, and apply Lemmas \ref{L:BLS_BoCo} and \ref{L:TLS_BoAs}.
However, since $M_\tau u^\varepsilon\not\in\mathbb{V}_g$ in general, we need to modify $M_\tau u^\varepsilon$ to an element of $\mathbb{V}_g$ in order to take a suitable test function and apply the coerciveness \eqref{E:BLS_Cor} of the bilinear form $a_g$.

\begin{lemma} \label{L:DgG_Surf}
  Let $\eta\in L^2(\Gamma)$ satisfy $\int_\Gamma\eta\,d\mathcal{H}^2=0$.
  Then the problem
  \begin{align} \label{E:DgG_Surf}
    -\mathrm{div}_\Gamma(g\nabla_\Gamma q) = \eta \quad\text{on}\quad \Gamma, \quad \int_\Gamma q\,d\mathcal{H}^2 = 0
  \end{align}
  admits a unique solution $q\in H^2(\Gamma)$.
  Moreover, the inequality
  \begin{align} \label{E:DgG_Est}
    \|q\|_{H^2(\Gamma)} \leq c\|\eta\|_{L^2(\Gamma)}
  \end{align}
  holds with a constant $c>0$ independent of $\eta$ and $q$.
\end{lemma}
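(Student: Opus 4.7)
The plan is to solve \eqref{E:DgG_Surf} via Lax--Milgram on a mean-zero $H^1$ subspace of $\Gamma$ and then bootstrap to $H^2$ regularity using the smoothness of $g$ and standard elliptic regularity for the Laplace--Beltrami operator on the closed surface $\Gamma$.

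First I would introduce the mean-zero space
\[
H^1_*(\Gamma) = \Bigl\{q\in H^1(\Gamma) \;\Big|\; \textstyle\int_\Gamma q\,d\mathcal{H}^2 = 0\Bigr\},
\]
and the bilinear form $a(q,\xi) = (g\nabla_\Gamma q,\nabla_\Gamma\xi)_{L^2(\Gamma)}$ on $H^1_*(\Gamma)$. Boundedness of $a$ is immediate from $g\in C^4(\Gamma)\subset L^\infty(\Gamma)$, and coercivity follows from the lower bound \eqref{E:G_Inf} $g\geq c>0$ (so $a(q,q)\geq c\|\nabla_\Gamma q\|_{L^2(\Gamma)}^2$) together with the Poincar\'e inequality on the closed surface $\Gamma$ for mean-zero functions. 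The compatibility assumption $\int_\Gamma\eta\,d\mathcal{H}^2 = 0$ ensures that $\xi\mapsto(\eta,\xi)_{L^2(\Gamma)}$ is a well-defined bounded linear functional on $H^1_*(\Gamma)$. Lax--Milgram then yields a unique $q\in H^1_*(\Gamma)$ satisfying $a(q,\xi)=(\eta,\xi)_{L^2(\Gamma)}$ for all $\xi\in H^1_*(\Gamma)$. Since both sides vanish on constants, this identity extends to arbitrary $\xi\in H^1(\Gamma)$ by splitting $\xi$ into its mean and its mean-zero part, which is the weak form of the equation in \eqref{E:DgG_Surf}. Coercivity furthermore gives the a priori bound $\|q\|_{H^1(\Gamma)}\leq c\|\eta\|_{L^2(\Gamma)}$.

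Next I would upgrade to $H^2$. Expanding $\mathrm{div}_\Gamma(g\nabla_\Gamma q) = g\,\Delta_\Gamma q + \nabla_\Gamma g\cdot\nabla_\Gamma q$ and dividing by $g\geq c>0$ recasts the PDE as
\[
-\Delta_\Gamma q = g^{-1}\bigl(\eta+\nabla_\Gamma g\cdot\nabla_\Gamma q\bigr) \quad\text{on}\quad \Gamma.
\]
The right-hand side lies in $L^2(\Gamma)$, with norm controlled by $c\|\eta\|_{L^2(\Gamma)}$, using $g^{-1},\nabla_\Gamma g\in C^3(\Gamma)\subset L^\infty(\Gamma)$ and the $H^1$ estimate already in hand. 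Standard $H^2$ elliptic regularity for the Laplace--Beltrami operator on the $C^5$ closed surface $\Gamma$ then delivers $q\in H^2(\Gamma)$ together with \eqref{E:DgG_Est}. Uniqueness of $q$ under the mean-zero normalization is inherited from uniqueness in the Lax--Milgram step.

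I do not expect any genuine obstacle here: the problem is a standard weighted elliptic equation on a closed surface with smooth positive weight, and the two normalizations $\int_\Gamma\eta\,d\mathcal{H}^2=0$ and $\int_\Gamma q\,d\mathcal{H}^2=0$ match the one-dimensional constant kernel of $-\mathrm{div}_\Gamma(g\nabla_\Gamma\cdot)$. The only mildly technical step is the $H^2$ regularity on $\Gamma$; this is handled by a partition-of-unity reduction to local coordinate patches, where the operator becomes a uniformly elliptic divergence-form operator with $C^3$ coefficients on a bounded planar domain, for which $H^2$ regularity is classical.
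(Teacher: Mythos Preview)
Your proposal is correct and follows essentially the same route as the paper: Lax--Milgram on the mean-zero subspace of $H^1(\Gamma)$ using \eqref{E:G_Inf} and Poincar\'e's inequality, then a reduction to $-\Delta_\Gamma q=g^{-1}(\eta+\nabla_\Gamma g\cdot\nabla_\Gamma q)$ and the standard $H^2$ elliptic regularity for the Laplace--Beltrami operator. The only cosmetic difference is that the paper obtains the Poisson equation by substituting $\xi\mapsto g^{-1}\xi$ in the weak form rather than by expanding the divergence, which amounts to the same computation.
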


Note that the condition $\int_\Gamma\eta\,d\mathcal{H}^2=0$ is necessary for the existence of a solution to \eqref{E:DgG_Surf}.
Indeed, if there exists a solution $q\in H^2(\Gamma)$ to \eqref{E:DgG_Surf}, then
\begin{align*}
  \int_\Gamma\eta\,d\mathcal{H}^2 = -\int_\Gamma\mathrm{div}_\Gamma(g\nabla_\Gamma q)\,d\mathcal{H}^2 = \int_\Gamma gH(\nabla_\Gamma q\cdot n)\,d\mathcal{H}^2 = 0
\end{align*}
by \eqref{E:TD_IbP} and $\nabla_\Gamma q\cdot n=0$ on $\Gamma$.

\begin{proof}
  For all $q\in H^1(\Gamma)$ satisfying $\int_\Gamma q\,d\mathcal{H}^2=0$, we have
  \begin{align*}
    \|q\|_{H^1(\Gamma)} \leq c\|\nabla_\Gamma q\|_{L^2(\Gamma)} \leq c\|g^{1/2}\nabla_\Gamma q\|_{L^2(\Gamma)}
  \end{align*}
  by Poincar\'{e}'s inequality on $\Gamma$ (see e.g. \cite[Lemma 3.8]{Miu20_03}) and \eqref{E:G_Inf}.
  By this inequality and the Lax--Milgram theorem, we find that the problem \eqref{E:DgG_Surf} admits a unique weak solution $q\in H^1(\Gamma)$ in the sense that
  \begin{align} \label{Pf_DGS:Weak}
    (g\nabla_\Gamma q,\nabla_\Gamma\xi)_{L^2(\Gamma)} = (\eta,\xi)_{L^2(\Gamma)} \quad\text{for all}\quad \xi\in H^1(\Gamma)
  \end{align}
  and that there exists a constant $c>0$ such that
  \begin{align} \label{Pf_DGS:H1}
    \|q\|_{H^1(\Gamma)} \leq c\|\eta\|_{L^2(\Gamma)}.
  \end{align}
  Moreover, replacing $\xi$ by $g^{-1}\xi$ in \eqref{Pf_DGS:Weak}, we have
  \begin{align} \label{Pf_DGS:Poi}
    (\nabla_\Gamma q,\nabla_\Gamma\xi)_{L^2(\Gamma)} = (\zeta,\xi)_{L^2(\Gamma)}, \quad \zeta = g^{-1}(\eta+\nabla_\Gamma q\cdot\nabla_\Gamma g) \in L^2(\Gamma)
  \end{align}
  for all $\xi\in H^1(\Gamma)$, which means that $q\in H^1(\Gamma)$ is a (unique) weak solution to
  \begin{align*}
    -\Delta_\Gamma q = \zeta \quad\text{on}\quad \Gamma, \quad \int_\Gamma q\,d\mathcal{H}^2 = 0.
  \end{align*}
  Note that $\int_\Gamma\zeta\,d\mathcal{H}^2=0$ by \eqref{Pf_DGS:Poi} with $\xi\equiv1$.
  Hence $q\in H^2(\Gamma)$ and
  \begin{align*}
    \|q\|_{H^2(\Gamma)} \leq c\|\zeta\|_{L^2(\Gamma)} \leq c\Bigl(\|\eta\|_{L^2(\Gamma)}+\|q\|_{H^1(\Gamma)}\Bigr) \leq c\|\eta\|_{L^2(\Gamma)},
  \end{align*}
  i.e. \eqref{E:DgG_Est} holds by the elliptic regularity theorem for the Poisson equation on $\Gamma$ (see e.g. \cite[Theorem 3.3]{DziEll13}), \eqref{E:G_Inf}, $g\in C^4(\Gamma)$, and \eqref{Pf_DGS:H1}.
\end{proof}

\begin{lemma} \label{L:AveT_Vg}
  Let $c$ and $\sigma$ be positive constants.
  Suppose that $u^\varepsilon\in\mathcal{V}_\varepsilon$ and
  \begin{align} \label{E:ATVg_Bo}
    \|u^\varepsilon\|_{H^1(\Omega_\varepsilon)} \leq c\varepsilon^{-1/2+\sigma/2}.
  \end{align}
  When the condition (A1) of Assumption \ref{Asmp_2} is imposed, suppose further that $\gamma^0>0$ or $\gamma^1>0$.
  Then there exists a vector field $v_\varepsilon\in\mathbb{V}_g$ such that
  \begin{align} \label{E:ATVg_Diff}
    \|M_\tau u^\varepsilon-v_\varepsilon\|_{H^1(\Gamma)} \leq c'\varepsilon^{\sigma/2},
  \end{align}
  where $c'>0$ is a constant independent of $\varepsilon$ and $u^\varepsilon$.
\end{lemma}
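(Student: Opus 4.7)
The plan is to modify $M_\tau u^\varepsilon$ into an element of $\mathbb{V}_g$ by two successive corrections, each of which is small of order $\varepsilon^{\sigma/2}$ in $H^1(\Gamma)$: first I subtract off a surface gradient to produce a weighted solenoidal field, and then, only when $\mathbb{V}_g$ is strictly smaller than $H_{g\sigma}^1(\Gamma,T\Gamma)$, I subtract off the $g$-weighted $L^2$-projection onto the finite-dimensional space $\mathcal{K}_g(\Gamma)$.

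For the first correction, note that $M_\tau u^\varepsilon$ is tangential, so \eqref{E:TD_IbP} with constant test function gives $\int_\Gamma\mathrm{div}_\Gamma(gM_\tau u^\varepsilon)\,d\mathcal{H}^2=0$. Lemma~\ref{L:DgG_Surf} then produces a unique $q^\varepsilon\in H^2(\Gamma)$ with $-\mathrm{div}_\Gamma(g\nabla_\Gamma q^\varepsilon)=-\mathrm{div}_\Gamma(gM_\tau u^\varepsilon)$ on $\Gamma$, and \eqref{E:DgG_Est} together with \eqref{E:Ave_Div} and \eqref{E:ATVg_Bo} yields $\|q^\varepsilon\|_{H^2(\Gamma)}\leq c\varepsilon^{\sigma/2}$. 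Setting $v_1^\varepsilon=M_\tau u^\varepsilon-\nabla_\Gamma q^\varepsilon$ gives $v_1^\varepsilon\in H_{g\sigma}^1(\Gamma,T\Gamma)$ and $\|v_1^\varepsilon-M_\tau u^\varepsilon\|_{H^1(\Gamma)}\leq c\varepsilon^{\sigma/2}$. Under condition~(A1) with $\gamma^0>0$ or $\gamma^1>0$, or under (A2), we have $\mathbb{V}_g=H_{g\sigma}^1(\Gamma,T\Gamma)$, and the choice $v_\varepsilon=v_1^\varepsilon$ already satisfies \eqref{E:ATVg_Diff}.

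It remains to handle the case in which $\gamma^0=\gamma^1=0$ and $\mathcal{K}_g(\Gamma)\neq\{0\}$, which, given the hypotheses of the lemma, forces condition~(A3) of Assumption~\ref{Asmp_2} to hold. Lemma~\ref{L:Kilg_Sub} lets me pick a basis $\{w_1,\ldots,w_m\}$ of $\mathcal{K}_g(\Gamma)$ that is orthonormal with respect to the weighted inner product $(g\,\cdot\,,\,\cdot\,)_{L^2(\Gamma)}$, and I set
\begin{equation*}
  v_\varepsilon = v_1^\varepsilon-\sum_{k=1}^{m}(gv_1^\varepsilon,w_k)_{L^2(\Gamma)}w_k,
\end{equation*}
which lies in $\mathbb{V}_g$ since $\mathcal{K}_g(\Gamma)\subset H_{g\sigma}^1(\Gamma,T\Gamma)$. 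Applying \eqref{E:TD_IbP} componentwise with $\xi=gw_k$, using $w_k\cdot n=0$ and $\mathrm{div}_\Gamma(gw_k)=0$, shows that $(g\nabla_\Gamma q^\varepsilon,w_k)_{L^2(\Gamma)}=0$, so $(gv_1^\varepsilon,w_k)_{L^2(\Gamma)}=(gM_\tau u^\varepsilon,w_k)_{L^2(\Gamma)}$.

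The main obstacle, and the step where condition~(A3) is used essentially, is to prove each of these surface inner products is of order $\varepsilon^{\sigma/2}$. Since $\mathcal{R}_g|_\Gamma=\mathcal{K}_g(\Gamma)$ under (A3), each $w_k$ is the boundary trace of some $W_k\in\mathcal{R}_g$ of the form $W_k(x)=a_k\times x+b_k$, and the hypothesis $u^\varepsilon\in\mathcal{V}_\varepsilon\subset\mathcal{R}_g^\perp$ gives $(u^\varepsilon,W_k)_{L^2(\Omega_\varepsilon)}=0$. Lemma~\ref{L:Ave_RPe} then produces $|(gM_\tau u^\varepsilon,w_k)_{L^2(\Gamma)}|\leq c\varepsilon^{1/2}\|u^\varepsilon\|_{L^2(\Omega_\varepsilon)}\leq c\varepsilon^{\sigma/2}$ by \eqref{E:ATVg_Bo}. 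Since $\{w_k\}$ is a fixed finite family in $H^1(\Gamma)$, the projection correction has $H^1(\Gamma)$-norm bounded by $c\varepsilon^{\sigma/2}$, and combining with the first step yields \eqref{E:ATVg_Diff} by the triangle inequality.
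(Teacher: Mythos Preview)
Your proof is correct and follows essentially the same approach as the paper: first correct $M_\tau u^\varepsilon$ by $\nabla_\Gamma q^\varepsilon$ via Lemma~\ref{L:DgG_Surf} to land in $H_{g\sigma}^1(\Gamma,T\Gamma)$, and then, in the case $\gamma^0=\gamma^1=0$, $\mathcal{K}_g(\Gamma)\neq\{0\}$ (which forces (A3)), subtract the $g$-weighted $L^2$-projection onto $\mathcal{K}_g(\Gamma)$ and bound the coefficients using $u^\varepsilon\in\mathcal{R}_g^\perp$ and Lemma~\ref{L:Ave_RPe}. The only cosmetic difference is that the paper invokes Lemma~\ref{L:Lgs_Per} to kill $(g\nabla_\Gamma q^\varepsilon,w_k)_{L^2(\Gamma)}$, whereas you do this directly via integration by parts; both are fine.
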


\begin{proof}
  Let $u^\varepsilon\in\mathcal{V}_\varepsilon$ satisfy \eqref{E:ATVg_Bo}.
  Since $u^\varepsilon$ satisfies $\mathrm{div}\,u^\varepsilon=0$ in $\Omega_\varepsilon$ and $u^\varepsilon\cdot n_\varepsilon=0$ on $\Gamma_\varepsilon$, we can use \eqref{E:Ave_Div} and \eqref{E:ATVg_Bo} to get
  \begin{align} \label{Pf_ATV:div}
    \|\mathrm{div}_\Gamma(gM_\tau u^\varepsilon)\|_{L^2(\Gamma)} \leq c\varepsilon^{1/2}\|u^\varepsilon\|_{H^1(\Omega_\varepsilon)} \leq c\varepsilon^{\sigma/2}.
  \end{align}
  Let $\eta^\varepsilon=-\mathrm{div}_\Gamma(gM_\tau u^\varepsilon)$ on $\Gamma$.
  Since $\eta^\varepsilon\in L^2(\Gamma)$ and
  \begin{align*}
    \int_\Gamma\eta_\varepsilon\,d\mathcal{H}^2 = \int_\Gamma gH(M_\tau u^\varepsilon\cdot n)\,d\mathcal{H}^2 = 0
  \end{align*}
  by \eqref{E:TD_IbP} and $M_\tau u^\varepsilon\cdot n=0$ on $\Gamma$, there exists a unique solution $q^\varepsilon\in H^2(\Gamma)$ to \eqref{E:DgG_Surf} with source term $\eta^\varepsilon$ by Lemma \ref{L:DgG_Surf}.
  Moreover, we see by \eqref{E:DgG_Est} and \eqref{Pf_ATV:div} that
  \begin{align} \label{Pf_ATV:qe}
    \|q^\varepsilon\|_{H^2(\Gamma)} \leq c\|\eta^\varepsilon\|_{L^2(\Gamma)} = c\|\mathrm{div}_\Gamma(gM_\tau u^\varepsilon)\|_{L^2(\Gamma)} \leq c\varepsilon^{\sigma/2}.
  \end{align}
  Thus, if we set $v_1^\varepsilon=M_\tau u^\varepsilon-\nabla_\Gamma q^\varepsilon$ on $\Gamma$, then $v_1^\varepsilon\in H_{g\sigma}^1(\Gamma,T\Gamma)$ and
  \begin{align} \label{Pf_ATV:v1e}
    \|M_\tau u^\varepsilon-v_1^\varepsilon\|_{H^1(\Gamma)} = \|\nabla_\Gamma q^\varepsilon\|_{H^1(\Gamma)} \leq c\varepsilon^{\sigma/2}
  \end{align}
  by \eqref{Pf_ATV:qe}.
  When $\gamma^0>0$ or $\gamma^1>0$ or $\mathcal{K}_g(\Gamma)=\{0\}$, we define $v^\varepsilon=v_1^\varepsilon$ to get $v^\varepsilon\in\mathbb{V}_g$ and \eqref{E:ATVg_Diff} by the above result.

  Now suppose that $\gamma^0=\gamma^1=0$ and $\mathcal{K}_g(\Gamma)\neq\{0\}$.
  By the assumption of the lemma, this case occurs only when the condition (A3) of Assumption \ref{Asmp_2} is imposed.
  In this case, we need to take a component of $v_1^\varepsilon$ orthogonal to $\mathcal{K}_g(\Gamma)$ with respect to the weighted inner product $(g\,\cdot,\cdot)_{L^2(\Gamma)}$.
  Since $\mathcal{K}_g(\Gamma)$ is a finite dimensional subspace of $H_{g\sigma}^1(\Gamma,T\Gamma)$ by Lemma \ref{L:Kilg_Sub}, we can take a finite number of elements $w_1,\dots,w_{k_0}$ of $\mathcal{K}_g(\Gamma)$ such that $\{w_1,\dots,w_{k_0}\}$ is an orthonormal basis of $\mathcal{K}_g(\Gamma)$ with respect to $(g\,\cdot,\cdot)_{L^2(\Gamma)}$.
  Hence, if we define
  \begin{align*}
    v^\varepsilon = v_1^\varepsilon-\sum_{k=1}^{k_0}(gv_1^\varepsilon,w_k)_{L^2(\Gamma)}w_k \in H_{g\sigma}^1(\Gamma,T\Gamma),
  \end{align*}
  then $v$ is orthogonal to $\mathcal{K}_g(\Gamma)$ with respect to $(g\,\cdot,\cdot)_{L^2(\Gamma)}$ and thus $v^\varepsilon\in\mathbb{V}_g$.
  Let us estimate $M_\tau u^\varepsilon-v^\varepsilon$.
  By Lemmas \ref{L:Kilg_Sub} and \ref{L:Lgs_Per}, we have
  \begin{align*}
    (gv_1^\varepsilon,w_k)_{L^2(\Gamma)} = (gM_\tau u^\varepsilon,w_k)_{L^2(\Gamma)}-(g\nabla_\Gamma q^\varepsilon,w_k)_{L^2(\Gamma)} = (gM_\tau u^\varepsilon,w_k)_{L^2(\Gamma)}.
  \end{align*}
  Moreover, since $\mathcal{R}_g|_\Gamma=\mathcal{K}_g(\Gamma)$ in the condition (A3), we may assume that $w_k\in\mathcal{R}_g$ and it is of the form $w_k(x)=a_k\times x+b_k$, $x\in\mathbb{R}^3$ with $a_k,b_k\in\mathbb{R}^3$.
  Then, since $(u^\varepsilon,w_k)_{L^2(\Omega_\varepsilon)}=0$ by $u^\varepsilon\in\mathcal{V}_\varepsilon$ and \eqref{E:Def_Heps}, we can apply \eqref{E:Ave_RPe} and \eqref{E:ATVg_Bo} to find that
  \begin{align*}
    \Bigl|(gv_1^\varepsilon,w_k)_{L^2(\Gamma)}\Bigr| = \Bigl|(gM_\tau u^\varepsilon,w_k)_{L^2(\Gamma)}\Bigr| &\leq c\varepsilon^{1/2}(|a_k|+|b_k|)\|u^\varepsilon\|_{L^2(\Omega_\varepsilon)} \\
    &\leq c\varepsilon^{\sigma/2}(|a_k|+|b_k|)
  \end{align*}
  for each $k=1,\dots,k_0$.
  Hence
  \begin{align*}
    \|v_1^\varepsilon-v^\varepsilon\|_{H^1(\Gamma)} \leq \sum_{k=1}^{k_0}\Bigl|(gv_1^\varepsilon,w_k)_{L^2(\Gamma)}\Bigr|\|w_k\|_{H^1(\Gamma)} \leq c\varepsilon^{\sigma/2}
  \end{align*}
  with a constant $c>0$ independent of $\varepsilon$ (note that $k_0$ and $w_k$ are independent of $\varepsilon$).
  By this inequality and \eqref{Pf_ATV:v1e}, we again get \eqref{E:ATVg_Diff} when $\gamma^0=\gamma^1=0$ and $\mathcal{K}_g(\Gamma)\neq\{0\}$.
\end{proof}

\begin{lemma} \label{L:Weak_Rep}
  Under the assumptions of Lemma \ref{L:Ave_Weak}, let $u^\varepsilon$ be a weak solution to \eqref{E:SNS_CTD} such that $u^\varepsilon\in\mathcal{V}_\varepsilon$.
  When the condition (A1) of Assumption \ref{Asmp_2} is imposed, suppose that $\gamma^0>0$ or $\gamma^1>0$.
  Then there exists a vector field $v^\varepsilon\in\mathbb{V}_g$ such that
  \begin{align} \label{E:WR_Diff}
    \|M_\tau u^\varepsilon-v^\varepsilon\|_{H^1(\Gamma)} \leq c\varepsilon,
  \end{align}
  where $c>0$ is a constant independent of $\varepsilon$, $u^\varepsilon$, and $v^\varepsilon$.
  Moreover,
  \begin{align} \label{E:Weak_Rep}
    a_g(v^\varepsilon,\eta)+b_g(v^\varepsilon,v^\varepsilon,\eta) = (gM_\tau\mathbb{P}_\varepsilon f^\varepsilon,\eta)_{L^2(\Gamma)}+R_\varepsilon^2(\eta)
  \end{align}
  for all $\eta\in H_{g\sigma}^1(\Gamma,T\Gamma)$, where $R_\varepsilon^2(\eta)$ is a residual term for which \eqref{E:AW_Res} holds.
\end{lemma}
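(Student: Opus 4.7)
The plan is to construct $v^\varepsilon$ by invoking Lemma \ref{L:AveT_Vg} applied to the averaged bulk velocity $M_\tau u^\varepsilon$, and then obtain \eqref{E:Weak_Rep} by substituting $M_\tau u^\varepsilon=v^\varepsilon+(M_\tau u^\varepsilon-v^\varepsilon)$ into the weak form \eqref{E:Ave_Weak} and absorbing every contribution involving the difference into the new residual $R_\varepsilon^2(\eta)$.

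First I would verify the hypotheses of Lemma \ref{L:AveT_Vg}. From \eqref{E:AW_H1H2} (available by Lemma \ref{L:Ave_Weak}) we have $\|u^\varepsilon\|_{H^1(\Omega_\varepsilon)}\leq c\varepsilon^{1/2}$, which is exactly \eqref{E:ATVg_Bo} with $\sigma=2$. Moreover, the compatibility assumption on $\gamma^0,\gamma^1$ under condition (A1) is the same in both statements. Lemma \ref{L:AveT_Vg} therefore produces $v^\varepsilon\in\mathbb{V}_g$ with
\begin{align*}
  \|w^\varepsilon\|_{H^1(\Gamma)}\leq c\varepsilon, \qquad w^\varepsilon:=M_\tau u^\varepsilon-v^\varepsilon,
\end{align*}
which is exactly \eqref{E:WR_Diff}. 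As an auxiliary consequence I would also record the uniform bound $\|v^\varepsilon\|_{H^1(\Gamma)}\leq c$, which follows from $\|M_\tau u^\varepsilon\|_{H^1(\Gamma)}\leq c\varepsilon^{-1/2}\|u^\varepsilon\|_{H^1(\Omega_\varepsilon)}\leq c$ via Lemma \ref{L:Ave_Hk} and \eqref{E:AW_H1H2}, combined with $\|w^\varepsilon\|_{H^1(\Gamma)}\leq c\varepsilon$.

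Next, I substitute $M_\tau u^\varepsilon=v^\varepsilon+w^\varepsilon$ into \eqref{E:Ave_Weak}. By bilinearity of $a_g$ and trilinearity of $b_g$, the resulting identity takes the form \eqref{E:Weak_Rep} with
\begin{align*}
  R_\varepsilon^2(\eta)=R_\varepsilon^1(\eta)-a_g(w^\varepsilon,\eta)-b_g(w^\varepsilon,v^\varepsilon,\eta)-b_g(v^\varepsilon,w^\varepsilon,\eta)-b_g(w^\varepsilon,w^\varepsilon,\eta)
\end{align*}
for $\eta\in H_{g\sigma}^1(\Gamma,T\Gamma)$. I would then estimate each of the four new contributions. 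Using \eqref{E:BLS_Bou} we obtain $|a_g(w^\varepsilon,\eta)|\leq c\varepsilon\|\eta\|_{H^1(\Gamma)}$; using \eqref{E:TLS_Bou} together with the uniform bound $\|v^\varepsilon\|_{H^1(\Gamma)}\leq c$ and $\|w^\varepsilon\|_{H^1(\Gamma)}\leq c\varepsilon$, each of the three trilinear terms is bounded by $c\varepsilon\|\eta\|_{H^1(\Gamma)}$ (the pure quadratic $b_g(w^\varepsilon,w^\varepsilon,\eta)$ even gives $c\varepsilon^2\|\eta\|_{H^1(\Gamma)}$). Since $\alpha\in(0,1]$ and $\varepsilon\in(0,1)$ we have $\varepsilon\leq\varepsilon^{\alpha/4}\leq\delta(\varepsilon)$, so all four contributions are absorbed into a bound of the form $c\delta(\varepsilon)\|\eta\|_{H^1(\Gamma)}$. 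Adding this to the bound \eqref{E:AW_Res} for $R_\varepsilon^1(\eta)$ from Lemma \ref{L:Ave_Weak} yields the desired estimate for $R_\varepsilon^2(\eta)$.

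I do not expect a serious obstacle here: once Lemma \ref{L:AveT_Vg} is applied and the uniform $H^1$-bound on $v^\varepsilon$ is in hand, the argument is a routine bilinear/trilinear splitting. The only point requiring a bit of care is checking that the constant $\sigma=2$ produced by \eqref{E:AW_H1H2} gives the rate $\varepsilon$ (not $\varepsilon^{1/2}$) in \eqref{E:WR_Diff}, and that this rate indeed dominates $\delta(\varepsilon)^{-1}$-free comparisons, so that the new residual satisfies the same bound as $R_\varepsilon^1$ without introducing a worse power of $\varepsilon$.
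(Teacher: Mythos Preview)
Your proposal is correct and follows essentially the same approach as the paper: invoke Lemma \ref{L:AveT_Vg} with $\sigma=2$ to get $v^\varepsilon$ and \eqref{E:WR_Diff}, record the uniform $H^1(\Gamma)$-bounds on $M_\tau u^\varepsilon$ and $v^\varepsilon$, then split $a_g$ and $b_g$ and absorb the difference terms into $R_\varepsilon^2$ using \eqref{E:BLS_Bou}, \eqref{E:TLS_Bou}, and $\varepsilon\leq\varepsilon^{\alpha/4}$. The only cosmetic difference is that the paper writes the trilinear discrepancy as $b_g(w^\varepsilon,M_\tau u^\varepsilon,\eta)+b_g(v^\varepsilon,w^\varepsilon,\eta)$ (two terms) while you expand it into three, but these are algebraically identical.
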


\begin{proof}
  Since $u^\varepsilon\in\mathcal{V}_\varepsilon$ and \eqref{E:ATVg_Bo} is valid with $\sigma=2$ by \eqref{E:AW_H1H2}, there exists a vector field $v^\varepsilon\in\mathbb{V}_g$ such that \eqref{E:WR_Diff} holds by Lemma \ref{L:AveT_Vg}.
  Moreover,
  \begin{align} \label{Pf_WR:Bound}
    \begin{aligned}
      \|M_\tau u^\varepsilon\|_{H^1(\Gamma)} &\leq c\varepsilon^{-1/2}\|u^\varepsilon\|_{H^1(\Omega_\varepsilon)} \leq c, \\
      \|v^\varepsilon\|_{H^1(\Gamma)} &\leq \|M_\tau u^\varepsilon\|_{H^1(\Gamma)}+\|M_\tau u^\varepsilon-v^\varepsilon\|_{H^1(\Gamma)} \leq c
    \end{aligned}
  \end{align}
  with a constant $c>0$ independent of $\varepsilon$ by \eqref{E:Ave_Hk} with $k=1$, \eqref{E:AW_H1H2}, and \eqref{E:WR_Diff}.

  Let $\eta\in H_{g\sigma}^1(\Gamma,T\Gamma)$.
  We see by \eqref{E:Ave_Weak} that, if we set
  \begin{align*}
    J_1 &= a_g(M_\tau u^\varepsilon,\eta)-a_g(v^\varepsilon,\eta), \\
    J_2 &= b_g(M_\tau u^\varepsilon,M_\tau u^\varepsilon,\eta)-b_g(v^\varepsilon,v^\varepsilon,\eta),
  \end{align*}
  then $v^\varepsilon$ satisfies \eqref{E:Weak_Rep} with $R_\varepsilon^2(\eta)=R_\varepsilon^1(\eta)-J_1-J_2$, where $R_\varepsilon^1(\eta)$ satisfies \eqref{E:AW_Res}.
  Moreover, it follows from \eqref{E:BLS_Bou} and \eqref{E:WR_Diff} that
  \begin{align*}
    |J_1| = |a_g(M_\tau u^\varepsilon-v^\varepsilon,\eta)| \leq c\|M_\tau u^\varepsilon-v^\varepsilon\|_{H^1(\Gamma)}\|\eta\|_{H^1(\Gamma)} \leq c\varepsilon\|\eta\|_{H^1(\Gamma)}.
  \end{align*}
  Also, since $J_2=b_g(M_\tau u^\varepsilon-v^\varepsilon,M_\tau u^\varepsilon,\eta)+b_g(v^\varepsilon,M_\tau u^\varepsilon-v^\varepsilon,\eta)$, we have
  \begin{align*}
    |J_2| \leq c\Bigl(\|M_\tau u^\varepsilon\|_{H^1(\Gamma)}+\|v^\varepsilon\|_{H^1(\Gamma)}\Bigr)\|M_\tau u^\varepsilon-v^\varepsilon\|_{H^1(\Gamma)}\|\eta\|_{H^1(\Gamma)} \leq c\varepsilon\|\eta\|_{H^1(\Gamma)}
  \end{align*}
  by \eqref{E:TLS_Bou}, \eqref{E:WR_Diff}, and \eqref{Pf_WR:Bound}.
  Hence $R_\varepsilon^2(\eta)$ satisfies \eqref{E:AW_Res} by $\varepsilon\leq\varepsilon^{\alpha/4}$.
\end{proof}

\subsection{Difference estimate on the surface} \label{SS:Pf_TErr}
Now we are ready to prove Theorem \ref{T:Error}.

\begin{proof}[Proof of Theorem \ref{T:Error}]
  Let $u^\varepsilon$ and $v$ be weak solutions to \eqref{E:SNS_CTD} and \eqref{E:Lim_Eq}, respectively.
  Suppose that the assumptions of Theorem \ref{T:Error} are satisfied.
  Then there exists a vector field $v^\varepsilon\in\mathbb{V}_g$ satisfying \eqref{E:WR_Diff} and \eqref{E:Weak_Rep} by Lemma \ref{L:Weak_Rep}.
  For $\eta\in H_{g\sigma}^1(\Gamma,T\Gamma)$, we subtract \eqref{E:Weak_Rep} from \eqref{E:Lim_Weak} and use \eqref{E:HinS_L2} and \eqref{E:HinS_Mul} to get
  \begin{align*}
    a_g(v-v^\varepsilon,\eta)+b_g(v-v^\varepsilon,v,\eta)+b_g(v^\varepsilon,v-v^\varepsilon,\eta) = [f-M_\tau\mathbb{P}_\varepsilon f^\varepsilon,g\eta]_{T\Gamma}-R_\varepsilon^2(\eta).
  \end{align*}
  Let $\eta=v-v^\varepsilon$ in this equality.
  Then, since $\gamma^0>0$ or $\gamma^1>0$ or $v-v^\varepsilon\in\mathbb{V}_g$ satisfies \eqref{E:KoG_Per} by the assumptions of Theorem \ref{T:Error}, we can use \eqref{E:BLS_Cor} to get
  \begin{align*}
    \|v-v^\varepsilon\|_{H^1(\Gamma)}^2 \leq c_aa_g(v-v^\varepsilon,v-v^\varepsilon).
  \end{align*}
  Moreover, we observe by \eqref{E:TLS_Bou} and \eqref{E:TLS_Asym} that
  \begin{align*}
    |b_g(v-v^\varepsilon,v,v-v^\varepsilon)| \leq c_b\|v\|_{H^1(\Gamma)}\|v-v^\varepsilon\|_{H^1(\Gamma)}^2, \quad b_g(v^\varepsilon,v-v^\varepsilon,v-v^\varepsilon) = 0,
  \end{align*}
  and by $g\in C^1(\Gamma)$ that, for any fixed $\sigma\in(0,1)$,
  \begin{align*}
    \Bigl|[f-M_\tau\mathbb{P}_\varepsilon f^\varepsilon,g(v-v^\varepsilon)]_{T\Gamma}\Bigr| &\leq \|f-M_\tau\mathbb{P}_\varepsilon f^\varepsilon\|_{H^{-1}(\Gamma,T\Gamma)}\|g(v-v^\varepsilon)\|_{H^1(\Gamma)} \\
    &\leq c\|f-M_\tau\mathbb{P}_\varepsilon f^\varepsilon\|_{H^{-1}(\Gamma,T\Gamma)}\|v-v^\varepsilon\|_{H^1(\Gamma)} \\
    &\leq \frac{\sigma}{2c_a}\|v-v^\varepsilon\|_{H^1(\Gamma)}^2+\frac{c}{\sigma}\|f-M_\tau\mathbb{P}_\varepsilon f^\varepsilon\|_{H^{-1}(\Gamma,T\Gamma)}^2.
  \end{align*}
  Since \eqref{E:AW_Res} holds for $R_\varepsilon^2(\eta)$ by Lemma \ref{L:Weak_Rep}, we also have
  \begin{align*}
    |R_\varepsilon^2(v-v^\varepsilon)| \leq c\delta(\varepsilon)\|v-v^\varepsilon\|_{H^1(\Gamma)} \leq \frac{\sigma}{2c_a}\|v-v^\varepsilon\|_{H^1(\Gamma)}^2+\frac{c}{\sigma}\delta(\varepsilon)^2,
  \end{align*}
  where $\delta(\varepsilon)=\varepsilon^{\alpha/4}+\sum_{i=0,1}|\varepsilon^{-1}\gamma_\varepsilon^i-\gamma^i|$.
  Hence we get
  \begin{align*}
    \|v-v^\varepsilon\|_{H^1(\Gamma)}^2 &\leq \Bigl(\sigma+c_ac_b\|v\|_{H^1(\Gamma)}\Bigr)\|v-v^\varepsilon\|_{H^1(\Gamma)}^2\\
    &\qquad +\frac{c}{\sigma}\Bigl(\delta(\varepsilon)^2+\|f-M_\tau\mathbb{P}_\varepsilon f^\varepsilon\|_{H^{-1}(\Gamma,T\Gamma)}^2\Bigr),
  \end{align*}
  where $c>0$ is a constant independent of $\varepsilon$ and $\sigma$.
  Thus, if we set
  \begin{align*}
    \rho = \rho(\sigma) = \frac{1-2\sigma}{c_ac_b} > 0
  \end{align*}
  for any fixed $\sigma\in(0,1)$ and assume that $\|v\|_{H^1(\Gamma)}\leq\rho$, then it follows that
  \begin{align*}
    \|v-v^\varepsilon\|_{H^1(\Gamma)}^2 \leq \frac{c}{\sigma^2}\Bigl(\delta(\varepsilon)^2+\|f-M_\tau\mathbb{P}_\varepsilon f^\varepsilon\|_{H^{-1}(\Gamma,T\Gamma)}^2\Bigr).
  \end{align*}
  Therefore, we obtain \eqref{E:Error} by this inequality and \eqref{E:WR_Diff} (note that $\varepsilon\leq\varepsilon^{\alpha/4}$).
\end{proof}

\begin{remark} \label{R:Pf_TErr}
  Let $\rho_u=(c_ac_b)^{-1}$ be the constant given in Proposition \ref{P:LW_Uniq}.
  For any fixed $\sigma\in(0,1)$, the constant $\rho=\rho(\sigma)$ given above satisfies $\rho<\rho_u$.
  Hence the condition $\|v\|_{H^1(\Gamma)}\leq\rho$ implies the uniqueness of a weak solution to \eqref{E:Lim_Eq} in the class $\mathbb{V}_g$ by Proposition \ref{P:LW_Uniq}.
  Conversely, if a weak solution $v\in\mathbb{V}_g$ to \eqref{E:Lim_Eq} satisfies $\|v\|_{H^1(\Gamma)}<\rho_u$, then $\|v\|_{H^1(\Gamma)}\leq\rho(\sigma)$ for some $\sigma\in(0,1)$ and Theorem \ref{T:Error} is applicable to $v$.
\end{remark}

\begin{remark} \label{R:Er_Full}
  Under the assumptions of Theorem \ref{T:Error}, we can apply Proposition \ref{P:NCW_AN} with $\beta=1$ to $u^\varepsilon$.
  Then, noting that $\alpha\in(0,1]$, we have
  \begin{align*}
    \|Mu^\varepsilon\cdot n\|_{H^1(\Gamma)} \leq c\varepsilon^{\alpha/2}.
  \end{align*}
  By this inequality and $Mu^\varepsilon=M_\tau u^\varepsilon+(Mu^\varepsilon\cdot n)n$ on $\Gamma$, we see that the difference estimate \eqref{E:Error} is still valid if we replace $M_\tau u^\varepsilon$ by the average $Mu^\varepsilon$ itself.
\end{remark}

\subsection{Difference estimate in the thin domain} \label{SS:Pf_ErCE}
Let us show Theorem \ref{T:Er_CE}.

\begin{proof}[Proof of Theorem \ref{T:Er_CE}]
  The proof is the same as in the nonstationary case \cite[Theorems 7.29 and 7.30]{Miu20_03}, but we give it here for the reader's convenience.

  Under the assumptions of Theorem \ref{T:Error}, let $u^\varepsilon$ and $v$ be weak solutions to \eqref{E:SNS_CTD} and \eqref{E:Lim_Eq}, respectively.
  Then $u^\varepsilon\in D(A_\varepsilon)$ by Lemma \ref{L:Ave_Weak} and $u^\varepsilon$ satisfies \eqref{E:Slip_Bo} on $\Gamma_\varepsilon$.
  We have
  \begin{align*}
    \Bigl\|u^\varepsilon-\overline{M_\tau u^\varepsilon}\Bigr\|_{L^2(\Omega_\varepsilon)}+\Bigl\|\overline{P}\nabla u^\varepsilon-\overline{\nabla_\Gamma M_\tau u^\varepsilon}\Bigr\|_{L^2(\Omega_\varepsilon)} \leq c\varepsilon\|u^\varepsilon\|_{H^2(\Omega_\varepsilon)} \leq c\varepsilon^{1/2+\alpha/2}
  \end{align*}
  by \eqref{E:Ave_Diff}, \eqref{E:AvDi_H1}, and \eqref{E:AW_H1H2}.
  Moreover, it follows from \eqref{E:CE_L2CTD} that
  \begin{align} \label{Pf_TEC:Mtuv}
    \Bigl\|\overline{M_\tau u^\varepsilon}-\bar{v}\Bigr\|_{L^2(\Omega_\varepsilon)}+\Bigl\|\overline{\nabla_\Gamma M_\tau u^\varepsilon}-\overline{\nabla_\Gamma v}\Bigr\|_{L^2(\Omega_\varepsilon)} \leq c\varepsilon^{1/2}\|M_\tau u^\varepsilon-v\|_{H^1(\Gamma)}.
  \end{align}
  By these inequalities, \eqref{E:Error}, and $\varepsilon^{\alpha/2}\leq\varepsilon^{\alpha/4}$, we find that
  \begin{multline} \label{Pf_TEC:uvL2}
    \varepsilon^{-1/2}\left(\|u^\varepsilon-\bar{v}\|_{L^2(\Omega_\varepsilon)}+\Bigl\|\overline{P}\nabla u^\varepsilon-\overline{\nabla_\Gamma v}\Bigr\|_{L^2(\Omega_\varepsilon)}\right) \\
    \leq c\Bigl(\delta(\varepsilon)+\|M_\tau\mathbb{P}_\varepsilon f^\varepsilon-f\|_{H^{-1}(\Gamma,T\Gamma)}\Bigr),
  \end{multline}
  where $\delta(\varepsilon)=\varepsilon^{\alpha/4}+\sum_{i=0,1}|\varepsilon^{-1}\gamma_\varepsilon^i-\gamma^i|$.
  Next we have
  \begin{align*}
    \Bigl\|\overline{P}\partial_nu^\varepsilon+\overline{W}u^\varepsilon\Bigr\|_{L^2(\Omega_\varepsilon)}+\left\|\partial_nu^\varepsilon\cdot\bar{n}-\frac{1}{\bar{g}}\overline{M_\tau u^\varepsilon}\cdot\overline{\nabla_\Gamma g}\right\|_{L^2(\Omega_\varepsilon)} \leq c\varepsilon\|u^\varepsilon\|_{H^2(\Omega_\varepsilon)} \leq c\varepsilon^{1/2+\alpha/2}
  \end{align*}
  by \eqref{E:Pdn_CTD}, \eqref{E:ndn_CTD}, and \eqref{E:AW_H1H2}.
  Noting that $W$ and $\nabla_\Gamma g$ are bounded on $\Gamma$ and \eqref{E:G_Inf} holds, we deduce from this inequality, \eqref{E:Error}, \eqref{Pf_TEC:Mtuv}, \eqref{Pf_TEC:uvL2}, and $\varepsilon^{\alpha/2}\leq\varepsilon^{\alpha/4}$ that
  \begin{multline} \label{Pf_TEC:dnuv}
    \varepsilon^{-1/2}\left(\Bigl\|\overline{P}\partial_nu^\varepsilon+\overline{Wv}\Bigr\|_{L^2(\Omega_\varepsilon)}+\left\|\partial_nu^\varepsilon\cdot\bar{n}-\frac{1}{\bar{g}}\bar{v}\cdot\overline{\nabla_\Gamma g}\right\|_{L^2(\Omega_\varepsilon)}\right) \\
    \leq c\Bigl(\delta(\varepsilon)+\|M_\tau\mathbb{P}_\varepsilon f^\varepsilon-f\|_{H^{-1}(\Gamma,T\Gamma)}\Bigr).
  \end{multline}
  Therefore, setting $V=-Wv+g^{-1}(v\cdot\nabla_\Gamma g)n$ on $\Gamma$ and noting that
  \begin{align*}
    \partial_nu^\varepsilon-\overline{V} = \Bigl(\overline{P}\partial_nu^\varepsilon+\overline{Wv}\Bigr)+\left(\partial_nu^\varepsilon\cdot\bar{n}-\frac{1}{\bar{g}}\bar{v}\cdot\overline{\nabla_\Gamma g}\right)\bar{n} \quad\text{in}\quad \Omega_\varepsilon,
  \end{align*}
  we obtain \eqref{E:Er_CE} by \eqref{Pf_TEC:uvL2} and \eqref{Pf_TEC:dnuv}.
\end{proof}

\section{Proof of Lemma \ref{L:Sob_CTD}} \label{S:Pf_Sob}
In this section we prove Lemma \ref{L:Sob_CTD}.
First suppose that \eqref{E:Sob_L6} holds.
Then
\begin{align*}
  \|\varphi\|_{L^2(\Omega_\varepsilon)} \leq \|\varphi\|_{H^1(\Omega_\varepsilon)}, \quad \|\varphi\|_{L^6(\Omega_\varepsilon)} \leq c\varepsilon^{-1/3}\|\varphi\|_{H^1(\Omega_\varepsilon)}
\end{align*}
for $\varphi\in H^1(\Omega_\varepsilon)$ and thus we have \eqref{E:Sob_Lp} by an interpolation.

Let us show \eqref{E:Sob_L6}.
We follow the idea of the proof of \cite[Lemma 4.4]{HoaSel10} in the case of a flat thin domain and use the anisotropic Sobolev inequality
\begin{align} \label{E:Ani_Sob}
  \|\Phi\|_{L^6(Q)} \leq c\prod_{i=1}^3\Bigl(\|\Phi\|_{L^2(Q)}+\|\partial_i\Phi\|_{L^2(Q)}\Bigr)^{1/3}
\end{align}
for $Q=(0,1)^3$ and $\Phi\in H^1(Q)$ (see \cite[Proposition 2.3]{TemZia96}).

Since $\Gamma$ is closed and of class $C^5$, we can take a finite number of bounded open subsets $U_k$ of $\mathbb{R}^2$ and $C^5$ local parametrizations $\mu_k\colon U_k\to\Gamma$ of $\Gamma$ with $k=1,\dots,k_0$ such that $\{\mu_k(U_k)\}_{k=1}^{k_0}$ is an open covering of $\Gamma$.
Let $\{\eta_k\}_{k=1}^{k_0}$ be a partition of unity on $\Gamma$ subordinate to $\{\mu_k(U_k)\}_{k=1}^{k_0}$.
We may assume that $\eta_k$ is supported in $\mu_k(\mathcal{K}_k)$ with some compact subset $\mathcal{K}_k$ of $U_k$ for each $k=1,\dots,k_0$.
Let $Q_k=U_k\times(0,1)$ and
\begin{align*}
  \zeta_k(s) = \mu_k(s')+\varepsilon\{(1-s_3)g_0(\mu_k(s'))+s_3g_1(\mu_k(s'))\}n(\mu_k(s'))
\end{align*}
for $s=(s',s_3)\in Q_k$.
Also, let $\bar{\eta}_k=\eta_k\circ\pi$ be the constant extension of $\eta_k$ in the normal direction of $\Gamma$.
Then $\{\zeta_k(Q_k)\}_{k=1}^{k_0}$ is an open covering of $\Omega_\varepsilon$ and $\{\bar{\eta}_k\}_{k=1}^{k_0}$ is a partition of unity on $\Omega_\varepsilon$ subordinate to $\{\zeta_k(Q_k)\}_{k=1}^{k_0}$.
Moreover, $\partial_n\bar{\eta}_k=\bar{n}\cdot\nabla\bar{\eta}_k=0$ in $\Omega_\varepsilon$.

Let $\varphi\in H^1(\Omega_\varepsilon)$ and $\varphi_k=\bar{\eta}_k\varphi$ on $\Omega_\varepsilon$ for $k=1,\dots,k_0$.
Then, since $\varphi_k$ is supported in $\zeta(\mathcal{K}_k\times(0,1))\subset\zeta_k(Q_k)$ and $\partial_n\varphi_k=\bar{\eta}_k\partial_n\varphi$ in $\Omega_\varepsilon$, it is sufficient for \eqref{E:Sob_L6} to show that
\begin{align} \label{E:Sob_Loc}
  \|\varphi_k\|_{L^6(\zeta_k(Q_k))} \leq c\varepsilon^{-1/3}\|\varphi_k\|_{H^1(\zeta_k(Q_k))}^{2/3}\Bigl(\|\varphi_k\|_{L^2(\zeta_k(Q_k))}+\varepsilon\|\partial_n\varphi_k\|_{L^2(\zeta_k(Q_k))}\Bigr)^{1/3}
\end{align}
for all $k=1,\dots,k_0$.
From now on, we fix and suppress the index $k$.
Hence $\varphi$ is supported in $\zeta(\mathcal{K}\times(0,1))\subset\zeta(Q)$.
Moreover, we may assume that $U=(0,1)^2$ and thus $Q=(0,1)^3$ by taking $U$ sufficiently small and scaling it.
We also write $c$ for a general positive constant independent of $\varepsilon$.
The local parametrization of $\Omega_\varepsilon$ on $Q$ is of the form
\begin{align*}
  \zeta(s) = \mu(s')+h_\varepsilon(s)n(\mu(s')), \quad s=(s',s_3)\in Q,
\end{align*}
where $\mu\colon U\to\Gamma$ is a $C^5$ local parametrization of $\Gamma$ and
\begin{align*}
  h_\varepsilon(s) = \varepsilon\{(1-s_3)g_0(\mu(s'))+s_3g_1(\mu(s'))\}.
\end{align*}
Then, since $g_0$, $g_1$, and $n$ are of class $C^4$ on $\Gamma$ and $\mathcal{K}$ is compact in $U$, we have
\begin{align} \label{E:Gr_Ze}
  |\nabla_s\zeta(s)| \leq c, \quad s\in\mathcal{K}\times(0,1),
\end{align}
where $\nabla_s\zeta$ is the gradient matrix of $\zeta$.
Moreover, it is shown in the proof of \cite[Lemma 4.3]{Miu21_02} (see (B.11) in \cite{Miu21_02}) that
\begin{align*}
  \det\nabla_s\zeta(s) = \varepsilon g(\mu(s'))J(\mu(s'),h_\varepsilon(s))\sqrt{\det\theta(s')}, \quad s\in Q,
\end{align*}
where $J$ is given by \eqref{E:Def_Ja} and $\theta=(\theta_{ij})_{i,j}$ is the Riemannian metric of $\Gamma$ locally given by $\theta_{ij}=\partial_{s_i}\mu\cdot\partial_{s_j}\mu$ on $U$ for $i,j=1,2$.
Then, noting that $\det\theta$ is continuous and positive on $U$ and $\mathcal{K}$ is compact in $U$, we see by the above equality, \eqref{E:G_Inf}, and \eqref{E:Jacob} that
\begin{align} \label{E:Det_Ze}
  c^{-1}\varepsilon \leq \det\nabla_s\zeta(s) \leq c\varepsilon, \quad s\in \mathcal{K}\times(0,1).
\end{align}
Now let $\Phi=\varphi\circ\zeta$ on $Q$.
Then since $\varphi$ is supported in $\zeta(\mathcal{K}\times(0,1))$ and
\begin{align*}
  \partial_{s_i}\Phi(s) &= \partial_{s_i}\zeta(s)\cdot\nabla\varphi(\zeta(s)), \quad i=1,2, \\
  \partial_{s_3}\Phi(s) &= \varepsilon g(\mu(s'))\partial_n\varphi(\zeta(s))
\end{align*}
for all $s\in Q$, it follows from \eqref{E:G_Inf}, \eqref{E:Gr_Ze}, and \eqref{E:Det_Ze} that
\begin{align} \label{E:Phi_H1}
  \begin{aligned}
    \|\Phi\|_{L^2(Q)} &\leq c\varepsilon^{-1/2}\|\varphi\|_{L^2(\zeta(Q))}, \\
    \|\partial_{s_i}\Phi\|_{L^2(Q)} &\leq c\varepsilon^{-1/2}\|\nabla\varphi\|_{L^2(\zeta(Q))}, \quad i=1,2, \\
    \|\partial_{s_3}\Phi\|_{L^2(Q)} & \leq c\varepsilon^{1/2}\|\partial_n\varphi\|_{L^2(\zeta(Q))}
  \end{aligned}
\end{align}
and thus $\Phi\in H^1(Q)$ and \eqref{E:Ani_Sob} holds for $\Phi$.
Moreover, since \eqref{E:Det_Ze} holds and $\varphi$ is supported in $\zeta(\mathcal{K}\times(0,1))$, it follows that
\begin{align} \label{E:Phi_L6}
  c\varepsilon^{-1/6}\|\varphi\|_{L^6(\zeta(Q))} \leq \|\Phi\|_{L^6(Q)}.
\end{align}
Hence we obtain \eqref{E:Sob_Loc} by \eqref{E:Ani_Sob}, \eqref{E:Phi_H1}, and \eqref{E:Phi_L6}.

\section*{Acknowledgments}
The work of the author was supported by JSPS KAKENHI Grant Number 23K12993.

\bibliographystyle{abbrv}
\bibliography{SNS_CTD_Ref}

\end{document}